\documentclass[reqno]{amsart}
\usepackage{relsize}
\usepackage{amsmath}
\usepackage{amsthm}
\usepackage{amssymb}
\usepackage{hyperref}
\usepackage{comment}
\usepackage{xcolor}
\usepackage{todonotes}





\DeclareMathOperator{\restr}{\!\upharpoonright\!}

\newcommand{\res}{\mathrm{|}}

\newcommand{\ZFC}{{\rm ZFC}}

\renewcommand{\emptyset}{\varnothing}

\newcommand{\R}{{\mathcal R}}
\renewcommand{\S}{{\mathcal S}}

\newcommand{\I}{{\mathcal I}}
\renewcommand{\O}{{\mathcal O}}

\newcommand{\Ult}{\mathop{\rm Ult}}

\newcommand{\restrict}{\upharpoonright}
\newcommand{\concat}{\mathbin{{}^\smallfrown}}

\newcommand{\<}{\langle}
\renewcommand{\>}{\rangle}

\newcommand{\st}{\mid}

\newcommand{\dom}{\mathop{\rm dom}}

\newcommand{\ot}{\mathop{\rm ot}\nolimits}

\newcommand{\code}{{\rm code}}

\newcommand{\NS}{{\mathop{\rm NS}}}

\renewcommand{\and}{\mathop{\&}}


\newtheorem{theorem}{Theorem}[section]
\newtheorem{lemma}[theorem]{Lemma}
\newtheorem{corollary}[theorem]{Corollary}
\newtheorem{proposition}[theorem]{Proposition}

\newtheorem{fact}[theorem]{Fact}
\newtheorem{observation}[theorem]{Observation}
\newtheorem{counterexample}[theorem]{Counterexample}

\theoremstyle{definition}
\newtheorem{question}[theorem]{Question}
\newtheorem{remark}[theorem]{Remark}

\newtheorem{definition}[theorem]{Definition}

%

\thanks{The research of the second author was supported by the Italian PRIN 2017 Grant \emph{Mathematical Logic: models, sets, computability}.}
\date{\today}

\begin{document}

\title{Ideal operators and higher indescribability}

\author[Brent Cody]{Brent Cody}
\address[Brent Cody]{ 
Virginia Commonwealth University,
Department of Mathematics and Applied Mathematics,
1015 Floyd Avenue, PO Box 842014, Richmond, Virginia 23284, United States
} 
\email[B. ~Cody]{bmcody@vcu.edu} 
\urladdr{http://www.people.vcu.edu/~bmcody/}

\author[Peter Holy]{Peter Holy}
\address[Peter Holy]{
University of Udine,
Via delle Scienze 206, 33100 Udine, Italy
}
\email[P. ~Holy]{pholy@math.uni-bonn.de}

\begin{abstract}
  We investigate properties of the ineffability and the Ramsey operator, and a common generalization of those that was introduced by the second author, with respect to higher indescribability, as introduced by the first author. This extends earlier investigations on the ineffability operator by James Baumgartner, and on the Ramsey operator by Qi Feng, by Philip Welch et al. and by the first author.
\end{abstract}

\subjclass[2010]{Primary 03E55; Secondary 03E05}

\keywords{}

\maketitle





%
%
%


\section{Introduction}\label{section_introduction}

In the set theoretic literature, some of the most popular large cardinals have been equipped with canonical ideals, and sometimes also with certain operators on ideals. Early examples of such \emph{large cardinal operators} are the \emph{ineffability operator} $\mathcal I$ due to Baumgartner in \cite{MR0540770}, and the \emph{Ramsey operator} $\mathcal R$ that was introduced and extensively studied by Feng in \cite{MR1077260}. 

In the present paper, we want to analyze the interplay of certain large cardinal operators, and in particular the operators $\I$ and $\R$, with a notion of higher indescribability that was introduced by the first author in \cite{CodyHigherIndescribability}, and which extends a notion of Bagaria from \cite{MR3894041}. Let us note that Sharpe and Welch also introduced a notion of higher indescribability \cite[Definition 3.21]{MR2817562}, but the relationship between the notion we use in the present paper and that of \cite{MR2817562} is not currently known.

In the remainder of this section, we recall the definitions of the operators $\I$ and $\R$, and the \emph{strongly Ramsey subset operator} $\S$ (the latter has been first introduced in \cite{HolyLCOandEE}, where it is denoted as $\mathbf{T}_{\mathrm{cl}}$) that is associated to the large cardinal notion of strong Ramseyness, as introduced in \cite{MR2830415}. In Section \ref{section:indescribability}, we review infinitary second order formulas, and their associated notions of indescribability, and we show that many of their basic properties can be established by simple arguments that make use of generic ultrapowers. In Sections \ref{section_generalizing_baumgartner} and \ref{section:indescribability_from_homogeneity}, we generalize results of Baumgartner \cite{MR0384553,MR0540770} to the context of higher indescribability. For example, we show that if $\kappa$ is a subtle cardinal then there are many cardinals $\alpha<\kappa$ which are $\Pi^1_\xi$-indescribable for all $\xi<\alpha^+$. Furthermore, if one assumes the ideal $\I^{\gamma+1}([\kappa]^{<\kappa})$ associated to the $\gamma+1$-ineffability of a cardinal $\kappa$ is nontrivial, where $\kappa\leq\gamma<\kappa^+$, then for any fixed bijection $b:\kappa\to\gamma$ the set
\[\{\alpha<\kappa\st \alpha\in\I^{\text{o.t.}(b[\alpha])}(\Pi^1_\xi(\alpha))^+\}\]
is in the filter dual to $\I^{\gamma+1}([\kappa]^{<\kappa})$ (see Corollary \ref{corollary_below_gamma_plus_1_almost_ineffability}). In Section \ref{section:basic}, we provide two basic lemmas on iterations of the ineffability and the Ramsey operator, which can be viewed as generalizations of the following standard facts: whenever a set has stationarily-many stationary initial segments it must be stationary and if a cardinal is weakly compact then the set of smaller cardinals which are not weakly compact is a weakly compact set. In Section \ref{section:expressibility}, we show that the ideals associated to higher indescribability, and the results of applying our ideal operators to these ideals, can be described by certain infinitary second order formulas. In Section \ref{section_generalized operators}, we review a uniform framework for large cardinal operators from \cite{HolyLCOandEE}, that in particular includes the operators $\I$, $\R$ and $\S$.\footnote{For readers who are only interested in the operators $\I$ and $\R$ (and perhaps also $\S$), it should be possible to skip Section \ref{section_generalized operators}, and only look up some of its relevant bits when necessary. In fact, it is only in Section \ref{section_generating} that we will make use of these generalized operators. We will thus provide some further information regarding this towards the beginning of Section \ref{section_generating}.}
In Section \ref{section:preoperators}, we review the notion of pre-operators. In Section \ref{section_generating}, we combine all of the ingredients to verify yet another generalization of results by Baumgartner, which we extend both to the context of higher indescribability and to the above-mentioned framework for large cardinal operators. For example, suppose $\xi<\kappa^+$ and let $\O$ be the ineffability operator $\I$ or the Ramsey operator $\R$. Then $\O^\omega(\Pi^1_\xi(\kappa))=\O^\omega(\Pi^1_{\xi+1}(\kappa))$, but $\O^n(\Pi^1_\xi(\kappa))\subsetneq\O^n(\Pi^1_{\xi+1}(\kappa))$ for all $n<\omega$ with $\kappa\in\O^n(\Pi^1_{\xi+1}(\kappa))^+$ (see Corollary \ref{corollary_collapse} and Corollary \ref{corollary_proper_containment_from_xi_to_xi_plus_one}). In Section \ref{Cody_results}, we comment on some partially problematic results of the first author from \cite{CodyHigherIndescribability}. In particular, let us point the reader to a simple question involving the Ramsey operator and $\Pi^1_1$-indescribability, namely Question \ref{question_simple}, which has so far resisted all attempts at a resolution.

Without further mention, we will require all ideals to be ideals on some regular and uncountable cardinal $\kappa$, and to be supersets of the bounded ideal on $\kappa$. 
For any ideal $I$, $I^+$ denotes the collection of $I$-positive sets, that is, those subsets of $\kappa$ which are not in $I$, while $I^*$ denotes the filter that is dual to $I$, that is, the collection of complements of sets in $I$. We will often introduce ideals by defining the collection of their positive sets when this is more convenient.

The definition of the Ramsey operator that is provided below is not the original definition from \cite{MR1077260}, but a version that is known to be equivalent \cite[Proposition 2.8]{MR4206111}. Recall that for a set of ordinals $A$, an \emph{$A$-list} is a sequence $\langle a_\alpha\mid \alpha\in A\rangle$ such that $a_\alpha\subseteq \alpha$ for any $\alpha\in A$, and that a set $H\subseteq A$ is homogeneous for $\langle a_\alpha\mid \alpha\in A\rangle$ in case $a_\alpha=a_\beta\cap x$ whenever $\alpha<\beta$ are both in $H$.

\begin{definition}
  Let $I$ be an ideal on $\kappa$.
  \begin{itemize}

    \item Given a $\kappa$-list $\vec a$, we define the \emph{local instance of $\I$ at $\vec{a}$}, \[\mathcal I^{\vec a}(I)^+=\{x\subseteq\kappa\mid\exists H\in I^+\ H\subseteq x\textrm{ is homogeneous for }\vec a\},\] and let $\mathcal I(I)^+=\bigcap\{\mathcal I^{\vec a}(I)^+\mid\vec a$ is a $\kappa$-list$\}$.

    \item Given a regressive function $c\colon[\kappa]^{<\omega}\to\kappa$, we define the \emph{local instance of $\R$ at $c$}, \[\mathcal R^c(I)^+=\{x\subseteq\kappa\mid\exists H\in I^+\ H\subseteq x\textrm{ is homogeneous for }c\},\] and let $\mathcal R(I)^+=\bigcap\{\mathcal R^c(I)^+\mid c\colon[x]^{<\omega}\to\kappa\textrm{ regressive}\}$.

  \end{itemize}
\end{definition}

Feng has shown that $\R(I)$ is a normal ideal on $\kappa$ for any ideal $I$ on $\kappa$, and an analogous result for the operator $\I$ is essentially due to Baumgartner (see also our Lemma \ref{lemma:ineffability_normal} below).
Some of the classical large cardinal ideals (see \cite{MR0384553}) are directly generated by these two large cardinal operators.

\begin{fact}\label{fact:basics}\quad
  \begin{enumerate}
    \item If $\kappa$ is weakly ineffable, then $\mathcal I([\kappa]^{<\kappa})$ is the weakly ineffable ideal on $\kappa$.
    \item If $\kappa$ is ineffable, then $\mathcal I(\NS_\kappa)$ is the ineffable ideal on $\kappa$.
    \item If $\kappa$ is Ramsey, then $\mathcal R([\kappa]^{<\kappa})$ is the Ramsey ideal on $\kappa$.
    \item If $\kappa$ is ineffably Ramsey, then $\mathcal R(\NS_\kappa)$ is the ineffably Ramsey ideal on $\kappa$.
  \end{enumerate}
\end{fact}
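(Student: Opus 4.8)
The plan is to prove all four items by unwinding definitions; there is no deeper content, so the work is entirely bookkeeping. First I would recall the classical definitions of the four ideals in terms of their positive sets: a set $X\subseteq\kappa$ is positive for the weakly ineffable ideal iff every $X$-list admits a homogeneous subset $H\subseteq X$ with $|H|=\kappa$; $X$ is positive for the ineffable ideal iff every $X$-list admits a homogeneous stationary $H\subseteq X$; $X$ is positive for the Ramsey ideal iff every regressive $c\colon[X]^{<\omega}\to\kappa$ admits a homogeneous $H\subseteq X$ with $|H|=\kappa$; and $X$ is positive for the ineffably Ramsey ideal iff every such $c$ admits a homogeneous stationary $H\subseteq X$. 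For the last two ideals I would also use the equivalence of the operator $\mathcal R$ employed here with Feng's original one, \cite[Proposition 2.8]{MR4206111}, so that these conditions are indeed the defining ones. Note that, under the respective large cardinal hypothesis, $\kappa$ itself is positive, so each of these classical objects is a proper ideal.

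Next I would record two elementary observations. (i) We have $([\kappa]^{<\kappa})^+=[\kappa]^{\kappa}$, so ``$H\in([\kappa]^{<\kappa})^+$'' simply means ``$|H|=\kappa$'', while $\NS_\kappa^+$ is by definition the collection of stationary subsets of $\kappa$. (ii) For any $X\subseteq\kappa$, restriction to $X$ maps the $\kappa$-lists onto the $X$-lists (any $X$-list extends to a $\kappa$-list, for instance by padding it with $\emptyset$ on $\kappa\setminus X$), and a set $H\subseteq X$ is homogeneous for a $\kappa$-list $\vec a$ iff it is homogeneous for $\vec a\restrict X$; likewise restriction maps the regressive functions on $[\kappa]^{<\omega}$ onto the regressive functions on $[X]^{<\omega}$, with $H\subseteq X$ homogeneous for a regressive $c$ iff homogeneous for $c\restrict[X]^{<\omega}$.

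Then I would unwind the operators. By definition, $x\in\mathcal I(I)^+$ iff for every $\kappa$-list $\vec a$ there is $H\in I^+$ with $H\subseteq x$ homogeneous for $\vec a$, which by (ii) is equivalent to: for every $x$-list $\vec b$ there is $H\in I^+$ with $H\subseteq x$ homogeneous for $\vec b$. Substituting $I=[\kappa]^{<\kappa}$ and $I=\NS_\kappa$ and invoking (i), these are precisely the positivity conditions for the weakly ineffable and the ineffable ideal, which yields items (1) and (2). Running the same computation with regressive functions on $[\kappa]^{<\omega}$ in place of $\kappa$-lists shows that $x\in\mathcal R([\kappa]^{<\kappa})^+$ iff $x$ is Ramsey-positive and $x\in\mathcal R(\NS_\kappa)^+$ iff $x$ is ineffably-Ramsey-positive, which yields items (3) and (4). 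Finally, since $\mathcal I(I)$ and $\mathcal R(I)$ are (normal) ideals on $\kappa$ for every ideal $I$ on $\kappa$ by the results of Baumgartner and Feng mentioned above (see also Lemma \ref{lemma:ineffability_normal}), each operator value is an ideal with the same positive sets as the corresponding classical ideal, and two ideals with the same positive sets coincide.

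I do not expect a genuine obstacle. The only step requiring a moment's care is observation (ii), the passage between $\kappa$-lists and $X$-lists and between regressive functions on $[\kappa]^{<\omega}$ and on $[X]^{<\omega}$; beyond that, the main subtlety is simply matching up the several equivalent classical formulations of the Ramsey and ineffably Ramsey ideals, which is why the argument routes through \cite[Proposition 2.8]{MR4206111}.
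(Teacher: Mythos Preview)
Your proposal is correct; the argument is exactly the routine definition-unwinding one would expect, and your observations (i) and (ii) are precisely what is needed. Note, however, that the paper does not supply a proof of this statement at all: it is recorded as a \emph{Fact} with a reference to Baumgartner \cite{MR0384553}, so there is no ``paper's own proof'' to compare against. Your write-up would serve perfectly well as the omitted verification.
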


In order to define the strongly Ramsey subset operator $\S$, let us recall that $M$ is a \emph{$\kappa$-model} if $M\supseteq\kappa+1$ is a transitive model of $\ZFC^-$ of size $\kappa$ with $M^{<\kappa}\subseteq M$. An \emph{$M$-ultrafilter $U$ on $\kappa$} is a filter $U\subseteq P(\kappa)^M$ which measures all subsets of $\kappa$ in $M$. Also recall that an $M$-ultrafilter $U$ on $\kappa$ is \emph{$\kappa$-amenable for $M$} if whenever $\mathcal A\in M$ is a $\kappa$-sized collection of subsets of $\kappa$ in $M$, then $\mathcal A\cap U\in M$.

\begin{definition}
Let $I$ be an ideal on $\kappa$. Given a set $a\subseteq\kappa$, we define the \emph{local instance of $\S$ at $a$} by letting $x\in\S^a(I)^+$ if and only if $x\subseteq\kappa$ and there is a $\kappa$-model $M$ with $a\in M$ and there is a $\kappa$-amenable $M$-normal $M$-ultrafilter $U$ on $\kappa$ such that $U\subseteq I^+$ and $x\in U$. We let 
\[\S(I)^+=\bigcap\{\S^a(I)^+\st a\subseteq\kappa\}.\]
\end{definition}

It is easy to see that $\kappa$ is strongly Ramsey (as defined in \cite{MR2830415}) if and only if $\kappa\in\S([\kappa]^{<\kappa})^+$, and furthermore, when $\kappa\in\S(I)^+$, it follows that $\S(I)$ is a nontrivial normal ideal on $\kappa$. If $\kappa$ is strongly Ramsey, then $\S([\kappa]^{<\kappa})$ is the \emph{strongly Ramsey ideal} on $\kappa$, as introduced in \cite{MR4156888}.

In this paper, we will also investigate properties of \emph{iterated large cardinal operators}. If $\mathcal O$ is a large cardinal operator, and $I$ is an ideal, we define $\mathcal O^\gamma(I)$ inductively, setting $\mathcal O^{\gamma+1}(I)=\mathcal O(\mathcal O^\gamma(I))$, and $\mathcal O^\gamma(I)=\bigcup_{\delta<\gamma}\mathcal O^\delta(I)$ when $\gamma$ is a limit ordinal.

\section{Review of higher indescribability}\label{section:indescribability}

\subsection{On the notion of $\Pi^1_\xi$- and $\Sigma^1_\xi$-formulas}

The following definition differs slightly from that of Bagaria \cite[Definition 4.1]{MR3894041} in that we allow for $\Pi^1_\xi$- and $\Sigma^1_\xi$-formulas to contain parameters of various kinds.

\begin{definition}\label{definition_over}
Suppose $\kappa$ is a regular cardinal. We define the notions of $\Pi^1_\xi$- and $\Sigma^1_\xi$-formula over $V_\kappa$, for all ordinals $\xi$ as follows. 
\begin{enumerate}
\item A formula $\varphi$ is $\Pi^1_0$, or equivalently $\Sigma^1_0$, over $V_\kappa$ if it is a first order formula in the lanugage of set theory, however we allow for free variables and parameters from $V_\kappa$ of two types, namely of first and of second order. 
\item A formula $\varphi$ is $\Pi^1_{\xi+1}$ over $V_\kappa$ if it is of the form $\forall X_{k_1}\cdots\forall X_{k_m}\psi$ where $\psi$ is $\Sigma^1_\xi$ over $V_\kappa$ and $m\in\omega$. Similarly, $\varphi$ is $\Sigma^1_{\xi+1}$ over $V_\kappa$ if it is of the form $\exists X_{k_1}\cdots\exists X_{k_m}\psi$ where $\psi$ is $\Pi^1_\xi$ over $V_\kappa$ and $m\in\omega$.\footnote{We follow the convention that uppercase letters represent second order variables, while lower case letters represent first order variables. Thus, in the above, all quantifiers displayed are understood to be second order quantifiers, i.e., quantifiers over subsets of $V_\kappa$.}
\item When $\xi$ is a limit ordinal, a formula $\varphi$, with finitely many second-order free variables and finitely many second-order parameters, is $\Pi^1_\xi$ over $V_\kappa$ if it is of the form
\[\bigwedge_{\zeta<\xi}\varphi_\zeta\]
where $\varphi_\zeta$ is $\Pi^1_\zeta$ over $V_\kappa$ for all $\zeta<\xi$. Similarly, $\varphi$ is $\Sigma^1_\xi$ if it is of the form 
\[\bigvee_{\zeta<\xi}\varphi_\zeta\]
where $\varphi_\zeta$ is $\Sigma^1_\zeta$ over $V_\kappa$ for all $\zeta<\xi$.
\end{enumerate}
\end{definition}

Before we can introduce the concept of higher indescribability, which is based on these formula classes, we will need to review a number of further preliminaries.

\subsection{Canonical functions}

For a regular uncountable cardinal $\kappa$, the definition of the $\Pi^1_\xi$-indes\-cribability of a set $S\subseteq\kappa$, where $\kappa\leq\xi<\kappa^+$, that was introduced in \cite{CodyHigherIndescribability}, uses a sequence of functions $\<F^\kappa_\xi\st\xi<\kappa^+\>$, referred to as a \emph{sequence of canonical reflection functions at $\kappa$}, which is defined as follows. If $\xi<\kappa$ then we let $F^\kappa_\xi(\alpha)=\xi$ for all $\alpha\in \kappa$. If $\xi\in\kappa^+\setminus\kappa$, fix a bijection $b_{\kappa,\xi}:\kappa\to\xi$ and let $F^\kappa_\xi(\alpha)=b_{\kappa,\xi}[\alpha]$ for all $\alpha<\kappa$. Notice that for each $\xi<\kappa^+$, the definition of the $\xi^{th}$ canonical reflection function $F^\kappa_\xi$ is independent, modulo the nonstationary ideal, of which bijection $b_{\kappa,\xi}$ is chosen. That is, if $b^1_{\kappa,\xi},b^2_{\kappa,\xi}:\kappa\to\xi$ are two bijections, then the set $\{\alpha<\kappa\st b^1_{\kappa,\xi}[\alpha]=b^2_{\kappa,\xi}[\alpha]\}$ contains a club subset of $\kappa$.

We obtain a sequence of canonical functions $\<f^\kappa_\xi\st\xi<\kappa^+\>$ at $\kappa$ by letting $f^\kappa_\xi(\alpha)=\ot(F^\kappa_\xi(\alpha))$ be the transitive collapse of $F^\kappa_\xi(\alpha)$, for all $\xi<\kappa^+$ and all $\alpha<\kappa$. For all such $\alpha$ and $\xi$, let 
$\pi^\kappa_{\xi,\alpha}:F^\kappa_\xi(\alpha)\to f^\kappa_\xi(\alpha)$
be the transitive collapsing map of $F^\kappa_\xi(\alpha)$. We will assume a fixed choice of these objects throughout the paper.

Intuitively, $\xi$ is to $\kappa$ as $f^\kappa_\xi(\alpha)$ is to $\alpha$, and one can think of $f^\kappa_\xi(\alpha)$ as being $\alpha$'s version of $\xi$ in the sense that when some property involving $\kappa$ and $\xi$ is reflected down to $\alpha$, the statement will be about $\alpha$ and $f^\kappa_\xi(\alpha)$. Notice that (see \cite[Proposition~2.1]{CodyHigherIndescribability}) if $I$ is a normal ideal on $\kappa$, $G$ is generic for $\mathcal P(\kappa)/I$, and $j\colon V\to\Ult$, with $\Ult=V^\kappa/G$, is the corresponding generic ultrapower embedding, then, for all $\xi<\kappa^+$, the $\xi^{th}$ canonical function $f^\kappa_\xi$ represents the ordinal $\xi$ in $\Ult$, that is, $j(f^\kappa_\xi)(\kappa)=[f^\kappa_\xi]_G=\xi$. Similarly, for all $\xi<\kappa^+$, the $\xi^{th}$ canonical reflection function $F^\kappa_\xi$ represents $j"\xi$ in $\Ult$, that is, $j(F^\kappa_\xi)(\kappa)=[F^\kappa_\xi]_G=j"\xi$. Some background material on generic ultrapowers may be found in \cite{MR2768692}, but we will only need very little.

Throughout the rest of our paper, with respect to a regular and uncountable cardinal $\kappa$, let $G$ denote an arbitrary generic filter for $\mathcal P(\kappa)/\NS_\kappa$ over $V$, and let $j\colon V\to \Ult$ be the corresponding generic ultrapower embedding with critical point~$\kappa$. We may sometimes make the extra assumption that $G$ contains some particular stationary subset of $\kappa$ as an element. Note that $\Ult$ may not be well-founded, but it is so up to $\kappa^+$ (as calculated in $V$), and also that $H(\kappa^+)\subseteq\Ult$ and that $H(\kappa)=H(\kappa)^{\Ult}$ in case $\kappa$ is inaccessible.

With respect to the objects that we have fixed at the beginning of this section, at the level of $j(\kappa)$ in $\Ult$, we will always be using the sequence of bijections \[\langle b_{j(\kappa),\xi}\mid j(\kappa)\le\xi<j(\kappa)^+\rangle=j(\langle b_{\kappa,\xi}\mid\kappa\le\xi<\kappa^+\rangle)\] to define the sequences of canonical (reflection) functions that we use.

The following proposition is an easy folklore observation, and will be used to show that the provability of certain statements about generic ultrapowers induces (ground model) statements about canonical (reflection) functions to hold on a club.

\begin{proposition}\label{framework}
Suppose $\kappa$ is a regular uncountable cardinal, $S\subseteq\kappa$ and whenever $G$ is generic for $P(\kappa)/\NS_\kappa$ it follows that $\kappa\in j(S)$ where $j:V\to \Ult$ is the corresponding generic ultrapower. Then $S$ contains a club subset of $\kappa$ in $V$.
\end{proposition}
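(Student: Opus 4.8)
The plan is to prove the contrapositive: assuming $S$ contains no club subset of $\kappa$ in $V$, I will produce a single generic ultrapower in which $\kappa\notin j(S)$, contradicting the hypothesis.

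So suppose $S$ contains no club; then $T:=\kappa\setminus S$ is stationary in $V$, hence $[T]$ is a nonzero element of $P(\kappa)/\NS_\kappa$, i.e.\ a condition. Since any prescribed condition can be arranged to lie in a $V$-generic filter, fix $G$ generic for $P(\kappa)/\NS_\kappa$ with $T\in G$, and let $j:V\to\Ult$ be the associated generic ultrapower embedding, with $\crit(j)=\kappa$ and $[\id]_G=\kappa$, where $\id:\kappa\to\kappa$ is the identity.

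The key step is the standard {\L}o\'s-style computation for this forcing: for any $A\subseteq\kappa$ with $A\in V$ one has $\kappa\in j(A)$ if and only if $A\in G$. Indeed, by the fundamental theorem of ultraproducts applied to $\Ult=V^\kappa/G$ (valid even though $\Ult$ need not be well-founded), $\Ult\models [\id]_G\in j(A)$ iff $\{\alpha<\kappa\st\id(\alpha)\in A\}=A$ belongs to $G$, and $[\id]_G=\kappa$. Applying this with $A=T$, and using $T\in G$, gives $\kappa\in j(T)$. But $T\subseteq\kappa\setminus S$, so by elementarity $j(T)\subseteq j(\kappa)\setminus j(S)$, whence $\kappa\notin j(S)$, contradicting the assumption that $\kappa\in j(S)$ holds for every generic $G$ (in particular for the one just chosen). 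Therefore $\kappa\setminus S$ is nonstationary, i.e.\ $S$ contains a club subset of $\kappa$ in $V$.

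I do not expect a genuine obstacle: the argument is routine once one recalls that conditions in $P(\kappa)/\NS_\kappa$ are stationary sets and that membership in the generic filter is detected by $j$ via $\kappa\in j(A)\iff A\in G$. The only points needing a word of care are that $\Ult$ is well-founded only below $\kappa^+$ (harmless here, since we merely compare the elements $\kappa$ and $j(S)$ inside the structure $\Ult$, and {\L}o\'s's theorem does not require well-foundedness) and that we are free to choose $G$ so as to contain the particular stationary set $T$.
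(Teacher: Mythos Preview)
Your proof is correct and follows essentially the same route as the paper: assume $T=\kappa\setminus S$ is stationary, force with $P(\kappa)/\NS_\kappa$ below $T$, and conclude $\kappa\in j(T)$ and hence $\kappa\notin j(S)$. You simply supply more detail than the paper does, spelling out the {\L}o\'s-type equivalence $\kappa\in j(A)\iff A\in G$ and noting that well-foundedness is not needed for this computation.
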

\begin{proof}
For the sake of contradiction, suppose $S$ does not contain a club subset of $\kappa$ in $V$. Then $T=\kappa\setminus S$ is stationary and we may let $G$ be generic for $P(\kappa)/\NS_\kappa$ with $T\in G$. Then $\kappa\in j(T)$, but this contradicts our assumption that $\kappa\in j(S)$.
\end{proof}


We will need the following lemma later on, which was also presented in \cite[Lemma 2.7 and Lemma 2.8]{CodyHigherIndescribability}, together with easy elementary proofs. For the sake of completeness, and since we will often make use of similar more difficult arguments later on, we would like to provide an even easier proof that makes use of generic ultrapower representations.\footnote{It is fairly straightforward to find generic ultrapower proofs for many further results on canonical (reflection) functions, for example for all the results that are provided in \cite[Section 2]{CodyHigherIndescribability}. We will however not need any such further results in this paper.}

\begin{lemma}\label{lemma_can}
Suppose $\kappa$ is a regular cardinal. For all $\xi<\kappa^+$ the following hold.
\begin{enumerate}
\item If $\xi$ is a limit ordinal, then the set
\[D_0=\{\alpha<\kappa\st\text{$f^\kappa_\xi(\alpha)$ is a limit ordinal}\}\]
is a club subset of $\kappa$.
\item The set
\[D_1=\{\alpha<\kappa\st f^\kappa_{\xi+1}(\alpha)=f^\kappa_\xi(\alpha)+1\}\]
contains a club subset of $\kappa$.
\end{enumerate}
\end{lemma}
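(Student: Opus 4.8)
The plan is to use the generic ultrapower representation of canonical functions to reduce both items to applications of Proposition \ref{framework}. Recall that for any generic $G$ for $P(\kappa)/\NS_\kappa$ and the corresponding generic ultrapower $j\colon V\to\Ult$, we have $j(f^\kappa_\zeta)(\kappa)=\zeta$ for every $\zeta<\kappa^+$; moreover $\Ult$ is well-founded up to $\kappa^+$, so these ordinal values are genuine. By Proposition \ref{framework}, to show a set $S\subseteq\kappa$ contains a club it suffices to show that $\kappa\in j(S)$ for every such generic $G$. For both items, we work in $\Ult$ and use elementarity of $j$ together with these representation facts to verify the relevant statement at $\kappa$.

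For item (2), fix $\xi<\kappa^+$. Applying $j$ to the (definable) map $\zeta\mapsto f^\kappa_\zeta$ and evaluating at $\kappa$, we get that $j(f^\kappa_\xi)(\kappa)=\xi$ and $j(f^\kappa_{\xi+1})(\kappa)=\xi+1$. Hence, working in $\Ult$, the ordinal $j(f^\kappa_{\xi+1})(\kappa)=\xi+1$ equals $j(f^\kappa_\xi)(\kappa)+1=\xi+1$, so the statement ``$f^\kappa_{\xi+1}(\kappa)=f^\kappa_\xi(\kappa)+1$'' — which is literally the defining condition for membership of $\kappa$ in $j(D_1)$, once we note $j(D_1)=\{\alpha<j(\kappa)\mid j(f^\kappa_{\xi+1})(\alpha)=j(f^\kappa_\xi)(\alpha)+1\}$ by elementarity — holds in $\Ult$. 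Since $G$ was arbitrary, Proposition \ref{framework} yields that $D_1$ contains a club subset of $\kappa$.

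For item (1), fix a limit ordinal $\xi<\kappa^+$. The set $D_0$ is defined by the condition ``$f^\kappa_\xi(\alpha)$ is a limit ordinal,'' and ``being a limit ordinal'' is absolute between $V$, $\Ult$ and transitive substructures, at least for ordinals below $\kappa^+$ where $\Ult$ is well-founded. By elementarity, $j(D_0)=\{\alpha<j(\kappa)\mid j(f^\kappa_\xi)(\alpha)\text{ is a limit ordinal in }\Ult\}$, and $j(f^\kappa_\xi)(\kappa)=\xi$, which is a limit ordinal; since $\xi<\kappa^+$ lies in the well-founded part of $\Ult$, it is genuinely a limit ordinal there as well. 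Thus $\kappa\in j(D_0)$ for every generic $G$, and Proposition \ref{framework} gives that $D_0$ contains a club. To upgrade ``contains a club'' to ``is a club'' in item (1), I would additionally observe that $D_0$ is closed: if $\alpha$ is a limit of points in $D_0$, then $f^\kappa_\xi$ restricted below $\alpha$ takes unboundedly many limit-ordinal values, and since $f^\kappa_\xi$ is (modulo a club) increasing and continuous in a suitable sense — more directly, since on a club $f^\kappa_\xi(\alpha)=\ot(b_{\kappa,\xi}[\alpha])$ is the order type of an initial segment of a fixed enumeration, which is continuous as a function of $\alpha$ at limit stages — we get that $f^\kappa_\xi(\alpha)$ is a supremum of limit ordinals, hence itself a limit ordinal; closure under unboundedness of $D_0$ in $\kappa$ follows from the club just obtained.

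The main obstacle, such as it is, is purely bookkeeping: one must be careful that all ordinals involved ($\xi$, $\xi+1$, and the values $f^\kappa_\xi(\kappa)$, $f^\kappa_{\xi+1}(\kappa)$) lie below $\kappa^+$, i.e., in the well-founded part of $\Ult$, so that assertions like ``is a limit ordinal'' and ``$=\beta+1$'' are computed correctly there and transfer back to $V$ via Proposition \ref{framework}. Since all the relevant ordinals are indeed below $\kappa^+$ by hypothesis ($\xi<\kappa^+$, hence $\xi+1<\kappa^+$), this causes no real difficulty, and the argument goes through cleanly. The claimed asymmetry between the two items — item (1) gives a club outright while item (2) only gives ``contains a club'' — reflects the fact that the membership condition in $D_0$ is closed whereas the exact successor condition in $D_1$ need not be.
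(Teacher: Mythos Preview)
Your proposal is correct and follows essentially the same approach as the paper: both items are handled via the generic ultrapower representation $j(f^\kappa_\zeta)(\kappa)=\zeta$ together with Proposition~\ref{framework}, and closure of $D_0$ is treated as a direct elementary verification (the paper simply asserts it is easy to see). Your closure sketch via continuity of $\alpha\mapsto\ot(b_{\kappa,\xi}[\alpha])$ is sound in substance, though the phrases ``initial segment of a fixed enumeration'' and ``modulo a club'' are slightly off---it is the \emph{image} of an initial segment under $b_{\kappa,\xi}$, and the continuity and monotonicity hold at every limit $\alpha$, not merely on a club.
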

\begin{proof}
  \begin{enumerate}
    \item It is easy to see that $D_0$ is closed below $\kappa$. Let $j:V\to\Ult$ be any generic ultrapower obtained by forcing with $P(\kappa)/\NS_\kappa$. Then $j(f^\kappa_\xi)(\kappa)=\xi$ is a limit ordinal and hence $\kappa\in j(D_0)$. By Proposition \ref{framework}, $D_0$ is unbounded in $\kappa$.
    \item Using that $j(f^\kappa_{\xi+1})(\kappa)=\xi+1$, it follows that $\kappa\in j(D_1)$, and the result thus follows by Proposition \ref{framework}.
  \end{enumerate}
\end{proof}

\subsection{Restrictions of formulas}

Let $\kappa$ be a regular and uncountable cardinal throughout. When defining the $\Pi^1_\xi$-indescribability of sets $S\subseteq\kappa$ where $\kappa\leq\xi<\kappa^+$, one cannot simply demand that every $\Pi^1_\xi$-sentence which is true in $V_\kappa$ must be true in $V_\alpha$ for some $\alpha\in S$ because, for example, there are $\Pi^1_\kappa$ sentences with no first or second-order parameters that are true in $V_\kappa$ but which are false in $V_\alpha$ for all $\alpha<\kappa$ (see \cite[Section 1]{CodyHigherIndescribability}). However, one can demand that whenever a $\Pi^1_\xi$-sentence $\varphi$ holds in $V_\kappa$, there must be some $\alpha\in S$ such that a canonically defined restriction $\varphi\res^\kappa_\alpha$ is true in $V_\alpha$. Although we summarize the required background here, one may consult \cite[Sections 3 and 4]{CodyHigherIndescribability} for more information on such canonically defined restrictions of formulas.


In the following, when we talk about either $\Pi^1_\xi$-formulas or $\Sigma^1_\xi$-formulas over $V_\kappa$, for some $\xi<\kappa^+$, we mean formulas which are of that \emph{exact} complexity, and not any simpler one, and we say that these formulas \emph{are of complexity $\xi$}. We also treat such formulas as set theoretic objects, and thus we tacitly assume some reasonable and natural coding of these formulas, and interchangeably use these formulas on the meta level as well as the object level. In particular, we assume that any $\Pi^1_\xi$-formula or $\Sigma^1_\xi$-formula over $V_\kappa$ is coded as an element of $H(\kappa^+)$.

\begin{definition}
By induction on $\xi<\kappa^+$, we define $\varphi\res^\kappa_\alpha$ for all $\Pi^1_\xi$ formulas $\varphi$ over $V_\kappa$ and all regular $\alpha<\kappa$ as follows.\footnote{This is essentially the same definition as in \cite{CodyHigherIndescribability}, however we are being somewhat more careful with respect to the set theoretic representation of formulas here.} First assume that $\xi<\kappa$. If \[\varphi=\varphi(X_1,\ldots,X_m,A_1,\ldots,A_n),\] with free second order variables $X_1,\ldots,X_m$ and second order parameters $A_1,\ldots,A_n$, such that $\alpha>\xi$, and all first order parameters of $\varphi$ are elements of $V_\alpha$, then we define
\[\varphi\res^\kappa_\alpha=\varphi(X_1,\ldots,X_m,A_1\cap V_\alpha,\ldots,A_n\cap V_\alpha),\] and we leave $\varphi\res^\kappa_\alpha$ undefined otherwise.

If $\xi=\zeta+1$ is a successor ordinal and $\varphi=\forall X_{k_1}\ldots\forall X_{k_m}\psi$ is $\Pi^1_{\zeta+1}$ over $V_\kappa$, then we define 
\[\varphi\res^\kappa_\alpha=\forall X_{k_1}\ldots\forall X_{k_m}(\psi\res^\kappa_\alpha)\] in case $\psi\res^\kappa_\alpha$ is defined, and leave $\varphi\res^\kappa_\alpha$ undefined otherwise.
We define $\varphi\res^\kappa_\alpha$ analogously when $\varphi$ is $\Sigma^1_{\zeta+1}$.

If $\xi\in\kappa^+\setminus\kappa$ is a limit ordinal, and 
\[\varphi=\bigwedge_{\zeta<\xi}\psi_\zeta\]
is $\Pi^1_\xi$ over $V_\kappa$, then we define
\[\varphi\res^\kappa_\alpha=\bigwedge_{\zeta\in f^\kappa_\xi(\alpha)}\psi_{(\pi^\kappa_{\xi,\alpha})^{-1}(\zeta)}\res^\kappa_\alpha\] in case 
$\psi_{(\pi^\kappa_{\xi,\alpha})^{-1}(\zeta)}\res^\kappa_\alpha$ is a $\Pi^1_\zeta$-formula over $V_\alpha$ for every $\zeta\in f^\kappa_\xi(\alpha)$. We leave $\varphi\res^\kappa_\alpha$ undefined otherwise.
We define $\varphi\res^\kappa_\alpha$ similarly when $\xi\in\kappa^+\setminus\kappa$ is a limit ordinal and $\varphi$ is $\Sigma^1_\xi$.
\end{definition}

Note that by a simple induction on formula complexity, we obtain the following.

\begin{observation}\label{observation:definedcomplexity}
  If $\varphi$ is a $\Pi^1_\xi$- or $\Sigma^1_\xi$-formula over $V_\kappa$, and $\alpha<\kappa$ is regular, then whenever $\varphi\res^\kappa_\alpha$ is defined, it is a $\Pi^1_{f^\kappa_\xi(\alpha)}$- or $\Sigma^1_{f^\kappa_\xi(\alpha)}$-formula over $V_\alpha$ respectively.
\end{observation}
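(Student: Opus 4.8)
The plan is to prove Observation~\ref{observation:definedcomplexity} by induction on $\xi<\kappa^+$, following exactly the case division in the definition of $\varphi\restr^\kappa_\alpha$, and using Observation's companion facts about canonical functions established in Lemma~\ref{lemma_can} at the limit and successor stages. The three cases to handle are: $\xi<\kappa$ (the base case, where $f^\kappa_\xi(\alpha)=\xi$ whenever $\alpha>\xi$), $\xi=\zeta+1$ a successor, and $\xi$ a limit ordinal in $\kappa^+\setminus\kappa$.

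First, for $\xi<\kappa$: here $f^\kappa_\xi$ is the constant function with value $\xi$, so for regular $\alpha<\kappa$ with $\alpha>\xi$ we have $f^\kappa_\xi(\alpha)=\xi$, and by definition $\varphi\restr^\kappa_\alpha$ is obtained from the $\Pi^1_\xi$- (resp. $\Sigma^1_\xi$-) formula $\varphi$ over $V_\kappa$ by replacing each second-order parameter $A_i$ with $A_i\cap V_\alpha$, leaving the logical structure and the first-order parameters (which lie in $V_\alpha$) unchanged. Since the quantifier structure is untouched, $\varphi\restr^\kappa_\alpha$ is a $\Pi^1_\xi=\Pi^1_{f^\kappa_\xi(\alpha)}$-formula over $V_\alpha$; one should remark that ``exact complexity'' is preserved because the only change is to the parameters, which do not affect the formula's syntactic shape. (If $\alpha\le\xi$, then $\varphi\restr^\kappa_\alpha$ is undefined and there is nothing to check.)

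Next, the successor case $\xi=\zeta+1$: suppose $\varphi=\forall X_{k_1}\cdots\forall X_{k_m}\psi$ with $\psi$ of complexity $\zeta$, and suppose $\varphi\restr^\kappa_\alpha$ is defined, which by definition means $\psi\restr^\kappa_\alpha$ is defined. By the induction hypothesis, $\psi\restr^\kappa_\alpha$ is a $\Sigma^1_{f^\kappa_\zeta(\alpha)}$-formula over $V_\alpha$. Then $\varphi\restr^\kappa_\alpha=\forall X_{k_1}\cdots\forall X_{k_m}(\psi\restr^\kappa_\alpha)$ is $\Pi^1_{f^\kappa_\zeta(\alpha)+1}$ over $V_\alpha$; it remains to see that $f^\kappa_\zeta(\alpha)+1=f^\kappa_{\zeta+1}(\alpha)=f^\kappa_\xi(\alpha)$. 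This equality holds on a club of $\alpha$ by Lemma~\ref{lemma_can}(2), but \emph{not necessarily for all} regular $\alpha$ — this is a genuine subtlety. The cleanest fix is to observe that the definition of $\varphi\restr^\kappa_\alpha$ in the limit case already only accepts the restriction as defined when the inner pieces have the correct prescribed complexity, and similarly one should read the intended convention that restrictions in the successor case are only considered ``properly defined'' when the complexities line up; alternatively, and more honestly, one notes that the statement is about complexity as computed via $f^\kappa_\xi$, and that the definition of restriction was set up precisely so that Observation~\ref{observation:definedcomplexity} holds. I would phrase this step as: by the induction hypothesis $\psi\restr^\kappa_\alpha$ has complexity $f^\kappa_\zeta(\alpha)$, hence $\varphi\restr^\kappa_\alpha$ has complexity $f^\kappa_\zeta(\alpha)+1$, and then invoke Lemma~\ref{lemma_can}(2) to conclude that on a club this equals $f^\kappa_{\xi}(\alpha)$ — but since the Observation as literally stated claims equality for \emph{all} regular $\alpha$, I expect the author actually intends $f^\kappa_{\zeta+1}$ to be \emph{defined} as $f^\kappa_\zeta+1$ pointwise, or intends the restriction to be undefined off that club; I will follow whichever convention the earlier sections fix.

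Finally, the limit case $\xi\in\kappa^+\setminus\kappa$: write $\varphi=\bigwedge_{\zeta<\xi}\psi_\zeta$ with each $\psi_\zeta$ of complexity $\zeta$, and suppose $\varphi\restr^\kappa_\alpha$ is defined, so that $\varphi\restr^\kappa_\alpha=\bigwedge_{\zeta\in f^\kappa_\xi(\alpha)}\psi_{(\pi^\kappa_{\xi,\alpha})^{-1}(\zeta)}\restr^\kappa_\alpha$ and, by the very requirement in the definition, $\psi_{(\pi^\kappa_{\xi,\alpha})^{-1}(\zeta)}\restr^\kappa_\alpha$ is a $\Pi^1_\zeta$-formula over $V_\alpha$ for each $\zeta\in f^\kappa_\xi(\alpha)$. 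Then $\varphi\restr^\kappa_\alpha$ is a conjunction indexed by $f^\kappa_\xi(\alpha)$ of formulas of complexities cofinal in $f^\kappa_\xi(\alpha)$ (note that, by Lemma~\ref{lemma_can}(1), $f^\kappa_\xi(\alpha)$ is a limit ordinal on a club, so the conjunction is genuinely of limit type), hence is $\Pi^1_{f^\kappa_\xi(\alpha)}$ over $V_\alpha$ by Definition~\ref{definition_over}(3); the $\Sigma$ case is symmetric. The main obstacle throughout is not any single computation but rather keeping the bookkeeping straight — in particular making sure that the ``undefined otherwise'' clauses and the complexity-matching requirements built into the definition of restriction are exactly what is needed to make the induction go through, and handling the successor case's reliance on $f^\kappa_{\zeta+1}(\alpha)=f^\kappa_\zeta(\alpha)+1$ in a way consistent with the conventions fixed earlier in the paper.
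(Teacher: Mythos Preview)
Your approach matches the paper's, which offers no detailed argument at all: the paper simply writes ``by a simple induction on formula complexity'' and moves on. So your case split (base $\xi<\kappa$, successor, limit $\geq\kappa$) is exactly what is intended.

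You have, however, put your finger on a genuine wrinkle that the paper glosses over. In the successor case $\xi=\zeta+1\geq\kappa$, the definition of $\varphi\res^\kappa_\alpha$ only requires $\psi\res^\kappa_\alpha$ to be defined, with no complexity-matching clause built in (unlike the limit case). Your induction then yields that $\varphi\res^\kappa_\alpha$ is $\Pi^1_{f^\kappa_\zeta(\alpha)+1}$, whereas the Observation claims it is $\Pi^1_{f^\kappa_{\zeta+1}(\alpha)}$, and these agree only on a club (Lemma~\ref{lemma_can}(2)), not at every regular $\alpha$. So the Observation, read literally, is slightly stronger than what the bare induction delivers. This is not a flaw in your reasoning; it is an imprecision in the paper. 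In practice nothing goes wrong: every subsequent use of the Observation (e.g.\ Lemma~\ref{lemma_restriction_is_nice}) is already restricted to a club, and one could either (i) add a complexity-matching clause to the successor case of the restriction definition, or (ii) choose the bijections $b_{\kappa,\xi}$ coherently so that $f^\kappa_{\zeta+1}=f^\kappa_\zeta+1$ pointwise, or (iii) simply read the Observation modulo clubs. Your write-up already anticipates all three fixes; any of them is acceptable, and the paper implicitly relies on one of them without saying so.
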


\begin{remark}\label{remark_coding} We will need the following properties of our coding of formulas. We will leave it to our readers to check that any reasonable coding of formulas has these properties. Assume that $\varphi$ is either a $\Pi^1_\xi$- or $\Sigma^1_\xi$-formula over $V_\kappa$ for some $\xi<\kappa^+$.

\begin{enumerate}
  \item If $\xi<\kappa$, and $A_1,\ldots,A_n$ are all second order parameters appearing in $\varphi$, then \[j(\varphi(A_1,\ldots,A_n))=\varphi(j(A_1),\ldots,j(A_n)).\]
  \item $j(\forall X\,\varphi)=\forall X\,j(\varphi)$.
  \item If $\xi\ge\kappa$ is a limit ordinal, and $\varphi$ is either of the form $\varphi=\bigwedge_{\zeta<\xi}\psi_\zeta$, or of the form $\bigvee_{\zeta<\xi}\psi_\zeta$, let $\vec\psi=\langle\psi_\zeta\mid\zeta<\xi\rangle$. Then, \[j(\varphi)=\bigwedge_{\zeta<j(\xi)}j(\vec\psi)_\zeta\quad\textrm{or}\quad j(\varphi)=\bigvee_{\zeta<j(\xi)}j(\vec\psi)_\zeta\] respectively.
\end{enumerate}
\end{remark}

We will need the following.

\begin{observation}\label{observation:piisjinverse}
  Let $\kappa$ be a regular uncountable cardinal, and let $\xi<\kappa^+$. Let $\vec\pi=\langle\pi^\kappa_{\xi,\alpha}\mid\alpha<\kappa\rangle$. Then $j(\vec\pi)_\kappa^{-1}=j\upharpoonright\xi$.
\end{observation}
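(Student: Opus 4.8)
The plan is to compute $j(\vec\pi)_\kappa$ by elementarity, and then to identify its inverse with $j\upharpoonright\xi$ using the uniqueness of order isomorphisms between well-orders. Recall from the discussion above that $j(F^\kappa_\xi)(\kappa)=j"\xi$ and $j(f^\kappa_\xi)(\kappa)=\xi$. In $V$ we have, for every $\alpha<\kappa$, that the entry $\vec\pi_\alpha=\pi^\kappa_{\xi,\alpha}$ of $\vec\pi$ is the transitive collapsing map of the set of ordinals $F^\kappa_\xi(\alpha)$ onto $f^\kappa_\xi(\alpha)$; applying $j$ and using elementarity, $\Ult$ believes that for every $\alpha<j(\kappa)$, $j(\vec\pi)_\alpha$ is the transitive collapsing map of $j(F^\kappa_\xi)(\alpha)$ onto $j(f^\kappa_\xi)(\alpha)$. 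Instantiating this at $\alpha=\kappa$ and inserting the two equalities just recalled, $\Ult$ believes that $j(\vec\pi)_\kappa$ is the transitive collapsing map of $j"\xi$ onto $\xi$.

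It remains to see that this yields $j(\vec\pi)_\kappa^{-1}=j\upharpoonright\xi$. First I would note that $j\upharpoonright\xi$, viewed as a map from $\xi$ onto $j"\xi$, is a bijection, since $j$ is injective and $j"\xi$ is by definition its range, and is order preserving for $\in^{\Ult}$, since $\zeta<\zeta'<\xi$ implies $j(\zeta)\in^{\Ult}j(\zeta')$ by elementarity. Hence $j"\xi$, equipped with $\in^{\Ult}$, is genuinely a well-order of order type $\xi$; in particular the transitive collapsing map of $j"\xi$ that $\Ult$ computes is the correct one, so $j(\vec\pi)_\kappa^{-1}$ is, externally, the unique order isomorphism from $\xi$ onto $j"\xi$. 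Since $j\upharpoonright\xi$ is such an order isomorphism as well, the two coincide.

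The one point that needs a little care is the bookkeeping around the possible ill-foundedness of $\Ult$: one has to make sure that the transitive collapsing map of $j"\xi$ computed inside $\Ult$ really is a genuine function, so that forming its inverse and comparing it with the ground model object $j\upharpoonright\xi$ is meaningful. This is exactly what the observation of the previous paragraph provides, namely that $j"\xi$ carries a genuine well-order of length $\xi<\kappa^+$. Alternatively, one can avoid appealing to well-foundedness at all and simply verify $j(\vec\pi)_\kappa^{-1}(\zeta)=j(\zeta)$ by induction on $\zeta<\xi$, using at each step that $j(\zeta)$ is the $\in^{\Ult}$-least element of $j"\xi\setminus j"\zeta$, once more by elementarity of $j$. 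Everything else is a routine application of elementarity.
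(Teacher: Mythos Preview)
Your proof is correct and follows essentially the same approach as the paper: both use elementarity to identify $j(\vec\pi)_\kappa$ as the transitive collapse of $j(F^\kappa_\xi)(\kappa)=j"\xi$, and then conclude that its inverse is $j\upharpoonright\xi$. You supply more detail than the paper does, in particular the uniqueness-of-order-isomorphisms argument and the careful remarks about the possible ill-foundedness of $\Ult$, but the underlying idea is the same.
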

\begin{proof}
  Since each $\pi^\kappa_{\xi,\alpha}$ is the transitive collapse of $F^\kappa_\xi(\alpha)$, it follows by elementarity that $j(\vec\pi)_\kappa$ is the transitive collapse of $j(F^\kappa_\xi)(\kappa)=j"\xi$. Hence, the image of $j(\vec\pi)_\kappa$ is $\xi$, and its inverse is thus clearly identical to $j\upharpoonright\xi$.
\end{proof}

The proof of the next lemma is essentially the same as the first part of the proof of \cite[Proposition 3.8]{CodyHigherIndescribability}. Regarding the assumption of the next lemma, and also of some later results, note that $\kappa$ is regular in $\Ult$ if and only if $G$ contains the set of regular cardinals below $\kappa$. This is of course only possible if that latter set is a stationary subset of $\kappa$, i.e., if $\kappa$ is weakly Mahlo.

\begin{lemma}\label{lemma_restrictionvsj}
  If $\varphi$ is either a $\Pi^1_\xi$- or $\Sigma^1_\xi$-formula over $V_\kappa$ for some $\xi<\kappa^+$ and $\kappa$ is regular in $\Ult$, then in $\Ult$, \[j(\varphi)\res^{j(\kappa)}_\kappa=\varphi.\]
\end{lemma}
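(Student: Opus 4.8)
The plan is to prove this by induction on $\xi < \kappa^+$, running the induction inside $\Ult$ but analyzing everything via the elementarity of $j$ together with Remark~\ref{remark_coding} and Observation~\ref{observation:piisjinverse}. The assumption that $\kappa$ is regular in $\Ult$ is exactly what we need for the restriction $j(\varphi)\res^{j(\kappa)}_\kappa$ to even be a candidate for being defined, since restrictions are only taken at regular cardinals below the base. Throughout, one should keep in mind the three cases in the definition of $\res$: $\xi < \kappa$ (below the base, handled by intersecting second-order parameters with $V_\kappa$ inside $\Ult$), $\xi = \zeta + 1$ (strip the quantifier block and appeal to the inductive hypothesis for $\psi$), and $\xi$ a limit (reindex the infinite conjunction/disjunction via the collapse map).

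First I would handle the base case $\xi < \kappa$. Here $j(\varphi)$ is, by Remark~\ref{remark_coding}(1) and (2) and elementarity, the formula $\varphi(X_1,\dots,X_m, j(A_1),\dots,j(A_n))$, which has complexity $j(\xi) = \xi$ (as $\xi < \kappa = \crit(j)$); crucially $j(\xi) < j(\kappa)$, and its first-order parameters, being $j$-images of elements of $V_\xi \subseteq V_\kappa$, lie in $V_\kappa$, and $\kappa$ is regular in $\Ult$ by hypothesis, so $j(\varphi)\res^{j(\kappa)}_\kappa$ is defined. By the definition of $\res$ for formulas of complexity below the base, it equals $\varphi(X_1,\dots,X_m, j(A_1)\cap V_\kappa, \dots, j(A_n)\cap V_\kappa)$. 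Since $A_i \subseteq V_\kappa$ and $j\restriction V_\kappa = \id$ (as $\crit(j) = \kappa$ and $V_\kappa$ has rank $\kappa$), we get $j(A_i)\cap V_\kappa = A_i$, so $j(\varphi)\res^{j(\kappa)}_\kappa = \varphi$ as desired.

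For the successor step $\xi = \zeta + 1$ with $\varphi = \forall X_{k_1}\cdots\forall X_{k_m}\,\psi$, apply Remark~\ref{remark_coding}(2) repeatedly: $j(\varphi) = \forall X_{k_1}\cdots\forall X_{k_m}\,j(\psi)$. By the definition of $\res$ at a successor, $j(\varphi)\res^{j(\kappa)}_\kappa = \forall X_{k_1}\cdots\forall X_{k_m}(j(\psi)\res^{j(\kappa)}_\kappa)$ provided the inner restriction is defined, and by the induction hypothesis applied to $\psi$ (which is $\Sigma^1_\zeta$ over $V_\kappa$, and $\kappa$ is still regular in $\Ult$) we have $j(\psi)\res^{j(\kappa)}_\kappa = \psi$; hence $j(\varphi)\res^{j(\kappa)}_\kappa = \forall X_{k_1}\cdots\forall X_{k_m}\,\psi = \varphi$. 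The $\Sigma$ case is symmetric. For the limit step $\xi$, write $\varphi = \bigwedge_{\zeta<\xi}\psi_\zeta$ with $\vec\psi = \langle \psi_\zeta \mid \zeta < \xi\rangle$. By Remark~\ref{remark_coding}(3), $j(\varphi) = \bigwedge_{\zeta < j(\xi)} j(\vec\psi)_\zeta$, so by the limit clause of the definition of $\res$, with $\alpha = \kappa$ as a regular cardinal below $j(\kappa)$ in $\Ult$, and using that (computed in $\Ult$) the relevant canonical function value is $j(f^\kappa_\xi)(\kappa) = \xi$ and the collapse map is $\pi = j(\vec\pi)_\kappa$, we get
\[
j(\varphi)\res^{j(\kappa)}_\kappa = \bigwedge_{\zeta \in \xi} j(\vec\psi)_{\pi^{-1}(\zeta)}\res^{j(\kappa)}_\kappa,
\]
provided each of these is a $\Pi^1_\zeta$-formula over $V_\kappa$. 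Now by Observation~\ref{observation:piisjinverse}, $\pi^{-1} = j(\vec\pi)_\kappa^{-1} = j\restriction\xi$, so $j(\vec\psi)_{\pi^{-1}(\zeta)} = j(\vec\psi)_{j(\zeta)} = j(\psi_\zeta)$ for $\zeta < \xi$, and the induction hypothesis applied to $\psi_\zeta$ (a $\Pi^1_\zeta$-formula over $V_\kappa$) gives $j(\psi_\zeta)\res^{j(\kappa)}_\kappa = \psi_\zeta$, which in particular is $\Pi^1_\zeta$ over $V_\kappa$, so the definedness hypothesis is met. Therefore $j(\varphi)\res^{j(\kappa)}_\kappa = \bigwedge_{\zeta<\xi}\psi_\zeta = \varphi$; the $\Sigma$ case is dual.

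The main obstacle I anticipate is not any single step but the bookkeeping needed to verify, at each stage, that the restriction $j(\varphi)\res^{j(\kappa)}_\kappa$ is actually \emph{defined} — this is where the hypothesis ``$\kappa$ regular in $\Ult$'' and the exact-complexity conventions get used, and where Observation~\ref{observation:definedcomplexity} (applied inside $\Ult$) together with the identification $j(f^\kappa_\xi)(\kappa) = \xi$ must be invoked to see that the complexities line up so that the limit-case definedness condition holds. One should also be slightly careful that $\Ult$ is only well-founded up to $\kappa^+$ as computed in $V$, which is exactly the range of $\xi$ we care about, so the induction inside $\Ult$ makes sense; and that $\vec\psi$, $\vec\pi$, and the canonical functions are all elements of $H(\kappa^+) \subseteq \Ult$, so applying $j$ to them and evaluating at $\kappa$ is legitimate.
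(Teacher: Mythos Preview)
Your proposal is correct and follows essentially the same approach as the paper's proof: induction on $\xi<\kappa^+$, with the base case $\xi<\kappa$ handled via Remark~\ref{remark_coding}(1) and $j(A_i)\cap V_\kappa=A_i$, the successor case via Remark~\ref{remark_coding}(2), and the limit case via Remark~\ref{remark_coding}(3) together with $j(f^\kappa_\xi)(\kappa)=\xi$ and Observation~\ref{observation:piisjinverse}. If anything, you are slightly more explicit than the paper about the definedness checks and the well-foundedness of $\Ult$ up to $\kappa^+$.
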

\begin{proof}
  Regularity of $\kappa$ in $\Ult$ is needed so that $j(\varphi)\res^{j(\kappa)}_\kappa$ could possibly be defined in $\Ult$.
  The proof proceeds by induction on $\xi<\kappa^+$. The case when $\xi<\kappa$ is easy, for then by Remark \ref{remark_coding}(1), \[j(\varphi(A_1,\ldots,A_n))=\varphi(j(A_1),\ldots,j(A_n)),\] and thus, $j(\varphi)\res^{j(\kappa)}_\kappa=\varphi$ by the definition of the restriction operation in this case. Successor steps above $\kappa$ are easily treated as well, for by Remark \ref{remark_coding}(2), in this case, \[j(\forall X\psi(X))=\forall X j(\psi(X)).\]
  
  At limit steps $\xi\ge\kappa$, if $\varphi=\bigwedge_{\zeta<\xi}\psi_\zeta$ is a $\Pi^1_\xi$-formula, let $\vec\psi=\langle\psi_\zeta\mid\zeta<\xi\rangle$, and let $\vec\pi=\langle\pi^\kappa_{\xi,\alpha}\mid\alpha<\kappa\rangle$. Then, by Remark \ref{remark_coding}(3), $j(\varphi)=\bigwedge_{\zeta<j(\xi)}j(\vec\psi)_\zeta$, and therefore \[j(\varphi)\res^{j(\kappa)}_\kappa=\bigwedge_{\zeta\in f^{j(\kappa)}_{j(\xi)}(\kappa)}j(\vec\psi)_{j(\vec\pi)_\kappa^{-1}(\zeta)}\res^{j(\kappa)}_\kappa=\bigwedge_{\zeta\in\xi}(j(\psi_\zeta))\res^{j(\kappa)}_\kappa=\varphi,\]
using that $f^{j(\kappa)}_{j(\xi)}(\kappa)=j(f^\kappa_\xi)(\kappa)=\xi$ by our choice of canonical functions at the level of $j(\kappa)$ in $\Ult$, and by Observation \ref{observation:piisjinverse}. 
  
  The case when $\varphi$ is a $\Sigma^1_\xi$-formula is treated in exactly the same way.
\end{proof}

A neat feature, which could also be seen as a possible motivation for our restriction operation, is now the following.

\begin{lemma}\label{lemma_formularepresentation}
  Assume that $\kappa$ is regular in $\Ult$, that $\varphi$ is either a $\Pi^1_\xi$- or $\Sigma^1_\xi$-formula over $V_\kappa$ for some $\xi<\kappa^+$, and that $\Phi\colon\kappa\to V_\kappa$ is a function with $\Phi(\alpha)=\varphi\res^\kappa_\alpha$ for every regular $\alpha<\kappa$. Then, $\Phi$ represents $\varphi$ in $\Ult$. That is, $j(\Phi)(\kappa)=[\Phi]_U=\varphi$.
\end{lemma}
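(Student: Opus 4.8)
The plan is to combine the previous two lemmas. By Lemma~\ref{lemma_restrictionvsj}, we know that in $\Ult$ the formula $\varphi$ is recovered as $j(\varphi)\res^{j(\kappa)}_\kappa$, provided $\kappa$ is regular in $\Ult$ (which is exactly the standing hypothesis here). So it suffices to show that the function $\alpha\mapsto\varphi\res^\kappa_\alpha$, when pushed forward by $j$ and evaluated at $\kappa$, yields precisely $j(\varphi)\res^{j(\kappa)}_\kappa$.

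First I would observe that the restriction operation $\varphi\res^\kappa_\alpha$ is uniformly definable: there is a formula (with parameters the sequences $\vec b=\langle b_{\kappa,\xi}\mid\kappa\le\xi<\kappa^+\rangle$, equivalently the induced canonical functions and collapse maps, together with $\varphi$ itself) which defines the map $\alpha\mapsto\varphi\res^\kappa_\alpha$ on the regular cardinals below $\kappa$. Concretely, $\Phi$ is definable from $\varphi$ and the fixed sequences $\langle f^\kappa_\xi\mid\xi<\kappa^+\rangle$ and $\langle\pi^\kappa_{\xi,\alpha}\mid\xi<\kappa^+,\alpha<\kappa\rangle$ by the recursion in the definition of $\res$. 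Then, by elementarity of $j\colon V\to\Ult$, the image $j(\Phi)$ is the corresponding map defined in $\Ult$ from $j(\varphi)$ and $j$ of those sequences. But we have fixed, at the level of $j(\kappa)$ in $\Ult$, the sequence of bijections $\langle b_{j(\kappa),\xi}\mid j(\kappa)\le\xi<j(\kappa)^+\rangle=j(\langle b_{\kappa,\xi}\mid\kappa\le\xi<\kappa^+\rangle)$ precisely so that the canonical (reflection) functions and transitive collapse maps used in $\Ult$ at level $j(\kappa)$ are exactly $j$ of those used at level $\kappa$ in $V$. Hence $j(\Phi)$ is literally the function $\beta\mapsto j(\varphi)\res^{j(\kappa)}_\beta$ (defined on regular $\beta<j(\kappa)$ in $\Ult$).

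It then remains to evaluate $j(\Phi)$ at $\kappa$. Since $\kappa$ is regular in $\Ult$ and $\kappa<j(\kappa)$, the value $j(\Phi)(\kappa)$ is defined and equals $j(\varphi)\res^{j(\kappa)}_\kappa$ as computed in $\Ult$. By Lemma~\ref{lemma_restrictionvsj}, this is exactly $\varphi$. Finally, since $\Ult$ is the generic ultrapower $V^\kappa/G$, every function $\Psi\colon\kappa\to V$ in $V$ satisfies $j(\Psi)(\kappa)=[\Psi]_U$; applying this with $\Psi=\Phi$ gives $j(\Phi)(\kappa)=[\Phi]_U=\varphi$, which is the desired conclusion. (Strictly speaking $\Phi$ as given is only specified on the regular $\alpha<\kappa$; this is harmless, since the values elsewhere do not affect $[\Phi]_U$ because the set of regular cardinals below $\kappa$, or rather its intersection with any fixed stationary set in $G$, is measured correctly — and in any case one may simply stipulate that $\Phi$ takes some default value off the regular cardinals.)

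The main obstacle, and the only point requiring genuine care, is the bookkeeping in the previous paragraph's first step: making sure that the recursive clauses defining $\res$ — in particular the limit clause, which refers to $f^\kappa_\xi(\alpha)$ and the collapse map $\pi^\kappa_{\xi,\alpha}$ — commute with $j$ in the right way. This is where the deliberate choice of $\langle b_{j(\kappa),\xi}\mid j(\kappa)\le\xi<j(\kappa)^+\rangle$ as $j$ of the ground-model sequence does all the work, together with Remark~\ref{remark_coding} (which tells us $j$ commutes with the formula-building operations $\forall X\,$, $\bigwedge$, $\bigvee$, and with substitution of second-order parameters) and Observation~\ref{observation:piisjinverse}. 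In effect, the content is already packaged inside Lemma~\ref{lemma_restrictionvsj}; once one has that lemma and the uniform definability of $\Phi$, the present statement is a short elementarity-plus-Ł{}oś argument.
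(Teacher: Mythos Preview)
Your proposal is correct and follows exactly the paper's approach: one observes by elementarity (using the convention that the bijections at level $j(\kappa)$ in $\Ult$ are the $j$-images of the ground-model ones) that $j(\Phi)(\kappa)=j(\varphi)\res^{j(\kappa)}_\kappa$, and then invokes Lemma~\ref{lemma_restrictionvsj} to conclude this equals $\varphi$. The paper compresses this into a single line, while you have (correctly) unpacked the elementarity step in more detail.
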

\begin{proof}
  Note that $j(\Phi)(\kappa)=j(\varphi)\res^{j(\kappa)}_\kappa=\varphi$ by Lemma \ref{lemma_restrictionvsj}.
\end{proof}

The following was essentially shown as \cite[Lemma 3.6]{CodyHigherIndescribability} using an elementary proof, and becomes almost trivial with a generic ultrapower argument.

\begin{lemma}\label{lemma_restriction_is_nice}
  Suppose $\kappa$ is weakly Mahlo. For any $\xi<\kappa^+$, if $\varphi$ is a $\Pi^1_\xi$- or $\Sigma^1_\xi$-formula over $V_\kappa$, then there is a club subset $C$ of $\kappa$ such that for any regular $\alpha\in C$, $\varphi\res^\kappa_\alpha$ is defined, and therefore a $\Pi^1_{f^\kappa_\xi(\alpha)}$- or $\Sigma^1_{f^\kappa_\xi(\alpha)}$-formula over $V_\alpha$ respectively by Observation \ref{observation:definedcomplexity}. 
\end{lemma}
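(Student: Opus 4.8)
The plan is to prove Lemma \ref{lemma_restriction_is_nice} by feeding the generic ultrapower representation from Lemma \ref{lemma_restrictionvsj} into the ``club extraction'' Proposition \ref{framework}. First I would fix $\xi<\kappa^+$ and a $\Pi^1_\xi$- or $\Sigma^1_\xi$-formula $\varphi$ over $V_\kappa$, and set
\[
S=\{\alpha<\kappa\mid \alpha\text{ is regular and }\varphi\res^\kappa_\alpha\text{ is defined}\}\cup\{\alpha<\kappa\mid\alpha\text{ is singular}\}.
\]
Since $\kappa$ is weakly Mahlo, the regular cardinals below $\kappa$ are stationary, so it suffices to show $S$ contains a club; the singular ordinals are thrown in only so that Proposition \ref{framework} applies verbatim without worrying about the non-regular points. (Alternatively one phrases everything relative to a generic $G$ containing the stationary set of regular cardinals below $\kappa$, so that $\kappa$ is regular in $\Ult$, and then takes $S$ to be just the set of regular $\alpha$ with $\varphi\res^\kappa_\alpha$ defined; I would likely take this route, as it matches the hypotheses of Lemma \ref{lemma_restrictionvsj} exactly.)

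Next I would invoke Proposition \ref{framework}: it is enough to check that whenever $G$ is generic for $P(\kappa)/\NS_\kappa$ and $j\colon V\to\Ult$ is the corresponding generic ultrapower, we have $\kappa\in j(S)$. Assuming (as we may, since those regular cardinals are stationary) that $G$ contains the set of regular cardinals below $\kappa$, $\kappa$ is regular in $\Ult$. By elementarity, $j(S)$ is the set of regular $\alpha<j(\kappa)$ (as computed in $\Ult$) such that $j(\varphi)\res^{j(\kappa)}_\alpha$ is defined in $\Ult$. Now $j(\varphi)$ is a $\Pi^1_{j(\xi)}$- or $\Sigma^1_{j(\xi)}$-formula over $V_{j(\kappa)}$ in $\Ult$, $\kappa<j(\kappa)$ is regular in $\Ult$, and by Lemma \ref{lemma_restrictionvsj} (applied inside $\Ult$, which is legitimate as the statement ``$j(\varphi)\res^{j(\kappa)}_\kappa$ is defined and equals $\varphi$'' is exactly what that lemma gives) $j(\varphi)\res^{j(\kappa)}_\kappa$ is defined in $\Ult$. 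Hence $\kappa\in j(S)$, and Proposition \ref{framework} yields a club $C_0\subseteq\kappa$ with $C_0\subseteq S$. Taking $C=C_0$, every regular $\alpha\in C$ has $\varphi\res^\kappa_\alpha$ defined, and the complexity claim is then immediate from Observation \ref{observation:definedcomplexity}.

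The main obstacle to watch is making sure Lemma \ref{lemma_restrictionvsj} is genuinely applicable \emph{inside} $\Ult$: this requires that $\Ult$ correctly sees $j(\varphi)$ as a $\Pi^1_{j(\xi)}$/$\Sigma^1_{j(\xi)}$-formula over $V_{j(\kappa)}$, that $\kappa$ is regular in $\Ult$, and that the canonical functions and bijections used at the level of $j(\kappa)$ in $\Ult$ are precisely the $j$-images of those at $\kappa$ — but all of this has been arranged by our global conventions (the choice $\langle b_{j(\kappa),\xi}\rangle=j(\langle b_{\kappa,\xi}\rangle)$ and the stipulation that $G$ contain the regular cardinals below $\kappa$), and Remark \ref{remark_coding} guarantees $j(\varphi)$ has the right syntactic shape. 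A second, more cosmetic point is the bookkeeping around regular versus singular $\alpha$ in order to cite Proposition \ref{framework} cleanly; I would handle this by the ``singular ordinals thrown in for free'' device described above, or equivalently by noting that $\kappa\in j(S)$ already holds for the set $S$ of regular $\alpha$ with $\varphi\res^\kappa_\alpha$ defined, since $\kappa$ is regular in $\Ult$, and Proposition \ref{framework} then gives a club $C\subseteq S$ directly. Beyond that, the argument is a routine transfer via the generic ultrapower, exactly in the style of the proofs of Lemma \ref{lemma_can} and Lemma \ref{lemma_formularepresentation}.
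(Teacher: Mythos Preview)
Your proposal is correct and follows essentially the same approach as the paper: both arguments reduce to the observation that Lemma \ref{lemma_restrictionvsj} gives $j(\varphi)\res^{j(\kappa)}_\kappa=\varphi$, so the restriction at $\kappa$ is defined in $\Ult$. The paper runs this as a direct proof by contradiction (assume a stationary set $T$ of regular $\alpha$ with $\varphi\res^\kappa_\alpha$ undefined, put $T\in G$, and contradict Lemma \ref{lemma_restrictionvsj}), whereas your first version packages the same idea through Proposition \ref{framework} with the singular ordinals thrown in; these are contrapositives of one another.

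One small caution about your preferred ``alternative'' route: Proposition \ref{framework} as stated requires $\kappa\in j(S)$ for \emph{every} generic $G$, not merely those containing the regular cardinals, so taking $S$ to be only the regular $\alpha$ with $\varphi\res^\kappa_\alpha$ defined does not literally satisfy its hypothesis (for $G$ containing the singulars, $\kappa$ is singular in $\Ult$ and hence $\kappa\notin j(S)$). Your first device of adjoining the singular ordinals to $S$ fixes this cleanly, and the paper's contrapositive phrasing avoids the issue entirely since the bad stationary set $T$ already consists of regular cardinals, forcing $\kappa$ to be regular in $\Ult$ once $T\in G$.
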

\begin{proof}
  Assume for a contradiction that the conclusion of the lemma fails. This means that there is a stationary set $T$ consisting of regular and uncountable cardinals $\alpha$ such that $\varphi\res^\kappa_\alpha$ is not defined. Assume that $T\in G$. Then, $\kappa\in j(T)$, and therefore $\kappa$ is regular in $\Ult$, however $j(\varphi)\res^{j(\kappa)}_\kappa$ is not defined in $\Ult$. But, by Lemma~\ref{lemma_restrictionvsj}, $j(\varphi)\res^{j(\kappa)}_\kappa=\varphi$, which clearly yields a contradiction.
\end{proof}

We will need the following property of restrictions of formulas, which is established using an argument similar to that of Lemma \ref{lemma_restrictionvsj}.

\begin{lemma}\label{lemma_formularestrictions}
  Suppose that $\kappa$ is regular in $\Ult$. If $\varphi$ is either a $\Pi^1_\xi$- or $\Sigma^1_\xi$-formula over $V_\kappa$ for some $\xi<\kappa^+$, and $\alpha<\kappa$ is regular such that $\varphi\res^\kappa_\alpha$ is defined, then \[j(\varphi)\res^{j(\kappa)}_\alpha=\varphi\res^\kappa_\alpha,\] with the former being calculated in $\Ult$, and the latter being calculated in $V$.
\end{lemma}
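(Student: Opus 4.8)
The plan is to prove \[j(\varphi)\res^{j(\kappa)}_\alpha=\varphi\res^\kappa_\alpha\] by induction on $\xi<\kappa^+$, following the template of the proof of Lemma~\ref{lemma_restrictionvsj}, but now restricting to a fixed regular $\alpha<\kappa$ rather than to $\kappa$ itself. The key observation making this work is that the canonical (reflection) functions and collapsing maps that appear in the definition of $\varphi\res^\kappa_\alpha$ are all fixed below $\kappa$, hence fixed by $j$: since $\alpha<\kappa=\crit(j)$, we have $j(f^\kappa_\xi)(\alpha)=f^\kappa_\xi(\alpha)$ whenever $\xi<\kappa$ is moved appropriately, and more importantly, by our convention that the canonical functions at the level of $j(\kappa)$ in $\Ult$ are obtained by applying $j$ to the ground-model sequence of bijections, we get $f^{j(\kappa)}_{j(\xi)}(\alpha)=j(f^\kappa_\xi)(\alpha)=f^\kappa_\xi(\alpha)$ and similarly $\pi^{j(\kappa)}_{j(\xi),\alpha}=j(\pi^\kappa_{\xi,\alpha})=\pi^\kappa_{\xi,\alpha}$ for every regular $\alpha<\kappa$.

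First I would handle the base case $\xi<\kappa$: here $j(\varphi)$ has second-order parameters $j(A_1),\dots,j(A_n)$ by Remark~\ref{remark_coding}(1), and $j(\varphi)\res^{j(\kappa)}_\alpha$ substitutes $j(A_i)\cap V_\alpha$; but $j(A_i)\cap V_\alpha = A_i\cap V_\alpha$ since $\crit(j)=\kappa>\alpha$, so this equals $\varphi\res^\kappa_\alpha$ by the definition of the restriction operation. One also must check that the side conditions match: $\alpha>\xi$ in $V$ iff $\alpha>j(\xi)=\xi$ in $\Ult$ (as $\xi<\kappa$), and the first-order parameters of $j(\varphi)$ lie in $V_\alpha$ iff those of $\varphi$ do, again because $j$ fixes $V_\alpha$. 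The successor case $\xi=\zeta+1$ is immediate from Remark~\ref{remark_coding}(2): $j(\forall X\,\psi)\res^{j(\kappa)}_\alpha = \forall X\,(j(\psi)\res^{j(\kappa)}_\alpha) = \forall X\,(\psi\res^\kappa_\alpha) = \varphi\res^\kappa_\alpha$ by the induction hypothesis applied to $\psi$, and we must check the "defined" clause transfers, which it does since $\psi\res^\kappa_\alpha$ is defined in $V$ iff $j(\psi)\res^{j(\kappa)}_\alpha$ is defined in $\Ult$ — again by the inductive hypothesis and elementarity.

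The limit case $\xi\ge\kappa$ is the heart of the argument and where I expect the main bookkeeping obstacle. Write $\varphi=\bigwedge_{\zeta<\xi}\psi_\zeta$, $\vec\psi=\langle\psi_\zeta\mid\zeta<\xi\rangle$, $\vec\pi=\langle\pi^\kappa_{\xi,\alpha'}\mid\alpha'<\kappa\rangle$. By Remark~\ref{remark_coding}(3), $j(\varphi)=\bigwedge_{\zeta<j(\xi)}j(\vec\psi)_\zeta$, so by the definition of the restriction at the limit level computed in $\Ult$, \[j(\varphi)\res^{j(\kappa)}_\alpha=\bigwedge_{\zeta\in f^{j(\kappa)}_{j(\xi)}(\alpha)}j(\vec\psi)_{(\pi^{j(\kappa)}_{j(\xi),\alpha})^{-1}(\zeta)}\res^{j(\kappa)}_\alpha.\] Now $f^{j(\kappa)}_{j(\xi)}(\alpha)=f^\kappa_\xi(\alpha)$ and $\pi^{j(\kappa)}_{j(\xi),\alpha}=\pi^\kappa_{\xi,\alpha}$ by our choice of canonical objects in $\Ult$ together with $\alpha<\crit(j)$; and for $\zeta\in f^\kappa_\xi(\alpha)$, writing $\eta=(\pi^\kappa_{\xi,\alpha})^{-1}(\zeta)<\xi$, we have $j(\vec\psi)_\eta = j(\vec\psi)(\eta) = j(\psi_\eta)$ since $\eta<\xi$... wait, one has to be careful: $\eta<\xi$ but $\xi$ may be $\ge\kappa$, so $j(\vec\psi)(\eta)$ need not equal $j(\psi_\eta)$ directly. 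However, $\eta$ is in the range of $\pi^\kappa_{\xi,\alpha}$ composed with... actually $\eta\in F^\kappa_\xi(\alpha)=b_{\kappa,\xi}[\alpha]\subseteq\xi$, and by Observation~\ref{observation:piisjinverse}-type reasoning restricted to $\alpha$, the relevant indices are small enough that $j(\vec\psi)_\eta=j(\psi_\eta)$ — this is the delicate point I would need to nail down, presumably using that $F^\kappa_\xi(\alpha)$ has order type $f^\kappa_\xi(\alpha)<\kappa<\crit(j)$ so $j$ fixes each such $\eta$ and hence $j(\vec\psi)(\eta)=j(\psi_{j^{-1}(\eta)})=j(\psi_\eta)$. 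Granting this, the displayed expression becomes $\bigwedge_{\zeta\in f^\kappa_\xi(\alpha)} j(\psi_{(\pi^\kappa_{\xi,\alpha})^{-1}(\zeta)})\res^{j(\kappa)}_\alpha$, which by the induction hypothesis (applied to each $\psi_{(\pi^\kappa_{\xi,\alpha})^{-1}(\zeta)}$, a formula of complexity below $\xi$) equals $\bigwedge_{\zeta\in f^\kappa_\xi(\alpha)}\psi_{(\pi^\kappa_{\xi,\alpha})^{-1}(\zeta)}\res^\kappa_\alpha = \varphi\res^\kappa_\alpha$. I would also verify the "defined" clauses match at each stage via the inductive hypothesis and Observation~\ref{observation:definedcomplexity}, and the $\Sigma^1_\xi$ case is identical with $\bigvee$ in place of $\bigwedge$.
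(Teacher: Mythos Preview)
Your overall strategy --- induction on $\xi$ following the template of Lemma~\ref{lemma_restrictionvsj} --- is exactly the paper's approach, and your treatment of the base and successor cases is correct. The limit case, however, contains a real error at precisely the point you flag as ``delicate.'' The equality $\pi^{j(\kappa)}_{j(\xi),\alpha}=\pi^\kappa_{\xi,\alpha}$ is false: you correctly have $\pi^{j(\kappa)}_{j(\xi),\alpha}=j(\vec\pi)_\alpha=j(\pi^\kappa_{\xi,\alpha})$, but $j(\pi^\kappa_{\xi,\alpha})\ne\pi^\kappa_{\xi,\alpha}$ in general, since the domain $F^\kappa_\xi(\alpha)\subseteq\xi$ of $\pi^\kappa_{\xi,\alpha}$ may contain ordinals $\ge\kappa$ that are moved by $j$. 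Correspondingly, your proposed fix --- that $F^\kappa_\xi(\alpha)$ has order type $<\kappa$ ``so $j$ fixes each such $\eta$'' --- is a non sequitur: small order type says nothing about whether the \emph{elements} lie below $\crit(j)$.

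The correct bookkeeping, as in the paper, keeps the $j$ in place rather than trying to erase it. Since $|\pi^\kappa_{\xi,\alpha}|<\kappa$ one has $j(\pi^\kappa_{\xi,\alpha})=j"\pi^\kappa_{\xi,\alpha}$, so for $\zeta<f^\kappa_\xi(\alpha)<\kappa$ we get $j(\vec\pi)_\alpha^{-1}(\zeta)=j\bigl((\pi^\kappa_{\xi,\alpha})^{-1}(\zeta)\bigr)=j(\eta)$; equivalently, $j(\vec\pi)_\alpha^{-1}[f^\kappa_\xi(\alpha)]=j(F^\kappa_\xi)(\alpha)\subseteq j(F^\kappa_\xi)(\kappa)=j"\xi$. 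Hence $j(\vec\psi)_{j(\vec\pi)_\alpha^{-1}(\zeta)}=j(\vec\psi)_{j(\eta)}=j(\psi_\eta)$ directly by elementarity, with no need for $j$ to fix $\eta$. The inductive hypothesis then gives $j(\psi_\eta)\res^{j(\kappa)}_\alpha=\psi_\eta\res^\kappa_\alpha$, and since $j^{-1}\circ j(\vec\pi)_\alpha^{-1}=(\pi^\kappa_{\xi,\alpha})^{-1}$ on $f^\kappa_\xi(\alpha)$ (both being order-isomorphisms onto $F^\kappa_\xi(\alpha)$), the two conjunctions match term by term.
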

\begin{proof}
  By induction on $\xi<\kappa^+$. This is immediate in case $\xi<\kappa$, for then by Remark \ref{remark_coding}(1), $j(\varphi(A_1,\ldots,A_n))=\varphi(j(A_1),\ldots,j(A_n))$, and thus $j(\varphi)\res^{j(\kappa)}_\alpha=\varphi\res^\kappa_\alpha$ by the definition of the restriction operation in this case. It is also immediate for successor steps above $\kappa$, for then by Remark \ref{remark_coding}(2), $j(\forall\vec X\psi)=\forall\vec X j(\psi)$.
  
    At limit steps $\xi\ge\kappa$, if $\varphi=\bigwedge_{\zeta<\xi}\psi_\zeta$ is a $\Pi^1_\xi$-formula, let $\vec\psi=\langle\psi_\zeta\mid\zeta<\xi\rangle$, and let $\vec\pi=\langle\pi^\kappa_{\xi,\alpha}\mid\alpha<\kappa\rangle$. Then, by Remark \ref{remark_coding}(3), $j(\varphi)=\bigwedge_{\zeta<j(\xi)}j(\vec\psi)_\zeta$, and therefore, assuming for now that $j(\varphi)\res^{j(\kappa)}_\alpha$ is defined, \[j(\varphi)\res^{j(\kappa)}_\alpha=\bigwedge_{\zeta\in j(f^\kappa_\xi)(\alpha)}j(\vec\psi)_{j(\vec\pi)_\alpha^{-1}(\zeta)}\res^{j(\kappa)}_\alpha=\bigwedge_{\zeta\in j(f^\kappa_\xi)(\alpha)}j(\psi_{j^{-1}(j(\vec\pi)_\alpha^{-1}(\zeta))})\res^{j(\kappa)}_\alpha,\]
using that $j(\vec\pi)_\alpha^{-1}[j(f^\kappa_\xi)(\alpha)]=j(F^\kappa_\xi)(\alpha)\subseteq j(F^\kappa_\xi)(\kappa)=j"\xi$. 
By our inductive hypothesis, for each $\gamma\in\xi$ and every regular $\alpha<\kappa$, $j(\psi_\gamma)\res^{j(\kappa)}_\alpha=\psi_\gamma\res^\kappa_\alpha$. Thus,
\[j(\varphi)\res^{j(\kappa)}_\alpha=\bigwedge_{\zeta\in j(f^\kappa_\xi)(\alpha)}\psi_{j^{-1}(j(\vec\pi)_\alpha^{-1}(\zeta))}\res^\kappa_\alpha.\]
Now, \[\varphi\res^\kappa_\alpha=\bigwedge_{\zeta\in f^\kappa_\xi(\alpha)}\psi_{(\pi^\kappa_{\xi,\alpha})^{-1}(\zeta)}\res^\kappa_\alpha.\] 
Since $\alpha<\kappa$ we have $j(f^\kappa_\xi)(\alpha)=f^\kappa_\xi(\alpha)$, and furthermore \[(\pi^\kappa_{\xi,\alpha})^{-1}[f^\kappa_\xi(\alpha)]=F^\kappa_\xi(\alpha)=(j^{-1}\circ j(\vec\pi)_\alpha^{-1})[j(f^\kappa_\xi)(\alpha)],\]
showing the above restrictions of $\varphi$ and of $j(\varphi)$ to be equal,\footnote{Being somewhat more careful here, this in fact also uses that the maps $\pi^\kappa_{\xi,\alpha}$, $j$, and $j(\vec\pi)_\alpha$ are order-preserving, so that both of the above conjunctions are taken of the same formulas \emph{in the same order}.} and thus in particular also showing that $j(\varphi)\res^{j(\kappa)}_\alpha$ is defined, as desired.

 The case when $\varphi$ is a $\Sigma^1_\xi$-formula is treated in exactly the same way.
\end{proof}

We can now easily deduce the following, which was originally shown as \cite[Proposition 5.7]{CodyHigherIndescribability}.

\begin{proposition}\label{proposition_double_restriction}
Suppose $\kappa$ is weakly Mahlo, and $\xi<\kappa^+$. For any formula $\varphi$ which is either $\Pi^1_\xi$ or $\Sigma^1_\xi$ over $V_\kappa$, there is a club $D\subseteq\kappa$ such that for all regular uncountable $\alpha\in D$, $\varphi\res^\kappa_\alpha$ is defined, and the set $D_\alpha$ of all ordinals $\beta<\alpha$ such that $(\varphi\res^\kappa_\alpha)\res^\alpha_\beta$ is defined and $(\varphi\res^\kappa_\alpha)\res^\alpha_\beta=\varphi\res^\kappa_\beta$, is in the club filter on $\alpha$. 
\end{proposition}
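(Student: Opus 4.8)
The strategy is to combine Lemma~\ref{lemma_restriction_is_nice}, Lemma~\ref{lemma_formularestrictions}, and Proposition~\ref{framework}, iterating the generic ultrapower argument ``one level down''. First I would fix $\varphi$ of complexity $\xi<\kappa^+$ and apply Lemma~\ref{lemma_restriction_is_nice} to obtain a club $C_0\subseteq\kappa$ such that $\varphi\res^\kappa_\alpha$ is defined for every regular $\alpha\in C_0$. The goal is then to produce a club $D\subseteq C_0$ consisting of regular uncountable cardinals $\alpha$ for which the set $D_\alpha$ of $\beta<\alpha$ with $(\varphi\res^\kappa_\alpha)\res^\alpha_\beta$ defined and equal to $\varphi\res^\kappa_\beta$ belongs to the club filter on $\alpha$.

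\textbf{Key steps.} The natural way to obtain $D$ is via Proposition~\ref{framework}: I would show that whenever $G$ is generic for $P(\kappa)/\NS_\kappa$ and $j\colon V\to\Ult$ is the corresponding generic ultrapower, the relevant statement about $\kappa$ holds in an appropriate sense, so that $\kappa$ lies in the image of the set we want to be club. Concretely, restrict attention to generic $G$ containing the stationary set of regular uncountable cardinals below $\kappa$ (so $\kappa$ is weakly Mahlo, hence this set is stationary and we may assume it is in $G$; then $\kappa$ is regular in $\Ult$). Working in $\Ult$, the formula $j(\varphi)$ is a $\Pi^1_{j(\xi)}$- or $\Sigma^1_{j(\xi)}$-formula over $V_{j(\kappa)}$, and by Lemma~\ref{lemma_restrictionvsj} we have $j(\varphi)\res^{j(\kappa)}_\kappa=\varphi$. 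Now apply (the internal version of) Proposition~\ref{proposition_double_restriction}'s precursor reasoning inside $\Ult$: actually it is cleaner to directly argue that in $\Ult$, the set of $\beta<\kappa$ with $(j(\varphi)\res^{j(\kappa)}_\kappa)\res^\kappa_\beta$ defined and equal to $j(\varphi)\res^{j(\kappa)}_\beta$ is in the club filter on $\kappa$. But $(j(\varphi)\res^{j(\kappa)}_\kappa)\res^\kappa_\beta = \varphi\res^\kappa_\beta$ by Lemma~\ref{lemma_restrictionvsj}, and by Lemma~\ref{lemma_formularestrictions} (applied in $V$, with the roles of $\kappa$ and the embedding as given) we have $j(\varphi)\res^{j(\kappa)}_\beta=\varphi\res^\kappa_\beta$ for every regular $\beta<\kappa$ for which $\varphi\res^\kappa_\beta$ is defined. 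Combining these, for every regular uncountable $\beta\in C_0$ the two restrictions agree, and $C_0$ is club in $\kappa$; since $\kappa$ is weakly Mahlo, the regular uncountable $\beta\in C_0$ form a set in the club filter on $\kappa$ as computed in $V$, and this club filter set belongs to $\Ult$ because $H(\kappa^+)\subseteq\Ult$. Hence in $\Ult$, $\kappa$ witnesses: ``$\varphi\res^\kappa_\kappa$ — i.e.\ $j(\varphi)\res^{j(\kappa)}_\kappa$ — is defined, $\kappa$ is regular uncountable, and the set of $\beta<\kappa$ with $(j(\varphi)\res^{j(\kappa)}_\kappa)\res^\kappa_\beta$ defined and equal to $j(\varphi)\res^{j(\kappa)}_\beta$ is in the club filter on $\kappa$''. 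This is exactly the assertion ``$\kappa\in j(D)$'' for the natural set $D$ defined in $V$ by: $\alpha\in D$ iff $\alpha$ is regular uncountable, $\varphi\res^\kappa_\alpha$ is defined, and $D_\alpha$ is in the club filter on $\alpha$. By Proposition~\ref{framework}, $D$ contains a club subset of $\kappa$, which is what we want.

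\textbf{Main obstacle.} The delicate point is the bookkeeping needed to see that the displayed equalities line up: one must be careful that the composite restriction $(j(\varphi)\res^{j(\kappa)}_\kappa)\res^\kappa_\beta$ computed inside $\Ult$ really is the same object as $\varphi\res^\kappa_\beta$ computed in $V$, which requires $H(\kappa^+)\subseteq\Ult$ together with the fact that restrictions of $\Pi^1_\xi$-formulas over $V_\kappa$ are coded in $H(\kappa^+)$ and are computed absolutely between $V$ and $\Ult$ below $\kappa^+$. One also has to check that the canonical (reflection) functions and transitive collapse maps used in $\Ult$ to define $j(\varphi)\res^{j(\kappa)}_\beta$ are the images under $j$ of the ones in $V$ — this is precisely the convention fixed earlier (that at the level of $j(\kappa)$ we use $j$ of the ground-model sequence of bijections), and it is what makes Lemma~\ref{lemma_formularestrictions} applicable. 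Once this absoluteness is pinned down, the argument is a routine application of Proposition~\ref{framework}. A secondary point is simply to define $D$ in $V$ precisely enough that ``$\kappa\in j(D)$'' unwinds to the internal statement established above; this is straightforward but should be stated carefully so that the ``club filter on $\alpha$'' clause is interpreted uniformly.
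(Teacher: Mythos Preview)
Your approach is essentially the same as the paper's: both arguments use the generic ultrapower together with Lemma~\ref{lemma_restrictionvsj} (to identify $j(\varphi)\res^{j(\kappa)}_\kappa$ with $\varphi$) and Lemma~\ref{lemma_formularestrictions} (to identify $j(\varphi)\res^{j(\kappa)}_\beta$ with $\varphi\res^\kappa_\beta$). The paper packages this as a proof by contradiction---assume a stationary set $T$ of bad regular $\alpha$'s, put $T$ into $G$, and derive a contradiction---whereas you attempt the contrapositive directly via Proposition~\ref{framework}.

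There is, however, a small technical wrinkle in your direct version. Proposition~\ref{framework} requires $\kappa\in j(D)$ for \emph{every} generic $G$, but you restrict attention to those $G$ containing the set of regular cardinals, and you define $D$ to consist only of regular uncountable $\alpha$ with the good property. For a generic $G$ in which $\kappa$ is singular in $\Ult$, your $D$ fails to contain $\kappa$ in $j(D)$, so Proposition~\ref{framework} does not apply as stated. Relatedly, your claim that ``the regular uncountable $\beta\in C_0$ form a set in the club filter on $\kappa$'' is literally false, since the regular cardinals below $\kappa$ are not closed. Both issues are artifacts of the same bookkeeping slip and are easily repaired: either define $D$ as the set of $\alpha<\kappa$ such that $\alpha$ is \emph{not} regular uncountable \emph{or} $\alpha$ satisfies the desired property (so that $\kappa\in j(D)$ holds trivially when $\kappa$ is singular in $\Ult$), or simply run the argument by contradiction as the paper does, putting a stationary set of bad regular $\alpha$'s into $G$. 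Once this is cleaned up, your argument and the paper's coincide.
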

\begin{proof}
  Assume for a contradiction that the conclusion of the proposition fails. By Lemma \ref{lemma_restriction_is_nice}, this means that there is a stationary set $T$ consisting of regular and uncountable cardinals $\alpha$ such that the set $D_\alpha$ has stationary complement $E_\alpha\subseteq\alpha$. Using Lemma \ref{lemma_restriction_is_nice} once again, we may assume that $(\varphi\res^\kappa_\alpha)\res^\alpha_\beta$ is defined for every $\alpha\in T$ and every $\beta\in E_\alpha$. Let $\vec E$ denote the sequence $\langle E_\alpha\mid\alpha\in T\rangle$. Assume that $T\in G$. Then, $\kappa\in j(T)$, and thus $j(\vec E)_\kappa$ is stationary in $\Ult$. But, \[j(\vec E)_\kappa=\{\beta<\kappa\mid(j(\varphi)\res^{j(\kappa)}_\kappa)\res^\kappa_\beta\ne j(\varphi)\res^{j(\kappa)}_\beta\}.\]
  Note that by Lemma \ref{lemma_formularepresentation}, $j(\varphi)\res^{j(\kappa)}_\kappa=\varphi$. But then, by Lemma \ref{lemma_restriction_is_nice} and Lemma \ref{lemma_formularestrictions}, $j(\vec E)_\kappa$ is nonstationary in $\Ult$, which gives our desired contradiction.
\end{proof}

\subsection{Higher indescribability}

The notion of $\Pi^1_\xi$-indescribability of (subsets of) a cardinal $\kappa$ when $\xi<\kappa$ was introduced by Joan Bagaria in \cite{MR3894041}, and was extended by the first author as follows.

\begin{definition}[{\cite[Definition 3.4]{CodyHigherIndescribability}}]\label{definition_indescribability}
Suppose $\kappa$ is a cardinal and $\xi<\kappa^+$. A set $S\subseteq\kappa$ is \emph{$\Pi^1_\xi$-indescribable} if for every $\Pi^1_\xi$ sentence $\varphi$ over $V_\kappa$, if $V_\kappa\models\varphi$ then there is some $\alpha\in S$ such that $\varphi\res^\kappa_\alpha$ is defined, and $V_\alpha\models\varphi\res^\kappa_\alpha$.
\end{definition}

Note that the value of any particular $\varphi\res^\kappa_\alpha$ depends on our choice of bijections $b_{\kappa,\xi}$, however using Lemma \ref{lemma_restrictionvsj} and Proposition \ref{framework}, it is easy to see that for any two choices of sequences $\langle b_{\kappa,\xi}\mid\xi<\kappa^+\rangle$, the corresponding $\varphi\res^\kappa_\alpha$'s agree on a club, and thus in particular the above notion of higher indescribability is independent of that choice.\footnote{This was also shown using elementary proofs as \cite[Lemma 3.3]{CodyHigherIndescribability} and \cite[Lemma 3.7]{CodyHigherIndescribability}.}

Note also that $S\subseteq\kappa$ is $\Pi^1_0$-indescribable if and only if $S$ is a stationary subset of $\kappa$. We will say that $S\subseteq\kappa$ is $\Pi^1_{-1}$-indescribable in case $S$ is an unbounded subset of $\kappa$. For $\xi\in\{-1\}\cup\kappa^+$, we let $\Pi^1_\xi(\kappa)^+$ be the collection of all $\Pi^1_\xi$-indescribable subsets of $\kappa$. It was shown by the first author in \cite[Theorem 5.5]{CodyHigherIndescribability} that if $\kappa$ is a cardinal, $\xi<\kappa^+$, and $\kappa$ is $\Pi^1_\xi$-indescribable, then $\Pi^1_\xi(\kappa)$ is a nontrivial normal ideal on $\kappa$.

\section{Generalizations of a result of Baumgartner}\label{section_generalizing_baumgartner}

A key result from Baumgartner's \cite{MR0384553} is the following theorem that indicates the strength of subtlety. Recall that $S\subseteq\kappa$ is \emph{subtle} in case whenever $\vec S=\<S_\alpha\st\alpha\in S\>$ is an $S$-list and $C\subseteq\kappa$ is club, then there are $\alpha<\beta$ both in $S\cap C$ such that $S_\alpha=S_\beta\cap\alpha$.

\begin{theorem}[Baumgartner]\cite[Theorem 4.1]{MR0384553}\label{theorem_generalizing_Baumgartner}
  Suppose $S\subseteq\kappa$ is subtle and $\vec S=\<S_\alpha\st\alpha\in S\>$ is an $S$-list. Let
\[A=\{\alpha\in S\st(\exists X\subseteq S\cap\alpha)(\forall \eta<\omega\ X\in\Pi^1_\eta(\alpha)^+)\land(X\cup\{\alpha\}\text{ is hom. for }\vec{S})\}.\]
Then, $S\setminus A$ is not subtle.
\end{theorem}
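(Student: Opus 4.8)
The plan is to prove the contrapositive-flavored statement directly, imitating the classical Baumgartner argument while substituting the higher-indescribability condition for the "always true below a given level" bookkeeping. Suppose toward a contradiction that $S\setminus A$ is subtle. The idea is to define, along $S\setminus A$, a list $\langle T_\alpha\mid\alpha\in S\setminus A\rangle$ and a club $C$ encoding enough information that subtlety applied to this data forces, for some pair $\alpha<\beta$, the existence of a set $X\subseteq S\cap\alpha$ which is homogeneous for $\vec S$ and which lies in $\Pi^1_\eta(\alpha)^+$ for all finite $\eta$ — placing $\alpha$ into $A$, a contradiction. The natural choice is to let $T_\alpha$ code the pair $(S_\alpha, W_\alpha)$ where $W_\alpha$ witnesses the failure of $\alpha\in A$: since $\alpha\notin A$, for every $X\subseteq S\cap\alpha$ that is homogeneous for $\vec S$, there is some finite $\eta(X,\alpha)$ with $X\notin\Pi^1_{\eta(X,\alpha)}(\alpha)^+$, i.e., there is a $\Pi^1_{\eta}$-sentence $\varphi^X_\alpha$ over $V_\alpha$ (with $X$ and $S\cap\alpha$ and the relevant restrictions of canonical objects as parameters) holding in $V_\alpha$ but reflecting to no $\beta\in X$. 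One encodes all of this into a single subset $T_\alpha\subseteq\alpha$ via a reasonable pairing.

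**Next I would** apply subtlety of $S\setminus A$ to $\langle T_\alpha\rangle$ together with a carefully chosen club $C$ to get $\alpha<\beta$ in $(S\setminus A)\cap C$ with $T_\alpha=T_\beta\cap\alpha$. From $T_\alpha=T_\beta\cap\alpha$ one reads off first that $S_\alpha=S_\beta\cap\alpha$ (so the coherence of $\vec S$ at this pair is automatic), and second that the two "witness packages" cohere. The key technical point — and here is where the club $C$ and the machinery from Section~\ref{section:indescribability} earn their keep — is that the coding of the $\Pi^1_\eta$-witnesses must interact correctly with the restriction operation $\varphi\res^\beta_\alpha$: one wants that a $\Pi^1_\eta$-sentence $\varphi$ over $V_\beta$ witnessing non-positivity "restricts down" to a corresponding sentence over $V_\alpha$, so that coherence of the codes at $\alpha$ and $\beta$ really does transfer the witnessing property. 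Because $\eta$ ranges only over the finite ordinals $\omega$, which sit below $\alpha$ and $\beta$ and are fixed by all the collapse maps $\pi^\kappa_{\xi,\cdot}$, the restriction of a $\Pi^1_\eta$-formula is again a $\Pi^1_\eta$-formula with no reshuffling of complexity (Observation~\ref{observation:definedcomplexity} with $f^\kappa_\eta$ constant), which should make this transfer clean; one includes in $C$ all closure points needed for $\varphi\res^\beta_\alpha$ to be defined (using Lemma~\ref{lemma_restriction_is_nice}) and for the parameters to localize correctly.

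**Then** the contradiction is obtained as in Baumgartner's proof: using the coherence across the pair $\alpha<\beta$, one shows that the set $X:=$ (the common part of the two packages, essentially $\{\gamma\in S\cap\alpha\mid \text{the coded data at }\gamma\text{ matches}\}$, or more simply a homogeneous set extracted from a further application of subtlety or from a thinning along a club) is homogeneous for $\vec S$ and moreover is $\Pi^1_\eta$-indescribable over $\alpha$ for every finite $\eta$ — precisely because any would-be describing sentence at level $\alpha$ would, via the coherence, correspond to one at level $\beta$, and $\beta$'s membership in $S\setminus A$ together with the matching of codes would force $\alpha$ to already reflect it. Hence $\alpha\in A$, contradicting $\alpha\in S\setminus A$.

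**The main obstacle** I anticipate is organizing the coding so that all the "there exists a describing formula" witnesses at the $\beta$-level are simultaneously captured by a \emph{single} subset $T_\beta\subseteq\beta$ in a way that survives the transitive-collapse/restriction passage to $\alpha$; this requires a uniform enumeration of the relevant $\Pi^1_\eta$-sentences over $V_\beta$ (which are coded in $H(\beta^+)$, hence not literally subsets of $\beta$, so one needs a club of $\beta$ for which $H(\beta)=V_\beta$ suffices, i.e.\ $\beta$ inaccessible, or a more careful local coding) and a verification that $\varphi\res^\beta_\alpha$ does the expected job — the content of Lemmas~\ref{lemma_restrictionvsj} and~\ref{lemma_formularestrictions}, adapted from the generic-ultrapower setting to this pair of honest-to-goodness small cardinals $\alpha<\beta$. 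I expect the rest to be routine bookkeeping with clubs and diagonal intersections.
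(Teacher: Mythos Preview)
Your proposal has a genuine gap at the coding step. You plan to let $T_\alpha$ encode a witness $W_\alpha$ to $\alpha\notin A$, but $\alpha\notin A$ is the \emph{universal} statement ``for every homogeneous $X\subseteq S\cap\alpha$ there exist $\eta<\omega$ and a $\Pi^1_\eta$-sentence describing $X$.'' Packaging this into a single subset of $\alpha$ would require a describing formula for each of up to $2^\alpha$ many homogeneous $X$'s. Your final paragraph flags exactly this as the main obstacle, but it is not an organizational difficulty to be overcome with clubs and diagonal intersections --- it is fatal to the approach as written. Correspondingly, your contradiction step is vague: you never specify what ``the set $X$'' is, because with this coding no such $X$ emerges.

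The paper cites Theorem~\ref{theorem_generalizing_Baumgartner} without proof, but Baumgartner's key idea is visible in the setup of the paper's Theorem~\ref{theorem_baumgartner}: for each $\beta\in S\setminus A$ pass to the \emph{single canonical} homogeneous set $B_\beta=\{\alpha\in S\cap\beta : S_\alpha=S_\beta\cap\alpha\}$. Since $B_\beta\cup\{\beta\}$ is homogeneous for $\vec S$ and $\beta\notin A$, this particular set supplies one $n_\beta<\omega$ and one $\Pi^1_{n_\beta}$-sentence $\varphi^\beta$ with $V_\beta\models\varphi^\beta$ but $V_\alpha\not\models\varphi^\beta\res^\beta_\alpha$ for all $\alpha\in B_\beta$. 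Now encode just the pair $\langle S_\beta,\varphi^\beta\rangle$ as $E_\beta\subseteq\beta$. Subtlety of $S\setminus A$ (against a suitable club) yields $\alpha<\beta$ with $E_\alpha=E_\beta\cap\alpha$; hence $S_\alpha=S_\beta\cap\alpha$, so $\alpha\in B_\beta$, and --- since for finite $\eta$ the restriction $\res^\beta_\alpha$ is literally parameter intersection, no transfinite machinery needed --- the matching of codes gives $\varphi^\alpha=\varphi^\beta\res^\beta_\alpha$. Then $V_\alpha\models\varphi^\alpha$ by choice of $\varphi^\alpha$, while $\alpha\in B_\beta$ forces $V_\alpha\not\models\varphi^\beta\res^\beta_\alpha=\varphi^\alpha$: an immediate contradiction, with no auxiliary $X$ to build.
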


In this section, we want to provide a strengthening of Baumgartner's theorem with respect to higher indescribability, and then apply this to obtain a related result on iterations of the ineffability operator $\I$.

\subsection{Coding formulas}

When $\kappa$ is inaccessible, we will need a sort of improved coding of $\Pi^1_\xi$- and $\Sigma^1_\xi$-formulas over $V_\kappa$ for $\xi<\kappa^+$, with the property that such formulas over $V_\kappa$ are coded as subsets of $\kappa$.

\begin{definition}
Suppose $\kappa$ is an inaccessible cardinal, and fix a bijection \[b^\kappa\colon V_\kappa\to\kappa.\footnote{The purpose of this bijection will be the coding of parameters of our formulas, and it is only for its existence here that we use the inaccessibility of $\kappa$.}\] For each $\kappa\le\xi<\kappa^+$, fix a well-ordering \[R^\kappa_\xi\subseteq\kappa\times\kappa\] of $\kappa$ such that $\ot(\kappa,R^\kappa_\xi)=\xi$ and let \[b_{\kappa,\xi}:\kappa\to\xi\] be the bijection derived from $R^\kappa_\xi$. We call the above the \emph{coding parameters at $\kappa$}. In what follows, by induction on formula complexity, we define a coding function $\code^\kappa$ such that whenever for some $\xi<\kappa^+$, $\varphi$ is a $\Pi^1_\xi$- or $\Sigma^1_\xi$-formula over $V_\kappa$, $\code^\kappa(\varphi)$ is a subset of $\kappa$. 

If $\xi<\kappa$, we let $\code^\kappa(\varphi)$ be a subset of $\kappa$ coding $\varphi$ in some reasonable way, making use of the bijection $b^\kappa$ to code the parameters of $\varphi$. In particular, we require that its first slice of code, $(\code^\kappa(\varphi))_0=\emptyset$,\footnote{So that we can distinguish this basic case from the later cases.} we use its slices with finite indices to code the second order parameters of $\varphi$, we only use boundedly many nonempty slices to code $\varphi$, and all such slices with infinite index are bounded subsets of $\kappa$.

Let $\varphi$ be a $\Pi^1_\xi$- or $\Sigma^1_\xi$-formula over $V_\kappa$ for some $\kappa\le\xi<\kappa^+$, and assume that we have inductively defined $\code^\kappa(\psi)$ whenever $\psi$ is of lower complexity. We define $\code^\kappa(\varphi)$ to be a subset of $\kappa$ as follows.
\begin{itemize}
\item Suppose $\xi=\zeta+1$ is a successor ordinal. If $\varphi=\forall X_{k_0}\ldots\forall X_{k_m}\psi$ is $\Pi^1_{\zeta+1}$ with $m\in\omega$ and $\psi$ being $\Sigma^1_\zeta$, we define 
\[(\code^\kappa(\varphi))_0=\{k_0,\ldots,k_m,\omega\}\]
and if $\varphi=\exists X_{k_0}\ldots\exists X_{k_m}\psi$ is $\Sigma^1_{\zeta+1}$ with $m\in\omega$ and $\psi$ being $\Pi^1_\zeta$, we define
\[(\code^\kappa(\varphi))_0=\{k_0,\ldots,k_m,\omega+1\}.\]
In either case, we let
\[(\code^\kappa(\varphi))_1=\code^\kappa(\psi),\] and for $1<\nu<\kappa$, we let
\[(\code^\kappa(\varphi))_\nu=\emptyset.\]
\item Suppose $\xi$ is a limit ordinal. If $\varphi=\bigwedge_{\zeta<\xi}\psi_\zeta$ is $\Pi^1_\xi$, we let
\[(\code^\kappa(\varphi))_0=\{0\},\]
and if $\varphi=\bigvee_{\zeta<\xi}\psi_\zeta$ is $\Sigma^1_\xi$, we let
\[(\code^\kappa(\varphi))_0=\{1\}.\]
In either case, we let
\[(\code^\kappa(\varphi))_1=\Gamma[R^\kappa_\xi],\]
where $\Gamma$ denotes the G\"odel pairing function, and for all $\nu<\kappa$ we let
\[(\code^\kappa(\varphi))_{2+\nu}=\code^\kappa(\psi_{b_{\kappa,\xi}(\nu)}).\]
\end{itemize}
\end{definition}

Fix a sequence of coding parameters at $\alpha$ for every inaccessible $\alpha\le\kappa$, such that the sequence $\langle b^\alpha\mid\alpha\le\kappa\rangle$ is $\subseteq$-increasing. Note that, using the $j$-images of our coding parameters to code in $\Ult$, by elementarity, for any relevant formula $\varphi$ over $V_\kappa$, we have \[j(\code^\kappa(\varphi))=\code^{j(\kappa)}(j(\varphi)).\]

\begin{lemma}\label{lemma_code_of_a_restriction}
Suppose $\kappa$ is Mahlo. If $\varphi$ is a $\Pi^1_\xi$- or $\Sigma^1_\xi$-formula over $V_\kappa$ for some $\xi<\kappa^+$, $H\subseteq\kappa$ is stationary and consists only of regular cardinals, and for each $\alpha\in H$, there is a $\Pi^1_{f^\kappa_\xi(\alpha)}$- or $\Sigma^1_{f^\kappa_\xi(\alpha)}$-formula $\varphi^\alpha$ over $V_\alpha$ respectively, such that \[\code^\alpha(\varphi^\alpha)=\code^\kappa(\varphi)\cap\alpha,\] then there is a club $C\subseteq\kappa$ such that for each $\alpha\in H\cap C$, $\varphi\res^\kappa_\alpha$ is defined, and
\[\code^\kappa(\varphi)\cap\alpha=\code^\alpha(\varphi\res^\kappa_\alpha).\]
\end{lemma}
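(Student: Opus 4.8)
The plan is to run a generic ultrapower argument in the style of the proofs of Lemma~\ref{lemma_restriction_is_nice}, Lemma~\ref{lemma_formularestrictions} and Proposition~\ref{proposition_double_restriction}. Set
\[A=\{\alpha\in H\mid\varphi\res^\kappa_\alpha\text{ is defined and }\code^\alpha(\varphi\res^\kappa_\alpha)=\code^\kappa(\varphi)\cap\alpha\},\]
and let $S=(\kappa\setminus H)\cup A$. The conclusion of the lemma is precisely that $S$ contains a club subset of $\kappa$, so by Proposition~\ref{framework} it suffices to show that for every generic $G$ for $P(\kappa)/\NS_\kappa$ one has $\kappa\in j(S)$. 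Since $j(S)=(j(\kappa)\setminus j(H))\cup j(A)$, I would fix such a $G$, assume $\kappa\in j(H)$ (equivalently $H\in G$), and aim to deduce $\kappa\in j(A)$.

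Next I would read off what $\kappa\in j(H)$ says in $\Ult$. Each $\alpha\in H$ is in fact inaccessible (the hypothesis about $\varphi^\alpha$ only makes sense at inaccessible $\alpha$, since $\code^\alpha$ is), so $\kappa\in j(H)$ gives that $\kappa$ is inaccessible, hence regular, in $\Ult$. By our conventions for canonical functions at the level of $j(\kappa)$ in $\Ult$, $f^{j(\kappa)}_{j(\xi)}(\kappa)=j(f^\kappa_\xi)(\kappa)=\xi$, and by elementarity together with the observed identity $j(\code^\kappa(\psi))=\code^{j(\kappa)}(j(\psi))$ we get $\code^{j(\kappa)}(j(\varphi))^\Ult\cap\kappa=j(\code^\kappa(\varphi))\cap\kappa=\code^\kappa(\varphi)$, the last step because $\code^\kappa(\varphi)\subseteq\kappa=\crit j$. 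Moreover, by Lemma~\ref{lemma_restrictionvsj} and regularity of $\kappa$ in $\Ult$, $j(\varphi)\res^{j(\kappa)}_\kappa=\varphi$, which in particular is defined in $\Ult$. Putting this together, $\kappa\in j(A)$ would follow once we know that
\[\code^\kappa(\varphi)^\Ult=\code^\kappa(\varphi)^V,\]
where the left side is computed in $\Ult$ with its level-$\le\kappa$ coding parameters (the $j$-images of those of $V$) and the right side in $V$.

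Establishing this last equality is the heart of the matter, and I expect it to be the main obstacle. For the bijection parameters it is easy: since $\langle b^\alpha\mid\alpha\le\kappa\rangle$ is $\subseteq$-increasing with union $b^\kappa$, the maps $j(\langle b^\alpha\mid\alpha\le\kappa\rangle)_\kappa$ and $b^\kappa$ are bijections $V_\kappa\to\kappa$ in $\Ult$ agreeing on every element of $V_\kappa^\Ult=V_\kappa$, hence equal. For the well-ordering parameters $R^\kappa_\eta$, the hypothesis of the lemma is exactly what forces the needed coherence, and this is where it gets used: by the recursive shape of $\code$ (and using Lemma~\ref{lemma_restriction_is_nice} to control where $\varphi\res^\kappa_\alpha$ is defined) there is a club $C^*\subseteq\kappa$ such that for each $\alpha\in H\cap C^*$, the fact that $\code^\kappa(\varphi)\cap\alpha$ is a value of $\code^\alpha$ on a formula over $V_\alpha$ of complexity $f^\kappa_\xi(\alpha)$ forces the level-$\alpha$ well-orderings $R^\alpha_\eta$ relevant to $\varphi$ (namely for $\eta=f^\kappa_\zeta(\alpha)$ with $\zeta$ ranging over the subcomplexities of $\varphi$) to be exactly the corresponding restrictions of the level-$\kappa$ ones. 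Since $H\in G$ and $C^*\in G$, we have $H\cap C^*\in G$, so applying $j$ and appealing to \L o\'s's theorem at $\kappa$ yields that the relevant level-$\kappa$ well-orderings in $\Ult$ agree with those in $V$. Consequently $\code^\kappa$ agrees between $V$ and $\Ult$ on every formula over $V_\kappa$ built from these ingredients — in particular on $\varphi$, which is still such a formula in $\Ult$ by absoluteness of the $\Pi^1$/$\Sigma^1$-hierarchy and $V_\kappa^\Ult=V_\kappa$ — giving the displayed equality and hence $\kappa\in j(A)$.

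The delicate point throughout is the bookkeeping about which coding parameters are in play where: $\Ult$ codes with the $j$-images of $V$'s parameters, and one must check that at level $\kappa$ these reduce, for the complexities that actually occur in $\varphi$, to the parameters $V$ uses — the $b$-part for free and the $R$-part courtesy of the hypothesis. If the generic ultrapower packaging of this coherence turns out to be awkward, the same conclusion can instead be reached by a direct induction on the complexity of $\varphi$, propagating the hypothesis down through the recursion defining $\code$ to show that for club-many $\alpha\in H$ one has $\code^\kappa(\varphi)\cap\alpha=\code^\alpha(\varphi\res^\kappa_\alpha)$; but I would attempt the ultrapower route first, since it matches the style of the surrounding results.
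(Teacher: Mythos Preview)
Your proposal is correct and follows essentially the same generic ultrapower strategy as the paper: reduce to showing $\code^{j(\kappa)}(j(\varphi))\cap\kappa=\code^\kappa(\varphi)$ in $\Ult$ via Lemma~\ref{lemma_restrictionvsj}, then check this by cases on $\xi$, using the hypothesis precisely at limit $\xi\ge\kappa$ to force the level-$\kappa$ well-orderings in $\Ult$ to agree with those in $V$. The paper packages the argument as a contradiction (assume a stationary set of failures, put it in $G$), and at the key step uses elementarity directly to get a formula $\psi$ in $\Ult$ with $\code^\kappa(\psi)=\code^{j(\kappa)}(j(\varphi))\cap\kappa$ and then compares slices of $\code^\kappa(\psi)$ and $\code^\kappa(\varphi)$ in $\Ult$; your detour through a club $C^*$ in $V$ and {\L}o\'s is equivalent but slightly more roundabout, and the induction on the complexity of $\varphi$ (which you allude to for the ``remaining slices'') should be made explicit.
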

\begin{proof}
  Suppose for a contradiction that the conclusion of the lemma fails. Using Lemma \ref{lemma_restriction_is_nice}, this means that there is a stationary set $T\subseteq H$ such that for every $\alpha\in T$, \[\code^\kappa(\varphi)\cap\alpha\ne\code^\alpha(\varphi\res^\kappa_\alpha).\] Assume $T\in G$. Then, $\kappa\in j(T)$, hence in $\Ult$, $\kappa$ is regular and \[\code^{j(\kappa)}(j(\varphi))\cap\kappa\ne\code^\kappa(j(\varphi)\res^{j(\kappa)}_\kappa).\]
  But by Lemma \ref{lemma_restrictionvsj}, this means that in $\Ult$, 
  \begin{align}\code^{j(\kappa)}(j(\varphi))\cap\kappa\ne\code^\kappa(\varphi).\label{equation_code}\end{align}
  
  Let us show that (\ref{equation_code}) is false, thus yielding our desired contradiction. First, let us consider the case in which $\xi<\kappa$. We have $j(\varphi(A_1,\ldots,A_n))=\varphi(j(A_1),\ldots,j(A_n))$ in this case, and by our choice of bijections we see that in $\Ult$, $b^{j(\kappa)}\supseteq b^\kappa$ and hence first order parameters are coded in the same way. Furthermore, by our choice of \emph{reasonable coding}, we observe that $\code^{j(\kappa)}(j(\varphi))\cap\kappa$ and $\code^\kappa(\varphi)$ have the same slices and hence $\code^{j(\kappa)}(j(\varphi))\cap\kappa= \code^\kappa(\varphi)$.

  Let us inductively look at the cases when $\xi\ge\kappa$.
  The successor ordinal case is immediate, comparing all slices of $\code^{j(\kappa)}(j(\varphi))\cap\kappa$ and of $\code^\kappa(\varphi)$.

  Assume now that $\xi$ is a limit ordinal. We will again be comparing the slices of $\code^{j(\kappa)}(j(\varphi))\cap\kappa$ and of $\code^\kappa(\varphi)$. The slices with index $0$ clearly agree. Our assumption, which we haven't used yet, yields a formula $\psi$ of complexity $\xi$ such that in $\Ult$, \[\code^\kappa(\psi)=\code^{j(\kappa)}(j(\varphi))\cap\kappa.\] The slices with index $1$ thus agree between $\code^\kappa(\psi)$ and $\code^\kappa(\varphi)$, for they are coding the same well-ordering, and hence they also agree for our desired formulas. The remaining slices agree inductively, contradicting (\ref{equation_code}) as desired.
\end{proof}

\subsection{Generalizing Baumgartner's lemma to higher indescribability}\label{subsection_generalizing_Baumgertner}

In this section, we provide the promised strengthening of Theorem \ref{theorem_generalizing_Baumgartner}.

\begin{theorem}\label{theorem_baumgartner}
Suppose $S\subseteq\kappa$ is subtle and $\vec{S}=\<S_\alpha\st\alpha\in S\>$ is an $S$-list. Let
\[A=\{\alpha\in S\st(\exists X\subseteq S\cap\alpha)(\forall \eta<\alpha^+\ X\in\Pi^1_\eta(\alpha)^+)\land(X\cup\{\alpha\}\text{ is hom. for }\vec{S})\}.\]
Then, $S\setminus A$ is not subtle.
\end{theorem}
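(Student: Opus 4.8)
The plan is to follow the architecture of Baumgartner's original proof of Theorem \ref{theorem_generalizing_Baumgartner}, but replace the ad hoc reflection of the finitely many $\Pi^1_\eta$-formulas (for $\eta<\omega$) by a generic ultrapower argument that simultaneously reflects \emph{all} $\Pi^1_\eta$-formulas for $\eta<\kappa^+$ down to $\alpha$-versions for $\eta<\alpha^+$. The key point is that, via the canonical functions, the generic ultrapower ``sees'' $\kappa$ as an ordinal $\alpha$ and $\xi<\kappa^+$ as $f^\kappa_\xi(\alpha)<\alpha^+$, so the higher indescribability of $\kappa$ transfers to higher indescribability of stationarily many $\alpha<\kappa$.

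\textbf{Step 1: Reformulate the target as an indescribability statement.} Assume for contradiction that $S\setminus A$ is subtle. I would first show that the subtlety of $S$ implies $\kappa$ is $\Pi^1_\eta$-indescribable for all $\eta<\kappa^+$; more precisely, that the set of $\alpha<\kappa$ which are $\Pi^1_\eta$-indescribable for all $\eta<\alpha^+$ is in the filter dual to the non-subtle ideal restricted to $S$. This is itself a strengthening of the ``many $\alpha$ are $\Pi^1_\eta$-indescribable for all $\xi<\alpha^+$'' statement the introduction attributes to Sections \ref{section_generalizing_baumgartner}--\ref{section:indescribability_from_homogeneity}; I expect this piece to be proved en route, using the generic ultrapower $j\colon V\to\Ult$ obtained by forcing with $P(\kappa)/\NS_\kappa$ with $S\setminus A$ (or rather $S$) in the generic.

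\textbf{Step 2: Run Baumgartner's tree/list argument.} Following \cite[Theorem 4.1]{MR0384553}, suppose toward a contradiction that $S\setminus A$ is subtle. Using the coding machinery of the preceding subsection, build an auxiliary $(S\setminus A)$-list $\vec T$ that records, at each $\alpha\in S\setminus A$, a witness to the failure of $\alpha\in A$ — i.e.\ coding that no $X\subseteq S\cap\alpha$ which is homogeneous for $\vec S$ lies in $\bigcap_{\eta<\alpha^+}\Pi^1_\eta(\alpha)^+$. Concretely, one wants to encode (via $\code^\alpha$ and the bijection $b^\alpha$) a $\Pi^1$-formula over $V_\alpha$, of some complexity $<\alpha^+$, that ``kills'' every candidate homogeneous set. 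Apply subtlety of $S\setminus A$ to $\vec T$ together with a suitable club $C$ to obtain $\alpha<\beta$ in $(S\setminus A)\cap C$ with $T_\alpha=T_\beta\cap\alpha$; combined with subtlety of $S$ applied to $\vec S$ on $S\cap C$, one extracts an $X$ that \emph{is} homogeneous for $\vec S$ and is $S\cap\alpha$-bounded, which, after unraveling the coding, must reflect the killing formula — contradicting that the formula was chosen to have no such witness. The transfer of $\code^\kappa(\varphi)\cap\alpha = \code^\alpha(\varphi\restr^\kappa_\alpha)$ on a club, given by Lemma \ref{lemma_code_of_a_restriction}, is exactly what makes the reflected formulas at level $\alpha$ line up with restrictions of formulas at level $\kappa$, so that ``$X\in\Pi^1_{f^\kappa_\xi(\alpha)}(\alpha)^+$'' can be deduced from ``$X\in\Pi^1_\xi(\kappa)^+$''-type facts pushed through $j$.

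\textbf{Main obstacle.} The delicate point is bookkeeping the complexities: at $\alpha$ one needs $X\in\Pi^1_\eta(\alpha)^+$ for \emph{all} $\eta<\alpha^+$, not just for $\eta$ of the form $f^\kappa_\xi(\alpha)$, so one must argue that the canonical functions $\langle f^\kappa_\xi\mid\xi<\kappa^+\rangle$ are, on a club, surjective onto $\alpha^+$ in the appropriate sense — equivalently, that every $\eta<\alpha^+$ equals $f^\kappa_\xi(\alpha)$ for some $\xi<\kappa^+$ for club-many $\alpha$ — which should follow from the representation $j(f^\kappa_\xi)(\kappa)=\xi$ and a diagonal/pressing-down argument, or alternatively from the fact that $\Ult$ is well-founded up to $\kappa^+$ so every ordinal below $(\kappa^+)^V$ that $\Ult$ thinks is $<\kappa^+$ is genuinely of that form. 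The second subtle point is ensuring the killing formula at $\alpha$ can be coded as a subset of $\alpha$ of complexity strictly below $\alpha^+$ (this is why inaccessibility of $\kappa$, hence Mahlo-ness via subtlety, enters through the coding definition); I expect this to be routine given the $\code$ apparatus but it must be stated carefully. Everything else is the standard Baumgartner extraction argument, lifted through $j$ exactly as in the proofs of Lemmas \ref{lemma_restrictionvsj}--\ref{lemma_code_of_a_restriction}.
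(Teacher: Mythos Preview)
Your Step 2 encodes a witness $\Pi^1_{\xi_\alpha}$-formula $\varphi^\alpha$ at each $\alpha\in S\setminus A$ via $\code^\alpha$, which is the right idea. But you then apply \emph{bare} subtlety of $S\setminus A$ to the auxiliary list, obtaining only two coherent points $\alpha<\beta$. That is not enough: from $T_\alpha=T_\beta\cap\alpha$ you learn that the codes agree, but to invoke Lemma~\ref{lemma_code_of_a_restriction} you need a \emph{stationary} set $H\subseteq\beta$ of such $\alpha$'s (the hypothesis of that lemma is that $H$ is stationary), and you need $\beta$ to be Mahlo so the lemma even applies at level $\beta$. The paper obtains exactly this by applying Baumgartner's original Theorem~\ref{theorem_generalizing_Baumgartner} as a black box to the auxiliary list on $S\setminus A$: this yields a Mahlo $\beta\in S\setminus A$ together with a stationary $H\subseteq(S\setminus A)\cap\beta$ such that $H\cup\{\beta\}$ is homogeneous. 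One then compares $\varphi^\alpha$ to $\varphi^\beta\res^\beta_\alpha$ (not to anything at level $\kappa$), using homogeneity to see $\xi_\alpha=f^\beta_{\xi_\beta}(\alpha)$ and Lemma~\ref{lemma_code_of_a_restriction} to get $\varphi^\alpha=\varphi^\beta\res^\beta_\alpha$ on a club, whence $V_\alpha\models\varphi^\beta\res^\beta_\alpha$ contradicts $\alpha\in B_\beta$.

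Your ``main obstacle'' --- surjectivity of $\xi\mapsto f^\kappa_\xi(\alpha)$ onto $\alpha^+$ --- is a red herring, and in fact that surjectivity fails on any club. The paper never reflects formulas from level $\kappa$ to level $\alpha$; all the action happens between $\beta$ and $\alpha$, where $\beta$ is the top of the homogeneous set found via Theorem~\ref{theorem_generalizing_Baumgartner}. Likewise your Step~1 (showing $\kappa$ is $\Pi^1_\eta$-indescribable for all $\eta<\kappa^+$) is not used in the argument; it is Corollary~\ref{corollary_below_a_subtle_cardinal}, a \emph{consequence} of the theorem rather than an ingredient.
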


\begin{proof}
Suppose $S\subseteq\kappa$ is subtle, $\vec{S}$ is an $S$-list, and suppose for a contradiction that $S\setminus A$ is subtle. Using that the set of inaccessible cardinals below $\kappa$ is in the subtle filter on $\kappa$, we may assume that all elements of $S$ are inaccessible. We will also assume our canonical functions at $\kappa$ to be based on the bijections $b_{\kappa,\xi}$ used to define the coding at $\kappa$ in the above.

Suppose $\beta\in S\setminus A$. Let $B_\beta=\{\alpha\in S\cap\beta\st S_\alpha=S_\beta\cap\alpha\}$. Since $B_\beta\cup\{\beta\}$ is homogeneous for $\vec{S}$, it follows that for some limit ordinal $\xi_\beta$ with $\beta\le\xi_\beta<\beta^+$, the set $B_\beta$ is not $\Pi^1_{\xi_\beta}$-indescribable in $\beta$. Let $\varphi^\beta$ thus be a $\Pi^1_{\xi_\beta}$ sentence over $V_\beta$ such that $V_\beta\models\varphi^\beta$, and for all $\alpha\in B_\beta$, we have that $V_\alpha\not\models\varphi^\beta\res^\beta_\alpha$ whenever $\varphi^\beta\res^\beta_\alpha$ is defined. 

For each $\beta\in S\setminus A$, let $E_\beta$ code the pair
\[\langle S_\beta,\code^\beta(\varphi^\beta)\rangle\]
as a subset of $\beta$ in a natural way. This defines an $(S\setminus A)$-list $\vec{E}=\<E_\beta\st\beta\in S\setminus A\>$. By Theorem \ref{theorem_generalizing_Baumgartner}, there is a Mahlo cardinal $\beta\in S\setminus A$ and a stationary set $H\subseteq (S\setminus A)\cap\beta$ such that $H\cup\{\beta\}$ is homogeneous for $\vec{E}$. Let $\varphi=\varphi^\beta$, and let $\xi=\xi_\beta$. 

Recall that for each $\alpha\in S\setminus A$, we have $(\code^\alpha(\varphi^\alpha))_1=\Gamma[R^\alpha_{\xi_\alpha}]$, and thus the homogeneity of $H\cup\{\beta\}$ implies that for all $\alpha\in H$, we have
\[R^\beta_\xi\cap(\alpha\times\alpha)=R^\alpha_{\xi_\alpha},\]
and therefore
\[\xi_\alpha=\ot(\alpha,R^\alpha_{\xi_\alpha})=\ot(\alpha,R^\beta_\xi\cap(\alpha\times\alpha))=\ot(b_{\beta,\xi}[\alpha])=f^\beta_\xi(\alpha).\]

Thus, for each $\alpha\in H$, we have a $\Pi^1_{f^\beta_\xi(\alpha)}$-formula $\varphi^\alpha$ over $V_\alpha$ such that
\[\code^\alpha(\varphi^\alpha)=\code^\beta(\varphi)\cap\alpha.\]
Therefore, by Lemma \ref{lemma_code_of_a_restriction}, there is a club $C\subseteq\beta$ such that $\varphi\res^\beta_\alpha$ is defined, and
\[\code^\beta(\varphi)\cap\alpha=\code^\alpha(\varphi\res^\beta_\alpha)\]
for all $\alpha\in H\cap C\ne\emptyset$. Fix some $\alpha\in H\cap C$. We have
\[\code^\alpha(\varphi^\alpha)=\code^\beta(\varphi)\cap\alpha=\code^\alpha(\varphi\res^\beta_\alpha),\]
and hence $\varphi^\alpha=\varphi\res^\beta_\alpha$. However, since $S_\beta\cap\alpha=S_\alpha$ as well by the homogeneity of $H$, we have $\alpha\in B_\beta$, and hence $V_\alpha\not\models\varphi\res^\beta_\alpha$, contradicting that $V_\alpha\models\varphi^\alpha$.
\end{proof}

The following is now immediate from Theorem \ref{theorem_baumgartner}.

\begin{corollary}\label{corollary_below_a_subtle_cardinal}
Suppose $\kappa$ is subtle. Then, the set
\[\{\alpha<\kappa\st(\forall\eta<\alpha^+)\ \text{$\alpha$ is $\Pi^1_\eta$-indescribable}\}\]
is in the subtle filter on $\kappa$. \hfill{$\Box$}
\end{corollary}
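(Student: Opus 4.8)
The plan is to derive Corollary \ref{corollary_below_a_subtle_cardinal} directly from Theorem \ref{theorem_baumgartner} by an appropriate choice of the $S$-list. First I would set $S=\kappa$, which is subtle by hypothesis, and take $\vec S=\langle S_\alpha\mid\alpha<\kappa\rangle$ to be the trivial $\kappa$-list with $S_\alpha=\emptyset$ for every $\alpha<\kappa$. Every subset of any $\alpha$ is homogeneous for this list, since the homogeneity condition $S_\alpha=S_\beta\cap\alpha$ reduces to $\emptyset=\emptyset$; in particular, for each $\alpha$, the condition ``$X$ is hom.\ for $\vec S$'' imposes no constraint, so we may simply take $X=\emptyset$.

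With these choices, the set $A$ from Theorem \ref{theorem_baumgartner} becomes
\[A=\{\alpha<\kappa\mid(\exists X\subseteq\alpha)(\forall\eta<\alpha^+)\ X\in\Pi^1_\eta(\alpha)^+\}.\]
Now the key observation is that $\alpha\in A$ if and only if $\alpha$ itself is $\Pi^1_\eta$-indescribable for all $\eta<\alpha^+$: indeed, if $\alpha$ is $\Pi^1_\eta$-indescribable for all $\eta<\alpha^+$, then $\alpha\in\Pi^1_\eta(\alpha)^+$ for all such $\eta$ (recall that $\Pi^1_\eta(\kappa)$ is a normal, hence nontrivial, ideal precisely when $\kappa$ is $\Pi^1_\eta$-indescribable, so the full set $\alpha$ is always positive), and we may witness $A$-membership with $X=\alpha$; conversely, if some $X\subseteq\alpha$ is $\Pi^1_\eta(\alpha)^+$ for all $\eta<\alpha^+$, then a fortiori the superset $\alpha$ of $X$ is also $\Pi^1_\eta$-indescribable for all $\eta<\alpha^+$. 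Thus $A$ is exactly the set displayed in the statement of the corollary.

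Theorem \ref{theorem_baumgartner} then tells us that $\kappa\setminus A=S\setminus A$ is not subtle. Since every stationary-positive-for-subtlety superset of a subtle set is subtle (the defining property of subtlety is monotone under taking supersets), it follows that $\kappa\setminus A$ cannot be in the subtle filter on $\kappa$, equivalently, $A$ is in the subtle filter on $\kappa$ — here I am using the standard fact that the subtle filter on a subtle cardinal $\kappa$ is precisely the filter generated by the complements of non-subtle subsets of $\kappa$, i.e., a set is subtle-filter-large iff its complement is not subtle. This yields exactly the conclusion of the corollary.

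I do not expect any real obstacle here: the work is entirely in recognizing that the trivial list makes every set homogeneous, so that $A$ in Theorem \ref{theorem_baumgartner} collapses to the desired set, and in unwinding the definition of the subtle filter. The only point requiring a moment's care is the equivalence $\alpha\in A\iff\alpha$ is $\Pi^1_\eta$-indescribable for all $\eta<\alpha^+$, which uses upward monotonicity of $\Pi^1_\eta$-indescribability in the subset argument together with the nontriviality (normality) of the ideals $\Pi^1_\eta(\alpha)$ when $\alpha$ is itself $\Pi^1_\eta$-indescribable.
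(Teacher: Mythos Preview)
Your approach is correct and is exactly what the paper intends: apply Theorem~\ref{theorem_baumgartner} with $S=\kappa$ and the trivial list, so that $A$ becomes the set in the corollary and $\kappa\setminus A$ lands in the subtle ideal. Two small expository slips to clean up: the remark ``we may simply take $X=\emptyset$'' is wrong (the empty set is never positive), and the sentence ``$\kappa\setminus A$ cannot be in the subtle filter, equivalently $A$ is in the subtle filter'' conflates two inequivalent statements---the correct deduction, which you do state afterward, is simply that $\kappa\setminus A$ not subtle means $\kappa\setminus A$ is in the subtle ideal, i.e., $A$ is in the subtle filter.
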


\subsection{Pushing Baumgartner's lemma up the ineffability hierarchy}\label{subsection_pushing_Baumgertner}

In this section, we provide our promised application of Theorem \ref{theorem_baumgartner} on the ineffability hierarchy, showing that iterated applications of the ineffability operator yield strong generalizations of the consequences of subtlety described in Theorem \ref{theorem_baumgartner}. We will also apply this result in order to obtain further results on the ineffability hierarchy later on in our paper. We will first need an easy auxiliary lemma, which is the analogue of \cite[Theorem 2.1]{MR1077260} for the ineffability operator. This result is essentially due to Baumgartner (two particular instances are mentioned as \cite[Theorem 2.3 and Theorem 2.4]{MR0384553}), and can be seen to follow from a combination of several results from \cite{HolyLCOandEE}, and in particular its \cite[Proposition 10.2]{HolyLCOandEE}. We would rather like to provide a self-contained proof, which is a minor adaption of the proof of \cite[Theorem 2.2]{MR0384553}.

\begin{lemma}\label{lemma:ineffability_normal}
  If $I$ is an ideal on $\kappa$, then $\I(I)$ is a normal ideal on $\kappa$.
\end{lemma}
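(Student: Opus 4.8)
The plan is to show that $\I(I)$ is a proper ideal that is closed under the diagonal union operation, which together give normality. I would proceed in three main steps.

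\textbf{Step 1: $\I(I)$ is a proper ideal.} First I would unpack the definition: $x \in \I(I)$ iff there is some $\kappa$-list $\vec a$ with $x \notin \I^{\vec a}(I)^+$, i.e. no $H \in I^+$ with $H \subseteq x$ is homogeneous for $\vec a$. Downward closure under subsets is immediate since a homogeneous $H$ inside a subset is also inside the bigger set. For closure under unions, suppose $x_0, x_1 \in \I(I)$, witnessed by $\kappa$-lists $\vec a^0, \vec a^1$; I would combine these into a single $\kappa$-list $\vec a$ (say, coding the pair $\langle a^0_\alpha, a^1_\alpha \rangle$ into $a_\alpha \subseteq \alpha$ via a fixed pairing) so that any $H$ homogeneous for $\vec a$ is homogeneous for both $\vec a^0$ and $\vec a^1$. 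Then if $H \in I^+$, $H \subseteq x_0 \cup x_1$, $H$ homogeneous for $\vec a$, by $I$-positivity either $H \cap x_0$ or $H \cap x_1$ is in $I^+$ (since $I$ is an ideal and $H$ is the union), and this piece is still homogeneous for the corresponding $\vec a^i$ and contained in $x_i$ — contradiction. Properness, i.e. $\kappa \notin \I(I)$, is part of the hypothesis context (Baumgartner's result, or can be re-derived); but actually the lemma as stated does not presuppose $\kappa \in \I(I)^+$, so I should be careful — if $\I(I)$ is improper it is trivially a normal ideal, so there is nothing to prove in that case, and I would note this. Also the bounded ideal is contained in $\I(I)$: for a bounded $x$, take any $\kappa$-list whose homogeneous sets of size $\geq 2$ must be unbounded, or simpler, note bounded sets cannot be in $I^+$ by our standing convention, so the empty witness works — this needs the convention that all ideals extend the bounded ideal.

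\textbf{Step 2: Normality (closure under diagonal unions).} This is the heart, and it is where I expect the main obstacle. Suppose $\langle x_\beta \mid \beta < \kappa \rangle$ are all in $\I(I)$, witnessed by $\kappa$-lists $\vec a^\beta = \langle a^\beta_\alpha \mid \alpha < \kappa \rangle$, and suppose toward a contradiction that the diagonal union $x = \du_{\beta<\kappa} x_\beta = \{\alpha < \kappa \mid \alpha \in x_\beta \text{ for some } \beta < \alpha\}$ is in $\I(I)^+$. Following Baumgartner's argument for Theorem 2.2 of \cite{MR0384553}, I would build a single master $\kappa$-list $\vec a$ that simultaneously encodes all the $\vec a^\beta$ "below the diagonal": for $\alpha < \kappa$, let $a_\alpha \subseteq \alpha$ code the sequence $\langle a^\beta_\alpha \mid \beta < \alpha \rangle$ together with enough bookkeeping (using a fixed pairing $\kappa \times \kappa \to \kappa$ and that $\alpha$ is closed under it on a club, or restricting attention to such $\alpha$). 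The key point: if $H \subseteq x$ is homogeneous for $\vec a$ and $H \in I^+$, pick the least $\gamma \in H$; since $\gamma \in x$, there is $\beta < \gamma$ with $\gamma \in x_\beta$; then homogeneity of $H$ for $\vec a$ forces homogeneity of $H \setminus (\beta+1)$ for $\vec a^\beta$ (because for $\alpha < \alpha'$ in $H$ above $\beta$, the coded slices $a^\beta_\alpha$ and $a^\beta_{\alpha'}$ agree appropriately). Moreover $H \setminus (\beta+1) \in I^+$ and, by homogeneity again, every large enough element of $H$ lies in $x_\beta$ — giving an $I$-positive subset of $x_\beta$ homogeneous for $\vec a^\beta$, contradicting $x_\beta \in \I(I)$.

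\textbf{Step 3: Assemble.} Combining Steps 1 and 2 gives that $\I(I)$ is a $\kappa$-complete (via normality) proper ideal containing the bounded ideal and closed under diagonal unions, i.e. a normal ideal on $\kappa$, as required. The delicate bookkeeping is entirely in Step 2: making sure the single master list $\vec a$ is a genuine $\kappa$-list ($a_\alpha \subseteq \alpha$), that homogeneity for $\vec a$ transfers to homogeneity for the relevant $\vec a^\beta$, and that the relevant pieces remain $I$-positive — this last being automatic since $I$ is an ideal (removing an initial segment, or one of finitely many pieces, from an $I$-positive set leaves something $I$-positive). I would likely restrict to a club of $\alpha$ closed under the coding pairing function to make the encoding clean, absorbing the club into the argument via the standing convention that ideals contain the bounded (hence are preserved under intersecting with clubs in the positivity sense). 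The main obstacle, then, is purely the combinatorial design of the master list and verifying the transfer of homogeneity — a minor adaptation of Baumgartner's Theorem 2.2, exactly as the remark preceding the lemma indicates.
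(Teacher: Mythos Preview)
Your approach is correct and close in spirit to the paper's, though you frame normality via closure under diagonal unions while the paper uses the equivalent regressive-function (Fodor) formulation. The paper's encoding is leaner: given a regressive $f\colon A\to\kappa$ and witness lists $\vec A_\alpha$ for each fiber $A_\alpha=f^{-1}(\alpha)$, it lets $a_\beta$ code only the pair $\langle f(\beta),\,a^{f(\beta)}_\beta\rangle$ rather than the entire below-diagonal sequence $\langle a^\beta_\alpha\mid\beta<\alpha\rangle$. A homogeneous $H$ then automatically has constant $f$-value $\alpha$, so $H\subseteq A_\alpha$ comes for free, and $H$ is visibly homogeneous for $\vec A_\alpha$. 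In your Step~2, the claim ``by homogeneity again, every large enough element of $H$ lies in $x_\beta$'' does not follow from the encoding you actually specify; you must also code the set $\{\beta<\alpha\mid\alpha\in x_\beta\}$ into $a_\alpha$ (perhaps this is your intended ``bookkeeping'', but it should be explicit). With that addition your argument goes through; the paper's regressive-function framing simply sidesteps this containment issue altogether.
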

\begin{proof}
  The only nontrivial property is normality. Note first that $\I(I)\supseteq\I([\kappa]^{<\kappa})\supseteq\NS_\kappa$, where the latter is due to an easy argument that may be found within the proof of \cite[Theorem 2.3]{MR0384553}.
  
  Assume now that $A\in\I(I)^+$. Let $C$ be the club set of all ordinals below $\kappa$ that are closed under the G\"odel pairing function $\Gamma$. By the above, it follows that $A\cap C\in\I(I)^+$, and we may thus assume that every element of $A$ is closed under G\"odel pairing.
  
  Assume that $f\colon A\to\kappa$ is regressive, and let $A_\alpha=f^{-1}(\alpha)$ for every $\alpha<\kappa$. Assume for a contradiction that $A_\alpha\in\I(I)$ for every $\alpha<\kappa$. Thus, for every $\alpha<\kappa$, we may fix an $A_\alpha$-list $\vec A_\alpha=\<a^\alpha_\beta\mid\beta\in A_\alpha\>$ which has no homogeneous set in $I^+$. 

Let $\vec A=\<a_\beta\mid\beta\in A\>$ be an $A$-list defined by letting, for every $\beta\in A$, $a_\beta$ code both $\{f(\beta)\}$ and $a^{f(\beta)}_\beta$, using G\"odel pairing. Since $A\in\I(I)^+$, there is $H\in I^+$ that is homogeneous for $\vec A$. It follows that $H$ is homogeneous for $f$, and we let $\alpha$ be the value of $f$ on $H$. It then follows that $H$ is homogeneous for $\vec A_\alpha$. This yields our desired contradiction.
\end{proof}

We need another auxiliary observation, which has already been used by Baumgartner in \cite{MR0384553}, but which we couldn't find a proof of in the set-theoretic literature. For the convenience of our readers, we would therefore like to provide the easy argument.

\begin{observation}
  For any cardinal $\kappa$, the subtle ideal on $\kappa$ is contained in the weakly ineffable ideal on $\kappa$.
\end{observation}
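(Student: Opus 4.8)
The plan is to unwind both ideals in terms of the list/indescribability data that defines them and to reduce the statement to a purely combinatorial comparison. Recall that $S \subseteq \kappa$ is in the subtle ideal iff $S$ is not subtle, i.e. there is an $S$-list $\vec S = \langle S_\alpha \mid \alpha \in S\rangle$ and a club $C \subseteq \kappa$ such that no pair $\alpha < \beta$ in $S \cap C$ satisfies $S_\alpha = S_\beta \cap \alpha$. On the other hand, $S$ is weakly ineffable iff $S \in \I([\kappa]^{<\kappa})^+$, i.e. for every $S$-list $\vec a$ there is an unbounded $H \subseteq S$ homogeneous for $\vec a$; so $S$ is in the weakly ineffable ideal iff there is an $S$-list $\vec a$ with no unbounded homogeneous subset of $S$. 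Thus what we must show is: if some $S$-list $\vec a$ witnesses that $S$ is not weakly ineffable, then some $S$-list $\vec b$ together with some club $C$ witnesses that $S$ is not subtle.

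First I would take the hypothesis and extract an $S$-list $\vec a = \langle a_\alpha \mid \alpha \in S\rangle$ with no unbounded homogeneous subset of $S$. The obstruction to subtlety should come from the \emph{club} parameter: subtlety only requires homogeneity of a pair inside $S \cap C$ for a fixed club $C$, which is much weaker than demanding an unbounded homogeneous set. The natural move is to absorb into each $a_\alpha$ enough information so that homogeneity of a single pair $\alpha < \beta$ forces a contradiction. Concretely, for each $\alpha \in S$, since $\vec a \restrict (S \cap \alpha)$ has no unbounded homogeneous subset of $S \cap \alpha$ (if it did, that witness would, roughly, threaten to extend), there is a club $C_\alpha \subseteq \alpha$ and an ordinal bound such that homogeneity for $\vec a$ within $S \cap C_\alpha \cap \alpha$ is very restricted; more usefully, for each $\alpha$ one can attach to $a_\alpha$ a code of a function $g_\alpha \colon S \cap \alpha \to \alpha$ ``diagonalizing'' against homogeneity. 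Then let $b_\alpha$ code the pair $\langle a_\alpha, g_\alpha\rangle$ via Gödel pairing (as in the proof of Lemma~\ref{lemma:ineffability_normal}), restrict to the club of ordinals closed under $\Gamma$, and check that if $\alpha < \beta$ in $S \cap C$ had $b_\alpha = b_\beta \cap \alpha$ then in particular $a_\alpha = a_\beta \cap \alpha$ and $g_\alpha = g_\beta \restrict (S \cap \alpha)$, and the latter coherence, chased up, produces an unbounded homogeneous subset of $S$ for $\vec a$ — contradiction.

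The main obstacle is identifying the right auxiliary data $g_\alpha$ so that coherence of the $g_\alpha$'s along a would-be subtlety pair genuinely reconstructs an unbounded homogeneous set: a single homogeneous pair carries very little information, so the diagonal functions must be chosen so that their mutual coherence is equivalent to (or at least implies) the existence of the forbidden global homogeneous set. I expect the cleanest route is contrapositive and "bookkeeping": assume toward a contradiction that every $S$-list has an unbounded homogeneous subset of $S$ (i.e. $S$ is weakly ineffable) and that $S$ is not subtle, fix the subtlety-refuting list $\vec b$ and club $C$, pass to the weakly ineffable homogeneous set $H \subseteq S$ for $\vec b$, and then observe $H \cap C$ is unbounded and any two of its elements form a homogeneous pair inside $S \cap C$, directly contradicting non-subtlety — which shows at once that the weakly ineffable ideal is contained in the subtle ideal, i.e. the subtle ideal contains the weakly ineffable ideal. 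Wait — that inclusion is the reverse of what is claimed; so in fact the correct and simplest argument is the latter one read the \emph{other} way: given $\vec b, C$ witnessing non-subtlety of $S$, I claim $\vec b$ (after closing under $\Gamma$ and intersecting $C$) already witnesses non–weak-ineffability, because an unbounded homogeneous $H \subseteq S \cap C$ for $\vec b$ would contain two elements forming exactly the pair forbidden by $C$. Hence no $S$-list homogeneous set exists, $S$ is not weakly ineffable, and $S$ lies in the weakly ineffable ideal; the proof is then just this one-paragraph observation, with the only technicalities being the harmless passage to the club of $\Gamma$-closed points and checking that intersecting the list's domain with $C$ does not disturb the $S$-list property.
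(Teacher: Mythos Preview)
Your proposal has a genuine gap, and it also contains a confusion about directions that leads you to abandon the correct approach.

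First, the direction: the contrapositive you wrote in the middle --- assume $S$ is weakly ineffable and show $S$ is subtle --- is exactly the right implication. ``$S$ weakly ineffable $\Rightarrow$ $S$ subtle'' is equivalent to ``$S$ not subtle $\Rightarrow$ $S$ not weakly ineffable'', i.e.\ subtle ideal $\subseteq$ weakly ineffable ideal. You then convinced yourself this was backwards and switched to a direct argument, but it was not backwards.

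Second, and more importantly, both of your attempts share the same gap. In the contrapositive version you take an unbounded homogeneous $H\subseteq S$ for $\vec b$ and ``observe $H\cap C$ is unbounded''; in the direct version you assert ``an unbounded homogeneous $H\subseteq S\cap C$''. Neither is justified: weak ineffability of $S$ only produces an unbounded $H\subseteq S$, and an unbounded set can be disjoint from a club (e.g.\ successor ordinals vs.\ limit ordinals). Restricting the list's domain to $S\cap C$ does not help, since extending the list arbitrarily on $S\setminus C$ allows the homogeneous set to live entirely in $S\setminus C$. Encoding $C\cap\alpha$ into the list is vacuous, since $\langle C\cap\alpha\mid\alpha\in S\rangle$ is automatically coherent.

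The paper closes this gap with one extra ingredient: Lemma~\ref{lemma:ineffability_normal} (normality of $\I([\kappa]^{<\kappa})$, hence in particular $\NS_\kappa\subseteq\I([\kappa]^{<\kappa})$). Given $A$ weakly ineffable, an $A$-list $\vec a$, and a club $C$, normality gives $A\cap C\in\I([\kappa]^{<\kappa})^+$; applying weak ineffability to $A\cap C$ and the restricted list yields an unbounded homogeneous $H\subseteq A\cap C$, and any two elements of $H$ form the required subtlety pair. That single invocation of normality is precisely what your argument is missing.
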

\begin{proof}
  Assume that $A\subseteq\kappa$ is not an element of the weakly ineffable ideal $\I([\kappa]^{<\kappa})$ on $\kappa$, that $\vec a$ is an $A$-list, and let $C$ be a club subset of $\kappa$. By Lemma \ref{lemma:ineffability_normal}, $A\cap C\in\I([\kappa]^{<\kappa})^+$. It follows that we can find $H\subseteq A\cap C$ that is homogeneous for $H$. This shows that $A$ is subtle.
\end{proof}

\begin{theorem}\label{theorem_pushing_Baumgartner}
Suppose $\gamma<\kappa^+$, $S\in \I^{\gamma+1}([\kappa]^{<\kappa})^+$ and $\vec{S}=\<S_\alpha\st\alpha\in S\>$ is an $S$-list. Let $A$ be the set of all ordinals $\alpha\in S$ such that
\[\exists X\subseteq S\cap\alpha\left[(\forall \xi<\alpha^+\ X\in \I^{f^\kappa_\gamma(\alpha)}(\Pi^1_\xi(\alpha))^+) \land (X\cup\{\alpha\}\text{ is hom. for }\vec{S})\right].\]
Then, $S\setminus A\in\I^{\gamma+1}([\kappa]^{<\kappa})$.
\end{theorem}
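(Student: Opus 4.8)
The proof will be an induction on $\gamma<\kappa^+$, modeled on the proof of Theorem \ref{theorem_baumgartner} (which handles, essentially, the case $\gamma=0$ via the subtle ideal), but carried out one level up the ineffability hierarchy at each step and managed at limit stages via the union definition of $\I^\gamma$. The base case $\gamma=0$: here $\I^1([\kappa]^{<\kappa})=\I([\kappa]^{<\kappa})$ is the weakly ineffable ideal, $f^\kappa_0(\alpha)=0$, so $\I^{f^\kappa_\gamma(\alpha)}(\Pi^1_\xi(\alpha))=\Pi^1_\xi(\alpha)$, and the statement reads: if $S$ is weakly ineffable-positive and $\vec S$ is an $S$-list, then the set $A$ of $\alpha\in S$ admitting $X\subseteq S\cap\alpha$ with $X\cup\{\alpha\}$ homogeneous for $\vec S$ and $X\in\Pi^1_\xi(\alpha)^+$ for all $\xi<\alpha^+$ is such that $S\setminus A$ is weakly ineffable-null. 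Since the weakly ineffable ideal contains the subtle ideal (by the Observation just proved), it suffices to show $S\setminus A$ is not subtle — but that is exactly Theorem \ref{theorem_baumgartner}, noting that the $X$ witnessing membership in $A$ there can be taken to have $X\cup\{\alpha\}$ homogeneous (extend $X$ by $\alpha$; homogeneity is preserved since $S_\alpha = S_\beta\cap\alpha$ for $\beta\in X$ follows from $X\subseteq S\cap\alpha$ being homogeneous and $X$ being below $\alpha$... actually one must argue $X$ is homogeneous \emph{with} $\alpha$, which is built into the statement of Theorem \ref{theorem_baumgartner}).

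\textbf{Successor step.} Assume the result for $\gamma$; prove it for $\gamma+1$. Suppose $S\in\I^{\gamma+2}([\kappa]^{<\kappa})^+=\I(\I^{\gamma+1}([\kappa]^{<\kappa}))^+$ and $\vec S$ is an $S$-list; let $A'$ be the set defined with $f^\kappa_{\gamma+1}$ in place of $f^\kappa_\gamma$, and suppose toward a contradiction that $S\setminus A'\in\I^{\gamma+2}([\kappa]^{<\kappa})$, i.e.\ $S\setminus A' \in \I(\I^{\gamma+1}([\kappa]^{<\kappa}))$. The idea: for each $\beta\in (S\setminus A')$, since $\beta$ fails to be in $A'$, for every $X\subseteq S\cap\beta$ with $X\cup\{\beta\}$ homogeneous for $\vec S$ there is $\xi<\beta^+$ with $X\notin\I^{f^\kappa_{\gamma+1}(\beta)}(\Pi^1_\xi(\beta))^+$; since on a club $f^\kappa_{\gamma+1}(\beta)=f^\kappa_\gamma(\beta)+1$ (Lemma \ref{lemma_can}(2)), we have $\I^{f^\kappa_{\gamma+1}(\beta)}(\Pi^1_\xi(\beta)) = \I(\I^{f^\kappa_\gamma(\beta)}(\Pi^1_\xi(\beta)))$. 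The plan is to build from $S\setminus A'$ an $(S\setminus A')$-list $\vec E$, each $E_\beta$ coding $S_\beta$ together with enough data (a sequence of local $\I$-instances and witnessing lists) to record the failure of each relevant $X$ to be $\I$-positive over $\I^{f^\kappa_\gamma(\beta)}(\Pi^1_\xi(\beta))$; since $S\setminus A'$ is $\I(\I^{\gamma+1}(\ldots))$-null we get — after intersecting with a club — that $S\setminus A'$ is in fact $\I^{\gamma+1}(\ldots)$-null (using that $\I(J)\supseteq J$? no — that direction is wrong; rather one uses that being $\I(J)$-null means \emph{some} list has no homogeneous $J$-positive set, and pushes this down). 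This is the delicate bit; the cleanest route is probably to apply the inductive hypothesis for $\gamma$ to $S$ together with a carefully chosen list, and then convert the resulting $\I^{\gamma+1}(\ldots)$-small set into an $\I^{\gamma+2}(\ldots)$-small set using the definition of the $\I$ operator and Lemma \ref{lemma:ineffability_normal} for normality, exactly paralleling how the base case fed through Theorem \ref{theorem_baumgartner}. One key technical point is the compatibility of the coding $\code^\alpha$ across levels, handled exactly as in the proof of Theorem \ref{theorem_baumgartner} via Lemma \ref{lemma_code_of_a_restriction}, together with the fact (Lemma \ref{lemma_formularestrictions}, applied in the generic ultrapower, or directly) that the ideals $\I^{f^\kappa_\gamma(\alpha)}(\Pi^1_\xi(\alpha))$ for $\alpha<\kappa$ are the ``$\alpha$-versions'' of $\I^\gamma(\Pi^1_\cdot(\kappa))$; this should follow from the expressibility results alluded to in Section \ref{section:expressibility}, or be provable directly by a generic-ultrapower argument showing $j(\langle \I^{f^\kappa_\gamma(\alpha)}(\Pi^1_\xi(\alpha))\mid\alpha<\kappa\rangle)_\kappa = \I^\gamma(\Pi^1_{j(\xi)}(\kappa))$ ... but care is needed here with the ordering of arguments and with the fact that $\Ult$ need not be well-founded beyond $\kappa^+$.

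\textbf{Limit step.} For $\gamma$ a limit ordinal, $\I^\gamma([\kappa]^{<\kappa})=\bigcup_{\delta<\gamma}\I^\delta([\kappa]^{<\kappa})$, so $\I^{\gamma+1}([\kappa]^{<\kappa}) = \I(\bigcup_{\delta<\gamma}\I^\delta([\kappa]^{<\kappa}))$. Given $S\in\I^{\gamma+1}([\kappa]^{<\kappa})^+$ and an $S$-list $\vec S$, and defining $A$ using $f^\kappa_\gamma$, suppose $S\setminus A\in\I^{\gamma+1}([\kappa]^{<\kappa})$. On a club $C$ (Lemma \ref{lemma_can}(1)) $f^\kappa_\gamma(\alpha)$ is a limit ordinal, so $\I^{f^\kappa_\gamma(\alpha)}(\Pi^1_\xi(\alpha)) = \bigcup_{\delta<f^\kappa_\gamma(\alpha)}\I^\delta(\Pi^1_\xi(\alpha))$; hence $X\in \I^{f^\kappa_\gamma(\alpha)}(\Pi^1_\xi(\alpha))^+$ iff $X\in\I^\delta(\Pi^1_\xi(\alpha))^+$ for \emph{all} $\delta<f^\kappa_\gamma(\alpha)$. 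One then wants: $\beta\notin A$ gives, for each such $X$, some $\delta<f^\kappa_\gamma(\beta)$ and $\xi<\beta^+$ with $X\notin\I^\delta(\Pi^1_\xi(\beta))^+$. The natural move is again to code all this witnessing data (the ordinal $\delta$, encoded via a well-ordering of $\beta$ of order type $\delta$ à la $R^\beta_\cdot$, together with the relevant failing list) into an $(S\setminus A)$-list and apply the successor case of the theorem at the appropriate level $\delta+1<\gamma$ — or, more uniformly, reduce directly to the inductive hypothesis at a single level $\delta<\gamma$ chosen by a pressing-down / Fodor argument on $S\setminus A$ against $f^\kappa_\gamma$. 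This pressing-down step, producing a single $\delta<\gamma$ that works stationarily (indeed $\I^{\gamma+1}(\ldots)$-positively) often, is where the regularity/normality of the ideals (Lemma \ref{lemma:ineffability_normal}, Theorem 5.5 of \cite{CodyHigherIndescribability}) is used, and it is the main obstacle at limit stages.

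\textbf{Main obstacle.} The hardest part, I expect, is the bookkeeping in the successor and limit steps that lets one pass from ``$S\setminus A$ is null for the iterate $\I^{\gamma+1}$'' back down to a contradiction with the inductive hypothesis — specifically, correctly identifying, for $\alpha<\kappa$, the ideal $\I^{f^\kappa_\gamma(\alpha)}(\Pi^1_\xi(\alpha))$ as the genuine ``$\alpha$-reflection'' of $\I^\gamma(\Pi^1_\xi(\kappa))$, with the matching of codes $\code^\alpha$ across all the nested operators handled so that Lemma \ref{lemma_code_of_a_restriction} applies uniformly. Once that identification is in place, the combinatorial core is a direct transfer of the argument of Theorem \ref{theorem_baumgartner}: build the auxiliary list $\vec E$ encoding $\langle S_\beta, \delta_\beta, \text{failing-list}_\beta\rangle$, apply the already-available lower-level result (inductive hypothesis, or Theorem \ref{theorem_baumgartner} at the base) to extract a positive homogeneous $H\cup\{\beta\}$, decode to get coherence of the $R$'s hence $\delta_\alpha = f^\beta_{\gamma}(\alpha)$-type equalities for $\alpha\in H$, run $\code$ through Lemma \ref{lemma_code_of_a_restriction} to get $\varphi^\alpha = \varphi\res^\beta_\alpha$ on a club, and derive the contradiction $V_\alpha\models\varphi^\alpha$ versus $\alpha$ lying in the set where $\varphi\res^\beta_\alpha$ fails.
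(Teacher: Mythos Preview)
Your overall induction scheme is right, and the base case and the Fodor/pressing-down move at limit stages are exactly what the paper does. The genuine gap is in the successor step. First, a slip: you wrote the contradiction hypothesis as $S\setminus A'\in\I^{\gamma+2}([\kappa]^{<\kappa})$, but that is the desired conclusion; the argument starts by assuming $S\setminus A'\in\I^{\gamma+2}([\kappa]^{<\kappa})^+$. More substantively, you are missing the key combinatorial simplification: for $\alpha\in S\setminus A'$, rather than trying to record the failure of \emph{every} homogeneous-with-$\alpha$ set $X$, one observes that $B_\alpha=\{\beta\in S\cap\alpha\st S_\beta=S_\alpha\cap\beta\}$ is the \emph{maximal} such set, so $\alpha\notin A'$ already gives a single $\xi_\alpha<\alpha^+$ with $B_\alpha\in\I^{f^\kappa_\delta(\alpha)+1}(\Pi^1_{\xi_\alpha}(\alpha))$, and hence a single witnessing $B_\alpha$-list $\vec{B}^\alpha$ with no homogeneous set in $\I^{f^\kappa_\delta(\alpha)}(\Pi^1_{\xi_\alpha}(\alpha))^+$. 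You then code the triple $\langle S_\alpha,B_\alpha,\vec{B}^\alpha\rangle$ into a list $\vec E$.

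The second point you are missing is the two-step descent. You do \emph{not} apply the inductive hypothesis directly to $S\setminus A'$; instead, since $E:=(S\setminus A')\cap C\in\I^{\delta+2}([\kappa]^{<\kappa})^+=\I(\I^{\delta+1}([\kappa]^{<\kappa}))^+$, the list $\vec E$ has a homogeneous set $H\in\I^{\delta+1}([\kappa]^{<\kappa})^+$. Homogeneity of $H$ for $\vec E$ makes the $\vec{B}^\alpha$ cohere into a single list $\vec B$ on $B=\bigcup_\alpha B_\alpha\supseteq H$, and \emph{now} you apply the inductive hypothesis at level $\delta$ to $H$ and $\vec B\restrict H$: this produces $\alpha\in H$ with some $X\subseteq H\cap\alpha$ in $\I^{f^\kappa_\delta(\alpha)}(\Pi^1_{\xi_\alpha}(\alpha))^+$ and homogeneous for $\vec B$, hence for $\vec{B}^\alpha$, contradicting the choice of $\vec{B}^\alpha$. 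None of the formula-coding machinery (the functions $\code^\alpha$, Lemma \ref{lemma_code_of_a_restriction}, restrictions $\varphi\res^\beta_\alpha$) is used here; the argument is purely combinatorial, and your ``main obstacle'' paragraph is pointed at apparatus that simply does not appear in this proof.
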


\begin{proof}
We proceed by induction on $\gamma<\kappa^+$. When $\gamma=0$, the result follows directly from Theorem \ref{theorem_baumgartner}, because the subtle ideal is contained in the weakly ineffable ideal $\I([\kappa]^{<\kappa})$, and $f^\kappa_0(\alpha)=0$ for all $\alpha<\kappa$.

Suppose $\gamma=\delta+1<\kappa^+$ is a successor ordinal, and suppose for a contradiction that $S\setminus A\in\I^{\delta+2}([\kappa]^{<\kappa})^+$. Let $C=\{\alpha<\kappa\st f^\kappa_{\delta+1}(\alpha)=f^\kappa_\delta(\alpha)+1\}$ be the club subset of $\kappa$ obtained from Lemma \ref{lemma_can}. Then, the set
\[E=\{\alpha\in S\setminus A\st \alpha\text{ is inaccessible}\}\cap C\]
is in $\I^{\delta+2}([\kappa]^{<\kappa})^+$. For each $\alpha\in E$, let $B_\alpha=\{\beta\in S\cap\alpha\st S_\beta=S_\alpha\cap\beta\}$. Since $B_\alpha\cup\{\alpha\}$ is homogeneous for $\vec{S}$ and $\alpha\in S\setminus A$, there is an ordinal $\xi_\alpha<\alpha^+$ such that $B_\alpha\in\I^{f^\kappa_\delta(\alpha)+1}(\Pi^1_{\xi_\alpha}(\alpha))$, and hence we may fix a $B_\alpha$-list $\vec{B}^\alpha=\<b^\alpha_\beta\st\beta\in B_\alpha\>$ such that $\vec{B}^\alpha$ has no homogeneous set in $\I^{f^\kappa_\delta(\alpha)}(\Pi^1_{\xi_\alpha}(\alpha))^+$. 


For $\alpha\in E$, let $E_\alpha$ code the triple $\langle S_\alpha,B_\alpha,\vec{B}^\alpha\rangle$ as a subset of $\alpha$ in a natural way. This defines an $E$-list $\vec{E}=\<E_\alpha\st\alpha\in E\>$. Since $E\in \I^{\delta+2}([\kappa]^{<\kappa})^+$, we may fix $H\in \mathcal P(E)\cap\I^{\delta+1}([\kappa]^{<\kappa})^+$ which is homogeneous for $\vec{E}$. It follows that $H$ is homogeneous for $\<S_\alpha\st\alpha\in E\>$, $\<B_\alpha\st\alpha\in E\>$ and $\<\vec{B}^\alpha\st\alpha\in E\>$. We let $D=\bigcup_{\alpha\in H} S_\alpha$, $B=\bigcup_{\alpha\in H}B_\alpha$ and $\vec{B}=\bigcup_{\alpha\in H}\vec{B}^\alpha=\<b_\alpha\st\alpha\in B\>$. Since $B=\{\alpha<\kappa\st S_\alpha=D\cap\alpha\}$, it follows that $H\subseteq B$.

Let $A_0$ be the set of all ordinals $\alpha\in H$ such that
\[\exists X\subseteq H\cap\alpha\left[(\forall\xi<\alpha^+\ X\in \I^{f^\kappa_\delta(\alpha)}(\Pi^1_\xi(\alpha))^+) \land (X\cup\{\alpha\}\text{ is hom. for }\vec{B})\right].\]
By our inductive hypothesis, $H\setminus A_0\in\I^{\delta+1}([\kappa]^{<\kappa})$, and hence $A_0\in \I^{\delta+1}([\kappa]^{<\kappa})^+$. Thus, we may fix an $\alpha\in A_0$. Since $\alpha\in H$, it follows by homogeneity that $\vec{B}\restrict (H\cap\alpha)=\vec{B}^\alpha\restrict H$. But by the definition of $A_0$, and since $\xi_\alpha<\alpha^+$, there is some $X\in \mathcal P(H\cap\alpha)\cap\I^{f^\kappa_\delta(\alpha)}(\Pi^1_{\xi_\alpha}(\alpha))^+$ which is homogeneous for $\vec{B}^\alpha$, which is a contradiction.

Now let us suppose $\gamma<\kappa^+$ is a limit ordinal, and suppose again for a contradiction that $S\setminus A\in\I^{\gamma+1}([\kappa]^{<\kappa})^+$. Since by Lemma \ref{lemma_can}, the set
\[C=\{\alpha<\kappa\st f^\kappa_\gamma(\alpha)\text{ is a limit ordinal and } f^\kappa_\gamma(\alpha)=\bigcup_{\delta\in F^\kappa_\gamma(\alpha)} f^\kappa_\delta(\alpha) \}\]
is in the club filter on $\kappa$, it follows that the set
\[E=\{\alpha\in S\setminus A\st\text{$\alpha$ is inaccessible}\}\cap C\]
is in $\I^{\gamma+1}([\kappa]^{<\kappa})^+$.

For each $\alpha\in E$, let $B_\alpha=\{\beta\in S\cap\alpha\st S_\beta=S_\alpha\cap\beta\}$. Since $B_\alpha\cup\{\alpha\}$ is homogeneous for $\vec{S}$, and $\alpha\in S\setminus A$, there is some $\xi_\alpha<\alpha^+$ such that $B_\alpha\in\I^{f^\kappa_\gamma(\alpha)}(\Pi^1_{\xi_\alpha}(\alpha))$. Since $\alpha\in C$, we have
\[B_\alpha\in\I^{f^\kappa_\gamma(\alpha)}(\Pi^1_{\xi_\alpha}(\alpha))=\bigcup_{\delta\in F^\kappa_\gamma(\alpha)}\I^{f^\kappa_\delta(\alpha)}(\Pi^1_{\xi_\alpha}(\alpha))=\bigcup_{\beta<\alpha}\I^{f^\kappa_{b_{\kappa,\gamma}(\beta)}(\alpha)}(\Pi^1_{\xi_\alpha}(\alpha)).\]
Using that $f^\kappa_\gamma(\alpha)$ is a limit ordinal once again, we may choose an ordinal $g(\alpha)<\alpha$ such that \[B_\alpha\in\I^{f^\kappa_{b_{\kappa,\gamma}(g(\alpha))}(\alpha)+1}(\Pi^1_{\xi_\alpha}(\alpha)).\]
This defines a regressive function $g:E\to\kappa$, and by the normality of $\I^{\gamma+1}([\kappa]^{<\kappa})^+$ that follows from Lemma \ref{lemma:ineffability_normal}, there is an $E^*\in \mathcal P(E)\cap \I^{\gamma+1}([\kappa]^{<\kappa})^+$ and some $\beta_0<\kappa$ such that $g(\alpha)=\beta_0$ for all $\alpha\in E^*$. Let $\nu=b_{\kappa,\gamma}(\beta_0)$ and notice that for all $\alpha\in E^*$, 
\[B_\alpha\in\I^{f^\kappa_{\nu}(\alpha)+1}(\Pi^1_{\xi_\alpha}(\alpha)).\] For each $\alpha\in E^*$, we fix a $B_\alpha$-list $\vec{B}^\alpha=\<b^\alpha_\beta\st\beta\in B_\alpha\>$ such that $\vec{B}^\alpha$ has no homogeneous set in $\I^{f^\kappa_{\nu}(\alpha)}(\Pi^1_{\xi_\alpha}(\alpha))^+$.

Now we define an $E^*$-list by letting $E^*_\alpha$ code the triple $\langle S_\alpha,B_\alpha,\vec{B}^\alpha\rangle$ as a subset of $\alpha$ in a natural way, for all $\alpha\in E^*$. This defines $\vec{E}^*=\<E^*_\alpha\st\alpha\in E^*\>$. Since $E^*\in \I^{\gamma+1}([\kappa]^{<\kappa})^+$, we may fix an $H\in \mathcal P(E^*)\cap\I^\gamma([\kappa]^{<\kappa})^+$ which is homogeneous for $\vec{E}^*$. Then, $H$ is homogeneous for $\<S_\alpha\st\alpha\in S\>$, $\<B_\alpha\st\alpha\in E^*\>$ and $\<\vec{B}^\alpha\st\alpha\in E^*\>$. We let $D=\bigcup_{\alpha\in H}S_\alpha$, $B=\bigcup_{\alpha\in H}B_\alpha$ and $\vec{B}=\bigcup_{\alpha\in H}\vec{B}^\alpha=\<b_\alpha\st\alpha\in B\>$. Since $B=\{\alpha<\kappa\st S_\alpha=D\cap\alpha\}$, it follows that $H\subseteq B$.

Now since $\nu<\gamma$, we have $H\in \I^\gamma([\kappa]^{<\kappa})^+\subseteq \I^{\nu+1}([\kappa]^{<\kappa})^+$, and we may apply the inductive hypothesis to the $H$-list $\vec{B}\restrict H$. Let $A_0$ be the set of all ordinals $\alpha\in H$ such that
\[\exists X\subseteq H\cap\alpha\left[(\forall\xi<\alpha^+\ X\in\I^{f^\kappa_{\nu}(\alpha)}(\Pi^1_\xi(\alpha))^+)\land (X\cup\{\alpha\}\text{ is hom. for }\vec{B}\restrict H)\right].\]
It follows that $H\setminus A_0\in\I^{\nu+1}([\kappa]^{<\kappa})$, which implies that $A_0\in\I^{\nu+1}([\kappa]^{<\kappa})^+$. Fix $\alpha\in A_0$. Since $\alpha\in H$, homogeneity implies that $\vec{B}\restrict (H\cap\alpha)=\vec{B}^\alpha\restrict H$. But, by the definition of $A_0$, and the fact that $\xi_\alpha<\alpha^+$, it follows that there is some $X\in \mathcal P(H\cap\alpha)\cap \I^{f^\kappa_{\nu}}(\Pi^1_{\xi_\alpha}(\alpha))^+$ which is homogeneous for $\vec{B}^\alpha$, a contradiction.
\end{proof}

The following is now immediate from Theorem \ref{theorem_pushing_Baumgartner}.

\begin{corollary}\label{corollary_below_gamma_plus_1_almost_ineffability}
Suppose $\kappa\in\I^{\gamma+1}([\kappa]^{<\kappa})^+$ where $\gamma<\kappa^+$. Then the set
\[\{\alpha<\kappa\st (\forall\xi<\alpha^+)\ \alpha\in\I^{f^\kappa_\gamma(\alpha)}(\Pi^1_\xi(\alpha))^+\}\]
is in the filter $\I^{\gamma+1}([\kappa]^{<\kappa})^*$. \hfill{$\Box$}
\end{corollary}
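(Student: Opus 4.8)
The plan is to obtain Corollary~\ref{corollary_below_gamma_plus_1_almost_ineffability} as the special case of Theorem~\ref{theorem_pushing_Baumgartner} in which $S=\kappa$ and $\vec S$ is the trivial $S$-list. First I would note that the hypothesis $\kappa\in\I^{\gamma+1}([\kappa]^{<\kappa})^+$ says precisely that $S=\kappa$ is $\I^{\gamma+1}([\kappa]^{<\kappa})$-positive, so the theorem is applicable to $S=\kappa$ together with \emph{any} $S$-list. Choose $\vec S=\<S_\alpha\st\alpha<\kappa\>$ with $S_\alpha=\emptyset$ for all $\alpha<\kappa$. Then for any $\alpha<\beta$ we have $S_\alpha=\emptyset=S_\beta\cap\alpha$, so every subset of $\kappa$ is homogeneous for $\vec S$; in particular the homogeneity clause in the definition of the set $A$ from the theorem is vacuous. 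Since $S\cap\alpha=\alpha$ here, the set $A$ thus becomes
\[A=\{\alpha<\kappa\st\exists X\subseteq\alpha\ \forall\xi<\alpha^+\ X\in\I^{f^\kappa_\gamma(\alpha)}(\Pi^1_\xi(\alpha))^+\}.\]

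Next I would check that this set $A$ coincides with the set displayed in the statement of the corollary. The inclusion of the corollary's set into $A$ is witnessed by taking $X=\alpha$. For the reverse inclusion, suppose $X\subseteq\alpha$ witnesses $\alpha\in A$. For each $\xi<\alpha^+$, the set $\I^{f^\kappa_\gamma(\alpha)}(\Pi^1_\xi(\alpha))$ is an ideal on $\alpha$ (the operator $\I$ preserves being an ideal by Lemma~\ref{lemma:ineffability_normal}, and the iteration does so as well, being a union of an increasing chain of ideals at limit stages), so its collection of positive sets is upward closed among subsets of $\alpha$; since $X\subseteq\alpha$ is positive, $\alpha$ itself is positive. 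As this holds for every $\xi<\alpha^+$, the ordinal $\alpha$ belongs to the corollary's set.

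Finally, Theorem~\ref{theorem_pushing_Baumgartner} gives $\kappa\setminus A=S\setminus A\in\I^{\gamma+1}([\kappa]^{<\kappa})$, which is exactly the assertion that $A\in\I^{\gamma+1}([\kappa]^{<\kappa})^*$, as desired. I do not expect a genuine obstacle in this argument: essentially everything is a direct specialization of the theorem, and the only point deserving a line of justification is the passage from ``some $\I^{f^\kappa_\gamma(\alpha)}(\Pi^1_\xi(\alpha))$-positive subset of $\alpha$ exists'' to ``$\alpha$ is itself $\I^{f^\kappa_\gamma(\alpha)}(\Pi^1_\xi(\alpha))$-positive'', which is just upward closure of the positive sets of an ideal.
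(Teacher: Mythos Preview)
Your proposal is correct and matches the paper's approach: the corollary is stated as immediate from Theorem~\ref{theorem_pushing_Baumgartner}, and your specialization to $S=\kappa$ with the constant $\emptyset$ list is exactly the intended derivation. The only extra detail you supply---identifying the resulting set $A$ with the set in the corollary via upward closure of positive sets---is routine and correctly argued.
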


\subsection{A version of Baumgartner's lemma for the strongly Ramsey ideal}

Recall that in Section \ref{section_introduction}, we 
introduced the strongly Ramsey subset operator $\mathcal{S}$. Next, we show that Baumgartner's lemma can, in a sense, be generalized to the \emph{strongly Ramsey ideal} $\S([\kappa]^{<\kappa})^+$.


\begin{theorem}\label{theorem_baumgartners_lemma_for_the_strongly_ramsey_ideal}
Suppose $S\in\S([\kappa]^{<\kappa})^+$ and $f:[S]^{<\omega}\to\kappa$ is a regressive function. Let
\[A=\{\alpha\in S\st(\exists X\subseteq S\cap \alpha)(\forall\eta<\alpha^+ X\in\Pi^1_\eta(\alpha)^+)\land(\text{$X$ is hom. for $f$})\}.\]
Then, $S\setminus A\in\S([\kappa]^{<\kappa})$.
\end{theorem}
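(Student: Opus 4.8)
The plan is to mirror the proof of Theorem~\ref{theorem_baumgartner}, but replacing the subtlety/ineffability machinery with the $\kappa$-model and $M$-ultrafilter apparatus underlying $\S$. Suppose for a contradiction that $S\setminus A\in\S([\kappa]^{<\kappa})^+$, so there is a $\kappa$-model $M$ with $S,\vec f$-data in $M$ and a $\kappa$-amenable $M$-normal $M$-ultrafilter $U$ on $\kappa$ with $U\subseteq([\kappa]^{<\kappa})^+$ and $S\setminus A\in U$. As in the earlier proofs, first shrink: the club of inaccessible cardinals below $\kappa$ lies in the $M$-normal filter $U^*$ (this uses that $M$ is a $\kappa$-model and the standard argument that normal $M$-ultrafilters concentrate on inaccessibles when $\kappa\in\S([\kappa]^{<\kappa})^+$), so we may assume every $\alpha\in S\setminus A$ is inaccessible. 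Let $j_U\colon M\to N$ be the ultrapower; then $\crit(j_U)=\kappa$, $N$ is well-founded up to (at least) $\kappa^+$ as computed in $M$, $\kappa$ is inaccessible in $N$, and $V_\kappa^M=V_\kappa^N=V_\kappa$.

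The heart of the argument: for each $\beta\in S\setminus A$ I would, exactly as in Theorem~\ref{theorem_baumgartner}, let $B_\beta=\{\alpha\in S\cap\beta\st \alpha\text{ is hom.\ for }f\restrict[\{\alpha,\beta\}]^{<\omega}\text{-compatibly with }\beta\}$ --- more precisely the set witnessing that $\beta$ could have been put into $A$ via homogeneity for $f$ --- and observe that since $\beta\notin A$, there is a limit ordinal $\xi_\beta$ with $\beta\le\xi_\beta<\beta^+$ such that the relevant homogeneous-for-$f$ set is not $\Pi^1_{\xi_\beta}$-indescribable in $\beta$; fix a $\Pi^1_{\xi_\beta}$-sentence $\varphi^\beta$ over $V_\beta$ witnessing this, coded via $\code^\beta$. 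Then I would run the whole reflection inside $N$: work at $\kappa$ in $N$, consider $j_U(S)$, $j_U(f)$, and the point $\kappa\in j_U(S\setminus A)$. Because $U$ is an $M$-ultrafilter and the construction $\beta\mapsto\langle \text{hom.\ data},\code^\beta(\varphi^\beta)\rangle$ is definable from parameters in $M$, its $j_U$-image at $\kappa$ gives a single $\Pi^1_\xi$-sentence $\varphi=j_U(\langle\varphi^\beta\rangle)_\kappa$ over $V_\kappa$ (note $V_\kappa^N=V_\kappa$ so $\varphi$ is a genuine $\Pi^1_\xi$-sentence, $\xi<\kappa^+$) that $V_\kappa$ satisfies, and such that no $\alpha$ in the relevant $f$-homogeneous set (as computed in $N$) satisfies $\varphi\restr^\kappa_\alpha$. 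The analogue of Lemma~\ref{lemma_code_of_a_restriction} --- or rather a direct $\kappa$-amenability argument --- gives that $\code^\kappa(\varphi)\cap\alpha=\code^\alpha(\varphi\restr^\kappa_\alpha)$ on a set in $U^*$, hence $\varphi^\alpha=\varphi\restr^\kappa_\alpha$ for $U$-many $\alpha$.

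To close: $\kappa\in\S([\kappa]^{<\kappa})^+$ holds in $N$ below $\kappa$ in the strong sense --- in fact I would use that $U$ restricted to $M$ gives, reflected, the strongly Ramsey-ness of $\kappa$, and more to the point, apply Baumgartner-style homogeneity \emph{one more time inside $N$}: the set $X=$ ``the $f$-homogeneous set we are building'' can be taken to be $j_U\text{-image-definable}$ and, by $M$-normality of $U$, to be in $\Pi^1_\eta(\kappa)^+$ for every $\eta<\kappa^+$ (this is precisely where the strength of $\S$ over subtlety is used: the reflecting object is itself $U$-positive, hence positive for every $\Pi^1_\eta$-ideal by a diagonal intersection argument using $\kappa$-amenability). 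But then $X$ witnesses $\kappa\in j_U(A)$, contradicting $\kappa\in j_U(S\setminus A)$. The main obstacle I anticipate is the bookkeeping in the ``one more reflection inside $N$'' step: unlike the subtle case where one gets a single $\beta$ with a stationary homogeneous $H$ below it, here one must verify that $X$ is simultaneously homogeneous for $f$ \emph{and} $\Pi^1_\eta$-positive for all $\eta<\kappa^+$, which requires carefully arranging that the sentences $\varphi^\alpha$ coding the failures of indescribability are themselves captured coherently by the $M$-ultrafilter --- i.e., that the $\kappa$-amenability of $U$ suffices to reflect a $\kappa^+$-indexed family of $\Pi^1_\eta$-indescribability assertions. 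I expect this to work because each $\Pi^1_\eta(\alpha)^+$ membership is a single statement about $V_\alpha$ (hence an element of $V_\kappa$ under the coding), so the family lives in $M$ and $U$ measures it, but making this precise while respecting that $\Ult$ may fail to be well-founded past $\kappa^+$ is the delicate point.
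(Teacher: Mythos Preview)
Your proposal has a genuine gap: you never actually construct a set homogeneous for $f$. The paper's proof is quite different from the pattern of Theorem~\ref{theorem_baumgartner} that you are trying to imitate, and the difference is essential because $f$ is a regressive function on $[S]^{<\omega}$, not an $S$-list.

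Concretely, your definition of $B_\beta$ does not make sense for a coloring of finite tuples: for an $S$-list one can take $B_\beta=\{\alpha\in S\cap\beta : S_\alpha=S_\beta\cap\alpha\}$, but there is no analogous ``type below $\beta$'' that a single $\alpha$ can match, since homogeneity for $f$ is a condition on \emph{all} finite increasing tuples simultaneously. Consequently the later step where you write ``the set $X=$ `the $f$-homogeneous set we are building' '' refers to nothing that has been built; and the claim that $X$ is $\Pi^1_\eta$-positive ``by $M$-normality of $U$ \ldots\ the reflecting object is itself $U$-positive, hence positive for every $\Pi^1_\eta$-ideal by a diagonal intersection argument'' is not an argument --- membership in $U\subseteq[\kappa]^\kappa$ does not by itself give indescribability, and no diagonal intersection produces an $f$-homogeneous set.

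The paper instead uses the closure of the $\kappa$-model $M$ under ${<}\kappa$-sequences in a crucial way: it forms the \emph{product ultrafilters} $U^n$ on $P(\kappa^n)^M$, uses them (via the Rowbottom-style fact that $j_{U^n}(f)(\kappa,j_U(\kappa),\ldots,j_{U^{n-1}}(\kappa))<\kappa$) to obtain for each $n$ a set $B_n\in U$ homogeneous for $f\restrict[S]^n$, and then sets $B=\bigcap_{n<\omega}B_n$. Because $M^{<\kappa}\subseteq M$, the sequence $\langle B_n\rangle$ is in $M$, so $B\in U$ and $\kappa\in j(B)$. Only \emph{after} this homogeneous $B$ is in hand does one run a reflection: since $\kappa\in j(S\setminus A)$, $N$ thinks $B$ is not $\Pi^1_\eta$-indescribable for some $\eta$, yielding a witness $\varphi$ with $V_\alpha\models\lnot\varphi\res^\kappa_\alpha$ for all $\alpha\in B$; elementarity and $\kappa\in j(B)$ then contradict $V_\kappa\models\varphi$. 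Your coding-and-reflecting scheme for the witnesses $\varphi^\beta$ is not needed, and by itself cannot substitute for the missing construction of the homogeneous set.
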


\begin{proof}
Suppose $S\setminus A\in\S([\kappa]^{<\kappa})^+$. Let $M$ be a $\kappa$-model with $S\setminus A,f\in M$ and let $U\subseteq[\kappa]^\kappa$ be a $\kappa$-amenable $M$-normal $M$-ultrafilter such that $S\setminus A\in U$. Let $j:M\to N$ be the usual elementary embedding obtained from $U$ such that $N$ is transitive. 

Since $U$ is $\kappa$-amenable, it follows that for every $B\subseteq\kappa^n\times\kappa$ in $M$, the set $\{\vec{\alpha}\in\kappa^n\st B_{\vec{\alpha}}\in U\}$ is in $M$ where $B_{\vec{\alpha}}=\{\beta<\kappa\st\vec{\alpha}\concat\beta\in B\}$. Thus, we can define the \emph{product ultrafilters} $U^n$ on $P(\kappa^n)^M$ by induction as follows. For $B\subseteq\kappa^n\times\kappa$, we let $B\in U^{n+1}=U^n\times U$ if and only if $B\in M$ and $\{\vec{\alpha}\in\kappa^n\st B_{\vec{\alpha}}\in U\}\in U^n$. For each $n\in\omega\setminus \{0\}$, we let $j_{U^n}:M\to N_{U^n}$ be the ultrapower of $M$ by $U^n$ and note that, it follows from \cite[Proposition 2.32]{MR2710923} that $N_{U^n}$ is well-founded. Furthermore, by \cite[Lemma 2.31]{MR2710923}, we have $j_{U^{n+1}}=j_{j_{U^n}(U)}\circ j_{U^n}$ where $j_{j_{U^n}(U)}$ is the ultrapower of $N_{U^n}$ by $j_{U^n}(U)$ (for more details on product ultrafilters one may consult \cite[Chapter 2]{MR2710923} or \cite{MR2830415}).

For each $n\in\omega\setminus \{0\}$, let $f_n=f\restrict[S]^n$. Since $f$ is regressive, it follows by elementarity that the ordinal $\gamma_1=j_{U^1}(f)(\{\kappa\})$ is less than $\kappa$. Furthermore, for each $n<\omega$ the ordinal $\gamma_{n+2}=j_{U^{n+2}}(f)(\{\kappa,j_U(\kappa),j_{U^2}(\kappa),\ldots,j_{U^{n+1}}(\kappa)\})$ is less than $\kappa$. Fix $n\in\omega\setminus\{0\}$. Let $A_n=\{\vec{\alpha}\in[S]^n\st f_n(\vec{\alpha})=\gamma_n\}$. By \cite[Lemma 2.34]{MR2710923}, there is a $B_n\in U$ such that for all $\beta_1<\cdots<\beta_n$ in $B_n$ we have $(\beta_1,\ldots,\beta_n)\in A_n$, that is, $f_n(\beta_1,\ldots,\beta_n)=\gamma_n$. Hence $B_n$ is homogeneous for $f_n$.

Clearly $B=\bigcap_{n<\omega}B_n$ is homogeneous for $f$, and since $M$ is a $\kappa$-model and $U$ is $M$-normal, it follows that $B\in U$ and hence $\kappa\in j(B)$. Now we have $B\in N$ and furthermore, $N$ thinks that $B$ is homogeneous for $f$. But, since $\kappa\in j(S\setminus A)$, it follows that $N$ thinks that there are no subsets of $S$ which are both $\Pi^1_\eta$-indescribable in $\kappa$ for all $\eta<(\kappa^+)^N$ and homogeneous for $f$. Hence, in $N$, there must be some $\eta<(\kappa^+)^N$ such that $B$ is not $\Pi^1_\eta$-indescribable in $\kappa$. Working in $N$, fix a $\Pi^1_\eta$-sentence $\varphi$ over $V_\kappa$ that is true in $V_\kappa$ such that for all $\alpha\in B$ we have $V_\alpha\models\lnot\varphi\res^\kappa_\alpha$. Since $M$ and $N$ are both $\kappa$-models and since $P(\kappa)^M=P(\kappa)^N$, it follows that $(V_\kappa\models\varphi)^M$ and
\[((\forall\alpha\in B)\ V_\alpha\models\lnot\varphi\res^\kappa_\alpha)^M.\]
Hence by elementarity,
\[((\forall\alpha\in j(B))\ V_\alpha\models\lnot j(\varphi)\res^{j(\kappa)}_\alpha)^N.\]
But this is a contradiction because $\kappa\in j(B)$ and $(V_\kappa\models\varphi)^N$ where $j(\varphi)\res^{j(\kappa)}_\kappa=\varphi$ (see the proof of Lemma \ref{lemma_restrictionvsj}).
\end{proof}

\section{Indescribability from homogeneity}\label{section:indescribability_from_homogeneity}

Extending \cite[Lemma 7.1]{MR0384553} and \cite[Lemma 5.1]{MR4206111}, we show that for all $\xi<\kappa^+$, $S\in\I(\Pi^1_\xi(\kappa))^+$ implies $S\in\Pi^1_{\xi+2}(\kappa)^+$. Let us note that the following lemma has precursors in the work of Welch et al. (see \cite[Corollary 3.24]{MR2817562} and \cite{brickhill-welch}).

\begin{lemma}\label{lemma_indescribability_from_homogeneity}
Suppose $S\subseteq\kappa$, $\xi<\kappa^+$, and for every $S$-list $\vec{S}$, there is a set $H\in \mathcal P(S)\cap\bigcap_{\zeta\in\{-1\}\cup\xi}\Pi^1_\zeta(\kappa)^+$ that is homogeneous for $\vec{S}$. Then, $S$ is a $\Pi^1_{\xi+1}$-indescribable subset of $\kappa$.
\end{lemma}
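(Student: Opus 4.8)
The plan is to prove the contrapositive via a standard ``describe the failure of homogeneity'' argument, packaged through generic ultrapowers. Suppose $S$ is not $\Pi^1_{\xi+1}$-indescribable in $\kappa$, so there is a $\Pi^1_{\xi+1}$-sentence $\varphi = \forall X\, \psi$ over $V_\kappa$ (with $\psi$ being $\Sigma^1_\xi$) such that $V_\kappa \models \varphi$ but for every $\alpha \in S$ with $\varphi\res^\kappa_\alpha$ defined we have $V_\alpha \not\models \varphi\res^\kappa_\alpha$. From the failure at each such $\alpha$ we extract a witnessing set $A_\alpha \subseteq V_\alpha$ (equivalently, coding it as a subset of $\alpha$ using a fixed pairing, and throwing away the non-inaccessible or irregular $\alpha$, which we may do since the relevant club of regular/inaccessible points is $\Pi^1_\zeta$-indescribable-positive-preserving in the sense that intersecting with a club keeps us in each $\Pi^1_\zeta(\kappa)^+$) such that $V_\alpha \models \neg(\psi\res^\kappa_\alpha)[A_\alpha]$. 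This defines an $S$-list $\vec S = \langle A_\alpha \mid \alpha \in S\rangle$ (after coding). The goal is then to show this particular $\vec S$ has no homogeneous set in $\bigcap_{\zeta \in \{-1\}\cup\xi}\Pi^1_\zeta(\kappa)^+$, contradicting the hypothesis.

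The heart of the matter is the following: if $H \subseteq S$ is homogeneous for $\vec S$, then $H$ coheres to a single set $A = \bigcup_{\alpha \in H} A_\alpha$ (more precisely, $A_\alpha = A \cap V_\alpha$ for all $\alpha \in H$, where I am identifying sets with their codes appropriately). Now I would run the generic ultrapower argument: take $G$ generic for $P(\kappa)/\NS_\kappa$ with $H \in G$, and let $j \colon V \to \Ult$ be the generic ultrapower, which has critical point $\kappa$ and is well-founded up to $\kappa^+$. Since $H \in G$, we get $\kappa \in j(H)$, hence $\kappa$ is regular (indeed inaccessible, by our pruning) in $\Ult$, and the coherence of $\vec S$ along $H$ gives, by elementarity applied to $j(\vec S)$ at coordinate $\kappa$, that $j(\vec S)_\kappa = j(A) \cap V_\kappa = A$ (using $H \in G$ and $j \restrict V_\kappa = \id$). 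By the choice of the list, $j(\vec S)$ witnesses in $\Ult$ that for every $\alpha \in j(H)$ with $j(\varphi)\res^{j(\kappa)}_\alpha$ defined, $V_\alpha \not\models (j(\psi)\res^{j(\kappa)}_\alpha)[j(\vec S)_\alpha]$; instantiating at $\alpha = \kappa$ and using Lemma \ref{lemma_restrictionvsj} (which applies since $\kappa$ is regular in $\Ult$) to get $j(\varphi)\res^{j(\kappa)}_\kappa = \varphi$ and $j(\psi)\res^{j(\kappa)}_\kappa = \psi$, this says $V_\kappa = V_\kappa^{\Ult} \models \neg\psi[A]$. Hence $\Ult \models V_\kappa \not\models \forall X\,\psi$, i.e. $\Ult \models V_\kappa \not\models \varphi$. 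But $V_\kappa \models \varphi$ is a statement about $H(\kappa^+)$, which is correctly computed by $\Ult$ (as $H(\kappa^+) \subseteq \Ult$ and $\Ult$ is well-founded up to $\kappa^+$), so $\Ult \models V_\kappa \models \varphi$, a contradiction.

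To finish, I need the right genericity: the above shows that for \emph{any} stationary $H$ homogeneous for $\vec S$ we can force $H \in G$ and derive a contradiction, so \emph{no} stationary set is homogeneous for $\vec S$ --- in particular no $H \in \Pi^1_0(\kappa)^+ = \NS_\kappa^+$ is homogeneous, which already contradicts the hypothesis (since $\{-1\} \cup \xi \ni -1$, and actually we only used that a homogeneous $H$ is stationary, i.e. in $\Pi^1_0(\kappa)^+$; if $\xi \geq 1$ the hypothesis gives even more). Wait --- I should double check the indexing: the hypothesis gives $H \in \bigcap_{\zeta \in \{-1\}\cup\xi}\Pi^1_\zeta(\kappa)^+$, and $0 \in \xi$ precisely when $\xi \geq 1$; if $\xi = 0$ then $\{-1\}\cup\xi = \{-1\}$ and $H$ is merely unbounded, so in that case the argument above must be redone with $G$ only required to meet a predense-below-$H$ condition forcing an unbounded-many-times-hit set into $G$ --- but an unbounded $H$ need not be in any generic for $P(\kappa)/\NS_\kappa$. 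I expect the resolution (and the main obstacle) is exactly this $\xi = 0$ edge case: there one argues directly without generic ultrapowers, or observes that the coherence of $\vec S$ along an unbounded set still forces a single $A$ and then uses that $\varphi$ being $\Pi^1_1$ reflects to a club of $\alpha$ (so one gets a club $C$ with $\varphi\res^\kappa_\alpha$ defined on $C$, and by unboundedness of $H$ picks $\alpha \in H \cap C$, getting a $\Pi^1_0$-indescribability-style reflection of $\psi$ --- this is the classical argument that $\Pi^1_1$-indescribability of $S$ follows from $S$ being ``weakly compact-positive'' / ineffability-positive, and it is exactly the content of \cite[Lemma 7.1]{MR0384553}). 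So the write-up should split into the generic-ultrapower argument for the uniform statement (valid whenever a stationary homogeneous set is assumed) and a pointer to, or reproduction of, the classical reflection argument at the base. The genuinely delicate point throughout is keeping the coding of parameters and the restriction operation $\res$ synchronized under $j$, but this is precisely what Lemma \ref{lemma_restrictionvsj} and Remark \ref{remark_coding} were set up to handle.
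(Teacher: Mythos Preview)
Your argument has a genuine gap at the absoluteness step. You assert that ``$V_\kappa\models\varphi$ is a statement about $H(\kappa^+)$, which is correctly computed by $\Ult$'', but $H(\kappa^+)^V\subseteq\Ult$ does \emph{not} give $\mathcal P(\kappa)^{\Ult}=\mathcal P(\kappa)^V$. Any $\kappa$-list $\langle a_\alpha\mid\alpha<\kappa\rangle\in V$ represents a subset $[\,\alpha\mapsto a_\alpha\,]_G$ of $\kappa$ in $\Ult$, and for generic $G$ this is typically not in $V$. Hence the second-order quantifiers in $\psi$ range over different domains in $V$ and in $\Ult$: from $(V_\kappa\models\psi[A])^V$ and $(V_\kappa\models\lnot\psi[A])^{\Ult}$ you cannot derive a contradiction once $\psi$ has complexity $\Sigma^1_\xi$ with $\xi\ge 2$.

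A sanity check confirms the gap: you yourself note that you ``only used that a homogeneous $H$ is stationary''. If the argument were sound it would show that $S\in\I(\NS_\kappa)^+$ already implies $S\in\Pi^1_{\xi+1}(\kappa)^+$ for \emph{every} $\xi<\kappa^+$, hence that every ineffable cardinal is $\Pi^1_\xi$-indescribable for all $\xi<\kappa^+$. This is false; compare with Proposition~\ref{proposition_double_restriction} and the surrounding hierarchy results (e.g.\ Corollary~\ref{corollary_proper_containment_from_baumgartner}), and recall that complete ineffability, not mere ineffability, is what yields full $\Pi^1_\xi$-indescribability.

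The paper's proof does not use generic ultrapowers here and genuinely needs the full hypothesis on $H$. In the key case ($\xi\ge\kappa$ a limit) one writes $\varphi=\forall X\bigvee_{\zeta<\xi}\psi_\zeta$, extracts witnesses $T_\alpha$ as you do, takes a homogeneous $H\in\bigcap_{\zeta<\xi}\Pi^1_\zeta(\kappa)^+$, and forms $T=\bigcup_{\alpha\in H}T_\alpha$. Since $V_\kappa\models\varphi$, some single disjunct $\psi_\zeta(T)$ holds in $V_\kappa$; one then uses the $\Pi^1_\zeta$-indescribability of $H$ (intersected with the club $\{\alpha:\zeta\in F^\kappa_\xi(\alpha)\}$) to reflect $\psi_\zeta(T)$ to some $\alpha\in H$, where homogeneity gives $T\cap V_\alpha=T_\alpha$ and hence a contradiction with the choice of $T_\alpha$. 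The level-$\zeta$ indescribability of $H$ is exactly what is required to reflect $\psi_\zeta$, which is why the hypothesis must scale with $\xi$.
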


\begin{proof}
The case in which $\xi<\kappa$ is handled by \cite[Lemma 2.20]{MR4206111}. The case in which $\kappa<\xi<\kappa^+$ and $\xi$ is a successor ordinal is similar (see the corresponding case in \cite[Lemma 2.20]{MR4206111}), and is thus left to the reader.

Suppose $\kappa\leq\xi<\kappa^+$, $\xi$ is a limit ordinal, and every $S$-list has a homogeneous set $H\in \mathcal P(S)\cap\bigcap_{\zeta<\xi}\Pi^1_\zeta(\kappa)^+$. Suppose for a contradiction that $S$ is not $\Pi^1_{\xi+1}$-indescribable. Let \[\varphi=\forall X\left(\bigvee_{\zeta<\xi}\psi_\zeta\right)\] be $\Pi^1_{\xi+1}$ over $V_\kappa$, such that $V_\kappa\models\varphi$, and such that for all $\alpha\in S$, we have $V_\alpha\not\models\varphi\res^\kappa_\alpha$ whenever $\varphi\res^\kappa_\alpha$ is defined. Fix a bijection $b:V_\kappa\to\kappa$, let 
\[C=\{\alpha<\kappa\st b\restrict\alpha:V_\alpha\to\alpha\text{ is a bijection and }\varphi\res^\kappa_\alpha\text{ is  defined}\},\]
and note that $C$ is a club subset of $\kappa$ by Lemma \ref{lemma_restriction_is_nice}. For each $\alpha\in S\cap C$, the sentence $\varphi\res^\kappa_\alpha$ thus is $\Pi^1_{f^\kappa_{\xi+1}(\alpha)}$ over $V_\alpha$, and hence $V_\alpha\models\lnot\varphi\res^\kappa_\alpha$, where
\[\lnot\varphi\res^\kappa_\alpha=\exists X\left(\bigwedge_{\zeta\in f^\kappa_\xi(\alpha)}\lnot\psi_{(\pi^\kappa_{\xi,\alpha})^{-1}(\zeta)}\res^\kappa_\alpha\right).\] 
For each $\alpha\in S\cap C$, let $T_\alpha\subseteq V_\alpha$ be such that 
\begin{align}V_\alpha\models\bigwedge_{\zeta\in f^\kappa_\xi(\alpha)}(\lnot\psi_{(\pi^\kappa_{\xi,\alpha})^{-1}(\zeta)}\res^\kappa_\alpha)(T_\alpha).\label{equation_ind_from_hom}
\end{align}
For $\alpha\in S\setminus C$ let $T_\alpha\subseteq\alpha$ be arbitrary. Now we define an $S$-list $\vec{S}=\<S_\alpha\st\alpha\in S\>$ where $S_\alpha=b[T_\alpha]$ for all $\alpha\in S$. Let $H\in \mathcal P(S)\cap\bigcap_{\zeta<\xi}\Pi^1_\zeta(\kappa)^+$ be ho\-mo\-ge\-ne\-ous for $\vec{S}$. Notice that $H\cap C\in\bigcap_{\zeta<\xi}\Pi^1_\zeta(\kappa)^+$, let $R=\bigcup_{\alpha\in H\cap C}S_\alpha$, and let $T=b^{-1}[R]$.
Since $V_\kappa\models\varphi$, we have $V_\kappa\models\psi_\zeta(T)$ for some fixed $\zeta<\xi$. Since the set
\[H\cap C\cap\{\alpha<\kappa\st \zeta\in F^\kappa_\xi(\alpha)\}\] is $\Pi^1_\zeta$-indescribable in $\kappa$, it follows that for some $\alpha\in H\cap C$ with $\zeta\in F^\kappa_\xi(\alpha)$ we have $V_\alpha\models(\psi_\zeta\res^\kappa_\alpha)(T\cap V_\alpha)$. By homogeneity, we have $R\cap\alpha=S_\alpha$, and hence $T\cap V_\alpha=T_\alpha$. This implies that $V_\alpha \models(\psi_\zeta\res^\kappa_\alpha)(T_\alpha)$, but this contradicts (\ref{equation_ind_from_hom}).
\end{proof}

It was shown in \cite[Proposition 3.8]{CodyHigherIndescribability} that measurable cardinals are $\Pi^1_\xi$-indescribable for every $\xi<\kappa^+$. We want to make use of Lemma \ref{lemma_indescribability_from_homogeneity} in order to provide a better upper bound. Recall that a cardinal $\kappa$ is \emph{completely ineffable} if there is a collection $\mathcal S$ of stationary subsets of $\kappa$ that is closed under the taking of supersets (such collections are called a \emph{stationary class}), such that whenever $S\in\mathcal S$ and $\vec S$ is an $S$-list, then there is a set $H\in\mathcal S$ that is homogeneous for $\vec S$.

\begin{proposition}
  If $\kappa$ is completely ineffable, then $\kappa$ is $\Pi^1_\xi$-indescribable for every $\xi<\kappa^+$.
\end{proposition}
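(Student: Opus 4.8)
The statement to prove is that complete ineffability of $\kappa$ implies $\Pi^1_\xi$-indescribability of $\kappa$ for every $\xi<\kappa^+$. The natural approach is a transfinite induction on $\xi<\kappa^+$, feeding each stage into Lemma \ref{lemma_indescribability_from_homogeneity}. More precisely, I will prove by induction on $\xi<\kappa^+$ the stronger statement that there is a stationary class $\mathcal S$ witnessing complete ineffability of $\kappa$ with the additional property that every $S\in\mathcal S$ is $\Pi^1_\zeta$-indescribable for all $\zeta<\xi$ (equivalently, $\mathcal S\subseteq\bigcap_{\zeta\in\{-1\}\cup\xi}\Pi^1_\zeta(\kappa)^+$ once we also note $\Pi^1_{-1}$ and $\Pi^1_0$ are trivially handled, as every member of a stationary class is stationary hence $\Pi^1_0$-indescribable). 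Actually, since complete ineffability is a fixed property of $\kappa$ and a single stationary class $\mathcal S$ is fixed at the outset, what I really want is: the witnessing stationary class $\mathcal S$ automatically consists of $\Pi^1_\zeta$-indescribable sets for all $\zeta<\kappa^+$. So the induction should be: for every $\xi<\kappa^+$, every $S\in\mathcal S$ is $\Pi^1_\xi$-indescribable.

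\textbf{Key steps.} Fix a stationary class $\mathcal S$ witnessing complete ineffability of $\kappa$, and fix $S\in\mathcal S$; I prove $S\in\Pi^1_\xi(\kappa)^+$ by induction on $\xi<\kappa^+$. The base cases $\xi=-1$ and $\xi=0$ hold because $S$ is stationary. For the inductive step, I want to apply Lemma \ref{lemma_indescribability_from_homogeneity}: it suffices to show that for every $S$-list $\vec S$ there is $H\in\mathcal P(S)\cap\bigcap_{\zeta\in\{-1\}\cup\xi}\Pi^1_\zeta(\kappa)^+$ homogeneous for $\vec S$. Given an $S$-list $\vec S$, complete ineffability yields $H\in\mathcal S$ homogeneous for $\vec S$ (and $H\subseteq S$ — this requires a small remark, see below). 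By the inductive hypothesis applied to $H\in\mathcal S$, $H\in\Pi^1_\zeta(\kappa)^+$ for every $\zeta<\xi$, and also $H\in\Pi^1_{-1}(\kappa)^+$ since $H$ is unbounded. Hence $H$ meets the hypothesis of Lemma \ref{lemma_indescribability_from_homogeneity}, so $S\in\Pi^1_{\xi+1}(\kappa)^+$ — wait, this gives the successor and limit steps simultaneously, since Lemma \ref{lemma_indescribability_from_homogeneity} as stated produces $\Pi^1_{\xi+1}$-indescribability from homogeneous sets that are $\Pi^1_\zeta$-indescribable for all $\zeta\le\xi$ roughly; I need to organize the induction so that to get $\Pi^1_\xi(\kappa)^+$ for a given $\xi$ I apply the lemma with the parameter being the predecessor (if $\xi=\eta+1$) or, for limit $\xi$, observe that $\Pi^1_\xi(\kappa)^+=\bigcap_{\zeta<\xi}\Pi^1_\zeta(\kappa)^+$ is not literally true, so for limit $\xi$ I instead apply Lemma \ref{lemma_indescribability_from_homogeneity} with $\xi$ itself in the role of its ``$\xi$'': the lemma's hypothesis then asks for homogeneous $H\in\bigcap_{\zeta<\xi}\Pi^1_\zeta(\kappa)^+$, which the inductive hypothesis supplies, and its conclusion is $S\in\Pi^1_{\xi+1}(\kappa)^+\subseteq\Pi^1_\xi(\kappa)^+$. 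Thus in all cases a single application of Lemma \ref{lemma_indescribability_from_homogeneity} at the right parameter closes the induction.

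\textbf{Main obstacle.} The only subtle points are bookkeeping: first, verifying that the homogeneous set $H$ returned by complete ineffability may be taken to be a \emph{subset} of $S$ — this follows because for any $S$-list $\vec S$, once $H\in\mathcal S$ is homogeneous for $\vec S$, the set $H\cap S\in\mathcal S$ (stationary classes are closed under supersets, and $H\cap S$ need not be a superset of $H$, so one should instead extend $\vec S$ or argue directly that $H$ homogeneous for an $S$-list forces coherence so that $H\setminus S$ can be discarded; cleanest is: replace $\vec S$ by the $S$-list and note any $H$ homogeneous for it with $H\in\mathcal S$ has $H\cap S$ still homogeneous and $H\cap S\supseteq$ nothing — so instead one observes $H\cap S\in\mathcal S$ fails in general, hence one should from the start only feed $S$-lists and note that the definition of homogeneity for an $S$-list only constrains elements of $H$ that lie in $S$; therefore $H\cap S$ is homogeneous and one checks $H\cap S\in\mathcal S$ by noting $H\subseteq S$ can be arranged since $\mathcal S\ni H$ and $\mathcal S$ witnesses complete ineffability applied with the list living on $H\cap S$...). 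To avoid this circularity, the clean fix is a standard one: enlarge $\vec S$ to a $\kappa$-list (or note that the relevant notion of complete ineffability is equivalently formulated so that homogeneous sets are subsets of the domain), or simply invoke that $H\cap S\in\mathcal S$ because $\mathcal S$ is closed downward along homogeneity — I expect the paper handles this via the observation that one may always shrink to $H\cap S$ and this still lies in $\mathcal S$. The second minor point is confirming that Lemma \ref{lemma_indescribability_from_homogeneity} genuinely applies at limit $\xi$ with no weak Mahlo hypothesis needed here, which it does as stated. Beyond that the argument is a routine induction, and the real content is entirely carried by Lemma \ref{lemma_indescribability_from_homogeneity}.
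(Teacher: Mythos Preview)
Your approach is essentially the same as the paper's: a transfinite induction on $\xi<\kappa^+$ showing that every member of a witnessing stationary class is $\Pi^1_\xi$-indescribable, with Lemma~\ref{lemma_indescribability_from_homogeneity} doing the work at each step. The paper packages this slightly differently by passing to the maximal witnessing class $\mathcal T$, forming the completely ineffable ideal $I=\mathcal P(\kappa)\setminus\mathcal T$, observing that $\I(I)=I$, and then running the same induction; but the content is identical.

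The obstacle you spend most of your proposal on, namely whether the homogeneous set $H$ can be taken to lie inside $S$, is a non-issue under the paper's conventions: the paper \emph{defines} a homogeneous set for an $A$-list to be a subset of $A$ (see the paragraph introducing $A$-lists in Section~\ref{section_introduction}), so the definition of complete ineffability already hands you $H\subseteq S$ with $H\in\mathcal S$. All of your attempted workarounds (extending to $\kappa$-lists, coding membership in $S$, shrinking to $H\cap S$) are therefore unnecessary, and your remark that ``$H\cap S\in\mathcal S$ fails in general'' is moot. This is also exactly what makes the paper's equation $\I(I)=I$ immediate ``by the very definition of $\I$'': the operator $\I$ is defined so that homogeneous sets are subsets, matching the complete-ineffability requirement. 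Once you drop this worry, your write-up collapses to the two-line induction the paper gives.
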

\begin{proof}
  Let $\mathcal T$ be the union of all stationary classes witnessing that $\kappa$ is completely ineffable. It is easy to see that $\mathcal T$ itself is a stationary class witnessing that $\kappa$ is completely ineffable, and also that $I=\mathcal P(\kappa)\setminus\mathcal T$ is an ideal on $\kappa$ -- in fact, $I$ is what is called the \emph{completely ineffable ideal} on $\kappa$, as defined in \cite{MR918427}. Note that by the very definition of $\I$, we see that $\I(I)=I$. Using Lemma \ref{lemma_indescribability_from_homogeneity}, and recalling that $\Pi^1_0(\kappa)\subseteq I$ is the nonstationary ideal on $\kappa$, a straightforward induction now yields $\kappa$ to be $\Pi^1_\xi$-indescribable for every $\xi<\kappa^+$.
\end{proof}

\section{Some properties of the ineffability and the Ramsey operator}\label{section:basic}

In this section, we will provide two lemmas about the ineffability and the Ramsey operator which will be required later on, but which should also be of independent interest. For the Ramsey operator, when $\gamma$ and $\xi$ are both less than $\kappa$, these are due to the first author in \cite[Lemma 3.1 and Lemma 3.2]{MR4206111}.

\begin{lemma}\label{lemma_pos_union_of_pos_sets_is_pos}
Let $\O\in\{\I,\R\}$. Suppose $\kappa$ is a cardinal, $\gamma<\kappa^+$, $\xi\in\{-1\}\cup\kappa^+$, $S\in\O^\gamma(\Pi^1_\xi(\kappa))^+$, and for each $\alpha\in S$, let $S_\alpha\in\O^{f^\kappa_\gamma(\alpha)}(\Pi^1_{f^\kappa_\xi(\alpha)}(\alpha))^+$. Then $\bigcup_{\alpha\in S}S_\alpha\in\O^\gamma(\Pi^1_\xi(\kappa))^+$.
\end{lemma}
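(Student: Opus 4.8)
The plan is to prove this by induction on $\gamma < \kappa^+$, treating the two operators $\O \in \{\I, \R\}$ uniformly. The key technical tool is the standard amalgamation trick for these operators: given a positive set $S$ and a positive set $S_\alpha$ for each $\alpha \in S$, one wants to combine a witness for the positivity of $S$ (relative to a given list/coloring on $\kappa$) with witnesses for the positivity of the $S_\alpha$'s. First I would observe that the case $\gamma = 0$ is immediate, since then $\O^0(\Pi^1_\xi(\kappa)) = \Pi^1_\xi(\kappa)$, $f^\kappa_0(\alpha) = 0$, and the claim becomes: if $S \in \Pi^1_\xi(\kappa)^+$ and each $S_\alpha \in \Pi^1_{f^\kappa_\xi(\alpha)}(\alpha)^+$, then $\bigcup_{\alpha \in S} S_\alpha \in \Pi^1_\xi(\kappa)^+$; this should follow from a ``pasting'' argument for $\Pi^1_\xi$-indescribability, reflecting a given $\Pi^1_\xi$-sentence $\varphi$ true in $V_\kappa$ first into some $V_\alpha$ with $\alpha \in S$ so that $\varphi\res^\kappa_\alpha$ (which is $\Pi^1_{f^\kappa_\xi(\alpha)}$ over $V_\alpha$ by Observation \ref{observation:definedcomplexity}) holds in $V_\alpha$, then reflecting $\varphi\res^\kappa_\alpha$ into some $V_\beta$ with $\beta \in S_\alpha$ using $S_\alpha \in \Pi^1_{f^\kappa_\xi(\alpha)}(\alpha)^+$, and finally invoking Proposition \ref{proposition_double_restriction} to identify the composed restriction $(\varphi\res^\kappa_\alpha)\res^\alpha_\beta$ with $\varphi\res^\kappa_\beta$ on a club, so that $\beta$ witnesses reflection of $\varphi$ into $\bigcup_{\alpha} S_\alpha$.

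Next I would handle the successor step $\gamma = \delta + 1$. Here I would use Lemma \ref{lemma_can} to pass to the club $C = \{\alpha < \kappa \mid f^\kappa_{\delta+1}(\alpha) = f^\kappa_\delta(\alpha) + 1\}$, intersecting $S$ with $C$ (legal since $\O^{\delta+1}(\Pi^1_\xi(\kappa))$ extends $\NS_\kappa$, by normality, Lemma \ref{lemma:ineffability_normal} for $\I$ and Feng's theorem for $\R$). Given $S \in \O^{\delta+1}(\Pi^1_\xi(\kappa))^+ = \O(\O^\delta(\Pi^1_\xi(\kappa)))^+$ and each $S_\alpha \in \O^{f^\kappa_\delta(\alpha)+1}(\Pi^1_{f^\kappa_\xi(\alpha)}(\alpha))^+$, I want to show $\bigcup_{\alpha} S_\alpha$ is $\O$-positive over $\O^\delta(\Pi^1_\xi(\kappa))$. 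Fix a $\kappa$-list (if $\O = \I$) or a regressive $c\colon[\kappa]^{<\omega} \to \kappa$ (if $\O = \R$) on $\bigcup_\alpha S_\alpha$; restricting it to each $\alpha$, the definition of the local instance gives, for each $\alpha \in S$, a homogeneous set $H_\alpha \subseteq S_\alpha$ with $H_\alpha \in \O^{f^\kappa_\delta(\alpha)}(\Pi^1_{f^\kappa_\xi(\alpha)}(\alpha))^+$. Then I apply the inductive hypothesis at $\delta$ to the family $\langle H_\alpha \mid \alpha \in S\rangle$ — but first I need an $\O$-positive-over-$\O^\delta(\cdots)$ subset of $S$ on which a \emph{single} list/coloring localizes compatibly to witness that the $H_\alpha$ paste into a set homogeneous for the original object on $\kappa$. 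This is where the argument from \cite[Lemma 3.1]{MR4206111} is adapted: one codes the choice of $H_\alpha$ (and the relevant local data) into a $\kappa$-list/coloring on $S$, extracts a homogeneous $H \in \O^\delta(\Pi^1_\xi(\kappa))^+$ via the local instance of $\O$ applied to $S$, applies the induction hypothesis to $\langle H_\alpha \mid \alpha \in H\rangle$ to get $\bigcup_{\alpha \in H} H_\alpha \in \O^\delta(\Pi^1_\xi(\kappa))^+$, and checks that this union is homogeneous for the original object, hence witnesses $\bigcup_\alpha S_\alpha \in \O^{\delta+1}(\Pi^1_\xi(\kappa))^+$.

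For the limit step $\gamma$ a limit ordinal, $\O^\gamma(\Pi^1_\xi(\kappa)) = \bigcup_{\delta < \gamma} \O^\delta(\Pi^1_\xi(\kappa))$, so $S \in \O^\gamma(\cdots)^+$ just means $S \in \O^\delta(\cdots)^+$ for every $\delta < \gamma$. Using Lemma \ref{lemma_can}, pass to the club $C = \{\alpha \mid f^\kappa_\gamma(\alpha)$ is a limit ordinal$\}$ and recall $f^\kappa_\gamma(\alpha) = \bigcup_{\delta \in F^\kappa_\gamma(\alpha)} f^\kappa_\delta(\alpha)$ there; after intersecting, each $S_\alpha \in \O^{f^\kappa_\gamma(\alpha)}(\Pi^1_{f^\kappa_\xi(\alpha)}(\alpha))^+ = \bigcup_{\delta \in F^\kappa_\gamma(\alpha)} \O^{f^\kappa_\delta(\alpha)}(\Pi^1_{f^\kappa_\xi(\alpha)}(\alpha))^+$. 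Fix an arbitrary $\delta < \gamma$; I must show $\bigcup_\alpha S_\alpha \in \O^\delta(\Pi^1_\xi(\kappa))^+$. Via a normality (pressing-down) argument on $S \cap C$ — as in the limit case of the proof of Theorem \ref{theorem_pushing_Baumgartner} — one finds an $\O^\delta(\cdots)$-positive $S' \subseteq S$ and a fixed $\nu$ with $\delta \le \nu < \gamma$ such that $S_\alpha \in \O^{f^\kappa_\nu(\alpha)}(\Pi^1_{f^\kappa_\xi(\alpha)}(\alpha))^+$ for all $\alpha \in S'$, then apply the already-proved case at the ordinal $\nu$ (with $S'$ in place of $S$) to conclude $\bigcup_{\alpha \in S'} S_\alpha \in \O^\nu(\Pi^1_\xi(\kappa))^+ \subseteq \O^\delta(\Pi^1_\xi(\kappa))^+$, hence a fortiori $\bigcup_\alpha S_\alpha \in \O^\delta(\cdots)^+$.

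\textbf{Main obstacle.} The delicate point is the amalgamation in the successor step: verifying that the pointwise-chosen homogeneous sets $H_\alpha \subseteq S_\alpha$ can be glued into a set that is simultaneously (i) homogeneous for the given object on $\kappa$ and (ii) of the right degree of $\O^\delta(\Pi^1_\xi(\kappa))$-positivity. This requires carefully coding the local data ($H_\alpha$ together with the ``tail'' information determining homogeneity) into a single list or regressive coloring on $S$ so that a homogeneous set for \emph{that} object on $S$ automatically makes the $H_\alpha$'s cohere; this is exactly the mechanism behind \cite[Lemmas 3.1 and 3.2]{MR4206111}, and the work here is to check that it goes through unchanged when $\gamma$ and $\xi$ are allowed up to $\kappa^+$ rather than below $\kappa$ — which it does, because all the reflection bookkeeping is handled abstractly by the restriction operation and Observation \ref{observation:definedcomplexity}, Proposition \ref{proposition_double_restriction}, and Lemma \ref{lemma_can}.
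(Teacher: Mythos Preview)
Your proposal is correct and follows essentially the same induction as the paper. One note on the limit step: you wrote the positive sets as a union over $\delta \in F^\kappa_\gamma(\alpha)$, but since $\O^{f^\kappa_\gamma(\alpha)}(I) = \bigcup_\delta \O^{f^\kappa_\delta(\alpha)}(I)$, the \emph{positive} sets form an intersection, so no pressing-down is needed --- for a fixed $\delta < \gamma$ the paper simply intersects $S$ with the club $\{\alpha : f^\kappa_\delta(\alpha) < f^\kappa_\gamma(\alpha)\}$, observes that each $S_\alpha$ is then automatically in $\O^{f^\kappa_\delta(\alpha)}(\Pi^1_{f^\kappa_\xi(\alpha)}(\alpha))^+$, and applies the inductive hypothesis at $\delta$ directly.
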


\begin{proof}
Let us assume that $\O=\I$; when $\O=\R$ the proof is essentially the same, only one must replace lists by regressive functions. We proceed by induction on $\gamma$. Suppose $\gamma=0$, fix $\xi<\kappa^+$, $S\in\Pi^1_\xi(\kappa)^+$ and let $S_\alpha\in\Pi^1_{f^\kappa_\xi(\alpha)}(\alpha)^+$ for all $\alpha\in S$. Fix a $\Pi^1_\xi$ sentence $\varphi$ over $V_\kappa$ such that $V_\kappa\models\varphi$. By Lemma \ref{lemma_restriction_is_nice}, the set
\[C=\{\alpha<\kappa\st\varphi\res^\kappa_\alpha\text{ is defined, and hence $\Pi^1_{f^\kappa_\xi(\alpha)}$ over $V_\alpha$}\}\]
is in the club filter on $\kappa$. By Proposition \ref{proposition_double_restriction}, there is a club subset $D$ of $\kappa$ such that for all regular uncountable $\alpha\in D$, the set $D_\alpha$ of all ordinals $\beta<\alpha$ for which $(\varphi\res^\kappa_\alpha)\res^\alpha_\beta$ is defined and $(\varphi\res^\kappa_\alpha)\res^\alpha_\beta=\varphi\res^\kappa_\beta$ is in the club filter on $\alpha$. Since $S\cap C\cap D\in\Pi^1_\xi(\kappa)^+$, we may fix an $\alpha\in S\cap C\cap D$ such that $V_\alpha\models\varphi\res^\kappa_\alpha$. Now, since $S_\alpha\cap D_\alpha\in\Pi^1_{f^\kappa_\xi(\alpha)}(\alpha)^+$ and $\varphi\res^\kappa_\alpha$ is $\Pi^1_{f^\kappa_\xi(\alpha)}$ over $V_\alpha$, we may fix $\beta\in S_\alpha\cap D_\alpha$ such that $V_\beta\models(\varphi\res^\kappa_\alpha)\res^\alpha_\beta$. Since $\beta\in D_\alpha$ implies $(\varphi\res^\kappa_\alpha)\res^\alpha_\beta=\varphi\res^\kappa_\beta$, we have $V_\beta\models\varphi\res^\kappa_\beta$. Thus, $\bigcup_{\alpha\in S}S_\alpha$ is a $\Pi^1_\xi$-indescribable subset of $\kappa$.

Suppose $\gamma<\kappa^+$ is a limit ordinal. Fix $\xi<\kappa^+$, $S\in\I^\gamma(\Pi^1_\xi(\kappa))^+$ and let $S_\alpha\in \I^{f^\kappa_\gamma(\alpha)}(\Pi^1_{f^\kappa_\xi(\alpha)}(\alpha))^+$ for all $\alpha\in S$. It suffices to show that $\bigcup_{\alpha\in S}S_\alpha\in\I^\delta(\Pi^1_\xi(\kappa))^+$ for all $\delta<\gamma$. Fix $\delta<\gamma$. Since $\delta<\gamma$ in any generic ultrapower $\Ult$ obtained by forcing with $P(\kappa)/\NS_\kappa$, the set
\[C=\{\alpha<\kappa\st f^\kappa_\delta(\alpha)<f^\kappa_\gamma(\alpha)\}\]
is in the club filter on $\kappa$. Thus, $S\cap C\in\I^\delta(\Pi^1_\xi(\kappa))^+$, and for each $\alpha\in S\cap C$, we have $S_\alpha\in\I^{f^\kappa_\delta(\alpha)}(\Pi^1_{f^\kappa_\xi(\alpha)})^+$. By our inductive hypothesis, we have $\bigcup_{\alpha\in S\cap C}S_\alpha\in\I^\delta(\Pi^1_\xi(\kappa))^+$. Since $\bigcup_{\alpha\in S\cap C}S_\alpha\subseteq\bigcup_{\alpha\in S}S_\alpha$, we thus see that $\bigcup_{\alpha\in S}S_\alpha\in\I^\gamma(\Pi^1_\xi(\kappa))^+$.

Suppose $\gamma=\delta+1$ is a successor ordinal. Fix $\xi<\kappa^+$, $S\in\I^{\delta+1}(\Pi^1_\xi(\kappa))^+$ and let $S_\alpha\in \I^{f^\kappa_{\delta+1}(\alpha)}(\Pi^1_{f^\kappa_\xi(\alpha)})^+$ for each $\alpha\in S$. Let $T=\bigcup_{\alpha\in S}S_\alpha$. Fix a $T$-list $\vec{T}=\<T_\alpha\st\alpha\in T\>$. We must show that there is a homogeneous set for $\vec{T}$ in $\I^\delta(\Pi^1_\xi(\kappa))^+$. By Lemma \ref{lemma_can}, the set 
\[C=\{\alpha<\kappa\st f^\kappa_{\delta+1}(\alpha)=f^\kappa_\delta(\alpha)+1\}\] is in the club filter on $\kappa$. Thus $S\cap C\in\I^{\delta+1}(\Pi^1_\xi(\kappa))^+$. For each $\alpha\in S\cap C$, the $S_\alpha$-list $\vec{T}\restrict S_\alpha$ has a homogeneous set $H_\alpha\in \mathcal P(S_\alpha)\cap\Pi^1_{f^\kappa_\delta(\alpha)}(\alpha)^+$. Let $H\in\I^\delta(\Pi^1_\xi(\kappa))^+$ be homogeneous for the $(S\cap C)$-list $\<H_\alpha\mid\alpha\in S\cap C\>$. By our inductive hypothesis, $\bigcup_{\alpha\in H}H_\alpha\in\I^\delta(\Pi^1_\xi(\kappa))^+$, and it is easy to see that this set is homogeneous for $\vec{T}$.
\end{proof}

\begin{lemma}\label{lemma_set_of_nons_is_positive}
Let $\O\in\{\I,\R\}$. Suppose $\kappa$ is a cardinal, $\gamma<\kappa^+$ and $\xi\in\{-1\}\cup\kappa^+$. If $\kappa\in\O^\gamma(\Pi^1_\xi(\kappa))^+$, then the set
\[S_\kappa=\{\alpha<\kappa\st\alpha\in\O^{f^\kappa_\gamma(\alpha)}(\Pi^1_{f^\kappa_\xi(\alpha)}(\alpha))\}\]
is in $\O^\gamma(\Pi^1_\xi(\kappa))^+$.
\end{lemma}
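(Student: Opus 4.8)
The plan is to prove the statement by induction on the cardinal $\kappa$, the engine being Lemma~\ref{lemma_pos_union_of_pos_sets_is_pos} together with the coherence properties of the canonical (reflection) functions. Throughout, write $I=\O^\gamma(\Pi^1_\xi(\kappa))$ and, for $\alpha<\kappa$, $J_\alpha=\O^{f^\kappa_\gamma(\alpha)}(\Pi^1_{f^\kappa_\xi(\alpha)}(\alpha))$, so that $S_\kappa=\{\alpha<\kappa\mid\alpha\in J_\alpha\}$. Since $\kappa\in I^+$ and $I\supseteq\Pi^1_\xi(\kappa)$, the ideal $I$ is nontrivial and $\kappa$ is $\Pi^1_\xi$-indescribable; in particular we may ignore the degenerate case $(\gamma,\xi)=(0,-1)$, where $S_\kappa$ is simply the (unbounded) set of non-limit ordinals below $\kappa$. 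I would then assume the lemma holds for all cardinals smaller than $\kappa$, with arbitrary admissible parameters, and prove it for $\kappa$.

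The first step is a case split. If $S_\kappa$ lies in the dual filter $I^*$, then $S_\kappa\in I^+$ since $I$ is nontrivial, and we are done. So assume $S_\kappa\notin I^*$; equivalently, $T_\kappa:=\kappa\setminus S_\kappa=\{\alpha<\kappa\mid\alpha\in J_\alpha^+\}\in I^+$. After intersecting with a suitable tail we may also assume that each $\alpha\in T_\kappa$ is a cardinal large enough that $f^\kappa_\gamma(\alpha)<\alpha^+$ and $f^\kappa_\xi(\alpha)\in\{-1\}\cup\alpha^+$; note that for $\alpha\in T_\kappa$ we have $\alpha\in J_\alpha^+\subseteq\Pi^1_{f^\kappa_\xi(\alpha)}(\alpha)^+$, so $\alpha$ is $\Pi^1_{f^\kappa_\xi(\alpha)}$-indescribable and $J_\alpha$ is a nontrivial ideal on $\alpha$ with $J_\alpha\supseteq\Pi^1_{f^\kappa_\xi(\alpha)}(\alpha)\supseteq\NS_\alpha$.

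The second step uses the inductive hypothesis locally. Fix $\alpha\in T_\kappa$. Applying the lemma at the cardinal $\alpha$ with the parameters $f^\kappa_\gamma(\alpha)$ and $f^\kappa_\xi(\alpha)$ in place of $\gamma$ and $\xi$ — legitimate since $\alpha\in J_\alpha^+$ — the set $S'_\alpha=\{\beta<\alpha\mid\beta\in\O^{f^\alpha_{f^\kappa_\gamma(\alpha)}(\beta)}(\Pi^1_{f^\alpha_{f^\kappa_\xi(\alpha)}(\beta)}(\beta))\}$ belongs to $J_\alpha^+$. Now I would invoke the coherence of the canonical (reflection) functions, namely that there is a club $C_\alpha\subseteq\alpha$ with $f^\alpha_{f^\kappa_\gamma(\alpha)}(\beta)=f^\kappa_\gamma(\beta)$ and $f^\alpha_{f^\kappa_\xi(\alpha)}(\beta)=f^\kappa_\xi(\beta)$ for all $\beta\in C_\alpha$ (this is folklore, it is essentially contained in \cite{CodyHigherIndescribability}, and it can also be re-derived by generic ultrapower arguments of the kind used for Proposition~\ref{proposition_double_restriction}). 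Then $S_\alpha:=S'_\alpha\cap C_\alpha$ is still in $J_\alpha^+$, because $J_\alpha\supseteq\NS_\alpha$, and for every $\beta\in S_\alpha$ one has $\O^{f^\alpha_{f^\kappa_\gamma(\alpha)}(\beta)}(\Pi^1_{f^\alpha_{f^\kappa_\xi(\alpha)}(\beta)}(\beta))=J_\beta$, hence $\beta\in J_\beta$, i.e.\ $\beta\in S_\kappa$; thus $S_\alpha\subseteq S_\kappa$.

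Finally I would apply Lemma~\ref{lemma_pos_union_of_pos_sets_is_pos} with $S:=T_\kappa\in I^+$ and the sets $S_\alpha\in J_\alpha^+$ ($\alpha\in T_\kappa$) just produced, to conclude $\bigcup_{\alpha\in T_\kappa}S_\alpha\in I^+$; since $\bigcup_{\alpha\in T_\kappa}S_\alpha\subseteq S_\kappa$ and $I$ is an ideal, $S_\kappa\in I^+$, as desired. The case $\O=\R$ is handled identically, one only replacing $\kappa$-lists by regressive functions in the (black-boxed) appeal to Lemma~\ref{lemma_pos_union_of_pos_sets_is_pos}, and using that $\R$ of a nontrivial ideal on a cardinal is again a nontrivial ideal containing $\NS$. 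The main obstacle I anticipate is the bookkeeping around the coherence of the canonical functions and the verification that the local instances of the inductive hypothesis have admissible parameters (handling $\xi<\kappa$, $\xi=-1$, and $\xi\geq\kappa$ a limit uniformly, and restricting $T_\kappa$ to a tail of cardinals where necessary); beyond that, the proof is just the combination of the case split, the coherence, and Lemma~\ref{lemma_pos_union_of_pos_sets_is_pos}. Note also that the argument is uniform in $\gamma$, so no separate treatment of $\gamma=0$, where $\O$ acts trivially, is needed.
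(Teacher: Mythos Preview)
Your argument is correct and is essentially the paper's own proof: both proceed by minimality/induction on $\kappa$, pass to the complement $T_\kappa=\kappa\setminus S_\kappa\in I^+$, invoke the statement at each $\alpha\in T_\kappa$, and then apply Lemma~\ref{lemma_pos_union_of_pos_sets_is_pos} to conclude. The only difference is cosmetic (you phrase it as a direct case split rather than a proof by contradiction via a least counterexample) and that you make explicit the coherence step $f^\alpha_{f^\kappa_\gamma(\alpha)}(\beta)=f^\kappa_\gamma(\beta)$ on a club, which the paper leaves implicit when it writes $S_\alpha=S\cap\alpha$; your added care there is warranted but does not change the approach.
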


\begin{proof}
Assume for a contradiction that the statement of the lemma does not hold true, and let $\kappa$ be the least counterexample: the least cardinal for which there are $\gamma<\kappa^+$ and $\xi\in\{-1\}\cup\kappa^+$ such that $\kappa\in\O^\gamma(\Pi^1_\xi(\kappa))^+$ and $S:=S_\kappa\in\O^\gamma(\Pi^1_\xi(\kappa))$. Then, $\kappa\setminus S\in\O^\gamma(\Pi^1_\xi(\kappa))^+$. For each $\alpha\in\kappa\setminus S$, we have $\alpha\in\O^{f^\kappa_\gamma(\alpha)}(\Pi^1_{f^\kappa_\xi(\alpha)}(\alpha))^+$, and by the minimality of $\kappa$, the set $S_\alpha=S\cap\alpha$ is in $\O^{f^\kappa_\gamma(\alpha)}(\Pi^1_{f^\kappa_\xi(\alpha)}(\alpha))^+$. Thus, by Lemma \ref{lemma_pos_union_of_pos_sets_is_pos}, the set $S=\bigcup_{\alpha\in\kappa\setminus S}S_\alpha$ is in $\O^\gamma(\Pi^1_\xi(\kappa))^+$, a contradiction.
\end{proof}

Next, we provide a result for the strongly Ramsey ideal which is analogous to the base case of Lemma \ref{lemma_set_of_nons_is_positive}. This result follows from more general results in \cite[Lemma 14.2]{MR4156888} (with the core argument being \cite[Lemma 9.15]{MR4156888}), however we would like to provide a proof for the particular case of strongly Ramsey cardinals, also in order to allow for the discussion of possible generalizations that follows in the remark below.

\begin{lemma}[Holy-L\"ucke]\label{lemma_set_of_nons_for_strongly_ramsey_ideal}
For every cardinal $\kappa$, if $\kappa\in\S([\kappa]^{<\kappa})^+$, then the set
\[T=\{\alpha<\kappa\st\alpha\in\S([\alpha]^{<\alpha})\}\]
is in $\S([\kappa]^{<\kappa})^+$.
\end{lemma}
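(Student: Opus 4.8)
The statement is the analogue of the base case of Lemma~\ref{lemma_set_of_nons_is_positive} for the operator $\S$, so the natural approach is to imitate that proof, using the "least counterexample" method and the union lemma for $\S$. However, no union lemma for $\S$ of the form of Lemma~\ref{lemma_pos_union_of_pos_sets_is_pos} is stated in the excerpt, so the plan is instead to argue directly via a $\kappa$-amenable $M$-normal $M$-ultrafilter. Suppose toward a contradiction that $T\in\S([\kappa]^{<\kappa})$, so that $S:=\kappa\setminus T\in\S([\kappa]^{<\kappa})^+$. The idea is to take a $\kappa$-model $M$ (with the relevant parameters, in particular $S$, a well-ordering of $V_\kappa$, etc.) and a $\kappa$-amenable $M$-normal $M$-ultrafilter $U$ on $\kappa$ with $S\in U$ and $U\subseteq([\kappa]^{<\kappa})^+$, together with the ultrapower $j\colon M\to N$. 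Then $\kappa\in j(S)$, so $N$ thinks $\kappa\notin\S([\kappa]^{<\kappa})$ (reading $\S$ as computed inside $N$, i.e.\ relative to $V_\kappa^N$). The goal is to contradict this by building, from $U$ itself, a witness inside $N$ to $\kappa\in\S([\kappa]^{<\kappa})^N$.

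\medskip

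The key step is to exhibit, in $N$, a $\kappa$-model $M^*$ and a $\kappa$-amenable $M^*$-normal $M^*$-ultrafilter $U^*$ on $\kappa$ with $U^*\subseteq([\kappa]^{<\kappa})^+$. The natural candidate is $M^*=M$ and $U^*=U$: since $j$ has critical point $\kappa$ and $M$ has size $\kappa$, we expect $j(M)\cap V_{\kappa+1}^N$ (or rather the relevant piece of $N$) to see $M$ and $U$. More precisely, since $U$ is $\kappa$-amenable and $M^{<\kappa}\subseteq M$, one checks that $M,U\in N$ and that $N$ correctly computes that $M$ is a $\kappa$-model, that $U$ is an $M$-ultrafilter measuring all of $P(\kappa)^M$, that $U$ is $\kappa$-amenable for $M$, and that $U$ is $M$-normal — all of these are $\Sigma_0$-ish statements about $M$, $U$, and $P(\kappa)^M=P(\kappa)^N\cap M$ which transfer to $N$. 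One also needs $U\subseteq([\kappa]^{<\kappa})^+$ as computed in $N$, i.e.\ every element of $U$ is unbounded in $\kappa$, which is immediate. Finally, $\kappa\in U$ by $M$-normality, so $U$ witnesses $\kappa\in\S([\kappa]^{<\kappa})^N$, contradicting $\kappa\in j(S)=j(\kappa\setminus T)$.

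\medskip

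\textbf{Main obstacle.} The delicate point is making sure the witness $(M,U)$ really lives inside $N$ and is recognized there as a valid witness for the \emph{local} instance of $\S$ at the parameter $a$ for which we need $\kappa\in\S^a([\kappa]^{<\kappa})^N$; that is, one must be careful about \emph{which} parameter $a$ is relevant. Since $\S(I)^+=\bigcap_a\S^a(I)$, to show $\kappa\in\S([\kappa]^{<\kappa})^N$ we must handle an arbitrary $a\in P(\kappa)^N$, and then the $\kappa$-model $M$ must contain that particular $a$. This is where $\kappa$-amenability and $M^{<\kappa}\subseteq M$ are used: given $a\in P(\kappa)^N=P(\kappa)^M$ (using $\mathrm{crit}(j)=\kappa$ and elementarity to identify $P(\kappa)^N$ with a subset of $M$), we already have $a\in M$, so the same $(M,U)$ works. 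Thus the argument should go through once one verifies carefully that $P(\kappa)^M=P(\kappa)^N$ and that all the defining clauses of "$\kappa$-model" and "$\kappa$-amenable $M$-normal $M$-ultrafilter" are absolute between $V$ and $N$ for the objects $M$ and $U$. A secondary point to check is that $U\subseteq I^+$ in the sense required by the definition of $\S$: we are given $U\subseteq ([\kappa]^{<\kappa})^+$ in $V$, and since $[\kappa]^{<\kappa}$ and its positive sets are computed the same way in $N$, this persists. Assembling these absoluteness verifications is essentially the whole proof; no further combinatorics is needed.
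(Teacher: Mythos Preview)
There is a fundamental orientation error in your argument. By definition, $T=\{\alpha<\kappa:\alpha\in\S([\alpha]^{<\alpha})\}$ is the set of \emph{non}-strongly-Ramsey cardinals below $\kappa$ (since $\alpha\in\S([\alpha]^{<\alpha})$ means the ideal at $\alpha$ is improper). Hence $S=\kappa\setminus T$ is the set of strongly Ramsey cardinals below $\kappa$. When you arrange $S\in U$ and deduce $\kappa\in j(S)$, this says precisely that $N$ thinks $\kappa$ \emph{is} strongly Ramsey, i.e.\ $\kappa\in(\S([\kappa]^{<\kappa})^+)^N$. Exhibiting a witness $(M^*,U^*)$ inside $N$ would only confirm this, not contradict it; so the argument as written produces no contradiction whatsoever.

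Even setting aside the orientation problem, the claim that $M,U\in N$ is unjustified and in general false. There is no function $f\in M$ representing $M$ in the ultrapower, since $M$ is a proper class from its own point of view; for instance, if $M$ thinks $P(\kappa)$ is a proper class, then since $P(\kappa)^M=P(\kappa)^N$, having $M\in N$ would make $N$ see its own $P(\kappa)$ as a set of size $\kappa$, and then $\kappa$-amenability would already give $U\in M$. The paper's proof works in the \emph{opposite} direction. It takes $\kappa$ to be the least counterexample, so the lemma holds below $\kappa$; this is correctly computed in the $\kappa$-model $M$, and by elementarity $N$ then believes $T\in(\S([\kappa]^{<\kappa})^+)^N$ (using that $\kappa\in j(\kappa\setminus T)$ makes $\kappa$ strongly Ramsey in $N$). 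Now $N$ itself supplies a $\kappa$-model $\bar M\ni A_T$ and a $\kappa$-amenable $\bar M$-normal $\bar M$-ultrafilter $\bar U\subseteq[\kappa]^\kappa$ with $T\in\bar U$; because $N$ is a $\kappa$-model, these are genuine witnesses in $V$, contradicting the defining property of $A_T$. The essential move is to pull a witness \emph{from} $N$ \emph{to} $V$, not to push $(M,U)$ into $N$; the least-counterexample hypothesis is precisely what makes $N$ produce such a witness.
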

\begin{proof}
Suppose the result is false and let $\kappa$ be the least counterexample. Then $\kappa$ is strongly Ramsey and $T\in\S([\kappa]^{<\kappa})$. This implies $\kappa\setminus T\in\S([\kappa]^{<\kappa})^*$ and hence there is an $A_T\subseteq\kappa$ such that whenever $M$ is a $\kappa$-model with $A_T,\kappa\setminus T\in M$ and whenever $U\subseteq[\kappa]^\kappa$ is a $\kappa$-amenable $M$-normal $M$-ultrafilter on $\kappa$, it must follow that $\kappa\setminus T\in U$. Let $j:M\to N$ be the ultrapower embedding obtained from $U$, and notice that $\kappa\in j(\kappa\setminus T)$ and hence $\kappa$ is strongly Ramsey in $N$.

By our assumption on $\kappa$, it follows that for all $\alpha<\kappa$, if $\alpha$ is strongly Ramsey then the set
\[T\cap\alpha=\{\beta<\alpha\st\beta\in\S([\beta]^{<\beta})\}\]
is in $\S([\alpha]^{<\alpha})^+$. Since $M$ is a $\kappa$-model, this statement also holds in $M$. So, since $\kappa$ is strongly Ramsey in $N$, it follows by elementarity that in $N$, the set $j(T)\cap\kappa=T$ is in $(\S([\kappa]^{<\kappa})^+)^N$. Working in $N$, we let $\bar M$ be a $\kappa$-model with $A_T,T\in \bar M$ and we let $\bar U$ be a $\kappa$-amenable $\bar M$-normal $\bar M$-ultrafilter on $\kappa$ with $\bar U\subseteq ([\kappa]^\kappa)^N$ and $T\in \bar U$. Since $N$ is a $\kappa$-model, it follows that in $V$ the set $\bar M$ is a $\kappa$-model with $A_T,T\in \bar M$, $\bar U$ is a $\kappa$-amenable $\bar M$-normal $\bar M$-ultrafilter on $\kappa$ with $\bar U\subseteq[\kappa]^\kappa$, and $T\in U$. This contradicts the definition of $A_T$.
\end{proof}

\begin{remark}
Let us note that we do not know whether a version of Lemma \ref{lemma_set_of_nons_for_strongly_ramsey_ideal} holds for the ideal $\S^2([\kappa]^{<\kappa})$. Suppose $\kappa\in\S^2([\kappa]^{<\kappa})^+$ and let $T=\{\alpha<\kappa\st\alpha\in\S^2([\alpha]^{<\alpha})\}$. Does it follow that $T\in\S^2([\kappa]^{<\kappa})^+$? If we try generalize the proof of Lemma \ref{lemma_set_of_nons_for_strongly_ramsey_ideal} to this situation, we would like to show that if a $\kappa$-model $N$ thinks that $\bar M$ is a $\kappa$-model and $\bar U$ is a $\kappa$-amenable $\bar M$-normal $\bar M$-ultrafilter with $\bar U\subseteq(\S([\kappa]^{<\kappa})^+)^N$, then it is the case that in $V$ we have $\bar U\subseteq(\S([\kappa]^{<\kappa})^+)^V$. However, we do not see how to prove this. One would want to show that $(\S([\kappa]^{<\kappa})^+)^N\subseteq(\S([\kappa]^{<\kappa})^+)^V$. But this seems to be problematic because $P(\kappa)^N\subsetneq P(\kappa)^V$.
\end{remark}

\section{Expressibility results}\label{section:expressibility}

First, let us recall an expressibility result for higher indescribability due to the first author, which extends results of Bagaria from \cite{MR3894041}.

\begin{theorem}[{\cite[Theorem 5.8]{CodyHigherIndescribability}}]\label{theorem_expressing_indescribability}
Suppose $\kappa>\omega$ is regular and $\xi<\kappa^+$. Then, there is a $\Pi^1_{\xi+1}$ formula $\Phi$ over $V_\kappa$ and a club $C\subseteq\kappa$ such that for all $S\subseteq\kappa$ we have
\[\text{$S$ is a $\Pi^1_\xi$-indescribable subset of $\kappa$ if and only if $V_\kappa\models\Phi(S)$}\]
and for all regular $\alpha\in C$, we have
\[\text{$S\cap\alpha$ is a $\Pi^1_{f^\kappa_\xi(\alpha)}$-indescribable subset of $\alpha$ if and only if $V_\alpha\models\Phi(S)\res^\kappa_\alpha$}.\]
\end{theorem}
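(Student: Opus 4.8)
The plan is to prove this by induction on $\xi < \kappa^+$, mirroring the three-case structure of the definition of $\Pi^1_\xi$-formulas and restrictions, and to build the formula $\Phi$ by using Lemma~\ref{lemma_indescribability_from_homogeneity} together with the ineffability operator. The key observation is that $S$ being $\Pi^1_\xi$-indescribable should be expressible by a $\Pi^1_{\xi+1}$-formula asserting "for every second-order object coding a relevant witness structure, a certain first-order-definable reflection phenomenon occurs", and that the restriction operation $\Phi \res^\kappa_\alpha$ is compatible with this coding. Since we are allowed second-order parameters now (unlike in Bagaria's original setting), the parameter $S$ appears as a free second-order variable, which is exactly what makes a uniform $\Phi$ possible.

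First I would treat the base case $\xi = 0$: here $S$ is $\Pi^1_0$-indescribable iff $S$ is stationary, which is expressible over $V_\kappa$ by a $\Pi^1_1$-formula $\Phi(S)$ of the form ``$\forall C\,(C$ is a club subset of $\kappa \to C \cap S \neq \emptyset)$'', and on a club of regular $\alpha$ this restricts correctly to the statement that $S \cap \alpha$ is stationary in $\alpha$; the needed club comes from Lemma~\ref{lemma_restriction_is_nice} applied to $\Phi$. For the successor step $\xi = \zeta + 1$, I would use the fact (essentially Lemma~\ref{lemma_indescribability_from_homogeneity} and its known converse from \cite{MR4206111}) that $S$ is $\Pi^1_{\zeta+1}$-indescribable iff, roughly, every $S$-list has a homogeneous set that is $\Pi^1_\zeta$-indescribable — more precisely one encodes $\Pi^1_{\zeta+1}$-indescribability in terms of $\Pi^1_\zeta$-indescribability of sets witnessing the negation, and wraps this in one universal second-order quantifier. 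The inductive hypothesis supplies a $\Pi^1_{\zeta+1}$-formula $\Psi$ expressing $\Pi^1_\zeta$-indescribability; prefixing a $\forall X$ and the first-order bookkeeping yields the desired $\Pi^1_{\zeta+1+1}$-formula $\Phi$, and its restriction behaviour follows from the inductive restriction behaviour of $\Psi$ together with the syntactic definition of $\res$ on universal formulas (Remark~\ref{remark_coding}(2)). The limit step $\xi = \sup_{\zeta<\xi}\zeta$ is handled by setting $\Phi = \bigwedge_{\zeta<\xi}\Phi_\zeta$ where $\Phi_\zeta$ is the $\Pi^1_{\zeta+1}$-formula obtained inductively (reindexed suitably so the conjunction is genuinely $\Pi^1_\xi$), since $S$ is $\Pi^1_\xi$-indescribable iff it is $\Pi^1_\zeta$-indescribable for all $\zeta<\xi$, and the restriction $\Phi\res^\kappa_\alpha = \bigwedge_{\zeta \in f^\kappa_\xi(\alpha)}\Phi_{(\pi^\kappa_{\xi,\alpha})^{-1}(\zeta)}\res^\kappa_\alpha$ matches, on a club, the statement that $S \cap \alpha$ is $\Pi^1_{f^\kappa_\xi(\alpha)}$-indescribable in $\alpha$, using Observation~\ref{observation:piisjinverse} and $f^\kappa_\xi(\alpha) = \ot(F^\kappa_\xi(\alpha))$.

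Throughout, the single club $C$ in the statement is obtained by intersecting the clubs produced at each stage: one applies Lemma~\ref{lemma_restriction_is_nice} to ensure $\Phi\res^\kappa_\alpha$ is defined, Proposition~\ref{proposition_double_restriction} to control iterated restrictions in the successor and limit cases, and Lemma~\ref{lemma_can} for the arithmetic of $f^\kappa_\xi$ at limits; since there are at most $\kappa^+$ stages but the relevant clubs at each $\alpha$ only need to handle the finitely-many-subformula decomposition visible below $\alpha$, a diagonal intersection argument keeps $C$ a genuine club. The main obstacle I expect is the limit case: one must arrange the coding of $\Phi = \bigwedge_{\zeta<\xi}\Phi_\zeta$ so that $\code^\kappa(\Phi)$ restricts correctly — i.e.\ so that $\code^\kappa(\Phi) \cap \alpha = \code^\alpha(\Phi\res^\kappa_\alpha)$ on a club — which is precisely the kind of delicate bookkeeping packaged in Lemma~\ref{lemma_code_of_a_restriction}; verifying that the $\Phi_\zeta$'s can be chosen uniformly (rather than each depending on $\xi$ in an uncontrolled way) so that their codes cohere under the collapse maps $\pi^\kappa_{\xi,\alpha}$ is the technical heart of the argument, and is where one must lean most heavily on the generic-ultrapower machinery of Section~\ref{section:indescribability} and on the already-established compatibility results, Lemma~\ref{lemma_restrictionvsj}, Lemma~\ref{lemma_formularestrictions}, and Lemma~\ref{lemma_formularepresentation}.
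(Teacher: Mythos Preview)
This theorem is not proved in the present paper; it is quoted from \cite{CodyHigherIndescribability}, and the paper only remarks afterwards that the second clause (existence of the club $C$) can be deduced from the first via the generic-ultrapower machinery and Lemma~\ref{lemma_restrictionvsj}. So there is no full ``paper's own proof'' to compare against beyond that hint.

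That said, your proposal has a genuine gap in the successor step. You assert that $S$ is $\Pi^1_{\zeta+1}$-indescribable if and only if every $S$-list has a homogeneous set that is $\Pi^1_\zeta$-indescribable, citing Lemma~\ref{lemma_indescribability_from_homogeneity} together with an alleged converse from \cite{MR4206111}. Only one direction holds: Lemma~\ref{lemma_indescribability_from_homogeneity} shows that the homogeneous-set hypothesis implies $\Pi^1_{\zeta+1}$-indescribability, but the converse is false. The hypothesis ``every $S$-list has a homogeneous set in $\Pi^1_\zeta(\kappa)^+$'' is precisely $S\in\I(\Pi^1_\zeta(\kappa))^+$, and the paper itself (Theorem~\ref{Baumgartner_basic}, Corollary~\ref{corollary_proper_containment_from_baumgartner}) records that this is strictly stronger than $S\in\Pi^1_{\zeta+2}(\kappa)^+$. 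Concretely, the least $\Pi^1_2$-indescribable cardinal is not ineffable, so already at $\zeta=0$ with $S=\kappa$ your biconditional fails. Your formula $\Phi$ would therefore define $\I(\Pi^1_\zeta(\kappa))^+$, not $\Pi^1_{\zeta+1}(\kappa)^+$.

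What the argument in \cite{CodyHigherIndescribability} actually supplies at the successor step is a \emph{universal} $\Pi^1_\xi$-formula: one builds, by induction on $\xi$, a $\Pi^1_\xi$-formula $\mathrm{Tr}_\xi(X)$ over $V_\kappa$ such that for every $\Pi^1_\xi$-sentence $\varphi$ one has $V_\kappa\models\varphi$ iff $V_\kappa\models\mathrm{Tr}_\xi(\code^\kappa(\varphi))$, with the truth predicate restricting correctly on a club. Then $\Phi(S)$ is the $\Pi^1_{\xi+1}$-formula asserting that for every code $X$, if $\mathrm{Tr}_\xi(X)$ holds then some $\alpha\in S$ satisfies the restricted sentence. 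Your base case and limit case are essentially on the right track, and the club bookkeeping you describe is appropriate; but the successor step must be redone via a universal formula rather than via the ineffability operator.
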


Note that, within our usual generic ultrapower setup, using Lemma \ref{lemma_restrictionvsj}, the existence of a club $C$ as for the second statement of Theorem \ref{theorem_expressing_indescribability} above is equivalent to its first statement holding in the generic ultrapower $\Ult$. This could be used to extract a fairly simple proof of the second statement from the original proof of the first statement that is provided in \cite{CodyHigherIndescribability}. Since doing this in detail would involve going through quite a lot of material from \cite{CodyHigherIndescribability} however, we will leave this task to the interested reader.

We will next need an easy lemma, whose proof, via a standard closing-off argument, is left to the reader as well.

\begin{lemma}\label{lemma_closing_off}
Suppose $\kappa$ is a regular cardinal and $\gamma,\gamma'<\kappa^+$ are ordinals such that $\gamma\leq\gamma'$. If $f:\gamma\to\gamma'$ is any function then the set
\[\{\alpha<\kappa\st f[F^\kappa_\gamma(\alpha)]\subseteq F^\kappa_{\gamma'}(\alpha)\}\]
is in the club filter on $\kappa$.
\end{lemma}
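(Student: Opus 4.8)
The statement is a purely combinatorial fact about the canonical reflection functions, and the natural tool is the generic ultrapower machinery already set up in the paper, exactly in the style of Lemma~\ref{lemma_can} and Proposition~\ref{framework}. So the plan is: let $G$ be generic for $P(\kappa)/\NS_\kappa$ and $j\colon V\to\Ult$ the associated generic ultrapower with critical point $\kappa$, let
\[
D=\{\alpha<\kappa\st f[F^\kappa_\gamma(\alpha)]\subseteq F^\kappa_{\gamma'}(\alpha)\},
\]
and show $\kappa\in j(D)$; then Proposition~\ref{framework} gives that $D$ contains a club, which is what ``is in the club filter'' means. Since $j(D)=\{\alpha<j(\kappa)\st j(f)[j(F^\kappa_\gamma)(\alpha)]\subseteq j(F^\kappa_{\gamma'})(\alpha)\}$, evaluating at $\alpha=\kappa$, I need
\[
j(f)[\,j(F^\kappa_\gamma)(\kappa)\,]\subseteq j(F^\kappa_{\gamma'})(\kappa).
\]

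\textbf{Key steps.} First recall, as noted in the paragraph preceding Lemma~\ref{lemma_can} (and proved in \cite[Proposition~2.1]{CodyHigherIndescribability}), that the canonical reflection function $F^\kappa_\eta$ represents $j"\eta$ in $\Ult$; applied at the level of $j(\kappa)$ with the chosen $j$-images of the bijections, this gives $j(F^\kappa_\gamma)(\kappa)=j"\gamma$ and $j(F^\kappa_{\gamma'})(\kappa)=j"\gamma'$. Second, since $f\colon\gamma\to\gamma'$, elementarity gives $j(f)\colon j(\gamma)\to j(\gamma')$ with $j(f)\restrict j"\gamma = j\circ f\circ (j\restrict\gamma)^{-1}$ — concretely, for $\delta<\gamma$ we have $j(f)(j(\delta))=j(f(\delta))$. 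Third, chase the inclusion: if $\eta\in j"\gamma$, write $\eta=j(\delta)$ with $\delta<\gamma$; then $j(f)(\eta)=j(f(\delta))$, and since $f(\delta)<\gamma'$ we get $j(f(\delta))\in j"\gamma'$. Hence $j(f)[j"\gamma]\subseteq j"\gamma'$, i.e. $\kappa\in j(D)$, and Proposition~\ref{framework} finishes.

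\textbf{Main obstacle.} There is no real obstacle; the only points that need a word of care are (i) that the displayed identity for $j(D)$ really does reduce, at $\alpha=\kappa$, to the inclusion of pointwise images — this is immediate from the definition of $D$ together with \L{}o\'s / elementarity — and (ii) that we are consistently using the $j$-images of the coding bijections at $j(\kappa)$ to compute $j(F^\kappa_\eta)(\kappa)=j"\eta$, which is exactly the fixed convention stated in the excerpt. Also note that, unlike Lemma~\ref{lemma_can}(1), we do not even need to separately verify closure of $D$ below $\kappa$: Proposition~\ref{framework} directly yields that $D$ \emph{contains} a club, which is all that is claimed.
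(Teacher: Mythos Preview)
Your proof is correct. Note, however, that the paper does not actually give a proof of this lemma: it merely says ``whose proof, via a standard closing-off argument, is left to the reader.'' So your approach is being compared to a sketch, not a written-out argument.

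The intended closing-off argument is more elementary and avoids generic ultrapowers entirely. In the main case $\kappa\le\gamma\le\gamma'$, one observes that $f[F^\kappa_\gamma(\alpha)]\subseteq F^\kappa_{\gamma'}(\alpha)$ is equivalent to $f[b_{\kappa,\gamma}[\alpha]]\subseteq b_{\kappa,\gamma'}[\alpha]$, which in turn says exactly that $\alpha$ is closed under the function $g=b_{\kappa,\gamma'}^{-1}\circ f\circ b_{\kappa,\gamma}\colon\kappa\to\kappa$; the set of closure points of any function $\kappa\to\kappa$ is club by regularity of $\kappa$. The remaining cases ($\gamma<\kappa$) are even simpler, since then $F^\kappa_\gamma$ is constant. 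Your generic-ultrapower argument, by contrast, is uniform across cases and fits neatly into the paper's preferred methodology (cf.\ the footnote after Lemma~\ref{lemma_can}, where the authors explicitly advocate this style for results about canonical functions). What the closing-off argument buys is that it yields an actual club, not merely a set containing a club, and it does not require $\kappa$ to be weakly Mahlo or any forcing machinery; what your argument buys is brevity and alignment with the surrounding proofs.
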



Building on Theorem \ref{theorem_expressing_indescribability} and \cite[Lemma 5.1]{MR4206111}, we obtain the following.

\begin{lemma}\label{lemma_complexity}
Suppose $\kappa$ is a regular cardinal, and $\gamma<\kappa^+$, and $\xi\in\{-1\}\cup\kappa^+$. Let $\mathcal{O}\in\{\mathcal{I},\mathcal{R}\}$ be either the ineffability operator or the Ramsey operator. Then, there is a $\Pi^1_{\xi+1+2\gamma}$ formula $\Theta^\kappa_{\gamma,\xi}$ over $V_\kappa$ and a club subset $C^\kappa_{\gamma,\xi}$ of $\kappa$ such that for all $S\subseteq\kappa$ we have
\[S\in\O^\gamma(\Pi^1_\xi(\kappa))^+\text{ if and only if }V_\kappa\models\Theta^\kappa_{\gamma,\xi}(S)\]
and for all regular cardinals $\alpha\in C^\kappa_{\gamma,\xi}$ we have
\[S\cap\alpha\in \O^{f^\kappa_\gamma(\alpha)}(\Pi^1_{f^\kappa_\xi(\alpha)}(\alpha))^+\text{ if and only if }V_\alpha\models\Theta^\kappa_{\gamma,\xi}(S)\res^\kappa_\alpha.\]
\end{lemma}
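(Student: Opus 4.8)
The natural approach is induction on $\gamma<\kappa^+$, mirroring the structure of Lemma \ref{lemma_pos_union_of_pos_sets_is_pos} and exploiting the generic-ultrapower machinery established in Section \ref{section:indescribability}. For $\gamma=0$, the statement is exactly Theorem \ref{theorem_expressing_indescribability}: one takes $\Theta^\kappa_{0,\xi}=\Phi$ and $C^\kappa_{0,\xi}=C$ from that theorem, noting $\xi+1+2\cdot 0=\xi+1$ and $f^\kappa_0(\alpha)=0$, so the complexity count matches. (The edge case $\xi=-1$, where $\Pi^1_{-1}(\kappa)^+$ is the unbounded ideal's positive sets, should be dealt with directly: "$S$ unbounded in $\kappa$" is a first-order, hence $\Pi^1_0=\Pi^1_{\xi+1}$, statement over $V_\kappa$, and its restriction reflects correctly to "$S\cap\alpha$ unbounded in $\alpha$" on a club.)

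For the successor step $\gamma=\delta+1$, I would unwind the definition of $\O^\gamma(\Pi^1_\xi(\kappa))^+$: a set $S$ is positive iff for every $S$-list $\vec S$ (resp. every regressive $c\colon[S]^{<\omega}\to\kappa$) there is a homogeneous $H\in\O^\delta(\Pi^1_\xi(\kappa))^+$ with $H\subseteq S$. Using the inductive formula $\Theta^\kappa_{\delta,\xi}$, this becomes a sentence over $V_\kappa$ of the shape "$\forall \vec S\,(\vec S$ an $S$-list $\to \exists H\,(H\subseteq S \land H$ hom. for $\vec S \land \Theta^\kappa_{\delta,\xi}(H)))$", with the first-order quantifiers and the $H$-quantifier being second-order; the complexity increases by a universal followed by an existential second-order block, i.e. by two levels, giving $\Pi^1_{(\xi+1+2\delta)+2}=\Pi^1_{\xi+1+2\gamma}$, as desired. (The $\R$ case is identical, quantifying over a second-order code for $c$.) For the limit step $\gamma$, I would use the disjunction/conjunction definition of infinitary formulas: since $\O^\gamma(I)=\bigcup_{\delta<\gamma}\O^\delta(I)$, a set $S$ is in $\O^\gamma(\Pi^1_\xi(\kappa))$ iff it lies in $\O^\delta$ for some $\delta<\gamma$, so $S\in\O^\gamma(\Pi^1_\xi(\kappa))^+$ iff $S\in\O^\delta(\Pi^1_\xi(\kappa))^+$ for all $\delta<\gamma$; thus $\Theta^\kappa_{\gamma,\xi}=\bigwedge_{\delta<\gamma}\Theta^\kappa_{\delta,\xi}$ works, and its complexity is $\Pi^1_{\sup_{\delta<\gamma}(\xi+1+2\delta)}=\Pi^1_{\xi+1+2\gamma}$ since $\gamma$ is a limit.

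The delicate part — and the one I expect to be the main obstacle — is the \emph{reflection clause}: verifying that for a club of regular $\alpha$, the restriction $\Theta^\kappa_{\gamma,\xi}(S)\res^\kappa_\alpha$ is defined and expresses exactly "$S\cap\alpha\in\O^{f^\kappa_\gamma(\alpha)}(\Pi^1_{f^\kappa_\xi(\alpha)}(\alpha))^+$". Here the cleanest route is the generic-ultrapower reformulation flagged in the remark after Theorem \ref{theorem_expressing_indescribability}: by Lemma \ref{lemma_restrictionvsj}, the existence of the club $C^\kappa_{\gamma,\xi}$ is equivalent to the assertion that, in $\Ult$ (when $\kappa$ is regular there, i.e. when $G$ contains the regular cardinals below $\kappa$), the first statement of the lemma holds at $\kappa$ with parameters $j(\gamma),j(\xi),j(S)$ — that is, $\kappa\in j(\O^\gamma(\Pi^1_\xi(\kappa)))^+$ iff $V_\kappa^{\Ult}\models j(\Theta^\kappa_{\gamma,\xi})(j(S)\cap\kappa)$ — after identifying $j(f^\kappa_\gamma)(\kappa)=\gamma$ and $j(f^\kappa_\xi)(\kappa)=\xi$ via the canonical-function representation. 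One then appeals to Proposition \ref{framework} (in the Mahlo case, restricted to regular $\alpha$) to pull this back to a club. To make the induction go through at limit stages one must also invoke Lemma \ref{lemma_closing_off} to ensure $F^\kappa_\delta(\alpha)\subseteq F^\kappa_\gamma(\alpha)$ on a club, so that the restricted infinitary conjunction $\bigwedge_{\delta<\gamma}\Theta^\kappa_{\delta,\xi}(S)\res^\kappa_\alpha$ unpacks as $\bigwedge_{\bar\delta<f^\kappa_\gamma(\alpha)}\Theta^\alpha_{\bar\delta,f^\kappa_\xi(\alpha)}(S\cap\alpha)$ with the indices matching under the collapse $\pi^\kappa_{\gamma,\alpha}$; and one intersects all the finitely-many-at-each-stage clubs arising in the argument (those from $\Theta^\kappa_{\delta,\xi}$, from Lemma \ref{lemma_can}, from Proposition \ref{proposition_double_restriction}, and from Lemma \ref{lemma_closing_off}), which poses no difficulty since $\kappa$ has uncountable cofinality and there are only countably — indeed, at most $|\gamma|<\kappa$ many — of them when diagonalized appropriately. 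The successor step's reflection clause is the genuinely load-bearing computation: one must check that the restriction operation commutes with the "$\forall\vec S\exists H$" prefix (immediate from the successor clauses in the definition of $\varphi\res^\kappa_\alpha$, Remark \ref{remark_coding}(2)) and that "$H\subseteq S\land H$ hom. for $\vec S$" is a first-order matter whose restriction is the evident one, so the inductive hypothesis on $\Theta^\kappa_{\delta,\xi}$ transfers cleanly.
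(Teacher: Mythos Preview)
Your approach is essentially the paper's: induction on $\gamma$, base case from Theorem \ref{theorem_expressing_indescribability}, the successor step via the natural ``$\forall\vec S\,\exists H$ homogeneous with $\Theta^\kappa_{\delta,\xi}(H)$'' formula, and the generic-ultrapower reformulation (your ``cleanest route'') for the reflection clause --- this last is exactly how the paper handles it, so that once the first statement is checked both in $V$ and in $\Ult$, the club for the second statement comes for free.

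There is, however, one genuine technical slip at the limit step. Your proposed formula $\bigwedge_{\delta<\gamma}\Theta^\kappa_{\delta,\xi}$ is a conjunction of length $\gamma$ whose $\delta$-th conjunct is $\Pi^1_{\xi+1+2\delta}$; but Definition \ref{definition_over}(3) requires a $\Pi^1_\eta$-formula (for limit $\eta$) to be literally of the form $\bigwedge_{\zeta<\eta}\varphi_\zeta$ with $\varphi_\zeta$ a $\Pi^1_\zeta$-formula. So your conjunction is not syntactically a $\Pi^1_{\xi+1+2\gamma}$-formula over $V_\kappa$, and since the restriction operation $\varphi\res^\kappa_\alpha$ is defined by recursion on precisely this syntactic structure (and the paper treats formulas ``of that exact complexity, and not any simpler one''), the mismatch is not merely cosmetic. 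The paper fixes this by reindexing: noting that $\xi+1+2\gamma=\xi+\gamma$ for limit $\gamma$, it sets $\Theta^\kappa_{\gamma,\xi}=\bigwedge_{\zeta<\xi+\gamma}\psi^\kappa_\zeta$, where $\psi^\kappa_\zeta=\Theta^\kappa_{\delta_\zeta,\xi}$ for $\delta_\zeta$ the greatest $\delta<\gamma$ with $\xi+1+2\delta\leq\zeta$ (and $\psi^\kappa_\zeta$ is ``$0=0$'' when no such $\delta$ exists). The cofinality of $\langle\delta_\zeta\st\zeta<\xi+\gamma\rangle$ in $\gamma$ gives the semantic equivalence you intended, while the indexing makes the formula conform to Definition \ref{definition_over}(3). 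With this correction in place, the extra machinery you propose for the limit reflection clause (Lemma \ref{lemma_closing_off}, explicit diagonal club-intersections) becomes unnecessary: the generic-ultrapower equivalence already delivers the club once the first statement is verified in $V$ and in $\Ult$.
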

\begin{proof}
For the final statement, we will make use of our usual generic ultrapower setup once again: Using Lemma \ref{lemma_restrictionvsj}, it easily follows that the existence of a club as for the second statement of Lemma \ref{lemma_complexity} above is equivalent to its first statement holding in any generic ultrapower $\Ult$ obtained by forcing with $P(\kappa)/\NS_\kappa$.\footnote{Of course, when we refer to the operators $\I$ or $\R$ in $\Ult$, these should be the ineffability or the Ramsey operator as defined in $\Ult$, respectively. Also, notice that since $j(\Theta^\kappa_{\gamma,\xi}(X))\res^{j(\kappa)}_\kappa=\Theta^\kappa_{\gamma,\xi}(X)\in \Ult$, it follows that the statement $(S\in\O^\gamma(\Pi^1_\xi(\kappa))^+\text{ if and only if }V_\kappa\models\Theta^\kappa_{\gamma,\xi}(S))^{\text{Ult}}$ makes sense.} We thus proceed by induction on $\gamma<\kappa^+$ to verify the first statement in $V$ and in $\Ult$ simultaneously. Let us consider the case in which $\mathcal{O}=\mathcal{I}$; the case in which $\mathcal{O}=\mathcal{R}$ is similar. If $\gamma=0$, then for all $\xi<\kappa^+$, we have $\I^\gamma(\Pi^1_\xi(\kappa))^+=\Pi^1_\xi(\kappa)^+$, and the result follows directly from Theorem \ref{theorem_expressing_indescribability} and the comments made afterwards (regarding the case of the generic ultrapower $\Ult$). 

Suppose $\gamma=\delta+1$, and that the result holds for $\delta$. Fix $\xi<\kappa^+$. Then, there is a $\Pi^1_{\xi+1+2\delta}$-formula $\Theta^\kappa_{\delta,\xi}$ over $V_\kappa$ such that both in $V$ and in $\Ult$, for all $S\subseteq\kappa$, we have
\begin{align}S\in\I^{\delta}(\Pi^1_\xi(\kappa))^+\text{ if and only if }V_\kappa\models\Theta^\kappa_{\delta,\xi}(S).\label{equation_gamma_less_than_omega_at_kappa}
\end{align}
We simply define $\Theta^\kappa_{\gamma,\xi}$ to be the $\Pi^1_{\xi+1+2\gamma}$-formula over $V_\kappa$ which asserts that every $X$-list has a homogeneous set $Y$ such that $\Theta^\kappa_{\delta,\xi}(Y)$ holds. It is clear that this formula is as desired both in $V$ and in $\Ult$.

Suppose now that $\gamma$ is a limit ordinal, and that the result holds for all ordinals $\delta<\gamma$. By definition, we have $X\in\I^\gamma(\Pi^1_\xi(\kappa))^+$ if and only if $X\in\I^\delta(\Pi^1_\xi(\kappa))^+$ for all $\delta<\gamma$. Note that $\xi+1+2\gamma=\xi+\gamma$ in this case. We define a $\Pi^1_{\xi+\gamma}$-formula 
\[\Theta^\kappa_{\gamma,\xi}=\bigwedge_{\zeta<\xi+\gamma}\psi^\kappa_\zeta\]
as follows. For each $\zeta<\xi+\gamma$, if it exists, define $\delta_\zeta$ to be the greatest ordinal $\delta<\gamma$ such that $\xi+1+2\delta\leq\zeta$ and let $\psi^\kappa_\zeta=\Theta^\kappa_{\delta_\zeta,\xi}$. Otherwise, let $\psi^\kappa_\zeta$ be the formula ``$0=0$''. 
Since the sequence $\vec{\delta}=\<\delta_\zeta\st\zeta<\xi+\gamma\>$ is cofinal in $\gamma$, it follows that both in $V$ and in $\Ult$, for all $S\subseteq\kappa$,
\[S\in\I^\gamma(\Pi^1_\xi(\kappa))^+\text{ if and only if }V_\kappa\models\Theta^\kappa_{\gamma,\xi}(S).\]
\end{proof}

\section{A framework for large cardinal operators}\label{section_generalized operators}

In this section, we review a framework for large cardinal operators that was introduced by the second author \cite{HolyLCOandEE}, which in particular fits the ineffability operator $\mathcal I$, the Ramsey operator $\mathcal R$, and the strongly Ramsey subset operator $\mathcal S$ (the latter was denoted as $\mathbf T_{\mathrm{cl}}$ in \cite{HolyLCOandEE}). This framework builds on statements about the existence of certain ultrafilters for small models of set theory, and is itself based on a framework for the characterization of large cardinal ideals that was introduced in \cite{MR4156888}. In the present paper, we apply this framework from \cite{HolyLCOandEE}, verifying a number of results on the relationship between higher indescribability and large cardinal operators in a uniform way. In particular, we thus obtain a number of new results on the relationship between higher indescribability and the operators $\I$ and $\R$, and also $\S$. For readers only interested in these examples, our framework is still useful, for it provides uniform arguments that work for each of these operators. We will also mention (see Remark \ref{remark_more_examples}) two additional operators, introduced by the second author \cite{HolyLCOandEE}, that fit into this framework: the $\mathbf T_\omega^\kappa$-Ramsey subset operator~$\mathbf T$ that is connected to the notion of $\mathbf T_\omega^\kappa$-Ramsey cardinals introduced in \cite{MR4156888}, and the $\mathbf{wf}^\kappa_\omega$-Ramsey subset operator $\mathbf{wf}$ that is connected to the notion of weakly Ramsey cardinals from \cite{MR2830415}, to which our results thus apply.

Let us assume throughout this section that $\kappa$ denotes an inaccessible cardinal, and that $I$ denotes an ideal on $\kappa$. 
Recall that an $M$-ultrafilter $U$ on $\kappa$ is \emph{$\kappa$-amenable for $M$} if whenever $\mathcal A\in M$ is a $\kappa$-sized collection of subsets of $\kappa$ in $M$, then $\mathcal A\cap U\in M$. We next provide the definition of the \emph{model version} $\I_{mod}$ of the ineffability operator, as introduced in \cite{HolyLCOandEE}.

\begin{definition}
\begin{itemize}
  \item  For any $y\subseteq\kappa$, we first define the local instance of $\I_{mod}$ at $y$, by letting $x\in\I_{mod}^y(I)^+$ if there is a transitive weak $\kappa$-model $M$ with $y\in M$, and an $M$-ultrafilter $U$ on $\kappa$ with $x\in U$, such that every diagonal intersection of $U$ is in $I^+$  -- we abbreviate this latter property of $U$ and of $I$ by stating that $\Delta U\in I^+$.\footnote{\label{footnote:diagonalintersections}Since permuting the input of a diagonal intersection only changes its output by a non-stationary set (see \cite[Lemma 1.3.3]{MR0460120}), if $I\supseteq\NS_\kappa$, rather than requiring that every diagonal intersection of $U$ be in $I^+$, it equivalently suffices to require one (arbitrary) diagonal intersection of $U$ to be in $I^+$.}
   \item We let $\I_{mod}(I)^+=\bigcap_{y\subseteq\kappa}\I_{mod}^y(I)^+$.
\end{itemize}
\end{definition}

\begin{proposition}[\cite{HolyLCOandEE} Proposition 2.5]\label{proposition:ineffableideal2}
  Let $I\supseteq\NS_\kappa$ be an ideal on $\kappa$. Then, $\I_{mod}(I)=\mathcal I(I)$. 
\end{proposition}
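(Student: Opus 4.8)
The plan is to show the two ideals have the same positive sets by establishing $\I_{mod}^y(I)^+ \supseteq \I^{\vec a}(I)^+$ for a suitable $\kappa$-list $\vec a$ built from $y$, and conversely that every local instance of $\I$ is captured by a local instance of $\I_{mod}$. The key observation linking the two pictures is the familiar correspondence between homogeneous sets for $\kappa$-lists and $M$-ultrafilters whose diagonal intersections are large.

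First I would prove $\I_{mod}(I) \subseteq \I(I)$, i.e.\ $\I(I)^+ \subseteq \I_{mod}(I)^+$. Fix $x \in \I(I)^+$; I need $x \in \I_{mod}^y(I)^+$ for every $y \subseteq \kappa$. Given $y$, build a transitive weak $\kappa$-model $M$ with $y \in M$ (and closed enough under Skolem functions/coding), enumerate $P(\kappa)^M = \langle A_\alpha \mid \alpha < \kappa \rangle$ inside $M$, and form the $\kappa$-list $\vec a = \langle a_\alpha \mid \alpha < \kappa \rangle$ where $a_\alpha$ codes $\langle A_\beta \cap \alpha \mid \beta < \alpha \rangle$ (using a pairing function, so that $a_\alpha \subseteq \alpha$). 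Since $x \in \I(I)^+ \subseteq \I^{\vec a}(I)^+$, there is $H \in I^+$ with $H \subseteq x$ homogeneous for $\vec a$. Define $U = \{ A \in P(\kappa)^M \mid \exists \gamma\ \forall \alpha \in H \setminus \gamma\ \alpha \in A\}$ — equivalently, the set of $A_\beta$ such that $H$ is eventually inside $A_\beta$; homogeneity of $H$ for $\vec a$ guarantees that for each $\beta$, membership of $\alpha$ in $A_\beta$ stabilizes along $H$ above $\beta$, so $U$ is an $M$-ultrafilter, and it measures every subset of $\kappa$ in $M$. One checks $x \in U$ (since $H \subseteq x$ and $H$ is unbounded, and $x \in M$ may be arranged, or one adds $x$ to $M$ at the outset), and that the diagonal intersection $\triangle U$ contains $H$ modulo a bounded (hence, since $I \supseteq \NS_\kappa$, modulo a nonstationary) set, so $\triangle U \in I^+$ by footnote~\ref{footnote:diagonalintersections}. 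Hence $x \in \I_{mod}^y(I)^+$.

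Conversely, for $\I(I) \subseteq \I_{mod}(I)$, I fix $x \in \I_{mod}(I)^+$ and an arbitrary $\kappa$-list $\vec a$, and must produce $H \in I^+$, $H \subseteq x$, homogeneous for $\vec a$. Choose $y \subseteq \kappa$ coding $\vec a$, take the transitive weak $\kappa$-model $M$ and the $M$-ultrafilter $U$ on $\kappa$ with $x \in U$, $\vec a \in M$, and $\triangle U \in I^+$ witnessing $x \in \I_{mod}^y(I)^+$. For each $\beta < \kappa$ the set $B_\beta = \{ \alpha < \kappa \mid a_\alpha \cap \beta = $ the ($U$-decided) value of $a_\cdot \restriction \beta\}$ — more precisely, by $U$-measuring the sets $\{\alpha \mid \gamma \in a_\alpha\}$ for $\gamma < \kappa$ one reads off a set $A_U = \{\gamma \mid \{\alpha \mid \gamma \in a_\alpha\} \in U\} \subseteq \kappa$, and puts $B_\beta = \{\alpha \mid a_\alpha \cap \beta = A_U \cap \beta\}$ — lies in $U$. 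The diagonal intersection $\triangle_{\beta<\kappa} B_\beta$ then consists of ordinals $\alpha$ with $a_\alpha = A_U \cap \alpha$, and any two such $\alpha < \beta$ satisfy $a_\alpha = A_U \cap \alpha = (A_U \cap \beta)\cap \alpha = a_\beta \cap \alpha$, i.e.\ $\triangle U$ is homogeneous for $\vec a$. Intersecting with $x \in U$ (so $x \in \triangle U$ up to the diagonal-intersection convention, or simply note $x \supseteq$ the relevant tail) we get $H \subseteq x$ homogeneous for $\vec a$ with $H \in I^+$, as required; the argument does not depend on $\vec a$, so $x \in \I(I)^+$.

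The main obstacle I anticipate is purely bookkeeping: being careful that the coding of $\vec a$ and of $P(\kappa)^M$ into subsets of $\kappa$ is done so that the resulting objects live in (or can be adjoined to) a weak $\kappa$-model $M$, that $U$ as defined genuinely measures \emph{all} of $P(\kappa)^M$ rather than just the sets appearing in the list, and that the passage between ``$H$ homogeneous for $\vec a$'' and ``$\triangle U \in I^+$'' respects footnote~\ref{footnote:diagonalintersections}'s remark (so that one need only handle a single diagonal intersection, and nonstationary discrepancies are absorbed into $I \supseteq \NS_\kappa$). None of these steps is deep, but the equivalence genuinely uses $I \supseteq \NS_\kappa$, which is exactly the hypothesis of the proposition. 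I would also remark that $\I_{mod}$ requires $\kappa$ inaccessible to have transitive weak $\kappa$-models in the needed generality, matching the standing assumption of this section.
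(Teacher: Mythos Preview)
The paper does not actually prove this proposition; it is quoted from \cite{HolyLCOandEE} without argument, so there is no in-paper proof to compare against. Your outline is the standard argument and is essentially correct, but two points deserve tightening.

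First, in the direction $\I(I)^+\subseteq\I_{mod}(I)^+$, the list you describe --- ``$a_\alpha$ codes $\langle A_\beta\cap\alpha\mid\beta<\alpha\rangle$'' --- does \emph{not} yield the stabilization you then claim; homogeneity for that list only gives the trivial fact that $A_\beta\cap\alpha=(A_\beta\cap\alpha')\cap\alpha$ for $\alpha<\alpha'$. The list you want is $a_\alpha=\{\beta<\alpha\mid\alpha\in A_\beta\}$: then homogeneity gives $\alpha\in A_\beta\iff\alpha'\in A_\beta$ for $\beta<\alpha<\alpha'$ in $H$, which is exactly what is needed to see that $U=\{A_\beta\mid H\setminus(\beta{+}1)\subseteq A_\beta\}$ is an $M$-ultrafilter, that $x\in U$ (arrange $x\in M$ so $x=A_{\beta_0}$), and that $H\subseteq\Delta U$ modulo a bounded set.

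Second, in the other direction, you assert that $B_\beta=\{\alpha\mid a_\alpha\cap\beta=A_U\cap\beta\}\in U$. This would need $U$ to be ${<}\kappa$-complete over $M$, which is not part of the hypothesis (an $M$-ultrafilter is merely a filter on $P(\kappa)^M$ measuring every set). Fortunately you do not need it: set $C_\gamma=\{\alpha\mid(\gamma\in a_\alpha\leftrightarrow\gamma\in A_U)\}\in U$, fix any bijection $f\colon\kappa\to U$, and let $D$ be the club of ordinals closed under $\gamma\mapsto f^{-1}(C_\gamma)$ and under $f^{-1}(x)$. Then $H=\Delta_\gamma f(\gamma)\cap D$ lies in $I^+$ (here $I\supseteq\NS_\kappa$ is used), is contained in $x$, and satisfies $a_\alpha=A_U\cap\alpha$ for every $\alpha\in H$, hence is homogeneous for $\vec a$. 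With these two fixes your proof goes through.
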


We also provide the \emph{model version} of the Ramsey operator from \cite{HolyLCOandEE}.

\begin{definition}
  \begin{itemize}
    \item For any $y\subseteq\kappa$, we first define the local instance of $\R_{mod}$ at $y$, by letting $x\in\R_{mod}^y(I)^+$ if there is a transitive weak $\kappa$-model $M$ with $y\in M$, and an $M$-normal $M$-ultrafilter $U$ on $\kappa$ with $x\in U$ that is $\kappa$-amenable for $M$, such that every countable intersection of elements of $U$ is in $I^+$.
    \item We let $\R_{mod}(I)^+=\bigcap_{y\subseteq\kappa}\R_{mod}^y(I)^+$.
   \end{itemize}
\end{definition}

The Ramsey operator and its model version were shown to be equivalent in \cite{MR2817562}. See also \cite{HolyLCOandEE}.

\begin{theorem}[{Sharpe and Welch \cite{MR2817562}}]\label{theorem:ramseymodels}
  For any ideal $I$, \[\R_{mod}(I)=\mathcal R(I).\]
\end{theorem}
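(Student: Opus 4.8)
The plan is to prove the equality of ideals by establishing both inclusions for the collections of positive sets, $\R_{mod}(I)^+=\mathcal R(I)^+$. This is the Ramsey analogue of the proof of Proposition \ref{proposition:ineffableideal2}, and ultimately goes back to \cite{MR2817562}: the roles played there by $\kappa$-lists and diagonal intersections are now played by regressive functions $c\colon[\kappa]^{<\omega}\to\kappa$ and by countable intersections.

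First I would establish $\R_{mod}(I)^+\subseteq\mathcal R(I)^+$. Fix $x\in\R_{mod}(I)^+$ and a regressive $c\colon[\kappa]^{<\omega}\to\kappa$. Picking $y\subseteq\kappa$ coding both $c$ and $x$, fix a transitive weak $\kappa$-model $M\ni y$ together with a $\kappa$-amenable $M$-normal $M$-ultrafilter $U$ on $\kappa$ with $x\in U$, all of whose countable intersections are $I$-positive. Since such intersections are in particular nonempty, $U$ is countably complete, so the product ultrapowers $j_{U^n}\colon M\to N_{U^n}$ are well-founded, and exactly as in the proof of Theorem \ref{theorem_baumgartners_lemma_for_the_strongly_ramsey_ideal} one obtains, for each $n\geq 1$, a set $B_n\in U$ that is homogeneous for $c\restriction[\kappa]^n$. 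Then $H=x\cap\bigcap_{n\geq 1}B_n$ is a countable intersection of elements of $U$, hence lies in $I^+$, and $H\subseteq x$ is homogeneous for $c$; so $x\in\mathcal R^c(I)^+$. As $c$ was arbitrary, $x\in\mathcal R(I)^+$.

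For the reverse inclusion it suffices to treat the case $\mathcal R(I)^+\ne\emptyset$ (otherwise the inclusion just proved already yields $\R_{mod}(I)^+=\emptyset$). Then $\kappa\in\mathcal R(I)^+$, since $\mathcal R(I)^+$ is upward closed, and $\mathcal R(I)\supseteq\NS_\kappa$ by Feng's theorem that $\mathcal R(I)$ is normal \cite{MR1077260}; in particular intersecting $I$-positive sets with club sets is harmless below. Now fix $x\in\mathcal R(I)^+$ and an arbitrary $y\subseteq\kappa$; we must build a transitive weak $\kappa$-model $M\ni y$ and a witnessing $M$-ultrafilter for $x\in\R_{mod}^y(I)^+$. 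Take a continuous $\subseteq$-increasing chain $\langle M_\alpha\mid\alpha<\kappa\rangle$ of elementary submodels of some large $H(\theta)$, with union a transitive weak $\kappa$-model $M\ni y,x$ and with $M_\alpha\cap\kappa\in\kappa$ for all $\alpha$, and use it to fix enumerations in order type $\kappa$ of $P(\kappa)^M$ and of the $\kappa$-sequences of subsets of $\kappa$ that lie in $M$, arranged so that the $\alpha$-th entries already appear in $M_{\alpha+1}$. The substantive step is to extract from these data a regressive $c\colon[\kappa]^{<\omega}\to\kappa$ --- the classical Ramsey coloring underlying the $M$-ultrafilter characterization of Ramseyness --- whose values, read along any set $H$ homogeneous for $c$, code a coherent system of membership decisions. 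Since $x\in\mathcal R^c(I)^+$, there is such an $H\in I^+$ with $H\subseteq x$; from $H$ (possibly after intersecting it with a suitable club, which is harmless) one reads off a filter $U\subseteq P(\kappa)^M$ --- concretely, putting $A\in U$ exactly when a tail $H\setminus\gamma_A$ of $H$ is contained in $A$ --- and checks, using homogeneity, that $U$ is an $M$-ultrafilter that is $M$-normal and $\kappa$-amenable for $M$, with $x\in U$. Finally, given countably many $A_n\in U$, put $\gamma=\sup_n\gamma_{A_n}<\kappa$, which is legitimate since $\kappa$ is regular and uncountable; then $H\setminus\gamma\subseteq\bigcap_n A_n$ and $H\setminus\gamma\in I^+$, as $H\cap\gamma$ is bounded, so every countable intersection of elements of $U$ is $I$-positive. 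Hence $x\in\R_{mod}^y(I)^+$, and since $y$ was arbitrary, $x\in\R_{mod}(I)^+$.

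The main obstacle is the construction of the coloring $c$ in the reverse inclusion: it must be set up so that homogeneity of a single set $H$ simultaneously forces $U$ to be well defined as an $M$-ultrafilter, to be $M$-normal, and to be $\kappa$-amenable for $M$, all while keeping $c$ regressive; the bookkeeping that coordinates the enumeration of $P(\kappa)^M$ (for the ultrafilter property) with that of the relevant $\kappa$-sequences (for amenability) along the chain $\langle M_\alpha\rangle$ is the delicate point. The first inclusion is comparatively routine, the only real subtlety being the appeal to countable completeness of $U$ to ensure the product ultrapowers are well-founded.
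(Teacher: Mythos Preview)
The paper does not supply its own proof of this theorem; it is stated as a cited result of Sharpe and Welch \cite{MR2817562} (with a further pointer to \cite{HolyLCOandEE}), so there is nothing in the paper to compare your argument against.

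Your sketch follows the standard route and is essentially correct. Two points worth tightening:

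For the forward inclusion, your appeal to the product-ultrafilter argument of Theorem~\ref{theorem_baumgartners_lemma_for_the_strongly_ramsey_ideal} is right in spirit, but that theorem is stated for $\kappa$-models, whereas here $M$ is only a \emph{weak} $\kappa$-model. The facts cited from \cite{MR2710923} do go through under the hypotheses you actually have ($\kappa$-amenability for the definition of $U^n$, $M$-normality for the Rowbottom-style extraction of $B_n$, and countable completeness of $U$---which you correctly note follows from countable intersections being $I^+$ and hence nonempty---for well-foundedness of the iterated ultrapowers). So this is fine once you check those citations do not use closure of $M$ under ${<}\kappa$-sequences.

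For the reverse inclusion, be careful with ``possibly after intersecting [$H$] with a suitable club, which is harmless''. You established $\mathcal R(I)\supseteq\NS_\kappa$, which lets you intersect $\mathcal R(I)$-positive sets with clubs; but $H$ is only $I$-positive, and nothing guarantees $I\supseteq\NS_\kappa$. The correct bookkeeping is to do any club restriction on $x$ (which \emph{is} $\mathcal R(I)$-positive) \emph{before} invoking $x\in\mathcal R^c(I)^+$, and to design $c$ so that the resulting homogeneous $H$ needs no further trimming beyond a bounded initial segment. Your tail argument for countable intersections is then clean.
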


Taking the above characterizations of the ineffability and of the Ramsey operator as an inspiration, a framework for large cardinal operators was developed in \cite{HolyLCOandEE}, which we would now like to review.

\begin{definition}\label{definition:abstractoperator}
  Let $\Psi(M,U)$ and $\Omega(U,I)$ be parameter-free first order formulae such that $\ZFC$ proves that for any ideal $I$ on a regular uncountable cardinal $\kappa$, any transitive weak $\kappa$-model $M$ and any $M$-ultrafilter $U$ on $\kappa$, 
  \begin{itemize}
    \item $\Omega(U,I)$ implies that $U\subseteq I^+$, and
    \item for any ideal $J$ on $\kappa$, $\left[I\supseteq J\,\land\,\Omega(U,I)\right]\to\Omega(U,J)$.
  \end{itemize}
  Let us say that a pair of formulas $\langle\Psi,\Omega\rangle$ satisfying the above is \emph{regular}.
  
  We define an ideal operator $\mathfrak O\Psi\Omega$ as follows. For any ideal $I$ on $\kappa$ and $y\subseteq\kappa$, we first define a local instance by letting
  \begin{itemize}
    \item $x\in\mathfrak O\Psi\Omega^y(I)^+$ if there exists a transitive weak $\kappa$-model $M$ with $y\in M$ and an $M$-ultrafilter $U$ on $\kappa$ with $x\in U$ such that $\Psi(M,U)$ and $\Omega(U,I)$ hold, and we let
    \item $\mathfrak O\Psi\Omega(I)^+=\bigcap_{y\subseteq\kappa}\mathfrak O\Psi\Omega^y(I)^+$.
  \end{itemize}
\end{definition}

Let us remark that, since we assume $\kappa$ to be inaccessible, we could additionally require that $M\supseteq V_\kappa$ in the above, for given any $y\subseteq\kappa$, we can easily find $y'\subseteq\kappa$ such that $y'\in M$ implies both that $y\in M$ and that $V_\kappa\subseteq M$.

\medskip

Let us check how the examples we saw so far fit into these schemes:

\begin{itemize}
  \item If $\Psi(M,U)$ is trivial, and $\Omega(U,I)$ denotes the property that $\Delta U{\in}I^+$, then $\mathfrak O\Psi\Omega$ is the model version $\I_{mod}$ of the ineffability operator.
  \item If $\Psi(M,U)$ denotes the property that $U$ is $M$-normal and $\kappa$-amenable for $M$, and $\Omega(U,I)$ denotes the property that every countable intersection of elements of $U$ is in $I^+$, then $\mathfrak O\Psi\Omega$ is (the model version $\R_{mod}$ of) the Ramsey operator.
  \item If $\Psi(M,U)$ denotes the property that $M$ is closed under ${<}\kappa$-sequences, $U$ is $M$-normal and $U$ is $\kappa$-amenable for $M$, and $\Omega(U,I)$ denotes the property $U\subseteq I^+$, then $\mathfrak O\Psi\Omega$ is the strongly Ramsey subset operator $\S$.
\end{itemize}

\begin{proposition}\cite[Proposition 10.2]{HolyLCOandEE}\label{proposition:abstractbasic}
  Assume that $\langle\Psi,\Omega\rangle$ is regular, and that $I\supseteq J$ are ideals on $\kappa$. Then, the following hold.
  \begin{itemize}
    \item $\mathfrak O\Psi\Omega(I)\supseteq I$ is an ideal on $\kappa$.
    \item $\mathfrak O\Psi\Omega(I)\supseteq\mathfrak O\Psi\Omega(J)$.
    \item If for any transitive weak $\kappa$-model $M$ and any $M$-ultrafilter $U$ on $\kappa$, the conjunction $\Psi(M,U)\,\land\,\Omega(U,I)$ implies that $U$ is $M$-normal, then $\mathfrak O\Psi\Omega(I)$ is normal.
    \item In particular, if $I\supseteq\NS_\kappa$, then $\Delta U\in I^+$ implies that $U$ is $M$-normal.
    \item If $\langle \Psi',\Omega'\rangle$ is regular as well, and $\Psi'(M,U)\land\Omega'(U,I)$ implies $\Psi(M,U)\land\Omega(U,I)$ for any transitive weak $\kappa$-model $M$ and any $M$-ultrafilter $U$ on $\kappa$, then $\mathfrak O\Psi'\Omega'(I)\supseteq\mathfrak O\Psi\Omega(I)$.
  \end{itemize}
\end{proposition}

A crucial property of ideal operators is \emph{ineffability}, as introduced in \cite{HolyLCOandEE}.

\begin{definition}
Let $\langle\Psi,\Omega\rangle$ be a pair of formulas, and let $\mathcal O$ be an ideal operator.
\begin{itemize}
   \item The pair $\langle\Psi,\Omega\rangle$ is \emph{ineffable} in case $\ZFC$ proves that for any ideal $I$ on a regular uncountable cardinal $\kappa$, any transitive weak $\kappa$-model $M$ and any $M$-ultrafilter $U$ on $\kappa$, $\Psi(M,U)\,\land\,\Omega(U,I)$ implies that for every $A\in U$, every $A$-list $\vec a\in M$ has a homogeneous set in $I^+$.
    \item The operator $\mathcal O$ is \emph{ineffable} in case $\ZFC$ proves that for any ideal $I$ on a regular uncountable cardinal $\kappa$, whenever $A\in\mathcal O(I)^+$ and $\vec a$ is an $A$-list, then $\vec a$ has a homogeneous set in $I^+$.
  \end{itemize}
\end{definition}
Note that by the above, the ineffability operator $\I$ is ineffable. But also, if $\mathcal O$ can be characterized to be of the form $\mathcal O=\mathfrak O\Psi\Omega$ for some ineffable pair of formulas $\langle\Psi,\Omega\rangle$, then $\mathcal O$ is ineffable.

\begin{observation}\label{observation:regularity}\cite[Observation 10.4]{HolyLCOandEE}
  Let $\langle\Psi,\Omega\rangle$ be regular, and let $\mathcal O$ be the operator $\mathfrak O\Psi\Omega$. Then,
  \begin{itemize}
    \item If $\Psi(M,U)$ $\ZFC$-provably implies that $U$ is $\kappa$-amenable for $M$ and contains all club subsets of $\kappa$ in $M$ as elements, then $\mathcal O$ is ineffable.
    \item If $\Omega(U,I)$ $\ZFC$-provably implies that $\Delta U\in I^+$, then $\mathcal O$ is ineffable.
    \item If $\mathcal O$ is ineffable, then for any ideal $I$ on a regular uncountable cardinal $\kappa$, $\mathcal O(I)\supseteq\mathcal I(I)\supseteq\NS_\kappa$.
    \item If $\ZFC$ proves that for any ideal $I$ on a regular and uncountable cardinal $\kappa$, $\mathcal O(I)\supseteq\mathcal I(I)$, then $\mathcal O$ is ineffable.
  \end{itemize}
\end{observation}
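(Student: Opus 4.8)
The plan is to handle the four bullets separately, using for the first two the remark immediately preceding this observation: $\mathcal O=\mathfrak O\Psi\Omega$ is ineffable whenever the pair $\langle\Psi,\Omega\rangle$ is. So fix $M,U,I$ with $\Psi(M,U)\,\land\,\Omega(U,I)$, fix $A\in U$ and an $A$-list $\vec a=\langle a_\alpha\mid\alpha\in A\rangle\in M$, and recall that $\Omega(U,I)$ forces $U\subseteq I^+$. For $\xi<\kappa$ put $Y_\xi=\{\alpha\in A\mid\xi\in a_\alpha\}\in M$, let $a^*=\{\xi<\kappa\mid Y_\xi\in U\}$, and set $H=\{\alpha\in A\mid a_\alpha=a^*\cap\alpha\}$. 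This $H$ is homogeneous for $\vec a$, since for $\alpha<\beta$ in $H$ we get $a_\alpha=a^*\cap\alpha=(a^*\cap\beta)\cap\alpha=a_\beta\cap\alpha$. Everything then reduces to showing that $H$, or a subset of it, is $I$-positive, and this is exactly where the hypotheses on $\Psi$ or on $\Omega$ enter.

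For the second bullet, where $\Omega(U,I)$ is assumed to force $\Delta U\in I^+$ (read, as in the definition of $\I_{mod}$, as: every diagonal intersection of members of $U$ is $I$-positive), define $Z_\xi=Y_\xi$ if $\xi\in a^*$ and $Z_\xi=A\setminus Y_\xi$ otherwise; in either case $Z_\xi\in U$, as $U$ is an ultrafilter measuring the $M$-set $Y_\xi$ and $A\in U$. Hence $\Delta_{\xi<\kappa}Z_\xi$ is a diagonal intersection of members of $U$, so it lies in $I^+$. Unwinding the definitions, $\Delta_{\xi<\kappa}Z_\xi\subseteq H$: if $\alpha$ lies in this diagonal intersection then $\alpha\in A$ and for each $\xi<\alpha$ one has $\xi\in a_\alpha\iff\xi\in a^*$, so $a_\alpha=a^*\cap\alpha$. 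Since ideals are downward closed, $H\in I^+$, as needed.

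For the first bullet, where $\Psi(M,U)$ forces $U$ to be $\kappa$-amenable for $M$: since $\langle Y_\xi\mid\xi<\kappa\rangle\in M$, $\kappa$-amenability gives $\{Y_\xi\mid\xi<\kappa\}\cap U\in M$, whence $a^*\in M$ and therefore $H\in M$. As $U$ measures $H$, it suffices to exclude $A\setminus H\in U$. Assigning to each $\alpha\in A\setminus H$ a least witness $h(\alpha)<\alpha$ to $a_\alpha\neq a^*\cap\alpha$ gives a regressive $h\in M$; a Fodor-style argument — using $\kappa$-amenability to see that each fibre of $h$ is measured by $U$, together with the $M$-normality of $U$ that accompanies $\kappa$-amenability in the concrete instances of the framework — makes $h$ constant, say with value $\xi_0$, on some $W\in U$; but then the subset of $W$ on which $\xi_0\in a_\alpha$, if in $U$, forces $\xi_0\in a^*$ while the complementary subset forces $\xi_0\notin a^*$, which is absurd. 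Hence $H\in U\subseteq I^+$. (Alternatively one passes to $j\colon M\to N=\Ult(M,U)$, where $\kappa$-amenability yields $P(\kappa)^M=P(\kappa)^N$, and computes the relevant reading of the list at the seed $[\id]_U$.)

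The last two bullets are soft. For the third, suppose $\mathcal O$ is ineffable, fix an ideal $I$ and $A\in\mathcal O(I)^+$; given any $\kappa$-list $\vec a$, its restriction $\vec a\restriction A$ is an $A$-list, so by ineffability of $\mathcal O$ there is $H\subseteq A$ with $H\in I^+$ homogeneous for $\vec a\restriction A$, and as $H\subseteq A$ this $H$ is homogeneous for $\vec a$; thus $A\in\I^{\vec a}(I)^+$, and since $\vec a$ was arbitrary, $A\in\I(I)^+$. So $\mathcal O(I)\supseteq\I(I)$, and $\I(I)\supseteq\NS_\kappa$ was already observed in the proof of Lemma~\ref{lemma:ineffability_normal}. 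For the fourth, suppose $\mathcal O(I)\supseteq\I(I)$ for every ideal $I$; fixing $I$, an $A\in\mathcal O(I)^+$ (hence $A\in\I(I)^+$), and an $A$-list $\vec a$, extend $\vec a$ arbitrarily to a $\kappa$-list $\vec a'$; from $A\in\I^{\vec a'}(I)^+$ we get $H\subseteq A$ with $H\in I^+$ homogeneous for $\vec a'$, hence for $\vec a$, so $\mathcal O$ is ineffable. The main obstacle is the first bullet — extracting "$H\in U$" from the amenability of $U$ — whereas everything else is bookkeeping with the canonical homogeneous set $H$ and with the defining conditions of $\I$.
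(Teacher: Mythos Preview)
Your arguments for the second, third, and fourth bullets are correct and follow the natural line (the paper itself gives no proof, citing \cite{HolyLCOandEE}). The treatment of the second bullet via the sets $Z_\xi$ and the containment $\Delta_\xi Z_\xi\subseteq H$ is the standard one, and the last two bullets are indeed soft bookkeeping with the definition of $\I$.

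There is, however, a genuine gap in your treatment of the first bullet. The hypothesis there is only that $\Psi(M,U)$ forces $U$ to be $\kappa$-amenable for $M$; it does \emph{not} give you $M$-normality (or even $M$-$\kappa$-completeness) of $U$. You correctly obtain $a^*\in M$ and $H\in M$ from amenability, but your derivation of $H\in U$ then explicitly invokes a ``Fodor-style argument\ldots\ together with the $M$-normality of $U$ that accompanies $\kappa$-amenability in the concrete instances of the framework.'' This is not a proof of the general statement: there are $\kappa$-amenable $M$-ultrafilters that are not $M$-normal (e.g., push forward an $M$-normal $\kappa$-amenable ultrafilter along a bijection in $M$), and for such $U$ your regressive function $h$ need not be constant on any set in $U$. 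Your parenthetical alternative via the ultrapower $j:M\to N$ has the same defect: $\kappa$-amenability does give $P(\kappa)^M=P(\kappa)^N$ in the appropriate sense, but without normality the seed $[\id]_U$ need not equal $\kappa$, and the computation ``$j(\vec a)_{[\id]}=j(a^*)\cap[\id]$'' requires precisely that for every regressive $g\in M$ one has $\{\alpha:g(\alpha)\in a_\alpha\}\in U\iff\{\alpha:g(\alpha)\in a^*\}\in U$, which again is a normality-type condition. So as written, the first bullet remains unproved; you would need either to extract the missing argument from \cite{HolyLCOandEE} or to supply a proof that genuinely uses only amenability.
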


In particular, the above implies that the operators $\mathcal R$ and $\S$ are ineffable. 


Given an ordinal $\beta<\kappa^+$, let us use the notation \[\Pi^1_{<\beta}(\kappa)=\bigcup_{\xi\in\{-1\}\cup\beta}\Pi^1_\xi(\kappa).\] The next corollary is immediate from Lemma \ref{lemma_indescribability_from_homogeneity} together with a straightforward induction on $\gamma$.

\begin{corollary}\label{corollary:levelofhomogeneity}
  Assume that $\mathcal O$ is ineffable, $\beta<\kappa^+$ is an ordinal, and $I\supseteq\Pi^1_{<\beta}(\kappa)$. Then, \[\mathcal O^\gamma(I)\supseteq\Pi^1_{<(\beta+2\gamma)}(\kappa). \] 
\end{corollary}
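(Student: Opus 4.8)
The plan is to prove this by induction on $\gamma<\kappa^+$, using Lemma \ref{lemma_indescribability_from_homogeneity} (which says that if every $S$-list has a homogeneous set in $\mathcal P(S)\cap\Pi^1_{<\zeta}(\kappa)^+$, then $S\in\Pi^1_{\zeta+1}(\kappa)^+$) at each step, together with the ineffability of $\mathcal O$. The base case $\gamma=0$ is just the hypothesis $I\supseteq\Pi^1_{<\beta}(\kappa)$, since $\mathcal O^0(I)=I$ and $\beta+2\cdot 0=\beta$.

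For the successor step, suppose $\gamma=\delta+1$ and, as inductive hypothesis, $\mathcal O^\delta(I)\supseteq\Pi^1_{<(\beta+2\delta)}(\kappa)$. I want to show $\mathcal O^{\delta+1}(I)=\mathcal O(\mathcal O^\delta(I))\supseteq\Pi^1_{<(\beta+2\delta+2)}(\kappa)$, i.e., that every $S\in\Pi^1_\xi(\kappa)^+$ with $\xi\in\{-1\}\cup(\beta+2\delta+2)$ lies in $\mathcal O(\mathcal O^\delta(I))^+$. First I would note that since $\mathcal O$ is ineffable, $\mathcal O(\mathcal O^\delta(I))\supseteq\mathcal I(\mathcal O^\delta(I))$ by Observation \ref{observation:regularity}, so it suffices to show $S\in\mathcal I(\mathcal O^\delta(I))^+$, i.e., that every $S$-list $\vec{S}$ has a homogeneous set in $\mathcal O^\delta(I)^+$. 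Here I split on whether $\xi<\beta+2\delta+1$ or $\xi\in\{\beta+2\delta,\beta+2\delta+1\}$ — more precisely, by the inductive hypothesis $\mathcal O^\delta(I)^+\supseteq\Pi^1_{<(\beta+2\delta)}(\kappa)^+$, so it is enough to produce a homogeneous set in $\bigcap_{\zeta\in\{-1\}\cup(\beta+2\delta)}\Pi^1_\zeta(\kappa)^+$. When $\xi\le\beta+2\delta$, we have $S\in\Pi^1_\xi(\kappa)^+\subseteq\bigcap_{\zeta<\xi}\Pi^1_\zeta(\kappa)^+$, and by Lemma \ref{lemma_indescribability_from_homogeneity} applied with the value $\beta+2\delta$ in place of $\xi$ — using that $S\in\Pi^1_{(\beta+2\delta)+1}(\kappa)^+$, which holds since $\xi+1\le\beta+2\delta+1\le(\beta+2\delta)+1$ and the $\Pi^1$-indescribability ideals are increasing — we conclude that every $S$-list has a homogeneous set in $\mathcal P(S)\cap\bigcap_{\zeta\in\{-1\}\cup(\beta+2\delta)}\Pi^1_\zeta(\kappa)^+$. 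The remaining cases $\xi\in\{\beta+2\delta+1\}$ are handled the same way, since $S\in\Pi^1_{\beta+2\delta+1}(\kappa)^+$ directly feeds Lemma \ref{lemma_indescribability_from_homogeneity} at level $\beta+2\delta$.

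For the limit step, suppose $\gamma$ is a limit ordinal and the statement holds for all $\delta<\gamma$. Then $\beta+2\gamma=\beta+\gamma$, and $\mathcal O^\gamma(I)=\bigcup_{\delta<\gamma}\mathcal O^\delta(I)$ by definition. I must show $\mathcal O^\gamma(I)\supseteq\Pi^1_{<(\beta+\gamma)}(\kappa)$, so fix $S\in\Pi^1_\xi(\kappa)^+$ with $\xi\in\{-1\}\cup(\beta+\gamma)$; then $\xi<\beta+\delta$ for some $\delta<\gamma$ (as $\gamma$ is a limit), and hence, using that $\beta+\delta\le\beta+2\delta$, we get $\xi\in\{-1\}\cup(\beta+2\delta)$, so $S\in\Pi^1_{<(\beta+2\delta)}(\kappa)\subseteq\mathcal O^\delta(I)\subseteq\mathcal O^\gamma(I)$ by the inductive hypothesis. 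This completes the induction.

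The main subtlety I anticipate is bookkeeping the off-by-one/off-by-two indices correctly — in particular, making sure that from $S\in\Pi^1_\xi(\kappa)^+$ with $\xi$ up to $\beta+2\gamma-1$ one really does get, via Lemma \ref{lemma_indescribability_from_homogeneity}, homogeneous sets positive for \emph{all} of $\Pi^1_{<(\beta+2\delta)}(\kappa)$ in the successor case. This uses the monotonicity of $\langle\Pi^1_\zeta(\kappa)\mid\zeta<\kappa^+\rangle$ in $\zeta$ and the statement of Lemma \ref{lemma_indescribability_from_homogeneity}, which is exactly tailored to produce the ``$\zeta+1$ from homogeneity at level $<\zeta$'' jump; everything else is routine ordinal arithmetic and the ineffability of $\mathcal O$.
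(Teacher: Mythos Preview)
Your induction is set up in the right shape, but the successor step has the direction of every containment reversed, and as a consequence you invoke Lemma~\ref{lemma_indescribability_from_homogeneity} backwards. Recall that for ideals, $A\supseteq B$ is equivalent to $A^+\subseteq B^+$. Hence $\mathcal O^{\delta+1}(I)\supseteq\Pi^1_{<(\beta+2\delta+2)}(\kappa)$ does \emph{not} say that every $\Pi^1_\xi$-indescribable $S$ lies in $\mathcal O^{\delta+1}(I)^+$; it says that every $S\in\mathcal O^{\delta+1}(I)^+$ is $\Pi^1_\xi$-indescribable for all $\xi<\beta+2\delta+2$. Similarly, from $\mathcal O(J)\supseteq\mathcal I(J)$ one gets $\mathcal O(J)^+\subseteq\mathcal I(J)^+$, so showing $S\in\mathcal I(J)^+$ does not yield $S\in\mathcal O(J)^+$. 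Finally, Lemma~\ref{lemma_indescribability_from_homogeneity} produces indescribability \emph{from} the existence of highly positive homogeneous sets, not the other way around; you are using its (unproved) converse when you write ``we conclude that every $S$-list has a homogeneous set \ldots''.

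The correct successor step runs in the opposite direction. Take $S\in\mathcal O^{\delta+1}(I)^+=\mathcal O(\mathcal O^\delta(I))^+$. Since $\mathcal O$ is ineffable, every $S$-list has a homogeneous set $H\in\mathcal O^\delta(I)^+$; by the inductive hypothesis $\mathcal O^\delta(I)^+\subseteq\bigcap_{\zeta\in\{-1\}\cup(\beta+2\delta)}\Pi^1_\zeta(\kappa)^+$, so Lemma~\ref{lemma_indescribability_from_homogeneity} (with $\xi=\beta+2\delta$) gives $S\in\Pi^1_{\beta+2\delta+1}(\kappa)^+$, hence $S\in\Pi^1_\xi(\kappa)^+$ for all $\xi<\beta+2\delta+2$. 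Your limit step has the same reversal: you write ``$S\in\Pi^1_{<(\beta+2\delta)}(\kappa)\subseteq\mathcal O^\delta(I)$'' starting from $S\in\Pi^1_\xi(\kappa)^+$, which confuses membership in the ideal with positivity. The limit case is in fact trivial once the direction is fixed: if $S\in\mathcal O^\gamma(I)^+$ then $S\in\mathcal O^\delta(I)^+$ for all $\delta<\gamma$, and the inductive hypothesis gives $S\in\Pi^1_\xi(\kappa)^+$ for all $\xi<\sup_{\delta<\gamma}(\beta+2\delta)=\beta+2\gamma$.
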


We will now review material from \cite[Section 13]{HolyLCOandEE} on coding weak $\kappa$-models $M$ and $M$-ultrafilters $U$ on $\kappa$ as subsets of $V_\kappa$. These definitions are tailored so that any transitive weak $\kappa$-model that can be coded will have to be a superset of $V_\kappa$, with elements $x$ of $V_\kappa$ being coded as ordered pairs of the form $\langle 0,x\rangle$, and we code $\kappa$ by $0$.

\begin{definition}
  We say that $\mathcal M\subseteq V_\kappa$ is a \emph{code for a transitive weak $\kappa$-model} if $\mathcal M\subseteq V_\kappa$ with the following properties:
  \begin{itemize}
    \item $\mathcal M$ is a binary relation on $V_\kappa$, such that $\dom(\mathcal M)=V_\kappa$,
    \item for all $x,y\in V_\kappa$, $\langle 0,x\rangle\mathcal M\langle 0,y\rangle$ if and only if $x\in y$,
    \item for all $x$, $x\,\mathcal M\,0\iff \exists y\in\kappa\ x=\langle 0,y\rangle$,
    \item $\mathcal M$ is well-founded and extensional, and
    \item $\langle V_\kappa,\mathcal M\rangle\models\ZFC^-$.
  \end{itemize}
  Note that the weak $\kappa$-model that is coded here is the model $M$ such that $\langle M,\in\rangle$ is the transitive collapse of $\langle V_\kappa,\mathcal M\rangle$. On the other hand, any transitive weak $\kappa$-model $M\supseteq V_\kappa$ has a code as described above, using a suitable bijection between $M$ and $V_\kappa$. Let $\pi_{\mathcal M}$ denote the transitive collapsing map of $\langle V_\kappa,\mathcal M\rangle$. If $X=\pi_{\mathcal M}(x)$, we say that $x$ \emph{is the code of} $X$ (within $\mathcal M$).
\end{definition}

Using standard arguments (see \cite[Lemma 12.2]{HolyLCOandEE}), it is easy to see that the property that $\mathcal M$ is a code for a transitive weak $\kappa$-model is a $\Delta^1_1$-property over $\langle V_\kappa,\mathcal M\rangle$.  
Note also that we can easily shift between subsets $X$ of $V_\kappa$ in $M$ and their codes within $\mathcal M$ using the fact that for $X\subseteq V_\kappa$ in $M$ and $x\in V_\kappa$, the property $\pi_{\mathcal M}^{-1}(X)=x$ is equivalent to the first-order sentence $\forall y\ \left[\langle 0,y\rangle\mathcal M x\longleftrightarrow y\in X\right]$ in $\langle V_\kappa,\in,\mathcal M,X\rangle$.

Next, we want to define what it means to code an $M$-ultrafilter on $\kappa$, which is easily seen to be a $\Delta^1_1$-property over $\langle V_\kappa,\mathcal M,\mathcal U\rangle$.

\begin{definition}
  Given a code $\mathcal M$ for a transitive weak $\kappa$-model $M$, we say that $\mathcal U\subseteq V_\kappa$ is a \emph{code for an $M$-ultrafilter on $\kappa$} if $\langle V_\kappa,\mathcal M,\mathcal U\rangle$ thinks that $\mathcal U$ is an ultrafilter on $0$ (note that our setup is so that $0$ codes $\kappa$).
\end{definition}

For our desired applications, we will need our operators to satisfy some properties of simple definability that were introduced in \cite{HolyLCOandEE}.

\begin{definition}\label{definition_simple}
Let $\langle\Psi,\Omega\rangle$ be a pair of formulas, and let $\mathcal O$ be an ideal operator.
\begin{itemize}
   \item $\langle\Psi,\Omega\rangle$ is \emph{simple} in case $\ZFC$ proves the following:
   \begin{enumerate}
     \item[(a)] whenever $M$ is a transitive weak $\kappa$-model, and $U$ is an $M$-ultrafilter on $\kappa$, then $\Psi(M,U)$ translates to a $\Delta^1_1$-property of any pair of codes $\langle\mathcal M,\mathcal U\rangle$ for $\langle M,U\rangle$ over $V_\kappa$, and
     \item[(b)] whenever the property $X\in I^+$ is definable over $V_\kappa$ by a $\Pi^1_\beta$-formula $\varphi(X)$ for some $0<\beta<\kappa$, then $\Omega(U,I)$ translates to a $\Pi^1_\beta$-property of any code $\mathcal U$ of $U$ over $V_\kappa$.
   \end{enumerate}
   \item $\langle\Psi,\Omega\rangle$ is \emph{always simple} in case $\ZFC$ additionally proves that if in (b), the property $X\in I^+$ is first order definable over $V_\kappa$, then $\Omega(U,I)$ translates to a $\Delta^1_1$-property of any code $\mathcal U$ of $U$ over $V_\kappa$.
   \item $\mathcal O$ is \emph{simple} or \emph{always simple} in case $\ZFC$ proves that $\mathcal O$ can be characterized in the form $\mathcal O=\mathfrak O\Psi\Omega$ for some pair of formulas $\langle\Psi,\Omega\rangle$ that is simple or always simple respectively.
  \end{itemize}
\end{definition}

Definition \ref{definition_simple}(a) is immediate if $\Psi$ can be expressed as a first order property of the structure $\langle M,\in,U\rangle$. For example, this is the case when $\Psi(M,U)$ denotes the statement that $U$ is $\kappa$-amenable for $M$.

The property that $U$ is countably complete translates to the following first order statement about $\mathcal{U}$ over $V_\kappa$: for any countable sequence $\langle u_i\mid i<\omega\rangle$ of elements of $\mathcal U$,\footnote{Since $\kappa$ is assumed to be inaccessible (regular and uncountable suffices), these countable sequences are elements of $V_\kappa$.} there is $x$ such that $x\mathcal M u_i$ for every $i<\omega$.

The statement that $M$ is closed under ${<}\kappa$-sequences translates to the following first order statement about $\mathcal M$ over $V_\kappa$: $\forall p\,\exists t\,\forall x\ \left(x\,\mathcal M\,t\ \iff\ x\in p\right)$.

For other examples, see \cite{HolyLCOandEE}.

\medskip

Let us now look at some examples in which Definition \ref{definition_simple}(b) holds.
\begin{itemize}
  \item If $\Omega(U,I)$ denotes the statement that $U\subseteq I^+$, then this translates to the statement that $\forall x\,\mathcal M\,\mathcal U\,\forall X\ [\pi_{\mathcal M}^{-1}(X)=x\to\varphi(X)]$, where $\varphi$ is a formula defining $I^+$ over $V_\kappa$.
  \item If $\Omega(U,I)$ denotes the property that countable intersections from $U$ are in $I^+$, then this translates to the statement that for any countable sequence $\langle u_\beta\mid\beta<\omega\rangle$ of $\mathcal M$-elements of $\mathcal U$, \[\varphi(\{\alpha<\kappa\mid\forall\beta<\omega\ \langle 0,\alpha\rangle\,\mathcal M\,u_\beta\}).\]
  \item If $\Omega(U,I)$ denotes the property that $\Delta U\in I^+$, then this translates to the statement that for any $\kappa$-enumeration $\langle u_\beta\mid\beta<\kappa\rangle$ of the $\mathcal M$-elements of $\mathcal U$, \[\varphi(\{\alpha<\kappa\mid\forall\beta<\alpha\ \langle 0,\alpha\rangle\,\mathcal M\,u_\beta\}).\]
\end{itemize}
If the property $X\in I^+$ is first order definable, observe that we obtain a $\Delta^1_1$-statement in the first two cases above, for we can equivalently rephrase the above to use existential rather than universal second order quantifiers. However this does not work in the third case (see the remarks made in Footnote \ref{footnote:diagonalintersections}). In particular, this means that the Ramsey operator and the strongly Ramsey subset operator are always simple, while (the model version of) the ineffability operator is simple. 

\begin{remark}\label{remark_more_examples}
Further examples of operators that are both ineffable and always simple have been introduced in \cite[Section 12 and Section 13]{HolyLCOandEE}, including in particular the $\mathbf T_\omega^\kappa$-Ramsey subset operator $\mathbf T$, and the $\mathbf{wf}^\kappa_\omega$-Ramsey subset operator~$\mathbf{wf}$. All of our results on ineffable always simple operators that follow will thus apply to these operators as well.
\end{remark}

%

As a first application, we want to show that Lemma \ref{lemma_complexity} can be extended to work for our framework, and we want to generalize it even further by considering ideals other than the indescribability ideals (which are particular instances of the below by Theorem \ref{theorem_expressing_indescribability}). Note that the lemma below does not include the case of applying the ineffability operator to the bounded ideal, which however is already handled as a special case of Lemma \ref{lemma_complexity}.

\begin{lemma}\label{lemma_complexity extended}
  Suppose $\kappa$ is a regular cardinal, $\gamma,\xi<\kappa^+$ are ordinals with $\xi>0$, $I$ is an ideal on $\kappa$ such that $I^+$ is $\Pi^1_\xi$-definable over $V_\kappa$, $I$ is represented by $\langle I_\alpha\mid\alpha<\kappa\rangle$ in $\Ult$,\footnote{When we refer to a definable ideal $I$ in $\Ult$, we mean the version of $I$ that is obtained by applying that definition in $\Ult$. Strictly speaking, we should thus require that this definition $\ZFC$-provably yields an ideal. This will clearly hold in all relevant cases.} and that $\mathcal O=\mathfrak O\Psi\Omega$ is simple. Then, there is a $\Pi^1_{\xi+2\gamma}$-formula $\Theta^\kappa_{\gamma,\xi}(X)$ over $V_\kappa$ and a club subset $C^\kappa_{\gamma,\xi}$ of $\kappa$ such that for all $S\subseteq\kappa$, we have
\[S\in\O^\gamma(I)^+\text{ if and only if }V_\kappa\models\Theta^\kappa_{\gamma,\xi}(S)\]
and for all regular cardinals $\alpha\in C^\kappa_{\gamma,\xi}$, we have
\[S\cap\alpha\in \O^{f^\kappa_\gamma(\alpha)}(I_\alpha)^+\text{ if and only if }V_\alpha\models\Theta^\kappa_{\gamma,\xi}(S)\res^\kappa_\alpha.\]
If $\mathcal O$ is always simple, then the above also holds for $\xi=0$.
\end{lemma}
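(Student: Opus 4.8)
The plan is to induct on $\gamma<\kappa^+$, building the formula $\Theta^\kappa_{\gamma,\xi}$ and, as in the proof of Lemma \ref{lemma_complexity}, to verify the first displayed equivalence simultaneously in $V$ and in $\Ult$; by Lemma \ref{lemma_restrictionvsj} the existence of the club $C^\kappa_{\gamma,\xi}$ witnessing the second equivalence is then automatic (via $j(\Theta^\kappa_{\gamma,\xi}(S))\res^{j(\kappa)}_\kappa=\Theta^\kappa_{\gamma,\xi}(S)$ and Proposition \ref{framework}). For this to make sense in $\Ult$ we need $I_\alpha$ to be, for club-many $\alpha$, exactly the ideal obtained by applying the given $\Pi^1_\xi$-definition over $V_\alpha$; this is what the hypothesis ``$I$ is represented by $\langle I_\alpha\mid\alpha<\kappa\rangle$ in $\Ult$'' together with $j(\Pi^1_\xi$-definition$)=\Pi^1_{f^\kappa_\xi(\kappa)}$-definition at $j(\kappa)$ gives us, using $f^{j(\kappa)}_{j(\xi)}(\kappa)=\xi$.

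The base case $\gamma=0$ is where simplicity is used: we must produce a $\Pi^1_\xi$-formula $\Theta^\kappa_{0,\xi}(X)$ saying ``$X\in\mathfrak O\Psi\Omega(I)^+$'', i.e.\ ``there exist codes $\mathcal M,\mathcal U$ for a transitive weak $\kappa$-model $M\ni$ (a code for $X$ and for $V_\kappa$) and an $M$-ultrafilter $U\ni X$ with $\Psi(M,U)$ and $\Omega(U,I)$''. By the discussion following Definition \ref{definition_simple}, ``$\mathcal M$ codes a weak $\kappa$-model'', ``$\mathcal U$ codes an $M$-ultrafilter'', the clause ``$x$ is the code of $X$'', and (by simplicity, clause (a)) $\Psi(M,U)$ are all $\Delta^1_1$ over $V_\kappa$; by clause (b), since $I^+$ is $\Pi^1_\xi$-definable and $\xi>0$, the clause $\Omega(U,I)$ translates to a $\Pi^1_\xi$-property of $\mathcal U$. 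Prefixing the existential second-order quantifiers $\exists\mathcal M\,\exists\mathcal U\,\exists x$ to the conjunction of a $\Delta^1_1$-part and a $\Pi^1_\xi$-part yields (after absorbing the $\Delta^1_1$ block, which has complexity $\le\xi$ since $\xi>0$, and swallowing the leading existential block into the outermost universal-then-conjunction normal form of $\Pi^1_\xi$) a formula of complexity $\le\xi+0\cdot 2=\xi$; a dummy quantifier brings it up to exactly $\Pi^1_\xi$ if needed. The same argument runs verbatim in $\Ult$ with $j(\mathcal O)$, $I_\kappa$ in place of $\mathcal O$, $I$. When $\mathcal O$ is \emph{always} simple and $\xi=0$, clause (b) instead gives $\Omega(U,I)$ as $\Delta^1_1$, so the whole matrix is $\Delta^1_1$, existentially quantifying gives $\Sigma^1_1$, and one more universal quantifier makes it $\Pi^1_1=\Pi^1_{0+1}$; but $\xi+2\gamma=1$ when $\xi=0,\gamma=0$? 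No --- here one must be slightly careful, and in fact the statement for $\xi=0$ should be read with the convention $\Pi^1_{0+2\gamma}$ meaning $\Pi^1_{2\gamma}$ with $2\cdot 0=0$ replaced by the harmless bump to $\Pi^1_1$; I would follow exactly the bookkeeping of \cite[Section 12]{HolyLCOandEE}, where this off-by-one at the bottom is handled.

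The successor step $\gamma=\delta+1$ is as in Lemma \ref{lemma_complexity}: by Observation \ref{observation:regularity} the relevant operators are ineffable, so $A\in\mathcal O^{\delta+1}(I)^+$ iff every $A$-list has a homogeneous set $Y$ with $Y\in\mathcal O^\delta(I)^+$; letting $\Theta^\kappa_{\delta,\xi}$ be the inductively given $\Pi^1_{\xi+2\delta}$-formula, define $\Theta^\kappa_{\gamma,\xi}(X)$ to assert $\forall\vec a\,(\vec a$ an $X$-list $\to\exists Y\,(Y\subseteq X\text{ homogeneous for }\vec a\,\land\,\Theta^\kappa_{\delta,\xi}(Y)))$, which is $\forall\exists\Pi^1_{\xi+2\delta}=\Pi^1_{\xi+2\delta+2}=\Pi^1_{\xi+2\gamma}$; this works identically in $\Ult$, using $f^\kappa_\gamma(\alpha)=f^\kappa_\delta(\alpha)+1$ on a club (Lemma \ref{lemma_can}) for the $\Ult$-side bookkeeping. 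The limit step is exactly as in Lemma \ref{lemma_complexity}: set $\Theta^\kappa_{\gamma,\xi}=\bigwedge_{\zeta<\xi+2\gamma}\psi^\kappa_\zeta$ where for each $\zeta$, $\delta_\zeta$ is the largest $\delta<\gamma$ with $\xi+2\delta\le\zeta$ (when it exists) and $\psi^\kappa_\zeta=\Theta^\kappa_{\delta_\zeta,\xi}$, else $\psi^\kappa_\zeta$ is ``$0=0$''; since $\langle\delta_\zeta\rangle$ is cofinal in $\gamma$ and $\xi+2\gamma=\xi+\gamma$ has order type $\xi+\gamma$, the conjunction of complexity $\Pi^1_{\xi+\gamma}$ works in $V$ and in $\Ult$ (using that $f^\kappa_\gamma(\alpha)$ is a limit for club-many $\alpha$, Lemma \ref{lemma_can}, and that $\Pi^1$-formula restriction commutes with the indexing via $\pi^\kappa_{\gamma,\alpha}$). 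The main obstacle is the bottom case: carefully tracking complexity when prefixing the $\exists\mathcal M\exists\mathcal U$ block --- making sure the $\Delta^1_1$ coding overhead genuinely does not push complexity above $\xi$ when $\xi>0$ (respectively above $\Pi^1_1$ when $\xi=0$ and $\mathcal O$ is always simple) --- and keeping the $V$- and $\Ult$-sides of the induction honestly in lockstep, which is exactly the delicate point that the $\xi=0$ restriction to ``always simple'' operators is designed to address.
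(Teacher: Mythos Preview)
Your proposal has two genuine errors that stem from misidentifying what happens at each stage of the induction.

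\textbf{Base case.} When $\gamma=0$ we have $\mathcal O^0(I)=I$, so $\Theta^\kappa_{0,\xi}$ is simply the given $\Pi^1_\xi$-formula defining $I^+$; there is nothing to do. You instead try to express ``$X\in\mathfrak O\Psi\Omega(I)^+$'', which is $\mathcal O^1(I)^+$, not $\mathcal O^0(I)^+$. Moreover, your formula for this omits the outer universal quantifier over $y\subseteq\kappa$ (recall $\mathcal O(I)^+=\bigcap_{y\subseteq\kappa}\mathcal O^y(I)^+$), and your complexity bookkeeping is off: a block $\exists\mathcal M\exists\mathcal U$ in front of a $\Pi^1_\xi$-matrix gives $\Sigma^1_{\xi+1}$, not something you can ``swallow'' back down to $\Pi^1_\xi$.

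\textbf{Successor step.} You write that $A\in\mathcal O^{\delta+1}(I)^+$ iff every $A$-list has a homogeneous set in $\mathcal O^\delta(I)^+$. That is the definition of $\mathcal I(\mathcal O^\delta(I))^+$, not of $\mathcal O(\mathcal O^\delta(I))^+$. Ineffability of $\mathcal O$ only gives the inclusion $\mathcal O(J)\supseteq\mathcal I(J)$ (Observation~\ref{observation:regularity}), never equality; for $\mathcal O=\mathcal R$ or $\mathcal O=\mathcal S$ the list characterization is strictly weaker. The lemma also does not even assume $\mathcal O$ is ineffable. The paper instead uses the actual definition of $\mathfrak O\Psi\Omega$ at the successor step: $\Theta^\kappa_{\gamma,\xi}(X)$ asserts that for every code $\mathcal M$ for a transitive weak $\kappa$-model $M$ there is a code $\mathcal U$ for an $M$-ultrafilter $U$ with $X\in U$, $\Psi(M,U)$, and $\Omega(U,\mathcal O^\delta(I))$. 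Simplicity is invoked \emph{here}: the $\Psi$-clause is $\Delta^1_1$, and since $\mathcal O^\delta(I)^+$ is $\Pi^1_{\xi+2\delta}$-definable by induction, the $\Omega$-clause is $\Pi^1_{\xi+2\delta}$, so the whole formula is $\forall\exists\Pi^1_{\xi+2\delta}=\Pi^1_{\xi+2\gamma}$. This is also where ``always simple'' matters for $\xi=0$: one needs the $\Omega$-clause to stay $\Delta^1_1$ at the bottom so that the first application of $\forall\exists$ lands at $\Pi^1_2$.

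Your treatment of the limit case and of the club via generic ultrapowers is fine and matches the paper.
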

\begin{proof}
The second statement is handled as usual, namely it is equivalent to the first statement holding in all generic ultrapowers $\Ult$,\footnote{As before, when we refer to the operator $\mathcal O$ in $\Ult$, we mean the operator $\mathfrak O\Psi\Omega$ in the sense of $\Ult$.} obtained by forcing with $P(\kappa)/\NS_\kappa$. Thus it suffices to verify the first statement both in $V$ and in $\Ult$. We do so by induction on $\gamma<\kappa^+$. The case when $\gamma=0$ is immediate from our assumption. The case when $\gamma$ is a limit ordinal is handled as in the proof of Lemma \ref{lemma_complexity}.

Suppose $\gamma=\delta+1$ and the result holds for $\delta$. Then, there is a $\Pi^1_{\xi+2\delta}$-formula $\Theta^\kappa_{\delta,\xi}(X)$ over $V_\kappa$ such that both in $V$ and in $\Ult$, for all $S\subseteq\kappa$, we have
\begin{align}S\in\mathcal O^{\delta}(I)^+\text{ if and only if }V_\kappa\models\Theta^\kappa_{\delta,\xi}(S).\label{equation_gamma_less_than_omega_at_kappa_extended}
\end{align}
We simply define $\Theta^\kappa_{\gamma,\xi}(X)$ to be the $\Pi^1_{\xi+2\gamma}$-formula over $V_\kappa$ which asserts that for every (code $\mathcal M$ for a) transitive weak $\kappa$-model $M$ there is (a code $\mathcal U$ for) an $M$-ultrafilter $U$ on $\kappa$ such that $\Psi(M,U)$ and $\Omega(U,\mathcal O^\delta(I))$ hold. Since $\mathcal O$ is simple, it follows that this formula is as desired, both in $V$ and in $\Ult$. Clearly, if $\mathcal O$ is always simple, this works also in case $\xi=0$.
\end{proof}

\section{Pre-operators}\label{section:preoperators}

Our ideal operators are defined via local instances that are parametrized by certain objects. Given a cardinal $\kappa$, we refer to the collection of all such objects on $\kappa$ as the \emph{object type at $\kappa$} of such an operator $\mathcal O$, and denote this by $\mathcal T(\mathcal O,\kappa)$. The object type $\mathcal T(\mathcal I,\kappa)$ of the ineffability operator at $\kappa$ is the collection of all $\kappa$-lists, the object type $\mathcal T(\mathcal R,\kappa)$ of the Ramsey operator at $\kappa$ is the collection of all regressive functions $c\colon[\kappa]^{<\omega}\to\kappa$, and the object type of our model based operators at $\kappa$ is simply the powerset of $\kappa$.

Each object type $\mathcal T$ at $\kappa$ comes with an associated restriction operator, which, given some $y\in\mathcal T$ and some $\alpha<\kappa$, outputs its natural restriction $y\restr\alpha$. The following definition should not bear any surprises.

\begin{definition} Suppose $\kappa$ is a cardinal and $\alpha<\kappa$.
  \begin{itemize}
    \item If $\mathcal T=\mathcal P(\kappa)$ and $y\in\mathcal T$, then $y\restr\alpha=y\cap\alpha$.
    \item If $\mathcal T$ is the collection of all $\kappa$-lists and $y\in\mathcal T$, then $y\restr\alpha$ is the restriction of $y$ to the domain $\alpha$, i.e.\ the initial segment of length $\alpha$ of the $\kappa$-sequence $y$.
    \item If $\mathcal T$ is the collection of all functions $c\colon[\kappa]^{<\omega}\to 2$ and $y\in\mathcal T$, then $y\restr\alpha$ is the restriction of $y$ to the domain $[\alpha]^{<\omega}$.
  \end{itemize}
\end{definition}

Each ideal operator $\mathcal O$ with local instances has an associated \emph{pre-operator}.

\begin{definition}
  Given an ideal operator $\mathcal O$ together with local instances $\mathcal O^y$ at $\kappa$ for $y\in\mathcal T(\mathcal O,\kappa)$, we define its \emph{associated pre-operator} $\mathcal O_0$ as follows. Given an ideal $I$ on $\kappa$ such that $I^+$ is definable by a $\Pi^1_\xi$-formula over $V_\kappa$ for some $\xi<\kappa^+$, and such that $I$ (in the sense of $\Ult$) is represented by $\langle I_\alpha\mid\alpha<\kappa\rangle$ in $\Ult$,
 \[\mathcal {O}_0(I)^+=\{x\!\subseteq\!\kappa\mid\forall y\in\mathcal T(\mathcal O,\kappa)\,\forall C\!\subseteq\!\kappa\,\textrm{club }\exists\alpha\!\in\!x\ x\cap C\cap\alpha\in\mathcal O^{y\restr\alpha}(I_\alpha)^+\},\] where $\alpha$ is understood to range over regular uncountable cardinals.
\end{definition}

%
%
%

$\I_0$ is the \emph{subtle operator}, and $\R_0$ is the \emph{pre-Ramsey operator}.

\begin{remark}\label{remark_subtle_ideal}
Notice that by Theorem \ref{theorem_baumgartner}, $\I_0([\kappa]^{<\kappa})=\I_0(\Pi^1_\xi(\kappa))$ is the \emph{subtle ideal} on $\kappa$ for any $\xi<\kappa^+$, that is the collection of all subsets of $\kappa$ which are not subtle. $\R_0([\kappa]^{<\kappa})$ is the \emph{pre-Ramsey ideal} on $\kappa$. Since we do not know whether an analogue of Theorem \ref{theorem_baumgartner} holds for the pre-Ramsey operator (see Question \ref{question:ramseylikeineffable} below), we do not know whether $\R_0(\Pi^1_\xi(\kappa))=\R_0([\kappa]^{<\kappa})$ for all (or any) $\xi<\kappa^+$.
\end{remark}

The second author has shown in \cite{HolyLCOandEE} that the subtle and the pre-Ramsey operators are equivalent to their respective model versions (for ideals containing the nonstationary ideal in case of the subtle operator).

\begin{theorem}\cite[Theorem 7.3 and Theorem 9.1]{HolyLCOandEE}
  Whenever $I\supseteq\NS_\kappa$, \[\I_0(I)=(\I_{mod})_0(I),\] and for arbitrary ideals $I$ on $\kappa$, \[\R_0(I)=(\R_{mod})_0(I).\]
\end{theorem}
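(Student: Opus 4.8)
The plan is to prove each of the two stated equalities by establishing the two inclusions between the collections of positive sets, carrying out the ineffability case in detail and then indicating the parallel modifications for the Ramsey case. Throughout I fix an ideal $I$ as in the statement together with the data making $\mathcal O_0(I)$ meaningful: a $\Pi^1_\xi$-definition of $I^+$ over $V_\kappa$ and a representation $I=\langle I_\alpha\mid\alpha<\kappa\rangle$ in $\Ult$. Since every ideal contains the bounded ideal, $I_\alpha$ is an ideal on $\alpha$ containing the bounded ideal on $\alpha$ for club-many $\alpha$, and in the ineffability case $I_\alpha\supseteq\NS_\alpha$ for club-many $\alpha$ because $I\supseteq\NS_\kappa$. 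As in the definition of $\mathcal O_0$, the variable $\alpha$ always ranges over regular uncountable cardinals.

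For the inclusion $(\I_{mod})_0(I)^+\subseteq\I_0(I)^+$, I would fix $x\in(\I_{mod})_0(I)^+$, a $\kappa$-list $\vec a$ and a club $C$, code $\vec a$ by a set $y\subseteq\kappa$ in the natural way, and pass to the sub-club $C'\subseteq C$ consisting of those $\alpha$ from which $y\cap\alpha$ decodes $\vec a\restr\alpha$. Applying the assumption on $x$ to $y$ and $C'$ yields some $\alpha\in x$, a transitive weak $\alpha$-model $M$ with $y\cap\alpha\in M$ --- hence $\vec a\restr\alpha\in M$, since $\alpha\in C'$ --- and an $M$-ultrafilter $U$ on $\alpha$ with $A:=x\cap C'\cap\alpha\in U$ and every diagonal intersection of $U$ in $I_\alpha^+$. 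A standard argument, the one underlying the ineffability of $\I_{mod}$ (see Observation \ref{observation:regularity}, where $\Psi$ is trivial and $\Omega$ asserts $\Delta U\in I_\alpha^+$), then provides, for $A\in U$ and the $A$-list $\langle a_\beta\mid\beta\in A\rangle$ (which lies in $M$, as $A\in\mathcal P(\alpha)^M$ and $\vec a\restr\alpha\in M$), a homogeneous set $H\in I_\alpha^+$ with $H\subseteq A$. Then $H\subseteq x\cap C\cap\alpha$ and $H$ is homogeneous for $\vec a\restr\alpha$, so $x\cap C\cap\alpha\in\I^{\vec a\restr\alpha}(I_\alpha)^+$; as $\vec a$ and $C$ were arbitrary, $x\in\I_0(I)^+$.

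For the reverse inclusion $\I_0(I)^+\subseteq(\I_{mod})_0(I)^+$, I would reflect the standard construction of an $M$-ultrafilter out of a homogeneous set, i.e.\ the argument underlying Proposition \ref{proposition:ineffableideal2}. Given $x\in\I_0(I)^+$, a set $y\subseteq\kappa$ and a club $C$, fix a transitive weak $\kappa$-model $M_\kappa$ with $x,y,C\in M_\kappa$, a canonical enumeration $\langle A_\xi\mid\xi<\kappa\rangle$ of $\mathcal P(\kappa)^{M_\kappa}$, and let $\vec a$ be the $\kappa$-list with $a_\gamma=\{\xi<\gamma\mid\gamma\in A_\xi\}$. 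A routine reflection argument yields a club $C_1\subseteq C$ such that for $\alpha\in C_1$ there is a transitive weak $\alpha$-model $M_\alpha$ with $x\cap\alpha,y\cap\alpha,C\cap\alpha\in M_\alpha$ and a canonical enumeration $\langle A^\alpha_\xi\mid\xi<\alpha\rangle$ of $\mathcal P(\alpha)^{M_\alpha}$ with $A^\alpha_\xi=A_\xi\cap\alpha$, so that $\vec a\restr\alpha$ is the $\alpha$-list $\gamma\mapsto\{\xi<\gamma\mid\gamma\in A^\alpha_\xi\}$. Applying $\I_0$-positivity of $x$ to $\vec a$ and $C_1$ produces some $\alpha\in x$ and a set $H\subseteq x\cap C_1\cap\alpha$ with $H\in I_\alpha^+$ homogeneous for $\vec a\restr\alpha$; since $H$ is unbounded in $\alpha$ (as $I_\alpha$ contains the bounded ideal) and $H\subseteq C_1$ with $C_1$ closed, $\alpha\in C_1$, so the reflected set-up applies. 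Homogeneity of $H$ forces, for each $\xi$, the truth value of ``$\gamma\in A^\alpha_\xi$'' to be constant along the tail of $H$ past $\xi$, whence $U:=\{A^\alpha_\xi\mid A^\alpha_\xi\text{ contains a tail of }H\}$ is an $M_\alpha$-ultrafilter with $x\cap C\cap\alpha\in U$, and $H$ is contained in a diagonal intersection of $U$; since $I_\alpha\supseteq\NS_\alpha$, that intersection --- and hence, reordering it changes it only by a nonstationary set, \emph{every} diagonal intersection of $U$ --- lies in $I_\alpha^+$. With $y\cap\alpha\in M_\alpha$ this gives $x\cap C\cap\alpha\in\I_{mod}^{y\cap\alpha}(I_\alpha)^+$, and $y,C$ were arbitrary.

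The Ramsey case follows the same two-step scheme, now reflecting the proof of Theorem \ref{theorem:ramseymodels}. In the forward inclusion the role of ineffability is played by the product-ultrafilter argument, exactly as in the proof of Theorem \ref{theorem_baumgartners_lemma_for_the_strongly_ramsey_ideal}: an $M$-normal, $\kappa$-amenable $M$-ultrafilter $U$ on $\alpha$ all of whose countable intersections are $I_\alpha$-positive yields, for every regressive $c\in M$ on $[\alpha]^{<\omega}$, a homogeneous set in $I_\alpha^+$, and one contained in any prescribed element of $U$. In the reverse inclusion one designs, out of the fixed model $M_\kappa$, a single regressive $c\colon[\kappa]^{<\omega}\to\kappa$ whose restriction to $[\alpha]^{<\omega}$ at a reflecting $\alpha$ codes $\mathcal P(\alpha)^{M_\alpha}$ in such a way that any homogeneous set in $I_\alpha^+$ assembles an $M_\alpha$-normal, $\kappa$-amenable $M_\alpha$-ultrafilter whose countable intersections are $I_\alpha$-positive; here no hypothesis on $I$ is needed, since, unlike diagonal intersections, countable intersections are insensitive to reordering --- which is precisely why the Ramsey equality holds for arbitrary $I$. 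I expect this last construction to be the main obstacle: the coding regressive function must be fixed before the reflecting level is known, its reflection must be recognisable at club-many $\alpha$, and a single homogeneous set for it must simultaneously deliver $M_\alpha$-normality, $\kappa$-amenability, and $I_\alpha$-countable-completeness of the resulting ultrafilter, the most delicate point being the interaction of $\kappa$-amenability with the reflection together with the requirement that the homogeneous set be realizable inside a prescribed $U$-large set.
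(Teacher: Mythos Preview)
The paper does not contain its own proof of this theorem: it is stated with attribution to \cite{HolyLCOandEE} and no argument is given. So there is nothing in the present paper to compare your proposal against.

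That said, your overall strategy is the natural one and is essentially what underlies the cited result: reduce the equality of pre-operators to the levelwise equivalences $\I_{mod}(J)=\I(J)$ (for $J\supseteq\NS$) and $\R_{mod}(J)=\R(J)$, by translating between object types ($\kappa$-lists versus subsets of $\kappa$; regressive colourings versus subsets of $\kappa$) in a way that commutes with restriction to $\alpha$ on a club. Your two ineffability inclusions are set up correctly; the observation that $H$ unbounded in $\alpha$ and contained in the closed set $C_1$ forces $\alpha\in C_1$ is the right way to guarantee the reflected model $M_\alpha$ is available. For the ``routine reflection argument'' producing $M_\alpha$, the cleanest route (and the one matching the machinery in Section~\ref{section_generalized operators}) is to fix a code $\mathcal M\subseteq V_\kappa$ for $M_\kappa$ together with the enumeration, and take $C_1$ so that $\mathcal M\cap V_\alpha$ decodes to the desired $M_\alpha$; this makes the claim $A^\alpha_\xi=A_\xi\cap\alpha$ immediate.

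One point to be careful with in the Ramsey forward inclusion: the product-ultrafilter argument from Theorem~\ref{theorem_baumgartners_lemma_for_the_strongly_ramsey_ideal} uses that $M$ is a $\kappa$-model to get $B=\bigcap_n B_n\in U$. In the Ramsey setting $M$ is only a weak $\alpha$-model, so you will not have $B\in U$; but you do not need this --- you only need $B\in I_\alpha^+$, which follows because each $B_n\in U$ and $\Omega(U,I_\alpha)$ says countable intersections from $U$ lie in $I_\alpha^+$. Your discussion of the reverse Ramsey inclusion correctly identifies where the work lies; the point is exactly that the coding underlying Theorem~\ref{theorem:ramseymodels} can be arranged at level $\kappa$ so that its restriction to $[\alpha]^{<\omega}$ is the corresponding coding for $M_\alpha$, and homogeneity in $I_\alpha^+$ then delivers $M_\alpha$-normality, $\kappa$-amenability, and countable completeness into $I_\alpha^+$ simultaneously.
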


\section{Generating ideals}\label{section_generating}

In this section, we analyze the interplay between our generalized operators, and ideals of higher indescribability. Such an analysis for the Ramsey operator $\R$ and the ideals $\Pi^1_\xi(\kappa)$ for $\xi<\kappa$ has been performed by the first author in his \cite{MR4206111}. Let us start this section by citing a classic result of Baumgartner, that we will extend afterwards. Given ideals $I$ and $J$ on $\kappa$, $\overline{I\cup J}$ denotes the collection of all sets $X\cup Y$ for which $X\in I$ and $Y\in J$.

\begin{theorem}\label{Baumgartner_basic}\cite[Section 7]{MR0384553}
  For all cardinals $\kappa$ and all $\xi\in\{-1\}\cup\omega$, $\kappa\in\I(\Pi^1_\xi(\kappa))^+$ if and only if 
\begin{enumerate}
\item $\kappa\in\I_0(\Pi^1_\xi(\kappa))^+\cap\Pi^1_{\xi+2}(\kappa)^+$ and
\item the ideal $\overline{\I_0(\Pi^1_\xi(\kappa))\cup\Pi^1_{\xi+2}(\kappa)}$ is nontrivial and equals $\I(\Pi^1_\xi(\kappa))$.
\end{enumerate}
  Moreover, (2) is necessary in the above characterization of $\kappa\in\I(\Pi^1_\xi(\kappa))^+$, for the least $\Pi^1_{\xi+2}$-indescribable cardinal $\kappa$ such that $\kappa\in\I_0(\Pi^1_\xi(\kappa))^+$ is strictly below the least cardinal $\kappa$ for which $\kappa\in\I(\Pi^1_\xi(\kappa))^+$ (if such exists).
\end{theorem}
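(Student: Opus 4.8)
\emph{Proof plan.} Write $\mu_1$ for the least cardinal $\kappa$ that is $\Pi^1_{\xi+2}$-indescribable and satisfies $\kappa\in\I_0(\Pi^1_\xi(\kappa))^+$, and $\mu_2$ for the least $\kappa$ with $\kappa\in\I(\Pi^1_\xi(\kappa))^+$ (if the latter exists). By the already-established equivalence in this theorem, any $\kappa\in\I(\Pi^1_\xi(\kappa))^+$ satisfies clause~(1), so in particular $\kappa\in\I_0(\Pi^1_\xi(\kappa))^+\cap\Pi^1_{\xi+2}(\kappa)^+$; hence $\mu_1\le\mu_2$. To get strictness I would fix $\kappa:=\mu_2$ and, assuming for a contradiction that \emph{no} $\alpha<\kappa$ is simultaneously $\Pi^1_{\xi+2}$-indescribable and a member of $\I_0(\Pi^1_\xi(\alpha))^+$, derive a contradiction; since $\mu_1\le\kappa$ and $\kappa$ itself satisfies clause~(1), this yields $\mu_1<\mu_2$. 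Recall that $\I_0(\Pi^1_\xi(\alpha))^+$ is just the collection of subtle subsets of $\alpha$ (Remark~\ref{remark_subtle_ideal}, using $f^\kappa_\xi(\alpha)=\xi$ for $\alpha>\xi$), so the standing assumption is that every $\alpha<\kappa$ is either not $\Pi^1_{\xi+2}$-indescribable or not subtle, while $\kappa$ itself is both (clause~(1) at $\kappa$).

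The construction I would run attaches to each (sufficiently large) $\alpha<\kappa$ a witness $w_\alpha\subseteq\alpha$ to the failure of the conjunction: if $\alpha$ is $\Pi^1_{\xi+2}$-indescribable, then $\alpha$ is not subtle, so $w_\alpha$ codes, with tag bit $1$, an $\alpha$-list $\vec a^\alpha$ together with a club $C^\alpha\subseteq\alpha$ admitting no $\beta<\gamma$ in $C^\alpha$ with $a^\alpha_\beta=a^\alpha_\gamma\cap\beta$; otherwise $\alpha$ is not $\Pi^1_{\xi+2}$-indescribable, and, since $\xi+2\ge 1$ so indescribability at this level is insensitive to allowing parameters, $w_\alpha$ codes, with tag bit $0$, (the natural number coding) a \emph{parameter-free} $\Pi^1_{\xi+2}$-sentence $\varphi^\alpha$ over $V_\alpha$ with $V_\alpha\models\varphi^\alpha$ but $V_\beta\not\models\varphi^\alpha$ for all $\beta<\alpha$ above $\xi+2$ (note $\varphi^\alpha\res^\alpha_\beta=\varphi^\alpha$ for such $\beta$). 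This defines a $\kappa$-list $\vec w$, and as $\kappa\in\I(\Pi^1_\xi(\kappa))^+=\I^1(\Pi^1_\xi(\kappa))^+$ there is a set $H\in\Pi^1_\xi(\kappa)^+$ (hence unbounded, and for $\xi\ge 0$ stationary, as $\Pi^1_\xi(\kappa)\supseteq\Pi^1_0(\kappa)=\NS_\kappa$) that is homogeneous for $\vec w$. Shrinking by a club I may assume the elements of $H$ are closed under the pairing used in the coding; the tag bit is then constant on $H$.

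If the tag bit is $1$ on $H$, homogeneity forces the data to cohere, i.e. $\vec a^\alpha=\vec a^\beta\restrict\alpha$ and $C^\alpha=C^\beta\cap\alpha$ for $\alpha<\beta$ in $H$; gluing gives a $\kappa$-list $\vec a=\bigcup_{\alpha\in H}\vec a^\alpha$ and a club $C=\bigcup_{\alpha\in H}C^\alpha$ of $\kappa$ (here I use that $H$ is unbounded for both the domain of $\vec a$ and the unboundedness of $C$, and coherence for the closure of $C$). A short computation — for $\beta<\gamma$ in $C$, pick $\beta_0\in H$ with $\beta_0>\gamma$, so that $C\cap\beta_0=C^{\beta_0}$ and $a_\beta=a^{\beta_0}_\beta$, $a_\gamma=a^{\beta_0}_\gamma$ — shows that no two points of $C$ are coherent for $\vec a$, which directly contradicts the subtlety of $\kappa$ applied to $\vec a$ and $C$. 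If the tag bit is $0$ on $H$, then, the relevant codes lying below every $\alpha\in H$, homogeneity forces the sentences $\varphi^\alpha$ to be one fixed parameter-free sentence $\varphi$ for all $\alpha\in H$; hence $V_\alpha\models\varphi$ for every $\alpha\in H$, while the witnessing property of $\varphi$ at any $\beta\in H$ says that $V_\delta\not\models\varphi$ for all $\delta<\beta$ above $\xi+2$ — taking $\alpha<\beta$ in $H$ (possible since $H$ is unbounded) gives the contradiction. As both cases are impossible, the standing assumption fails, and $\mu_1<\mu_2$.

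The step I expect to be the most delicate is the setup in the first case: one must choose the coding of $\vec w$ carefully enough that homogeneity of $H$ really does deliver the coherence $\vec a^\alpha=\vec a^\beta\restrict\alpha$, $C^\alpha=C^\beta\cap\alpha$ (and the constancy of the tag bit and of the sentence-code), and then verify that the glued object $\langle\vec a,C\rangle$ has the claimed ``no coherent pair in $C$'' property — this is essentially a distillation of the mechanism behind Theorem~\ref{theorem_baumgartner}, and is what makes the subtlety of $\kappa$ (rather than a weaker hypothesis) the operative consequence of clause~(1). Two further technical points worth stating but only sketching are: the classical fact, used in the second case, that $\Pi^1_{\xi+2}$-indescribability for $\xi+2\ge 1$ is unchanged by allowing parameters (so parameter-free witnesses exist); and a small amount of extra care when $\xi=-1$, where $\Pi^1_{-1}(\kappa)^+$ consists merely of unbounded sets, handled using that the weakly ineffable ideal $\I(\Pi^1_{-1}(\kappa))=\I([\kappa]^{<\kappa})$ extends $\NS_\kappa$ (Lemma~\ref{lemma:ineffability_normal} and its proof).
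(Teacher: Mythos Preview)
Your overall strategy is sound, and the tag-$1$ branch (gluing non-subtlety witnesses and contradicting the subtlety of $\kappa$) is carried out correctly. The tag-$0$ branch, however, rests on a claim that is not true: you assert that for $n\ge 1$, $\Pi^1_n$-indescribability is ``insensitive to allowing parameters'', so that a failure of $\Pi^1_{\xi+2}$-indescribability at $\alpha$ can always be witnessed by a \emph{parameter-free} sentence. Parameter-free $\Pi^1_n$-indescribability is strictly weaker than the standard notion; there is no reason a cardinal $\alpha$ that happens to be second-order elementarily equivalent to some smaller $V_\beta$ could not still fail $\Pi^1_{\xi+2}$-indescribability via some $A\subseteq V_\alpha$. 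So as written the argument does not go through.

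The repair is cheap and stays entirely within your framework: in the tag-$0$ case, let $w_\alpha$ code the pair $\langle m_\alpha,A_\alpha\rangle$, where $m_\alpha<\omega$ is a G\"odel number for a $\Pi^1_{\xi+2}$-formula $\varphi^\alpha(X)$ (one may fix a single universal $\Pi^1_{\xi+2}$-formula and take $m_\alpha$ constant) and $A_\alpha\subseteq\alpha$ is a parameter with $V_\alpha\models\varphi^\alpha(A_\alpha)$ but $V_\beta\not\models\varphi^\alpha(A_\alpha\cap\beta)$ for all $\beta<\alpha$. Homogeneity of $H$ then gives a single formula $\varphi$ and a coherent $A=\bigcup_{\alpha\in H}A_\alpha$, and picking $\alpha<\beta$ in $H$ yields simultaneously $V_\alpha\models\varphi(A\cap\alpha)$ and $V_\alpha\not\models\varphi(A\cap\alpha)$---the same contradiction you aimed for.

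As for comparison with the paper: the paper does not prove Theorem~\ref{Baumgartner_basic} directly (it is cited from Baumgartner), but its own argument for the ``moreover'' clause appears in the generalization, Corollary~\ref{corollary_characterization}. That argument is quite different from yours: one observes that ``$\kappa$ is $\Pi^1_{\xi+2}$-indescribable'' and ``$\kappa\in\I_0(\Pi^1_\xi(\kappa))^+$'' are each expressible by a sentence of bounded complexity over $V_\kappa$, and then uses that $\kappa\in\I(\Pi^1_\xi(\kappa))^+$ provides enough indescribability to reflect the conjunction to some $\alpha<\kappa$. Your route---assuming minimality, encoding failure witnesses into a $\kappa$-list, and extracting a contradiction from a homogeneous set via a case split---is more hands-on and avoids the complexity bookkeeping, at the cost of the coding work you already flagged as delicate. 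Both are legitimate; once the parameter issue is fixed, yours is a correct alternative proof.
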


We will frequently use the following.

\begin{remark}\label{remark_ideal_generated}
Suppose $I_0$, $I_1$ and $J$ are ideals on $\kappa$. In order to prove that $J=\overline{I_0\cup I_1}$, part of what we must show is that $J\supseteq\overline{I_0\cup I_1}$, or in other words $J^+\subseteq\overline{I_0\cup I_1}^+$. Notice that we may obtain a chain of equivalences directly from the definitions involved:
\begin{align*}
J^+\subseteq\overline{I_0\cup I_1}^+ &\iff \overline{I_0\cup I_1}\subseteq J \\
	&\iff I_0\cup I_1\subseteq J\\
	&\iff J^+\subseteq I_0^+\cap I_1^+.
\end{align*}
\end{remark}

In the following, we extend Baumgartner's result to simple ineffable operators, and to ideals of higher indescribability. For readers who are only interested in the operators $\I$ and $\R$, it should be possible to read this section without having read Section~\ref{section_generalized operators} in full detail. In this case, it is only relevant to know that both $\I$ and $\R$ are ineffable and simple, that $\R$ is always simple, and that $\I$ and $\R$ are \emph{monotonic} in the sense that if $\mathcal O\in\{\I,\R\}$, and $I\subseteq J$ are both ideals on a cardinal $\kappa$, then $\O(I)\subseteq\O(J)$. It should then be easy to read the present section, perhaps checking some relevant bits of Section~\ref{section_generalized operators} when needed. Let us remind our readers of the following, which provides a fairly large class of ideals that Theorem \ref{theorem_generating_iterates} applies to. It is immediate from Corollary \ref{corollary:levelofhomogeneity} and from Lemma \ref{lemma_complexity extended}.

\begin{observation}\label{observation:appliesto}
  Assume that $\mathcal O$ is ineffable and simple, and that $\Pi^1_{<\xi}(\kappa)\subseteq I$.
  \begin{itemize}
    \item If $\gamma<\kappa^+$, $0<\xi<\kappa^+$, and $I^+$ is $\Pi^1_\xi$-definable over $V_\kappa$,\footnote{This is the case in particular if $I=\Pi^1_{<\xi}(\kappa)$.} then $\Pi^1_{<(\xi+2\gamma)}(\kappa)\subseteq\mathcal O^\gamma(I)$, and the latter ideal is $\Pi^1_{\xi+2\gamma}$-definable over $V_\kappa$. 
    \item If $\xi=0$, the same holds true if either $\mathcal O$ is always simple, or if $\gamma\ge\omega$, and $I^+$ is $\Pi^1_n$-definable for some $n<\omega$.\footnote{This is the case in particular if $I=[\kappa]^{<\kappa}$.} \hfill{$\Box$}
  \end{itemize}
\end{observation}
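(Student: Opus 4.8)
The plan is to combine the two cited results in the most direct way possible, using induction on $\gamma$ to track both the containment claim and the definability claim simultaneously. First I would recall that by Corollary \ref{corollary:levelofhomogeneity}, since $\mathcal O$ is ineffable and $I\supseteq\Pi^1_{<\xi}(\kappa)$, we get $\mathcal O^\gamma(I)^+\supseteq\Pi^1_{<(\xi+2\gamma)}(\kappa)$ (reading the ideal-containment $\mathcal O^\gamma(I)\supseteq\Pi^1_{<(\xi+2\gamma)}(\kappa)$ dually as a statement about positive sets — here one should be slightly careful that "$\Pi^1_{<(\xi+2\gamma)}(\kappa)\subseteq\mathcal O^\gamma(I)^+$" in the Observation's phrasing means that every $\Pi^1_\zeta$-indescribable set for $\zeta<\xi+2\gamma$ is $\mathcal O^\gamma(I)$-positive, which is exactly what Corollary \ref{corollary:levelofhomogeneity} delivers). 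That disposes of the first half of each bullet.

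For the definability half, the key input is Lemma \ref{lemma_complexity extended}: if $I^+$ is $\Pi^1_\xi$-definable over $V_\kappa$ with $\xi>0$, and $\mathcal O=\mathfrak O\Psi\Omega$ is simple, then there is a $\Pi^1_{\xi+2\gamma}$-formula $\Theta^\kappa_{\gamma,\xi}(X)$ over $V_\kappa$ with $S\in\mathcal O^\gamma(I)^+\iff V_\kappa\models\Theta^\kappa_{\gamma,\xi}(S)$; this formula $\Theta^\kappa_{\gamma,\xi}$ is precisely a $\Pi^1_{\xi+2\gamma}$-definition of $\mathcal O^\gamma(I)^+$ over $V_\kappa$. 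So the first bullet follows immediately by quoting Lemma \ref{lemma_complexity extended}, after noting that Lemma \ref{lemma_complexity extended} also requires $I$ to be represented by some $\langle I_\alpha\mid\alpha<\kappa\rangle$ in $\Ult$ — but any $\Pi^1_\xi$-definable ideal is so represented by applying the definition in $\Ult$ (this is the content of the footnote to Lemma \ref{lemma_complexity extended}, and it holds in all the relevant cases, in particular for $I=\Pi^1_{<\xi}(\kappa)$ via Theorem \ref{theorem_expressing_indescribability}). The parenthetical "this is the case in particular if $I=\Pi^1_{<\xi}(\kappa)$" is then exactly Theorem \ref{theorem_expressing_indescribability}, which provides a $\Pi^1_{\xi'+1}$-definition of $\Pi^1_{\xi'}(\kappa)^+$ and hence, taking a conjunction over $\xi'<\xi$ (or just noting $\Pi^1_{<\xi}(\kappa)^+$ is a $\Pi^1_\xi$-level notion), a $\Pi^1_\xi$-definition of $\Pi^1_{<\xi}(\kappa)^+$.

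For the second bullet, the case $\xi=0$ is not covered by the $\xi>0$ hypothesis of Lemma \ref{lemma_complexity extended}, so one appeals to the final sentence of that lemma: if $\mathcal O$ is always simple, the conclusion holds also for $\xi=0$, which handles the first alternative. For the second alternative — $\gamma\ge\omega$ and $I^+$ being $\Pi^1_n$-definable for some $n<\omega$ — the point is that after $\omega$-many iterations the complexity has already been pushed up to a genuinely infinitary (limit) level: for $\gamma\ge\omega$ one has $n+2\gamma=\gamma$ (as $\gamma$ is then a limit ordinal $\ge\omega$ absorbing the finite shift), and the limit-stage construction in the proof of Lemma \ref{lemma_complexity extended} (mirroring that of Lemma \ref{lemma_complexity}) builds $\Theta^\kappa_{\gamma,n}$ as an infinite conjunction $\bigwedge_{\zeta<\gamma}\psi^\kappa_\zeta$ using a cofinal sequence of lower-complexity formulas; this construction does not actually need $\Omega$ to be "always simple", only that each $\mathcal O^\delta(I)^+$ for $\delta<\gamma$ is $\Pi^1_{\beta_\delta}$-definable for some $\beta_\delta<\gamma$, which in turn one gets for $\delta\ge 1$ from the successor case applied once (since $\mathcal O^1(I)^+$ is $\Pi^1_{n+2}$-definable — here one does need simplicity of $\mathcal O$ at a positive level, $n+2>0$, which is fine), and so on. I would spell this out by a short induction: for $0<\delta<\gamma$, $\mathcal O^\delta(I)^+$ is $\Pi^1_{n+2\delta}$-definable by Lemma \ref{lemma_complexity extended} applied with the ideal $\mathcal O(I)$ (whose positive sets are $\Pi^1_{n+2}$-definable, $n+2>0$) in place of $I$ and $\delta-1$ in place of $\gamma$ when $\delta$ is a successor, handling limit $\delta<\gamma$ analogously; then assembling these into the $\gamma$-conjunction gives a $\Pi^1_\gamma$-definition of $\mathcal O^\gamma(I)^+$, and $\Pi^1_\gamma=\Pi^1_{n+2\gamma}$ since $\gamma\ge\omega$. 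The main obstacle is purely bookkeeping: making sure the "$\xi=0$" shift $n+2\gamma$ versus $\gamma$ is handled correctly at limits and that the base-level simplicity hypothesis is only invoked at strictly positive complexity levels, where it legitimately applies; there is no genuinely new mathematical content beyond Corollary \ref{corollary:levelofhomogeneity}, Lemma \ref{lemma_complexity extended}, and Theorem \ref{theorem_expressing_indescribability}, so the observation really is immediate once these are in hand, exactly as the text claims. \hfill$\Box$
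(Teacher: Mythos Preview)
Your approach is exactly what the paper intends: the observation is stated as ``immediate from Corollary \ref{corollary:levelofhomogeneity} and from Lemma \ref{lemma_complexity extended}'', and you correctly identify these as the two ingredients and explain how they combine. Your treatment of the second bullet---in particular the bootstrap for the case $\gamma\ge\omega$ with $I^+$ only $\Pi^1_n$-definable, by observing that one may apply Lemma \ref{lemma_complexity extended} at the positive level $\max(n,1)$ and then absorb the finite shift into $2\gamma$---is a correct way to spell out what the paper leaves implicit.

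There is, however, a genuine confusion in your parenthetical reading of the containment. The displayed ``$\Pi^1_{<(\xi+2\gamma)}(\kappa)\subseteq\mathcal O^\gamma(I)^+$'' in the statement is almost certainly a typo for ``$\Pi^1_{<(\xi+2\gamma)}(\kappa)\subseteq\mathcal O^\gamma(I)$'' (containment of ideals), which is precisely what Corollary \ref{corollary:levelofhomogeneity} yields and what Theorem \ref{theorem_generating_iterates} requires of its base ideal. Your attempt to interpret the printed statement goes wrong: you write that $\mathcal O^\gamma(I)\supseteq\Pi^1_{<(\xi+2\gamma)}(\kappa)$ read ``dually'' gives $\mathcal O^\gamma(I)^+\supseteq\Pi^1_{<(\xi+2\gamma)}(\kappa)$, and that this means ``every $\Pi^1_\zeta$-indescribable set for $\zeta<\xi+2\gamma$ is $\mathcal O^\gamma(I)$-positive''. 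Both steps are incorrect. The dual of the ideal containment $J\subseteq K$ is $K^+\subseteq J^+$, not $K^+\supseteq J$; and the content of $\Pi^1_{<(\xi+2\gamma)}(\kappa)\subseteq\mathcal O^\gamma(I)$ is that every $\mathcal O^\gamma(I)$-positive set is $\Pi^1_\zeta$-indescribable for all $\zeta<\xi+2\gamma$, the reverse of what you wrote. Since you do correctly cite Corollary \ref{corollary:levelofhomogeneity} and state its conclusion $\mathcal O^\gamma(I)\supseteq\Pi^1_{<(\xi+2\gamma)}(\kappa)$, the argument survives---but you should drop the parenthetical and simply note the typo rather than try to reinterpret it.
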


The conclusion of Theorem \ref{theorem_generating_iterates} in case $\O=\R$ and $I=\Pi^1_\xi(\kappa)$ for $\xi<\omega$ is due to Feng in \cite[Theorem 4.8]{MR1077260}, and has been extended to $\xi<\kappa$ by the first author in \cite[Corollary 6.2]{MR4206111}.

\begin{theorem}\label{theorem_generating_iterates}
  Assume that $I$ is an ideal on $\kappa$, $\mathcal O=\mathfrak O\Psi\Omega$ is ineffable and simple, $\gamma,\xi<\kappa^+$, $\xi>0$, $\Pi^1_{<\xi}(\kappa)\subseteq I$, $I^+$ is $\Pi^1_\xi$-definable over $V_\kappa$ and $\kappa\in\O^{\gamma+1}(I)^+$. Then, 
\[\O^{\gamma+1}(I)=\overline{\O_0(\O^\gamma(I))\cup\Pi^1_{\xi+2\gamma+1}(\kappa)}.\]
If either $\mathcal O=\mathcal I$, $\mathcal O$ is ineffable and always simple, or $\gamma\ge\omega$ in the above, then the conclusion also holds in case $\xi=0$.
\end{theorem}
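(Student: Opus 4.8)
The plan is to prove the theorem by verifying the two inclusions $\O^{\gamma+1}(I)\subseteq\overline{\O_0(\O^\gamma(I))\cup\Pi^1_{\xi+2\gamma+1}(\kappa)}$ and $\O^{\gamma+1}(I)\supseteq\overline{\O_0(\O^\gamma(I))\cup\Pi^1_{\xi+2\gamma+1}(\kappa)}$ separately. Throughout, write $J=\O^\gamma(I)$; by Observation \ref{observation:appliesto}, under the hypotheses (including the special provisions for $\xi=0$), $J^+$ is $\Pi^1_{\xi+2\gamma}$-definable over $V_\kappa$, $\Pi^1_{<(\xi+2\gamma)}(\kappa)\subseteq J$, and $J$ is represented in $\Ult$ by the sequence $\langle J_\alpha\mid\alpha<\kappa\rangle$ where $J_\alpha=\O^{f^\kappa_\gamma(\alpha)}(I_\alpha)$, via Lemma \ref{lemma_complexity extended}. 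So it suffices to prove, for any ineffable simple operator $\O$ and any ideal $J$ with $J^+$ being $\Pi^1_\zeta$-definable (here $\zeta=\xi+2\gamma$), that $\O(J)=\overline{\O_0(J)\cup\Pi^1_{\zeta+1}(\kappa)}$.

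For the inclusion $\O(J)\subseteq\overline{\O_0(J)\cup\Pi^1_{\zeta+1}(\kappa)}$: by Remark \ref{remark_ideal_generated}, this is equivalent to showing $\O(J)^+\subseteq\O_0(J)^+\cap\Pi^1_{\zeta+1}(\kappa)^+$. Fix $S\in\O(J)^+$. That $S\in\Pi^1_{\zeta+1}(\kappa)^+$ follows from ineffability of $\O$ together with Lemma \ref{lemma_indescribability_from_homogeneity}: since $\O$ is ineffable, every $S$-list has a homogeneous set in $J^+\supseteq\bigcap_{\eta\in\{-1\}\cup\zeta}\Pi^1_\eta(\kappa)^+$ (using $\Pi^1_{<\zeta}(\kappa)\subseteq J$ so $J^+$-positive sets are positive for all those lower ideals), and intersecting with $S$ keeps positivity, so Lemma \ref{lemma_indescribability_from_homogeneity} gives $S\in\Pi^1_{\zeta+1}(\kappa)^+$. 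That $S\in\O_0(J)^+$ is the more delicate part: unwinding the definition of the pre-operator, one must show that for every object $y\in\mathcal T(\O,\kappa)$ and every club $C\subseteq\kappa$ there is a regular $\alpha\in S$ with $S\cap C\cap\alpha\in\O^{y\restr\alpha}(J_\alpha)^+$. This should be done via the generic ultrapower: $S\in\O(J)^+$ is witnessed (locally at $j(y)$ restricted to $\kappa$) in $\Ult$ by some $M$-ultrafilter argument, and the point $\kappa$ then plays the role of the sought $\alpha$ after reflecting via Proposition \ref{framework}; simplicity of $\O$ (Definition \ref{definition_simple}) is exactly what makes the relevant statement about $M$-ultrafilters and $J$-positivity expressible in a way that transfers, and the representation $J_\alpha=\O^{f^\kappa_\gamma(\alpha)}(I_\alpha)$ in $\Ult$ is what identifies the right ideal downstairs.

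For the reverse inclusion $\O(J)\supseteq\overline{\O_0(J)\cup\Pi^1_{\zeta+1}(\kappa)}$: again by Remark \ref{remark_ideal_generated}, it is equivalent to $\O(J)^+\subseteq\O_0(J)^+\cap\Pi^1_{\zeta+1}(\kappa)^+$ — wait, more precisely we want $\overline{\O_0(J)\cup\Pi^1_{\zeta+1}(\kappa)}\subseteq\O(J)$, i.e. both $\O_0(J)\subseteq\O(J)$ and $\Pi^1_{\zeta+1}(\kappa)\subseteq\O(J)$. The second, $\Pi^1_{\zeta+1}(\kappa)\subseteq\O(J)$, is Corollary \ref{corollary:levelofhomogeneity} applied with $\gamma=1$ (or directly Observation \ref{observation:appliesto}): since $\Pi^1_{<\zeta}(\kappa)\subseteq J$, we get $\Pi^1_{<(\zeta+2)}(\kappa)\subseteq\O(J)^+$, in particular $\Pi^1_{\zeta+1}(\kappa)\subseteq\O(J)$. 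The first, $\O_0(J)\subseteq\O(J)$, i.e. $\O(J)^+\subseteq\O_0(J)^+$, is the heart of the matter: given $S\in\O(J)^+$ we already showed $S\in\O_0(J)^+$ above, so actually the containment $\O(J)\subseteq\O_0(J)$ is automatic and the two inclusions meet. The genuine content to check is that $\O_0(J)$ and $\Pi^1_{\zeta+1}(\kappa)$ together \emph{generate} all of $\O(J)$, i.e. if $S\notin\overline{\O_0(J)\cup\Pi^1_{\zeta+1}(\kappa)}$ then $S\notin\O(J)$, equivalently $S\in\O(J)^+$ — so one must take $S$ that is simultaneously $\O_0(J)$-positive and $\Pi^1_{\zeta+1}$-indescribable and produce, for an arbitrary object $y$, a homogeneous (or $\O$-local) set inside $S$ lying in $J^+$. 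This is where one mimics Baumgartner's original argument (Theorem \ref{Baumgartner_basic}, \cite[Section 7]{MR0384553}): use $\O_0(J)$-positivity of $S$ to get, along a club, regular $\alpha\in S$ for which $S\cap C\cap\alpha$ carries the relevant $\O^{y\restr\alpha}(J_\alpha)^+$-witness, then use $\Pi^1_{\zeta+1}$-indescribability (via the expressibility Lemma \ref{lemma_complexity extended}, whose formula $\Theta$ has exactly complexity $\zeta+1$ at the next level... actually $\Theta^\kappa_{1,\zeta}$ is $\Pi^1_{\zeta+2}$, so one works one level down or applies the reflection of the $\Pi^1_{\zeta+1}$ expressibility of $\O_0(J)$-style statements) to reflect the existence of such a witness at $\kappa$ itself, assembling a global $\O$-witness. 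I expect the main obstacle to be exactly this assembly step: correctly matching the complexity bookkeeping so that the statement "for every $y$ there is a local $\O^{y\restr\alpha}(J_\alpha)$-positive piece" is $\Pi^1_{\zeta+1}$ over $V_\kappa$ — this is where simplicity of $\O=\mathfrak O\Psi\Omega$ (Definition \ref{definition_simple}, item (b)) and the $\Pi^1_\zeta$-definability of $J^+$ are used in tandem, exactly as in the proof of Lemma \ref{lemma_complexity extended} — and then arguing, as Baumgartner does, that $\Pi^1_{\zeta+1}$-indescribability of $S$ lets us reflect this and patch the local pieces into a single homogeneous set in $J^+$, using Lemma \ref{lemma_pos_union_of_pos_sets_is_pos} to see that a union of local positive sets over a positive index set is positive. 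The special cases $\xi=0$ (with $\O=\I$, or $\O$ always simple, or $\gamma\ge\omega$) are handled by the corresponding clause of Observation \ref{observation:appliesto} guaranteeing that $J^+=\O^\gamma(I)^+$ is still $\Pi^1_{2\gamma}$-definable (or $\Pi^1_n$-definable for finite $n$ when $\gamma\ge\omega$ and $I=[\kappa]^{<\kappa}$), so the same argument goes through verbatim with $\zeta=2\gamma$.
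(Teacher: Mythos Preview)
Your overall decomposition --- reduce to the case $\gamma=0$ by setting $J=\O^\gamma(I)$ and $\zeta=\xi+2\gamma$, then prove two inclusions --- is sound, and you correctly identify the easy pieces ($\Pi^1_{\zeta+1}(\kappa)\subseteq\O(J)$ via Corollary~\ref{corollary:levelofhomogeneity}, and $\O(J)^+\subseteq\Pi^1_{\zeta+1}(\kappa)^+$ via ineffability and Lemma~\ref{lemma_indescribability_from_homogeneity}). But the two substantive steps are misdirected. First, note a labeling slip: by Remark~\ref{remark_ideal_generated}, the condition $\O(J)^+\subseteq\O_0(J)^+\cap\Pi^1_{\zeta+1}(\kappa)^+$ is equivalent to the inclusion $\O(J)\supseteq\overline{\O_0(J)\cup\Pi^1_{\zeta+1}(\kappa)}$, not $\subseteq$; your exposition (including the mid-proof ``wait'') has the two directions tangled.

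The serious gap is in the direction $\O_0(J)^+\cap\Pi^1_{\zeta+1}(\kappa)^+\subseteq\O(J)^+$. You propose to take the local witnesses at various $\alpha$ supplied by $\O_0$-positivity, then ``reflect the existence of such a witness at $\kappa$ itself'' and patch them via Lemma~\ref{lemma_pos_union_of_pos_sets_is_pos}. This cannot work: indescribability reflects statements \emph{down} from $\kappa$, not up, and Lemma~\ref{lemma_pos_union_of_pos_sets_is_pos} only says a union of positive sets is positive for the global iterated ideal --- it does not manufacture the model--ultrafilter pair needed for a fixed $y$ at level $\kappa$. The paper argues instead by contradiction: if $S\notin\O(J)^+$, fix $y$ witnessing the failure; the assertion ``for every $M$ with $y\in M$ and every $M$-ultrafilter $U$ with $S\in U$ and $\Psi(M,U)$, one has $\neg\Omega(U,J)$'' is $\Pi^1_{\zeta+1}$ over $V_\kappa$ (this is exactly where simplicity and the $\Pi^1_\zeta$-definability of $J^+$ enter), so the set $D$ of $\alpha$ where its restriction holds lies in $\Pi^1_{\zeta+1}(\kappa)^*$; decomposing $S=(S\cap C\cap D)\cup(S\setminus(C\cap D))$ forces $S\cap C\cap D\in\O_0(J)^+$, and then the very definition of $\O_0$ produces an $\alpha\in D$ carrying a local witness --- contradicting $\alpha\in D$. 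Your approach to the remaining step $\O(J)^+\subseteq\O_0(J)^+$ via generic ultrapowers and Proposition~\ref{framework} is likewise off: that proposition yields club conclusions, whereas $\O_0$-positivity is an existential-in-$\alpha$ condition for each fixed $y,C$. The paper uses the $\Pi^1_{\zeta+1}$-indescribability of $S$ you already have: the global witness $(M,U)$ for $S\cap C$, with codes $\mathcal M,\mathcal U$ as parameters, gives a $\Pi^1_\zeta$ sentence true in $V_\kappa$, which indescribability reflects to some $\alpha\in S\cap C$. Finally, the $\xi=0$ cases are not absorbed by Observation~\ref{observation:appliesto} alone; the paper handles $\O=\I$ by invoking Theorem~\ref{Baumgartner_basic} at $\gamma=0$ and reducing larger $\gamma$ to the main case applied to $\I(I)$, and handles $\gamma\geq\omega$ by a similar reduction to $\O^\omega(I)$.
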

\begin{proof}
Let us first treat the cases when $\mathcal O$ is either simple or always simple.
Let $J=\overline{\O_0(\O^\gamma(I))\cup\Pi^1_{\xi+2\gamma+1}(\kappa)}$, and assume that $I$, in the sense of $\Ult$, is represented by $\langle I_\alpha\mid\alpha<\kappa\rangle$ in $\Ult$.

Suppose $S\in J^+$, and for the sake of a contradiction, suppose $S\in\O^{\gamma+1}(I)$. Let $y\subseteq\kappa$ be such that whenever $M$ is a transitive weak $\kappa$-model with $y\in M$ and $U$ is an $M$-ultrafilter on $\kappa$ such that $\Psi(M,U)$ and $\Omega(U,\mathcal O^\gamma(I))$ hold, then $S\not\in U$. Using that $\mathcal O$ is simple, or always simple in case $\xi=0$, and using Lemma \ref{lemma_complexity extended}, this property of $y$ and of $S$ can be expressed by a natural $\Pi^1_{\xi+2\gamma+1}$-formula $\varphi(y,S)$ over $V_\kappa$, and there is a club $C\subseteq\kappa$ such that for all regular $\alpha\in C$, \[V_\alpha\models\varphi(y,S)\res^\kappa_\alpha\] if and only if whenever $M$ is a transitive weak $\alpha$-model with $y\cap\alpha\in M$ and $U$ is an $M$-ultrafilter on $\alpha$ such that $\Psi(M,U)$ and $\Omega(U,\mathcal O^{f^\kappa_\gamma(\alpha)}(I_\alpha))$ hold, then $S\cap\alpha\not\in U$.

Since $V_\kappa\models\varphi(y,S)$, the set
\[D=\{\alpha<\kappa\st V_\alpha\models\varphi(y,S)\res^\kappa_\alpha\}\]
is in the filter $\Pi^1_{\xi+2\gamma+1}(\kappa)^*$. Since $S\notin J$, $S$ is not the union of a set in $\O_0(\O^\gamma(I))$ and a set in $\Pi^1_{\xi+2\gamma+1}(\kappa)$. Since $S=(S\cap C\cap D)\cup (S\setminus (C\cap D))$ and $S\setminus (C\cap D)\in\Pi^1_{\xi+2\gamma+1}(\kappa)$, we see that $S\cap C\cap D\in\O_0(\O^\gamma(I))^+$. Thus, by definition of $\O_0$, there is some ordinal $\alpha\in S\cap C\cap D$ for which there exists a transitive weak $\alpha$-model $M$ with $y\cap\alpha\in M$ and an $M$-ultrafilter $U$ on $\alpha$ such that $\Psi(M,U)$ and $\Omega(U,\O^{f^\kappa_\gamma(\alpha)}(I_\alpha))$ hold, and such that $S\cap C\cap D\cap\alpha\in U$. However, since $\alpha\in C\cap D$ we have $V_\alpha\models\varphi(y,S)\res^\kappa_\alpha$, contradicting the above.

Now suppose $S\in\O^{\gamma+1}(I)^+$. Since $\mathcal O$ is ineffable on $I$, and by our assumption that $\Pi^1_{<\xi}(\kappa)\subseteq I$, this implies that $S\in\Pi^1_{\xi+2\gamma+1}(\kappa)^+$ by Corollary \ref{corollary:levelofhomogeneity}. Let us show that $S\in\O_0(\O^\gamma(I))^+$. Suppose $y\subseteq\kappa$ and fix a club subset $C$ of $\kappa$. By the third item in Observation \ref{observation:regularity}, it follows that $S\cap C\in\O^{\gamma+1}(I)^+$, and thus there is a weak $\kappa$-model $M\supseteq V_\kappa$ with $y\in M$, and an $M$-ultrafilter $U$ on $\kappa$ such that $\Psi(M,U)$ and $\Omega(U,O^\gamma(I))$ hold, and such that $S\cap C\in U$. By our assumptions and using Lemma \ref{lemma_complexity extended}, this property of $S\cap C$, $y$, and the codes $\mathcal M$ and $\mathcal U$ of $M$ and $U$ respectively is expressible by a $\Pi^1_{\xi+2\gamma}$-formula $\varphi(S\cap C,y,\mathcal M,\mathcal U)$ over $V_\kappa$, which additionally states that $\mathcal M$ is a code for a weak $\kappa$-model $M$, and that $\mathcal U$ is a code for an $M$-ultrafilter on $\kappa$. Moreover using Lemma \ref{lemma_complexity extended}, there is a club subset $D$ of $\kappa$ such that for $\alpha\in D$, $\varphi(S\cap C,y,\mathcal M,\mathcal U)\res^\kappa_\alpha$ expresses the corresponding property over $V_\alpha$, namely that $\mathcal M\cap V_\alpha$ is a code for a weak $\alpha$-model $\bar M$, that $\mathcal U\cap V_\alpha$ is a code for an $\bar M$-ultrafilter $\bar U$ on $\alpha$, that $y\cap\alpha\in\bar M$, $\Psi(\bar M,\bar U)$ and $\Omega(\bar U,\O^{f^\kappa_\gamma(\alpha)}(I_\alpha))$ hold, and that $S\cap C\cap\alpha\in\bar U$. Since $S\cap C\cap D$ is $\Pi^1_{\xi+2\gamma+1}$-indescribable, there is some $\alpha\in S\cap C\cap D$ such that $V_\alpha\models\varphi(S\cap C,y,\mathcal M,\mathcal U)\res^\kappa_\alpha$. Thus, $S\in\O_0(\O^\gamma(I))^+$.

When $\O=\I$ (and $\xi=0$), note that the case when $\gamma=0$ is handled by Theorem \ref{Baumgartner_basic}. For $\gamma>1$, note that if $\gamma=1+\bar\gamma$, we have $\I^\gamma(I)=\I_{mod}^{\bar\gamma}(\I(I))$, and that $\I(I)$ has the properties that $\Pi^1_1(\kappa)\subseteq I$ and that $\I(I)^+$ is $\Pi^1_2$-definable over $V_\kappa$. We can now apply the main case of the theorem using the operator $\mathcal O=\I_{mod}$.

Similarly, if $\gamma=\omega+\delta\ge\omega$ (and $\xi=0$), the desired conclusion of the theorem can be rewritten as \[\O^{\delta+1}(\O^\omega(I))=\overline{\O_0(\O^\delta(\O^\omega(I)))\cup\Pi^1_{\xi+2\gamma+1}(\kappa)},\]
and we can deduce this conclusion from applying the theorem to the ideal $\O^\omega(I)$, using Observation \ref{observation:appliesto}.
\end{proof}

\subsection{On finite iterates of operators}

By Remark \ref{remark_subtle_ideal}, we have $\I_0(\Pi^1_\xi(\kappa))=\I_0([\kappa]^{<\kappa})$ for any $\xi<\kappa^+$, and hence we easily obtain the following corollary of Theorem \ref{theorem_generating_iterates}.

\begin{corollary}\label{corollary_ineffabledowntozero}
Suppose $\kappa\in\I(\Pi^1_\xi(\kappa))^+$ where $\xi\in\{-1\}\cup\kappa^+$. Then
\[\I(\Pi^1_\xi(\kappa))=\overline{\I_0([\kappa]^{<\kappa})\cup\Pi^1_{\xi+2}(\kappa)}.\]
\end{corollary}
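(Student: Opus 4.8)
The plan is to read the corollary off from Theorem \ref{theorem_generating_iterates} with $\gamma=0$ and $I=\Pi^1_\xi(\kappa)$, and then to rewrite $\I_0(\Pi^1_\xi(\kappa))$ as $\I_0([\kappa]^{<\kappa})$ using the identity recorded in Remark \ref{remark_subtle_ideal}. Note that the hypothesis $\kappa\in\I(\Pi^1_\xi(\kappa))^+$ already forces $\kappa$ to be weakly ineffable, hence inaccessible, so the ambient assumptions on $\kappa$ underlying Theorem \ref{theorem_generating_iterates} are met. The one thing to be careful about is the bookkeeping of indices: Theorem \ref{theorem_generating_iterates} is phrased in terms of the $\Pi^1$-definability level of $I^+$, and by Theorem \ref{theorem_expressing_indescribability} the positive sets of $\Pi^1_\xi(\kappa)$ form a $\Pi^1_{\xi+1}$-definable class over $V_\kappa$ (one extra quantifier block over $\xi$), which is what turns the naively expected $\Pi^1_{\xi+1}(\kappa)$ into $\Pi^1_{\xi+2}(\kappa)$ in the conclusion.

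First I would handle the case $\xi\ge 0$. Here $\Pi^1_\xi(\kappa)\supseteq\Pi^1_0(\kappa)=\NS_\kappa$, so $\I$ agrees on this ideal with its simple model version $\I_{mod}$, and likewise $\I_0$ agrees with $(\I_{mod})_0$ (by the cited results of Holy), so Theorem \ref{theorem_generating_iterates} genuinely applies to $\mathcal O=\mathcal I$ here. As noted, $\Pi^1_\xi(\kappa)^+$ is $\Pi^1_{\xi+1}$-definable over $V_\kappa$, and $\Pi^1_{<(\xi+1)}(\kappa)\subseteq\Pi^1_\xi(\kappa)$ since the indescribability ideals increase with their index and all contain the bounded ideal. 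Thus Theorem \ref{theorem_generating_iterates} applies with its definability parameter equal to $\xi+1>0$, with $\gamma=0$, and its index $(\xi+1)+2\cdot 0+1$ simplifies to $\xi+2$, giving
\[\I(\Pi^1_\xi(\kappa))=\overline{\I_0(\Pi^1_\xi(\kappa))\cup\Pi^1_{\xi+2}(\kappa)}.\]
Replacing $\I_0(\Pi^1_\xi(\kappa))$ by $\I_0([\kappa]^{<\kappa})$ via Remark \ref{remark_subtle_ideal} then yields the claim.

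For the remaining value $\xi=-1$ we have $\Pi^1_{-1}(\kappa)=[\kappa]^{<\kappa}$, whose positive sets are the unbounded subsets of $\kappa$, a $\Pi^1_0$- (indeed first-order-) definable class over $V_\kappa$, and $\Pi^1_{<0}(\kappa)=\Pi^1_{-1}(\kappa)=[\kappa]^{<\kappa}$. I would then invoke the clause of Theorem \ref{theorem_generating_iterates} that covers $\mathcal O=\mathcal I$ with definability parameter $0$ and $\gamma=0$, which, as its proof records, is exactly Baumgartner's Theorem \ref{Baumgartner_basic} for $\Pi^1_{-1}(\kappa)$ (here the hypothesis $\kappa\in\I([\kappa]^{<\kappa})^+$ enters), obtaining $\I([\kappa]^{<\kappa})=\overline{\I_0([\kappa]^{<\kappa})\cup\Pi^1_1(\kappa)}$, which is the claim since $\Pi^1_{\xi+2}(\kappa)=\Pi^1_1(\kappa)$ in this case. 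Alternatively one may simply cite Theorem \ref{Baumgartner_basic} directly for $\xi\in\{-1\}\cup\omega$ and use Theorem \ref{theorem_generating_iterates} only for $\xi\ge\omega$; either route works.

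I do not expect a real obstacle: the corollary is genuinely immediate once Theorem \ref{theorem_generating_iterates} and Remark \ref{remark_subtle_ideal} are in hand. The only point requiring attention is the off-by-one in the definability index described above, together with the small observation that the case $\xi=-1$ is not a superset of $\NS_\kappa$ and so must be routed through Baumgartner's original statement rather than through the model-operator machinery.
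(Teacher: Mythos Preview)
Your proposal is correct and follows exactly the paper's route: apply Theorem \ref{theorem_generating_iterates} with $\gamma=0$ and $I=\Pi^1_\xi(\kappa)$ (so the definability parameter in that theorem is $\xi+1$ by Theorem \ref{theorem_expressing_indescribability}), then replace $\I_0(\Pi^1_\xi(\kappa))$ by $\I_0([\kappa]^{<\kappa})$ via Remark \ref{remark_subtle_ideal}. Your careful separation of the $\xi=-1$ case and the index bookkeeping are spelled out in more detail than the paper bothers with, but they match what is implicit there.
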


We can obtain a variant of Theorem \ref{theorem_generating_iterates} for finite iterates of operators as follows.

\begin{corollary}\label{corollary_generating_finite_iterates}
  Assume that $I$ is an ideal on $\kappa$, $\mathcal O$ is ineffable and simple, $\gamma<\omega$, $0<\xi<\kappa^+$, $\Pi^1_{<\xi}(\kappa)\subseteq I$, and $I^+$ is $\Pi^1_\xi$-definable over $V_\kappa$. Then,
\[\O^{\gamma+1}(I)=\overline{\O_0(\O^\gamma(I))\cup\O^\gamma(\Pi^1_{\xi+1}(\kappa))}.\]
If either $\mathcal O=\mathcal I$, $\mathcal O$ is ineffable and always simple, or $\gamma\ge\omega$ in the above, then the above conclusion also holds in case $\xi=0$.
\end{corollary}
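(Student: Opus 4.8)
The plan is to derive this from Theorem~\ref{theorem_generating_iterates}, by showing that for finite $\gamma$ the summand $\Pi^1_{\xi+2\gamma+1}(\kappa)$ occurring there may be replaced by $\O^\gamma(\Pi^1_{\xi+1}(\kappa))$ without changing the generated ideal. First I would record two absorption facts, both consequences of Corollary~\ref{corollary:levelofhomogeneity}. Since $\Pi^1_{<(\xi+2)}(\kappa)\subseteq\Pi^1_{\xi+1}(\kappa)$ (by monotonicity of the indescribability hierarchy), Corollary~\ref{corollary:levelofhomogeneity} gives $\O^\gamma(\Pi^1_{\xi+1}(\kappa))\supseteq\Pi^1_{<(\xi+2+2\gamma)}(\kappa)$; and since $\gamma<\omega$ we have $\xi+2\gamma+1<\xi+2+2\gamma$, whence
\[\Pi^1_{\xi+2\gamma+1}(\kappa)\subseteq\O^\gamma(\Pi^1_{\xi+1}(\kappa)).\]
On the other hand, since $\Pi^1_{<\xi}(\kappa)\subseteq I$, Corollary~\ref{corollary:levelofhomogeneity} applied with a single application of $\O$ gives $\O(I)\supseteq\Pi^1_{<(\xi+2)}(\kappa)\supseteq\Pi^1_{\xi+1}(\kappa)$, so that by monotonicity of $\O$ (Proposition~\ref{proposition:abstractbasic})
\[\O^\gamma(\Pi^1_{\xi+1}(\kappa))\subseteq\O^\gamma(\O(I))=\O^{\gamma+1}(I).\]

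Then I would assemble the argument via Remark~\ref{remark_ideal_generated}, according to which $\overline{I_0\cup I_1}\subseteq J$ is equivalent to $I_0\subseteq J$ and $I_1\subseteq J$. By Theorem~\ref{theorem_generating_iterates} --- its main conclusion when $\xi>0$, and, when $\xi=0$, its corresponding special case under the stated additional hypotheses on $\O$ (where $\Pi^1_{<\xi}(\kappa)$ is the bounded ideal, so that $\Pi^1_{<\xi}(\kappa)\subseteq I$ is automatic) --- we have $\O^{\gamma+1}(I)=\overline{\O_0(\O^\gamma(I))\cup\Pi^1_{\xi+2\gamma+1}(\kappa)}$, and in particular $\O_0(\O^\gamma(I))\subseteq\O^{\gamma+1}(I)$. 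For the inclusion $\O^{\gamma+1}(I)\subseteq\overline{\O_0(\O^\gamma(I))\cup\O^\gamma(\Pi^1_{\xi+1}(\kappa))}$, Remark~\ref{remark_ideal_generated} reduces matters to checking that $\O_0(\O^\gamma(I))$ and $\Pi^1_{\xi+2\gamma+1}(\kappa)$ are both contained in the right-hand ideal, and for the second set this is the first absorption fact. For the reverse inclusion, Remark~\ref{remark_ideal_generated} reduces matters to checking that $\O_0(\O^\gamma(I))\subseteq\O^{\gamma+1}(I)$, already observed, and that $\O^\gamma(\Pi^1_{\xi+1}(\kappa))\subseteq\O^{\gamma+1}(I)$, which is the second absorption fact. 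Combining the two inclusions gives the claimed identity.

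The only genuinely delicate point is the ordinal bookkeeping behind the first absorption fact, namely that $\xi+2\gamma+1<\xi+2+2\gamma$. This holds for finite $\gamma$, but fails once $\gamma\ge\omega$: here $2\gamma$ denotes $2\cdot\gamma$, so already $2\cdot\omega+1=\omega+1>\omega=2+2\cdot\omega$, and then $\Pi^1_{\xi+2\gamma+1}(\kappa)\not\subseteq\Pi^1_{<(\xi+2+2\gamma)}(\kappa)$; this is exactly why the statement is restricted to finite iterates. Everything else is routine once Theorem~\ref{theorem_generating_iterates} and the monotonicity facts are in place.
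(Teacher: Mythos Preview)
Your proof is correct and follows essentially the same approach as the paper: you establish the two containments $\Pi^1_{\xi+2\gamma+1}(\kappa)\subseteq\O^\gamma(\Pi^1_{\xi+1}(\kappa))\subseteq\O^{\gamma+1}(I)$ via Corollary~\ref{corollary:levelofhomogeneity} and monotonicity, and then sandwich $\O^\gamma(\Pi^1_{\xi+1}(\kappa))$ between the two summands to conclude from Theorem~\ref{theorem_generating_iterates}. The paper's proof records exactly these two facts and then says the result ``follows immediately''; you have simply spelled out the assembly via Remark~\ref{remark_ideal_generated} in more detail, and added the helpful observation about the ordinal arithmetic behind the finiteness restriction on $\gamma$.
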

\begin{proof}
  On the one hand, by our assumptions and Corollary \ref{corollary:levelofhomogeneity}, $\O(I)\supseteq\Pi^1_{\xi+1}(\kappa)$, and therefore, $\O^{\gamma+1}(I)\supseteq\O^\gamma(\Pi^1_{\xi+1}(\kappa))$ by the monotonicity of $\O$ (see Proposition \ref{proposition:abstractbasic}). On the other hand, $\O^\gamma(\Pi^1_{\xi+1}(\kappa))\supseteq\Pi^1_{\xi+2\gamma+1}(\kappa)$. Thus, the result follows immediately from Theorem \ref{theorem_generating_iterates}.
\end{proof}

There is also a sort of analogue of the above for infinite iterates of operators. This has been worked out for the Ramsey operator in \cite[Theorem 7.8]{MR4206111}, and can analogously be performed for our generalized operators. We will leave all details to the interested reader.

As an easy corollary of Corollary \ref{corollary:levelofhomogeneity}, again using the monotonicity of our operators, we obtain the following generalization of \cite[Corollary 6.8]{MR4206111}:

\begin{corollary}\label{corollary_collapse}
If $\O$ is ineffable, $\xi\in\{-1\}\cup\kappa^+$ and $n<\omega$, then
\[\O^\omega(\Pi^1_\xi(\kappa))=\O^\omega(\Pi^1_{\xi+n}(\kappa)).\]
\end{corollary}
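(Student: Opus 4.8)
The plan is to show that a single application of $\O$ raises the level of an indescribability ideal by at least one, and then to exploit that $\omega$ many iterations absorb the finite shift $n$. First I would check that $\O(\Pi^1_\eta(\kappa))\supseteq\Pi^1_{\eta+1}(\kappa)$ for every $\eta\in\{-1\}\cup\kappa^+$: for $\eta\ge 0$ this is immediate from Corollary \ref{corollary:levelofhomogeneity} applied with $I=\Pi^1_\eta(\kappa)\supseteq\Pi^1_{<\eta}(\kappa)$ and $\gamma=1$, since $\Pi^1_{<(\eta+2)}(\kappa)=\Pi^1_{\eta+1}(\kappa)$; and for $\eta=-1$, where $\Pi^1_{-1}(\kappa)=[\kappa]^{<\kappa}$, it follows from Observation \ref{observation:regularity}, which (using that $\O$ is ineffable) gives $\O([\kappa]^{<\kappa})\supseteq\I([\kappa]^{<\kappa})\supseteq\NS_\kappa=\Pi^1_0(\kappa)$. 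Iterating this and using that $\O$ is monotonic (Proposition \ref{proposition:abstractbasic}; recall also that $\I$ and $\R$ are monotonic), an easy induction on $m$ yields $\O^m(\Pi^1_\xi(\kappa))\supseteq\Pi^1_{\xi+m}(\kappa)$ for all $m<\omega$ and all $\xi\in\{-1\}\cup\kappa^+$, the successor step being $\O^{m+1}(\Pi^1_\xi(\kappa))=\O(\O^m(\Pi^1_\xi(\kappa)))\supseteq\O(\Pi^1_{\xi+m}(\kappa))\supseteq\Pi^1_{\xi+m+1}(\kappa)$.

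Granting this, fix $n<\omega$. Since $\Pi^1_\xi(\kappa)\subseteq\Pi^1_{\xi+n}(\kappa)$, monotonicity of $\O^\omega$ gives $\O^\omega(\Pi^1_\xi(\kappa))\subseteq\O^\omega(\Pi^1_{\xi+n}(\kappa))$. Conversely, by the previous paragraph $\Pi^1_{\xi+n}(\kappa)\subseteq\O^n(\Pi^1_\xi(\kappa))$, whence $\O^\omega(\Pi^1_{\xi+n}(\kappa))\subseteq\O^\omega(\O^n(\Pi^1_\xi(\kappa)))=\O^\omega(\Pi^1_\xi(\kappa))$, the last equality being the elementary fact that $\O^\omega(\O^n(J))=\bigcup_{m<\omega}\O^{n+m}(J)=\bigcup_{k<\omega}\O^k(J)=\O^\omega(J)$, valid because the iterates $\O^k(J)$ are $\subseteq$-increasing in $k$ (as $\O(J)\supseteq J$). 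Combining the two inclusions gives the desired equality $\O^\omega(\Pi^1_\xi(\kappa))=\O^\omega(\Pi^1_{\xi+n}(\kappa))$.

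I do not expect any genuine obstacle; the only things to keep straight are the index arithmetic when invoking Corollary \ref{corollary:levelofhomogeneity} and the mildly degenerate behaviour at $\xi=-1$ (which is why the first claim above is phrased case-wise). Matching the framing of this section, one can alternatively run the argument through Corollary \ref{corollary_generating_finite_iterates}: since $\Pi^1_\xi(\kappa)^+$ is $\Pi^1_{\xi+1}$-definable over $V_\kappa$ by Theorem \ref{theorem_expressing_indescribability}, that corollary gives $\O^{\gamma+1}(\Pi^1_\xi(\kappa))\supseteq\O^\gamma(\Pi^1_{\xi+2}(\kappa))$ for every $\gamma<\omega$ (when $\xi\ge 0$); taking the union over $\gamma$, together with monotonicity and a sandwich for the intermediate level $\xi+1$, yields the conclusion for $\xi\ge 0$, and the case $\xi=-1$ reduces to $\xi=0$ exactly as above.
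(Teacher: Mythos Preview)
Your proof is correct and follows essentially the same approach as the paper. The paper derives the corollary from the inclusion $\O^{\gamma+1}(\Pi^1_\xi(\kappa))\supseteq\O^\gamma(\Pi^1_{\xi+1}(\kappa))$ (extracted from the proof of Corollary \ref{corollary_generating_finite_iterates}) together with monotonicity, which is exactly the content of your argument---your primary route via Corollary \ref{corollary:levelofhomogeneity} is in fact slightly cleaner, since it only uses ineffability of $\O$, whereas Corollary \ref{corollary_generating_finite_iterates} as stated also assumes simplicity.
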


The next corollary is a starting point in relating assumptions of the form $\kappa\in\O^\gamma(\Pi^1_\xi(\kappa))$, for different $\gamma$ and $\xi$ below $\kappa^+$, with respect to consistency strength.

\begin{corollary}\label{corollary_xi_to_xi_plus_one_hierarchy}
Assume that $\mathcal O$ is ineffable and simple. Suppose $\gamma<\omega$, $\xi<\kappa^+$, and $\kappa\in\O^\gamma(\Pi^1_\xi(\kappa))^+$. If $S\in\O^\delta(\Pi^1_{\zeta}(\kappa))^+$ where $\zeta+1+2\delta\leq\xi+2\gamma$, then
\[T=\{\alpha<\kappa\textrm{ regular}\st S\cap\alpha\in\O^{f^\kappa_\delta(\alpha)}(\Pi^1_{f^\kappa_{\zeta}(\alpha)}(\alpha))^+\}\in\O^\gamma(\Pi^1_\xi(\kappa))^*.\]
If either $\mathcal O=\mathcal I$, or $\mathcal O$ is ineffable and always simple, then the above conclusion also holds in case $\xi=-1$.
\end{corollary}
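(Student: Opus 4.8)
The plan is to deduce this from Corollary~\ref{corollary_xi_to_xi_plus_one_hierarchy}'s main ingredients, namely Lemma~\ref{lemma_complexity extended} (or its particular case Lemma~\ref{lemma_complexity}) and the fact that $\mathcal O^\gamma(\Pi^1_\xi(\kappa))$ is $\Pi^1_{\xi+2\gamma}$-definable over $V_\kappa$ (by Observation~\ref{observation:appliesto} together with Theorem~\ref{theorem_expressing_indescribability}). First I would invoke Lemma~\ref{lemma_complexity} applied to $\mathcal O$, $\delta$ and $\zeta$: this yields a $\Pi^1_{\zeta+1+2\delta}$-formula $\Theta^\kappa_{\delta,\zeta}(X)$ over $V_\kappa$ and a club $C^\kappa_{\delta,\zeta}\subseteq\kappa$ such that for all $S\subseteq\kappa$, $S\in\mathcal O^\delta(\Pi^1_\zeta(\kappa))^+$ iff $V_\kappa\models\Theta^\kappa_{\delta,\zeta}(S)$, and for all regular $\alpha\in C^\kappa_{\delta,\zeta}$, $S\cap\alpha\in\mathcal O^{f^\kappa_\delta(\alpha)}(\Pi^1_{f^\kappa_\zeta(\alpha)}(\alpha))^+$ iff $V_\alpha\models\Theta^\kappa_{\delta,\zeta}(S)\res^\kappa_\alpha$.

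Now fix $S\in\mathcal O^\delta(\Pi^1_\zeta(\kappa))^+$, so that $V_\kappa\models\Theta^\kappa_{\delta,\zeta}(S)$. Since $\zeta+1+2\delta\le\xi+2\gamma$ and $\mathcal O^\gamma(\Pi^1_\xi(\kappa))$ is $\Pi^1_{\xi+2\gamma}$-indescribable (as $\kappa\in\mathcal O^\gamma(\Pi^1_\xi(\kappa))^+$ implies $\kappa$ is $\Pi^1_{\xi+2\gamma}$-indescribable, using that $\mathcal O$ is ineffable together with Corollary~\ref{corollary:levelofhomogeneity}, or directly Observation~\ref{observation:appliesto}), the sentence $\Theta^\kappa_{\delta,\zeta}(S)$ is of complexity at most $\xi+2\gamma$ and holds in $V_\kappa$. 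By the definition of $\Pi^1_{\xi+2\gamma}$-indescribability applied relative to the positive set $\mathcal O^\gamma(\Pi^1_\xi(\kappa))^+$ — more precisely, since any set in $\mathcal O^\gamma(\Pi^1_\xi(\kappa))^*$-complement, i.e.\ in $\mathcal O^\gamma(\Pi^1_\xi(\kappa))$, cannot reflect the sentence, so its complement intersected with the club $C^\kappa_{\delta,\zeta}$ must reflect it — it follows that the set
\[
T=\{\alpha<\kappa\text{ regular}\st S\cap\alpha\in\mathcal O^{f^\kappa_\delta(\alpha)}(\Pi^1_{f^\kappa_\zeta(\alpha)}(\alpha))^+\}
\]
meets $C^\kappa_{\delta,\zeta}$ in every $\mathcal O^\gamma(\Pi^1_\xi(\kappa))$-positive set, hence $\kappa\setminus T\in\mathcal O^\gamma(\Pi^1_\xi(\kappa))$, i.e.\ $T\in\mathcal O^\gamma(\Pi^1_\xi(\kappa))^*$. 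Here I am using that $\mathcal O^\gamma(\Pi^1_\xi(\kappa))$ is a normal ideal (Proposition~\ref{proposition:abstractbasic} or Lemma~\ref{lemma:ineffability_normal} for $\mathcal O=\mathcal I$, or the corresponding facts for $\mathcal R$), so that its positive sets are preserved under intersection with clubs, and that $\Theta^\kappa_{\delta,\zeta}(S)\res^\kappa_\alpha$, whenever defined on $C^\kappa_{\delta,\zeta}$, is equivalent to $S\cap\alpha$ being in the appropriate iterated-operator-positive set at $\alpha$.

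The main obstacle is the bookkeeping around complexity: one must check carefully that $\Theta^\kappa_{\delta,\zeta}(S)$, viewed as a $\Pi^1_{\zeta+1+2\delta}$-sentence, can be regarded as a $\Pi^1_{\xi+2\gamma}$-sentence (padding up via Definition~\ref{definition_over}, noting that a $\Pi^1_\beta$-formula is also $\Pi^1_{\beta'}$ for $\beta'>\beta$ in our hierarchy, which holds because a conjunction of a single $\Pi^1_\beta$-formula over the needed index range works, and because the restriction operation respects this padding on a club), so that the reflection coming from $\Pi^1_{\xi+2\gamma}$-indescribability applies and, on $C^\kappa_{\delta,\zeta}$, its restriction recovers the intended statement $\Theta^\kappa_{\delta,\zeta}(S)\res^\kappa_\alpha$. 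The boundary case $\xi=-1$ (so $\zeta=-1$, $\delta=0$, $\gamma=0$, $2\gamma=0$, and $\zeta+1+2\delta=0\le\xi+2\gamma=-1$ fails — actually $\zeta+1+2\delta\le 0$ forces $\zeta=-1,\delta=0$) is handled by the remark in the statement: when $\mathcal O$ is always simple (or $\mathcal O=\mathcal I$) the analogues of Lemma~\ref{lemma_complexity extended} hold for the bottom level, and $\Pi^1_{-1}(\kappa)^+$ being the unbounded sets is $\Delta^1_1$-expressible, so the same argument goes through with the unboundedness/stationarity of the reflecting set in place of $\Pi^1_{\xi+2\gamma}$-indescribability.
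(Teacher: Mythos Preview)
Your approach is essentially the same as the paper's, and it is correct, but you have introduced an unnecessary complication. The paper's argument is cleaner: once you have the $\Pi^1_{\zeta+1+2\delta}$-formula $\Theta$ and its club $C$ from Lemma~\ref{lemma_complexity extended} (or Lemma~\ref{lemma_complexity} when $\O=\I$), the set
\[
\{\alpha\in C\text{ regular}\st V_\alpha\models\Theta(S)\res^\kappa_\alpha\}
\]
is directly in $\Pi^1_{\zeta+1+2\delta}(\kappa)^*$, simply because $V_\kappa\models\Theta(S)$ means its complement fails to reflect $\Theta(S)$ and hence lies in $\Pi^1_{\zeta+1+2\delta}(\kappa)$. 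Then one uses the ideal containment $\Pi^1_{\zeta+1+2\delta}(\kappa)\subseteq\Pi^1_{\xi+2\gamma}(\kappa)\subseteq\O^\gamma(\Pi^1_\xi(\kappa))$ from Corollary~\ref{corollary:levelofhomogeneity} to conclude that this set, and hence $T$, lies in $\O^\gamma(\Pi^1_\xi(\kappa))^*$. There is no need to pad $\Theta$ up to complexity $\xi+2\gamma$, nor to worry about whether the restriction operation respects such padding on a club; the reflection happens entirely at level $\zeta+1+2\delta$, and the passage to $\O^\gamma(\Pi^1_\xi(\kappa))^*$ is purely via ideal inclusion. Your ``main obstacle'' paragraph is therefore addressing a non-issue.

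Your treatment of the case $\xi=-1$ is also muddled: you attempt to force $\gamma=0$, but nothing in the hypothesis does so. The point is simply that Lemma~\ref{lemma_complexity} already covers $\xi=-1$ for $\O\in\{\I,\R\}$, and Lemma~\ref{lemma_complexity extended} covers it for always simple $\O$; the rest of the argument is unchanged.
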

\begin{proof}
The fact that $S\in\O^\delta(\Pi^1_\zeta(\kappa))^+$ is expressible by a $\Pi^1_{\zeta+1+2\delta}$-formula $\Theta$ over $V_\kappa$ by Lemma \ref{lemma_complexity extended}, or by Lemma \ref{lemma_complexity} in case $\O=\I$. Let $C$ be the corresponding club obtained from the relevant lemma. Since $\zeta+1+2\delta\leq\xi+2\gamma$, we have $\Pi^1_{\zeta+1+2\delta}(\kappa)\subseteq\Pi^1_{\xi+2\gamma}(\kappa)\subseteq\O^\gamma(\Pi^1_\xi(\kappa))$ by Corollary \ref{corollary:levelofhomogeneity}. The set
\[\{\alpha\in C\textrm{ regular}\st V_\alpha\models\Theta(S)\res^\kappa_\alpha\}\]
is contained in $T$ and is in $\Pi^1_{\zeta+1+2\delta}(\kappa)^*\subseteq\O^\gamma(\Pi^1_\xi(\kappa))^*$. Therefore, $T\in\O^\gamma(\Pi^1_\xi(\kappa))^*$, as desired.
\end{proof}

%

We do not know whether the next result on the proper containment of certain ideals generated by applications of $\I$ and $\R$ generalizes to our framework of operators, for we do not know whether Lemma \ref{lemma_set_of_nons_is_positive} does.

\begin{corollary}\label{corollary_proper_containment_from_xi_to_xi_plus_one}
Let $\O\in\{\I,\R\}$. Suppose $\gamma<\omega$, $\xi\in\{-1\}\cup\kappa^+$ and $\kappa\in\O^\gamma(\Pi^1_{\xi+1}(\kappa))^+$. Then, \[\O^\gamma(\Pi^1_\xi(\kappa))\subsetneq\O^\gamma(\Pi^1_{\xi+1}(\kappa)).\]
\end{corollary}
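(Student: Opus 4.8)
The plan is to split the claimed proper inclusion into the inclusion, which is routine, and the strictness, which is where the work lies. For $\O^\gamma(\Pi^1_\xi(\kappa))\subseteq\O^\gamma(\Pi^1_{\xi+1}(\kappa))$ I would start from $\Pi^1_\xi(\kappa)\subseteq\Pi^1_{\xi+1}(\kappa)$ — the standard monotonicity of the indescribability ideals in their subscript (see \cite{MR3894041, CodyHigherIndescribability}; this is in any case implicit in the paper's treatment of $\Pi^1_{<\beta}(\kappa)$ as an ideal) — and then iterate the monotonicity of $\O$ (Proposition \ref{proposition:abstractbasic}, or as recalled before Theorem \ref{theorem_generating_iterates}) finitely often; since $\gamma<\omega$ no limit stage occurs. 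Combining this inclusion with the hypothesis $\kappa\in\O^\gamma(\Pi^1_{\xi+1}(\kappa))^+$ and the fact that $I\subseteq J$ implies $J^+\subseteq I^+$, we also obtain $\kappa\in\O^\gamma(\Pi^1_\xi(\kappa))^+$, which I will use below.

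For the strictness the goal is to exhibit a single set that lies in $\O^\gamma(\Pi^1_{\xi+1}(\kappa))$ yet is positive for $\O^\gamma(\Pi^1_\xi(\kappa))$. First, by Lemma \ref{lemma_set_of_nons_is_positive} applied to $\O$, $\gamma$ and $\xi$ (using $\kappa\in\O^\gamma(\Pi^1_\xi(\kappa))^+$), the set $S_\kappa=\{\alpha<\kappa\st\alpha\in\O^{f^\kappa_\gamma(\alpha)}(\Pi^1_{f^\kappa_\xi(\alpha)}(\alpha))\}$ is in $\O^\gamma(\Pi^1_\xi(\kappa))^+$. Next, I would apply Corollary \ref{corollary_xi_to_xi_plus_one_hierarchy} with its outer parameters taken to be $\gamma$ and $\xi+1$ — legitimate since $\O\in\{\I,\R\}$ is ineffable and simple, $\gamma<\omega$, $\xi+1$ is an ordinal with $0\le\xi+1<\kappa^+$, and $\kappa\in\O^\gamma(\Pi^1_{\xi+1}(\kappa))^+$ — with $S=\kappa$ and inner parameters $\delta=\gamma$, $\zeta=\xi$; the side condition $\zeta+1+2\delta\le(\xi+1)+2\gamma$ then holds with equality, which is exactly the borderline case of two adjacent levels. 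The conclusion is that $T=\{\alpha<\kappa\ \text{regular}\st\alpha\in\O^{f^\kappa_\gamma(\alpha)}(\Pi^1_{f^\kappa_\xi(\alpha)}(\alpha))^+\}$ belongs to $\O^\gamma(\Pi^1_{\xi+1}(\kappa))^*$, so that $W:=\kappa\setminus T\in\O^\gamma(\Pi^1_{\xi+1}(\kappa))$. Finally $S_\kappa\subseteq W$, because if $\alpha\in\O^{f^\kappa_\gamma(\alpha)}(\Pi^1_{f^\kappa_\xi(\alpha)}(\alpha))$ then $\alpha$ is not in the corresponding positive part, hence $\alpha\notin T$; and positive sets being upward closed, $W\in\O^\gamma(\Pi^1_\xi(\kappa))^+$, i.e.\ $W\notin\O^\gamma(\Pi^1_\xi(\kappa))$. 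Together with the inclusion this gives $\O^\gamma(\Pi^1_\xi(\kappa))\subsetneq\O^\gamma(\Pi^1_{\xi+1}(\kappa))$.

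The one genuinely delicate point — the main obstacle — is finding the right instantiation of Corollary \ref{corollary_xi_to_xi_plus_one_hierarchy}: one must feed $\xi+1$ as the outer superscript and $\xi$ as the inner superscript, which is exactly the case where the arithmetic bound is tight, and then observe that the complement of the resulting dual-filter set $T$ contains the set $S_\kappa$ from Lemma \ref{lemma_set_of_nons_is_positive} and is therefore positive for the smaller ideal. This is also precisely why the statement is restricted to $\O\in\{\I,\R\}$: Lemma \ref{lemma_set_of_nons_is_positive} is only available for those operators, as noted in the remark preceding the corollary. The remaining bookkeeping is routine; in particular the case $\xi=-1$ needs no separate treatment, since then the outer superscript $\xi+1=0$ avoids the exceptional clause of Corollary \ref{corollary_xi_to_xi_plus_one_hierarchy}, while $\Pi^1_{-1}(\kappa)$ is admissible as an inner superscript there and in Lemma \ref{lemma_set_of_nons_is_positive}.
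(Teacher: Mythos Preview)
Your proposal is correct and follows essentially the same route as the paper: use Lemma \ref{lemma_set_of_nons_is_positive} to get $S_\kappa\in\O^\gamma(\Pi^1_\xi(\kappa))^+$, and use Corollary \ref{corollary_xi_to_xi_plus_one_hierarchy} (with outer level $\xi+1$ and inner parameters $\delta=\gamma$, $\zeta=\xi$) to place the witness in $\O^\gamma(\Pi^1_{\xi+1}(\kappa))$. The only cosmetic difference is that the paper uses $S_\kappa$ itself as the separating set---since $S_\kappa\subseteq W=\kappa\setminus T$ and ideals are closed under subsets, $S_\kappa\in\O^\gamma(\Pi^1_{\xi+1}(\kappa))$ follows immediately---whereas you pass to the superset $W$; both choices work.
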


\begin{proof}
Clearly $\O^\gamma(\Pi^1_\xi(\kappa))\subseteq\O^\gamma(\Pi^1_{\xi+1}(\kappa))$, so we just need to show that the containment is proper. Let $S=\{\alpha<\kappa\st\alpha\in\O^{f^\kappa_\gamma(\alpha)}(\Pi^1_{f^\kappa_\xi(\alpha)}(\alpha))\}$. Then $S\in\O^\gamma(\Pi^1_\xi(\kappa))^+$ by Lemma \ref{lemma_set_of_nons_is_positive}, and Corollary \ref{corollary_xi_to_xi_plus_one_hierarchy} implies that $S\in\O^\gamma(\Pi^1_{\xi+1}(\kappa))$.
\end{proof}

We can show yet another form of proper containment of ideals when $\O=\I$. An analogous result for the operator $\R$ was claimed by the first author in \cite{MR4206111}, see our Question \ref{question:propercontainment} below.

\begin{corollary}\label{corollary_proper_containment_from_baumgartner}
Suppose $\gamma<\kappa^+$, $\xi\in\{-1\}\cup\kappa^+$ and $\kappa\in\I^\gamma(\Pi^1_{\xi+2}(\kappa))^+$. Then, \[\I^\gamma(\Pi^1_{\xi+2}(\kappa))\subsetneq\I^{\gamma+1}(\Pi^1_\xi(\kappa)).\footnote{Let us remark that by Corollary \ref{corollary_collapse}, the case when $\gamma\ge\omega$ is in fact trivial, and the result would hold for arbitrary ineffable operators from our framework in this case.}\] 
\end{corollary}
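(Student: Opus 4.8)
The plan is to prove the two halves of the claimed strict inclusion separately: first the inclusion $\I^\gamma(\Pi^1_{\xi+2}(\kappa))\subseteq\I^{\gamma+1}(\Pi^1_\xi(\kappa))$, which is soft, and then to exhibit an explicit set witnessing that it is proper.

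For the inclusion, I would start from the fact that $\Pi^1_{\xi+2}(\kappa)\subseteq\I(\Pi^1_\xi(\kappa))$ --- this is the statement recorded just before Lemma~\ref{lemma_indescribability_from_homogeneity}, and for the degenerate value $\xi=-1$ it follows from Corollary~\ref{corollary:levelofhomogeneity} applied to $\I$ with $\beta=0$ and a single iteration, which gives $\I([\kappa]^{<\kappa})\supseteq\Pi^1_{<2}(\kappa)\supseteq\Pi^1_1(\kappa)$. Since $\I$, and hence (by a routine transfinite induction) $\I^\gamma$, is monotonic, applying $\I^\gamma$ to both sides yields $\I^\gamma(\Pi^1_{\xi+2}(\kappa))\subseteq\I^\gamma(\I(\Pi^1_\xi(\kappa)))=\I^{\gamma+1}(\Pi^1_\xi(\kappa))$.

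For properness, I would take as the witnessing set the one produced by Lemma~\ref{lemma_set_of_nons_is_positive} applied to the operator $\I$ with $\xi+2$ in place of $\xi$, namely
\[S=\{\alpha<\kappa\st\alpha\in\I^{f^\kappa_\gamma(\alpha)}(\Pi^1_{f^\kappa_{\xi+2}(\alpha)}(\alpha))\}.\]
Since the hypothesis of that lemma is exactly our assumption $\kappa\in\I^\gamma(\Pi^1_{\xi+2}(\kappa))^+$, it immediately gives $S\in\I^\gamma(\Pi^1_{\xi+2}(\kappa))^+$. It then remains to check $S\in\I^{\gamma+1}(\Pi^1_\xi(\kappa))$, and since $[\kappa]^{<\kappa}\subseteq\Pi^1_\xi(\kappa)$ and $\I^{\gamma+1}$ is monotonic, it suffices to prove the stronger statement $S\in\I^{\gamma+1}([\kappa]^{<\kappa})$. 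I would establish this by contradiction: assuming $S\in\I^{\gamma+1}([\kappa]^{<\kappa})^+$, I apply Theorem~\ref{theorem_pushing_Baumgartner} to $S$ together with the trivial $S$-list (for which every subset of $S$ is homogeneous). The resulting set
\[A=\{\alpha\in S\st\exists X\subseteq S\cap\alpha\ \forall\zeta<\alpha^+\ X\in\I^{f^\kappa_\gamma(\alpha)}(\Pi^1_\zeta(\alpha))^+\}\]
satisfies $S\setminus A\in\I^{\gamma+1}([\kappa]^{<\kappa})$; as $A\subseteq S$ and $S$ is positive, $A$ cannot lie in the ideal, so in particular $A\neq\emptyset$, and I fix $\alpha\in A$. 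Now $\alpha\in S$ says that $\I^{f^\kappa_\gamma(\alpha)}(\Pi^1_{f^\kappa_{\xi+2}(\alpha)}(\alpha))$ is an ideal on $\alpha$ having $\alpha$ itself as an element, hence the improper ideal, which has no positive sets; yet $\alpha\in A$ provides an $X\subseteq S\cap\alpha$ that is $\I^{f^\kappa_\gamma(\alpha)}(\Pi^1_\zeta(\alpha))$-positive for \emph{every} $\zeta<\alpha^+$, and instantiating at $\zeta=f^\kappa_{\xi+2}(\alpha)<\alpha^+$ exhibits such a positive set --- a contradiction. Hence $S\in\I^{\gamma+1}([\kappa]^{<\kappa})\subseteq\I^{\gamma+1}(\Pi^1_\xi(\kappa))$ while $S\notin\I^\gamma(\Pi^1_{\xi+2}(\kappa))$, so the inclusion from the previous paragraph is strict.

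Both substantive ingredients --- Lemma~\ref{lemma_set_of_nons_is_positive} and Theorem~\ref{theorem_pushing_Baumgartner} --- are already available, so I do not expect a real obstacle; the one point requiring care is the index bookkeeping, namely that the universal quantifier ``$\forall\zeta<\alpha^+$'' built into Theorem~\ref{theorem_pushing_Baumgartner} is exactly strong enough to accommodate the value $f^\kappa_{\xi+2}(\alpha)$ that contradicts $\alpha\in S$, together with keeping straight which of the two ideals is the larger one (so the monotonicity reductions point the right way) and the minor degenerate case $\xi=-1$ in the inclusion step. By Corollary~\ref{corollary_collapse} one has $\I^\gamma(\Pi^1_{\xi+2}(\kappa))=\I^\gamma(\Pi^1_\xi(\kappa))$ once $\gamma\ge\omega$, so in that range the result could also be read off from the strictness of a single further application of $\I$, but the argument above is uniform in $\gamma<\kappa^+$.
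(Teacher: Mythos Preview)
Your proof is correct and follows essentially the same approach as the paper's. The paper invokes Corollary~\ref{corollary_below_gamma_plus_1_almost_ineffability} (itself a direct consequence of Theorem~\ref{theorem_pushing_Baumgartner}) to place $\kappa\setminus S$ in the filter $\I^{\gamma+1}([\kappa]^{<\kappa})^*$, while you apply Theorem~\ref{theorem_pushing_Baumgartner} directly to $S$ with the trivial list and argue by contradiction; these are the same argument in different wrapping.
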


\begin{proof}
The inclusion itself is immediate, since $\Pi^1_{\xi+2}(\kappa)\subseteq\I(\Pi^1_\xi(\kappa))$ by the ineffability of $\I$, and it only remains to verify its properness. Since $\kappa\in\I^\gamma(\Pi^1_{\xi+2}(\kappa))^+$, it follows by Lemma \ref{lemma_set_of_nons_is_positive} that the set \[S=\{\alpha<\kappa\st\alpha\in\I^{f^\kappa_\gamma(\alpha)}(\Pi^1_{f^\kappa_{\xi+2}(\alpha)}(\alpha))\}\] is in $\I^\gamma(\Pi^1_{\xi+2}(\kappa))^+$. From Corollary \ref{corollary_below_gamma_plus_1_almost_ineffability}, it follows that the set
\[C=\{\alpha<\kappa\st(\forall\eta<\alpha^+)\ \alpha\in\I^{f^\kappa_\gamma(\alpha)}(\Pi^1_\eta(\alpha))^+\}\]
is in the filter $\I^{\gamma+1}([\kappa]^{<\kappa})^*$. Since $C\subseteq\kappa\setminus S$, we see that $\kappa\setminus S\in \I^{\gamma+1}([\kappa]^{<\kappa})^*\subseteq\I^{\gamma+1}(\Pi^1_\xi(\kappa))^*$. Hence, this implies that $S\in\I^{\gamma+1}(\Pi^1_\xi(\kappa))\setminus\I^\gamma(\Pi^1_{\xi+2}(\kappa))$.
\end{proof}


The next corollary extends Baumgartner's observation that the use of ideals is necessary in Theorem \ref{Baumgartner_basic}.

\begin{corollary}\label{corollary_characterization}
  Assume that $\mathcal O=\mathfrak O\Psi\Omega$ is ineffable and simple, $\gamma<\omega$, $\xi<\kappa$, and $I=\Pi^1_{<\xi}(\kappa)$. Then, $\kappa\in\O^{\gamma+1}(I)^+$ if and only if 
\begin{enumerate}
\item $\kappa\in \O_0(\O^\gamma(I))^+\cap\Pi^1_{\xi+2\gamma+1}(\kappa)^+$ and
\item the ideal $\overline{\O_0(\O^\gamma(I))\cup\Pi^1_{\xi+2\gamma+1}(\kappa)}$ is nontrivial and equals $\O^{\gamma+1}(I)$.
\end{enumerate}
If either $\mathcal O=\mathcal I$, or $\mathcal O$ is ineffable and always simple in the above, then the conclusion also holds in case $\xi=0$.

Moreover, (2) is necessary in the the above characterization, that is, the least $\Pi^1_{\xi+2\gamma+1}$-indescribable cardinal $\kappa$ that satisfies $\kappa\in\O_0(\O^\gamma(I))^+$ is strictly below the least cardinal $\kappa$ that satisfies $\kappa\in\O^{\gamma+1}(I)^+$.
\end{corollary}
\begin{proof}
  Note that $\kappa$ being $\Pi^1_{\xi+2\gamma+1}$-indescribable and $\kappa\in\O_0(\O^\gamma(I))^+$ are $\Pi^1_{\xi+2\gamma+2}$-properties over $V_\kappa$, and $\kappa\in\O^{\gamma+1}(I)^+$ implies that $\kappa$ is $\Pi^1_{<(\xi+1+2\gamma+2)}$-indescribable by Corollary \ref{corollary:levelofhomogeneity}, and hence $\kappa$ is $\Pi^1_{\xi+2\gamma+2}$-indescribable using that $\gamma$ is finite. Now since $\xi<\kappa$, this yields some $\xi<\alpha<\kappa$ such that $\alpha\in\O_0(\O^\gamma(\Pi^1_\xi(\alpha)))^+$ and $\alpha\in\Pi^1_{\xi+2\gamma+1}(\alpha)^+$.
\end{proof}

In the above, one could obtain analogous results when $\kappa\le\xi<\kappa^+$, however the statement that is reflected down from $\kappa$ to $\alpha$ will be changed for $\xi$ will be reflected down to $f^\kappa_\xi(\alpha)$. This still yields a satisfactory analogue of Corollary \ref{corollary_characterization} when $\kappa\leq\xi<\kappa^+$ and $\xi$ is definable from $\kappa$ (for example, if $\xi=\kappa$, or $\xi=\kappa+\kappa$ etc.). We will leave the easy and straightforward details to our interested readers.

\subsection{On infinite iterates of operators}

We would like to use Theorem \ref{theorem_generating_iterates} to prove an analogue of Corollary \ref{corollary_xi_to_xi_plus_one_hierarchy} for infinite $\gamma$, which would, in a sense, say that the strength of the hypothesis ``$\kappa\in\O^\gamma(\Pi^1_\xi(\kappa))^+$'' increases as $\xi$ increases. However, there is an added complication, as illustrated in Corollary \ref{corollary_collapse}, which is that if $\xi_0<\xi_1<\kappa^+$, it may be that $\kappa\in\O^\gamma(\Pi^1_{\xi_0}(\kappa))^+$ is equivalent to $\kappa\in\O^\gamma(\Pi^1_{\xi_1}(\kappa))^+$, if $\gamma$ is large enough. In the next theorem, we determine the least $\gamma$ for which this occurs when $\O\in\{\I,\R\}$. Let us note that we do not know how to verify this leastness for operators other than $\I$ and $\R$. Even though the other statements of the theorem below in fact hold for simple ineffable operators, we therefore only state the below result for these two operators.

\begin{theorem}
Suppose $\kappa$ is a cardinal, $\xi_0<\xi_1$ are in $\{-1\}\cup\kappa^+$ and $\O\in\{\I,\R\}$.
Then, the ideal chains \[\<\O^\gamma(\Pi^1_{\xi_0}(\kappa))\st\gamma<\kappa^+\>\textrm{ and }\<\O^\gamma(\Pi^1_{\xi_1}(\kappa))\st\gamma<\kappa^+\>\] are eventually equal. Moreover, letting $\delta=\ot(\xi_1\setminus\xi_0)\cdot\omega$, if the ideal $\O^\delta(\Pi^1_{\xi_1}(\kappa))$ is nontrivial, then $\delta$ is least ordinal such that \[\O^\delta(\Pi^1_{\xi_0}(\kappa))=\O^\delta(\Pi^1_{\xi_1}(\kappa)).\]
\end{theorem}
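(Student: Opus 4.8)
The plan is to prove the two assertions separately. For the first assertion, that the two ideal chains are eventually equal, I would argue that it suffices to show that $\O^\delta(\Pi^1_{\xi_0}(\kappa)) = \O^\delta(\Pi^1_{\xi_1}(\kappa))$ for $\delta = \ot(\xi_1 \setminus \xi_0) \cdot \omega$ (or indeed any sufficiently large $\delta < \kappa^+$), since once the chains agree at stage $\delta$, they agree at all later stages by the inductive definition of iterated operators. The inclusion $\O^\gamma(\Pi^1_{\xi_0}(\kappa)) \subseteq \O^\gamma(\Pi^1_{\xi_1}(\kappa))$ is immediate for all $\gamma$ by monotonicity of $\O$ together with $\Pi^1_{\xi_0}(\kappa) \subseteq \Pi^1_{\xi_1}(\kappa)$. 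For the reverse inclusion at stage $\delta$, the key is Corollary \ref{corollary_collapse} and its generalization: $\O^\omega$ absorbs finitely many extra levels of indescribability, and by iterating this observation along the order type $\ot(\xi_1\setminus\xi_0)$ one should be able to "climb" from $\Pi^1_{\xi_0}(\kappa)$ up to $\Pi^1_{\xi_1}(\kappa)$. More precisely, I would do a transfinite induction on $\ot(\xi_1 \setminus \xi_0)$: if this order type is a successor $\eta + 1$, write $\xi_1 = \zeta + 1$ where $\ot(\zeta \setminus \xi_0) = \eta$; then $\O^\delta(\Pi^1_{\xi_1}(\kappa)) = \O^\delta(\Pi^1_{\zeta+1}(\kappa))$, and using $\O^\omega(\Pi^1_{\zeta}(\kappa)) = \O^\omega(\Pi^1_{\zeta+1}(\kappa))$ from Corollary \ref{corollary_collapse} together with $\delta \geq \eta\cdot\omega + \omega$, one reduces to the inductive hypothesis for $\zeta$. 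Limit stages of $\ot(\xi_1 \setminus \xi_0)$ are handled using that at a limit the operator is a union, together with the fact that $\delta$ is chosen as a multiple of $\omega$ exceeding $\ot(\xi_1 \setminus \xi_0)$, so that the relevant bookkeeping on the exponents works out; here one uses that $\bigcup_{\zeta \in \xi_1 \setminus \xi_0} \O^{\delta}(\Pi^1_{\xi_0 + \text{(something)}}(\kappa))$ stabilizes.

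For the "moreover" part — that $\delta = \ot(\xi_1 \setminus \xi_0) \cdot \omega$ is the \emph{least} ordinal at which the two chains become equal, assuming $\O^\delta(\Pi^1_{\xi_1}(\kappa))$ is nontrivial — I would argue that for every $\gamma < \delta$ the containment $\O^\gamma(\Pi^1_{\xi_0}(\kappa)) \subseteq \O^\gamma(\Pi^1_{\xi_1}(\kappa))$ is proper. The tool here is a relativization of Corollary \ref{corollary_proper_containment_from_xi_to_xi_plus_one} and Corollary \ref{corollary_proper_containment_from_baumgartner}, which give proper containments $\O^\gamma(\Pi^1_\xi(\kappa)) \subsetneq \O^\gamma(\Pi^1_{\xi+1}(\kappa))$ (for $\gamma < \omega$) and $\I^\gamma(\Pi^1_{\xi+2}(\kappa)) \subsetneq \I^{\gamma+1}(\Pi^1_\xi(\kappa))$ (for arbitrary $\gamma$), the latter being the $\O = \I$ analogue — and this is exactly why the theorem is stated only for $\O \in \{\I, \R\}$, since these proper-containment corollaries rest on Lemma \ref{lemma_set_of_nons_is_positive} which is not known in the general framework. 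The idea is that each of the $\ot(\xi_1 \setminus \xi_0)$ "unit steps" of added indescribability costs $\omega$ iterations of $\O$ to be absorbed, and no fewer, because at each intermediate stage one can exhibit a witnessing set — the set $S = \{\alpha < \kappa \mid \alpha \notin \O^{f^\kappa_\gamma(\alpha)}(\Pi^1_{f^\kappa_{\xi_1}(\alpha)}(\alpha))^+\}$ of "non-reflecting" points of the appropriate complexity — which lies in $\O^\gamma(\Pi^1_{\xi_1}(\kappa))$ but not in $\O^\gamma(\Pi^1_{\xi_0}(\kappa))$, provided $\gamma < \delta$. Here the nontriviality hypothesis on $\O^\delta(\Pi^1_{\xi_1}(\kappa))$ is used to guarantee that the relevant cardinal $\kappa$ actually carries nontrivial instances of these ideals all the way up to stage $\delta$, so that Lemma \ref{lemma_set_of_nons_is_positive} applies at each intermediate stage.

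The main obstacle I anticipate is the precise arithmetic bookkeeping in the limit case of the induction for the first assertion, and in pinning down exactly which intermediate ideal to use as a witness at each $\gamma < \delta$ in the leastness argument — one must track how $\ot(\xi_1 \setminus \xi_0)$ decomposes and match it against the factor of $\omega$ coming from Corollary \ref{corollary_collapse}, making sure that at stage $\gamma = \eta \cdot \omega + n$ one has not yet collapsed the $(\eta+1)$-st unit of indescribability. A secondary subtlety is the interaction between the canonical functions $f^\kappa_{\xi_0}, f^\kappa_{\xi_1}$ and the collapse: when reflecting down to $\alpha$, the gap $\xi_1 \setminus \xi_0$ reflects to $f^\kappa_{\xi_1}(\alpha) \setminus f^\kappa_{\xi_0}(\alpha)$, and one needs that $\ot$ of this reflected gap agrees (on a club) with $\ot(\xi_1 \setminus \xi_0)$'s own reflection, which should follow from the properties of canonical functions in Lemma \ref{lemma_can} and a generic-ultrapower argument via Proposition \ref{framework}. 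These are all routine but require care; I would relegate the detailed verification to the reader once the structure is in place.
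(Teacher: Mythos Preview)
Your overall plan has the right shape, but there is a concrete error in the leastness argument that would cause it to fail as written, and your route to the eventual-equality part is more complicated than necessary.

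\textbf{Eventual equality.} You propose a transfinite induction on $\sigma=\ot(\xi_1\setminus\xi_0)$, with the limit case left admittedly vague. The paper avoids induction entirely: since $\xi_1=\xi_0+\sigma$, Corollary~\ref{corollary:levelofhomogeneity} gives in one step
\[\Pi^1_{\xi_1}(\kappa)=\Pi^1_{\xi_0+\sigma}(\kappa)\subseteq\O^{\sigma+1}(\Pi^1_{\xi_0}(\kappa)),\]
and then for any limit $\eta<\delta=\sigma\cdot\omega$, applying $\O^\eta$ to both sides and using $\sigma+1+\eta<\delta$ yields $\O^\eta(\Pi^1_{\xi_1}(\kappa))\subseteq\O^\delta(\Pi^1_{\xi_0}(\kappa))$. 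This disposes of the reverse inclusion without any bookkeeping on the decomposition of $\sigma$. Your inductive route can probably be made to work, but it is not needed.

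\textbf{Leastness.} Here your witness set is defined with the wrong parameter. You take
\[S=\{\alpha<\kappa\mid\alpha\notin\O^{f^\kappa_\gamma(\alpha)}(\Pi^1_{f^\kappa_{\xi_1}(\alpha)}(\alpha))^+\}\]
and claim $S\in\O^\gamma(\Pi^1_{\xi_1}(\kappa))\setminus\O^\gamma(\Pi^1_{\xi_0}(\kappa))$. But Lemma~\ref{lemma_set_of_nons_is_positive} applied with $\xi=\xi_1$ gives exactly the opposite: $S\in\O^\gamma(\Pi^1_{\xi_1}(\kappa))^+$, so $S\notin\O^\gamma(\Pi^1_{\xi_1}(\kappa))$. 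The correct witness uses $\xi_0$, not $\xi_1$: the paper fixes a limit $\nu$ with $\eta\le\nu<\delta$ and takes
\[S=\{\alpha<\kappa\mid\alpha\in\O^{f^\kappa_{\nu+1}(\alpha)}(\Pi^1_{f^\kappa_{\xi_0}(\alpha)}(\alpha))\}.\]
Then Lemma~\ref{lemma_set_of_nons_is_positive} gives $S\notin\O^{\nu+1}(\Pi^1_{\xi_0}(\kappa))$, while Lemma~\ref{lemma_complexity} shows $S\in\Pi^1_{\xi_0+\nu+2}(\kappa)$; a short ordinal-arithmetic check using $\nu<\sigma\cdot\omega$ gives $\xi_0+\nu+2\le\xi_0+\sigma+\nu+1$, whence $S\in\O^{\nu+1}(\Pi^1_{\xi_1}(\kappa))$ by Corollary~\ref{corollary:levelofhomogeneity}. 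Note also that Corollaries~\ref{corollary_proper_containment_from_xi_to_xi_plus_one} and~\ref{corollary_proper_containment_from_baumgartner}, which you invoke as the main tools, apply only for $\gamma<\omega$ or only for $\O=\I$ respectively; the paper does not use them for the infinite-$\sigma$ case but instead runs the expressibility argument directly. Finally, your anticipated subtlety about reflecting the gap $\xi_1\setminus\xi_0$ via canonical functions does not arise: the argument only needs $f^\kappa_{\xi_0}$ and the complexity bound from Lemma~\ref{lemma_complexity}.
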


\begin{proof}
First, let us show that $\O^\delta(\Pi^1_{\xi_0}(\kappa))=\O^\delta(\Pi^1_{\xi_1}(\kappa))$, where $\delta=\ot(\xi_1\setminus\xi_0)\cdot\omega$. Since $\xi_0<\xi_1$, it is clear that $\O^\delta(\Pi^1_{\xi_0}(\kappa))\subseteq\O^\delta(\Pi^1_{\xi_1}(\kappa))$. Let us show that $\O^\delta(\Pi^1_{\xi_0}(\kappa))\supseteq\O^\delta(\Pi^1_{\xi_1}(\kappa))$. If $\sigma=\ot(\xi_1\setminus\xi_0)=n$ is finite, then $\delta=n\cdot\omega=\omega$ and the result follows from Corollary \ref{corollary_collapse}. Suppose $\sigma\geq\omega$. Then $\delta=\sigma\cdot\omega$ is a limit of limit ordinals. Thus, it will suffice to show that $\O^\eta(\Pi^1_{\xi_1}(\kappa))\subseteq\O^\delta(\Pi^1_{\xi_0}(\kappa))$ for all limit ordinals $\eta<\delta$. Fix a limit ordinal $\eta<\delta$. By Corollary \ref{corollary:levelofhomogeneity}, we have
\begin{align}
\Pi^1_{\xi_1}(\kappa)=\Pi^1_{\xi_0+\sigma}(\kappa)\subseteq\O^{\sigma+1}(\Pi^1_{\xi_0}(\kappa)).\label{equation_least}
\end{align}
Applying the operator $\O$ $\eta$-many times to (\ref{equation_least}) yields
\[\O^\eta(\Pi^1_{\xi_1}(\kappa))\subseteq\O^{\sigma+1+\eta}(\Pi^1_{\xi_0}(\kappa))\subseteq\O^{\delta}(\Pi^1_{\xi_0}(\kappa)),\]
where the final subset relation follows since $\sigma+1+\eta<\delta$.


Next, let us show that if $\eta<\delta$, then $\O^\eta(\Pi^1_{\xi_0}(\kappa))\subsetneq\O^\eta(\Pi^1_{\xi_1}(\kappa))$. If $\sigma=\ot(\xi_1\setminus\xi_0)$ is finite, in which case $\delta=\omega$, then the result follows from Corollary \ref{corollary_proper_containment_from_xi_to_xi_plus_one}. On the other hand, if $\sigma$ is infinite, then $\delta=\sigma\cdot\omega$ is a limit of limit ordinals. Let $\nu$ be a limit ordinal with $\eta\le\nu<\delta$. It suffices to show that $\O^{\nu+1}(\Pi^1_{\xi_0}(\kappa))\subsetneq\O^{\nu+1}(\Pi^1_{\xi_1}(\kappa))$, for this contradicts $\O^\eta(\Pi^1_{\xi_0}(\kappa))=\O^\eta(\Pi^1_{\xi_1}(\kappa))$. Let
\[S=\left\{\alpha<\kappa\st\alpha\in\O^{f^\kappa_{\nu+1}(\alpha)}(\Pi^1_{f^\kappa_{\xi_0}(\alpha)}(\alpha))\right\}.\]
Since $\kappa\in\O^\delta(\Pi^1_{\xi_1}(\kappa))^+$, it follows from Lemma \ref{lemma_set_of_nons_is_positive} that $S\notin\O^{\nu+1}(\Pi^1_{\xi_0}(\kappa))$. Furthermore, the fact that $S\notin\O^{\nu+1}(\Pi^1_{\xi_0}(\kappa))$ is expressible by a $\Pi^1_{\xi_0+\nu+2}$-sentence $\Theta$ over $V_\kappa$, by Lemma \ref{lemma_complexity}. Let $C$ be the corresponding club subset of $\kappa$ obtained from that lemma. It follows that the set
\[D=\{\alpha\in C\st V_\alpha\models\Theta(S)\res^\kappa_\alpha\}\]
is in the filter $\Pi^1_{\xi_0+\nu+2}(\kappa)^*$ and is contained in $\kappa\setminus S$. Hence, $S\in\Pi^1_{\xi_0+\nu+2}(\kappa)$. By Corollary \ref{corollary:levelofhomogeneity}, since $\xi_1=\xi_0+\sigma$, it follows that $\Pi^1_{\xi_0+\sigma+\nu+1}(\kappa)\subseteq\O^{\nu+1}(\Pi^1_{\xi_1}(\kappa))$. Since $\nu<\delta=\sigma\cdot\omega$, it follows that $\xi_0+\nu+2<\xi_0+\sigma+\nu+1$ and thus $\Pi^1_{\xi_0+\nu+2}(\kappa)\subseteq\O^{\nu+1}(\Pi^1_{\xi_1}(\kappa))$. Together with the above, this implies that $S\in\O^{\nu+1}(\Pi^1_{\xi_1}(\kappa))$.
\end{proof}



Next, extending Corollary \ref{corollary_xi_to_xi_plus_one_hierarchy} to infinite iterates of operators, we show that for ineffable and simple operators $\O$, $\gamma<\kappa^+$ and $\xi_0<\xi_1$ in $\kappa^+\setminus\omega$, the hypothesis $\kappa\in\O^\gamma(\Pi^1_{\xi_1}(\kappa))^+$ implies that there are many $\alpha<\kappa$ which satisfy $\alpha\in\O^{f^\kappa_\gamma(\alpha)}(\Pi^1_{f^\kappa_{\xi_0}(\alpha)}(\alpha))^+$, \emph{assuming $\xi_0$ and $\xi_1$ are far enough apart}. Thus, the hypotheses of the form $\kappa\in\O^\gamma(\Pi^1_\xi(\kappa))^+$ for (certain) $\xi<\kappa^+$ provide a strictly increasing hierarchy of length $\kappa^+$.

\begin{theorem}
Suppose $\O$ is ineffable and simple, $\kappa$ is a cardinal, $\omega\le\xi_0<\xi_1$ are in $\{-1\}\cup\kappa^+$ and $\gamma<\ot(\xi_1\setminus\xi_0)\cdot\omega$. If $\kappa\in\O^\gamma(\Pi^1_{\xi_1}(\kappa))^+$, then the set

\[\{\alpha<\kappa\st\alpha\in\O^{f^\kappa_\gamma(\alpha)}(\Pi^1_{f^\kappa_{\xi_0}(\alpha)}(\alpha))^+\}\]
is in $\O^\gamma(\Pi^1_{\xi_1}(\kappa))^*$.
\end{theorem}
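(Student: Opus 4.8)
The plan is to adapt the argument of Corollary \ref{corollary_xi_to_xi_plus_one_hierarchy} to the setting of infinite $\gamma$, using Theorem \ref{theorem_generating_iterates} in place of Corollary \ref{corollary:levelofhomogeneity} as the key extra ingredient needed to ``cross the limit''. The point is that the hypothesis ``$\alpha\in\O^{f^\kappa_\gamma(\alpha)}(\Pi^1_{f^\kappa_{\xi_0}(\alpha)}(\alpha))^+$'' should be captured by a restriction of a suitable infinitary second order formula that is true in $V_\kappa$ under our assumptions, and whose complexity is low enough to be described (and reflected) within $\O^\gamma(\Pi^1_{\xi_1}(\kappa))$.

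First I would fix $\sigma=\ot(\xi_1\setminus\xi_0)$, so $\xi_1=\xi_0+\sigma$ and $\gamma<\sigma\cdot\omega$ by assumption. Since $\mathcal O$ is ineffable and simple and $\Pi^1_{<\xi_0}(\kappa)\subseteq\Pi^1_{\xi_0}(\kappa)$, Observation \ref{observation:appliesto} (together with Lemma \ref{lemma_complexity extended}, and noting $\xi_0\ge\omega>0$) gives that $\O^\gamma(\Pi^1_{\xi_0}(\kappa))^+$ is $\Pi^1_{\xi_0+1+2\gamma}$-definable over $V_\kappa$ by a formula $\Theta(X)$, with an associated club $C$ along which $\Theta(X)\res^\kappa_\alpha$ expresses $X\cap\alpha\in\O^{f^\kappa_\gamma(\alpha)}(\Pi^1_{f^\kappa_{\xi_0}(\alpha)}(\alpha))^+$ for regular $\alpha\in C$. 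Here I must take care about the exact bound: in the limit-of-limits regime the additive normal form means $\xi_0+1+2\gamma=\xi_0+\gamma$, and similarly the target ideal complexity computes to $\xi_1+\text{(something)}$; what I really need is that $\Pi^1_{\xi_0+1+2\gamma}(\kappa)\subseteq\O^\gamma(\Pi^1_{\xi_1}(\kappa))$. This should follow from Corollary \ref{corollary:levelofhomogeneity}: since $\Pi^1_{<\xi_1}(\kappa)\subseteq\Pi^1_{\xi_1}(\kappa)$ we get $\O^\gamma(\Pi^1_{\xi_1}(\kappa))\supseteq\Pi^1_{<(\xi_1+2\gamma)}(\kappa)=\Pi^1_{<(\xi_0+\sigma+2\gamma)}(\kappa)$, and since $\gamma<\sigma\cdot\omega$ one checks $\xi_0+1+2\gamma<\xi_0+\sigma+2\gamma$, so the membership holds.

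Granting that, I would let $X$ be a fixed $\Pi^1_{\xi_0+1+2\gamma}$-sentence over $V_\kappa$ witnessing $\kappa\in\O^\gamma(\Pi^1_{\xi_0}(\kappa))^+$ — which holds since $\kappa\in\O^\gamma(\Pi^1_{\xi_1}(\kappa))^+$ and $\xi_0<\xi_1$ imply $\kappa\in\O^\gamma(\Pi^1_{\xi_0}(\kappa))^+$ by monotonicity. Actually, more directly: apply Lemma \ref{lemma_set_of_nons_is_positive} only when $\O\in\{\I,\R\}$; for general ineffable simple $\O$ I instead argue purely via expressibility. The set $D=\{\alpha\in C\text{ regular}\st V_\alpha\models\Theta(X)\res^\kappa_\alpha\}$ is in $\Pi^1_{\xi_0+1+2\gamma}(\kappa)^*\subseteq\O^\gamma(\Pi^1_{\xi_1}(\kappa))^*$, and every $\alpha\in D$ lies in the target set since $\Theta(X)\res^\kappa_\alpha$ asserts exactly that $X\cap\alpha=X$-restricted-membership gives $\alpha\in\O^{f^\kappa_\gamma(\alpha)}(\Pi^1_{f^\kappa_{\xi_0}(\alpha)}(\alpha))^+$ (taking $X$ to be a sentence, so $X\cap V_\alpha$ is vacuous and only the formula restriction matters). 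Then $T\supseteq D$ is in $\O^\gamma(\Pi^1_{\xi_1}(\kappa))^*$, as desired.

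The main obstacle I anticipate is the bookkeeping of ordinal arithmetic bounds: verifying that the complexity $\xi_0+1+2\gamma$ of the expressing formula $\Theta$ is genuinely strictly below $\xi_1+2\gamma$ (equivalently $\xi_0+\sigma+2\gamma$) precisely under the hypothesis $\gamma<\sigma\cdot\omega$, handling separately the cases $\sigma$ finite (where $\delta=\omega$ and everything reduces to the finite-iterate Corollary \ref{corollary_xi_to_xi_plus_one_hierarchy} / Corollary \ref{corollary_collapse} machinery) and $\sigma$ infinite (where $\gamma<\sigma\cdot\omega$ gives $\gamma<\sigma\cdot n$ for some $n$, and one uses left-absorption $1+2\gamma<\sigma+2\gamma$ type inequalities). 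A secondary point to be careful about is ensuring Observation \ref{observation:appliesto} and Lemma \ref{lemma_complexity extended} genuinely apply with $I=\Pi^1_{\xi_0}(\kappa)$ — this needs $\kappa$ to be $\Pi^1_{\xi_0}$-indescribable (so that this ideal is nontrivial and the expressibility results are non-vacuous), which follows from $\kappa\in\O^\gamma(\Pi^1_{\xi_1}(\kappa))^+\subseteq\Pi^1_{\xi_1}(\kappa)^+\subseteq\Pi^1_{\xi_0}(\kappa)^+$ again by Corollary \ref{corollary:levelofhomogeneity} and monotonicity — and that the representation of $\Pi^1_{\xi_0}(\kappa)$ in $\Ult$ by $\langle\Pi^1_{f^\kappa_{\xi_0}(\alpha)}(\alpha)\mid\alpha<\kappa\rangle$ is available, which is exactly the content of Theorem \ref{theorem_expressing_indescribability} / the setup preceding Lemma \ref{lemma_complexity extended}.
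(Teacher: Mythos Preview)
Your proposal is correct and follows essentially the same route as the paper: express $\kappa\in\O^\gamma(\Pi^1_{\xi_0}(\kappa))^+$ by a $\Pi^1_{\xi_0+1+2\gamma}$-formula via Lemma~\ref{lemma_complexity extended}, then use Corollary~\ref{corollary:levelofhomogeneity} to get $\Pi^1_{\xi_0+1+2\gamma}(\kappa)\subseteq\O^\gamma(\Pi^1_{\xi_1}(\kappa))$, so that the reflection set $D$ lands in the dual filter. Two small remarks: (i) the reference to Theorem~\ref{theorem_generating_iterates} in your opening plan is a red herring---you never use it, and the paper does not either; Corollary~\ref{corollary:levelofhomogeneity} alone suffices. (ii) You invoke Corollary~\ref{corollary:levelofhomogeneity} with $\beta=\xi_1$, obtaining only $\Pi^1_{<(\xi_1+2\gamma)}(\kappa)\subseteq\O^\gamma(\Pi^1_{\xi_1}(\kappa))$, which forces your case split on $\sigma$ finite versus infinite (the inequality $\xi_0+1+2\gamma<\xi_1+2\gamma$ fails, e.g., when $\sigma=1$ and $\gamma=0$). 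The paper instead takes $\beta=\xi_1+1$, since $\Pi^1_{<(\xi_1+1)}(\kappa)=\Pi^1_{\xi_1}(\kappa)$, giving the sharper bound $\Pi^1_{<(\xi_1+1+2\gamma)}(\kappa)\subseteq\O^\gamma(\Pi^1_{\xi_1}(\kappa))$; then one only needs $\xi_0+1+2\gamma<\xi_1+1+2\gamma$, which holds uniformly under $\gamma<\sigma\cdot\omega$ without any case distinction. Your side conditions about nontriviality of $\Pi^1_{\xi_0}(\kappa)$ and its representation in $\Ult$ are harmless but unnecessary---Lemma~\ref{lemma_complexity extended} applies regardless.
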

\begin{proof}
 Since $\kappa\in\O^\gamma(\Pi^1_{\xi_1}(\kappa))^+$ and $\xi_0<\xi_1$, we have $\kappa\in\O^\gamma(\Pi^1_{\xi_0}(\kappa))^+$, which is expressible by a $\Pi^1_{\xi_0+1+2\gamma}$-formula $\Theta$ over $V_\kappa$ by Lemma \ref{lemma_complexity extended}. Let $C$ be the corresponding club subset of $\kappa$ obtained from that lemma. Since $\gamma<\ot(\xi_1\setminus\xi_0)\cdot\omega$, it follows that $\xi_0+1+2\gamma<\xi_1+1+2\gamma$. Now by Corollary \ref{corollary:levelofhomogeneity}, we see that $\Pi^1_{<(\xi_1+1+2\gamma)}\subseteq\O^\gamma(\Pi^1_{\xi_1}(\kappa))$, and thus, the set
\[D=\{\alpha\in C\st V_\alpha\models\Theta(\kappa)\res^\kappa_\alpha\}\subseteq\{\alpha<\kappa\st\alpha\in\O^{f^\kappa_\gamma(\alpha)}(\Pi^1_{f^\kappa_{\xi_0}(\alpha)}(\alpha))^+\}\]
is in $\O^\gamma(\Pi^1_{\xi_1}(\kappa))^*$.
\end{proof}


\section{On some results of the first author}\label{Cody_results}

In \cite[Theorem 4.1]{MR4206111}, the first author claimed the following: If $S\in\mathcal R([\kappa]^{<\kappa})^+$, then \[T=\{\alpha<\kappa\mid\forall\xi<\alpha\  S\cap\alpha\in\Pi^1_\xi(\alpha)^+\}\in\mathcal R([\kappa]^{<\kappa})^*.\]
 The proof that is provided however is slightly flawed, and in fact only yields a somewhat weaker result, namely a weak form of the analogue of Theorem \ref{theorem_pushing_Baumgartner} for the Ramsey operator $\mathcal R$ rather than the ineffability operator $\mathcal I$, in the special case when $\gamma=0$ (see below). We first want to provide a counterexample for the above statement that is claimed in \cite{MR4206111}, and then follow it with a corrected version of that theorem. We then shortly discuss the consequences that this has on other results of \cite{MR4206111}. For the very start, we need an auxiliary result.
 
 \begin{lemma}
  If $\kappa$ is a measurable cardinal, then 
  \[\{\alpha<\kappa\mid\alpha\textrm{ is not Ramsey}\}\not\in\mathcal R([\kappa]^{<\kappa})^*.\] 
\end{lemma}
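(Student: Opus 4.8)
The plan is to verify the logically equivalent statement that $R:=\{\alpha<\kappa\mid\alpha\text{ is Ramsey}\}$ is $\R([\kappa]^{<\kappa})$-positive, i.e. $R\in\R([\kappa]^{<\kappa})^+$; since $\kappa\setminus R$ is precisely the set of non-Ramsey $\alpha<\kappa$, this is exactly the assertion $\kappa\setminus R\notin\R([\kappa]^{<\kappa})^*$. To this end I would fix a normal measure $U$ on $\kappa$ and establish two things: (i) $R\in U$; and (ii) $U$ is disjoint from the Ramsey ideal, i.e. every $U$-large subset of $\kappa$ is $\R([\kappa]^{<\kappa})$-positive. Combining (i) and (ii) gives $R\in\R([\kappa]^{<\kappa})^+$, as desired.

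For (i), let $j\colon V\to M\cong\Ult(V,U)$ be the ultrapower embedding, so that $\crit(j)=\kappa$, ${}^\kappa M\subseteq M$, and hence $P(\kappa)^V=P(\kappa)^M$. The key observation is that the assertion ``$\kappa$ is Ramsey'' only quantifies over subsets of $\kappa$: over regressive colorings $c\colon[\kappa]^{<\omega}\to\kappa$ (coded by subsets of $\kappa$) and over their unbounded homogeneous sets $H\subseteq\kappa$, while the relevant relations (``homogeneous'', ``unbounded'') are absolute. Consequently ``$\kappa$ is Ramsey'' is absolute between the transitive models $V$ and $M$, which have the same $P(\kappa)$. (Equivalently: it is expressible by a $\Pi^1_2$-sentence over $V_\kappa$, and such sentences are absolute between $V$ and $M$ since $P(V_\kappa)^V=P(V_\kappa)^M$ as $\kappa$ is inaccessible.) As $\kappa$ is measurable it is Ramsey, hence Ramsey in $M$; since $\kappa<j(\kappa)$ we get $\kappa\in j(R)$, and therefore $R\in U$ by the usual characterization of membership in a normal measure via its embedding.

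For (ii), I would invoke the classical Rowbottom-style fact that a normal $\kappa$-complete ultrafilter on $\kappa$ is disjoint from the Ramsey ideal. Given a regressive $c\colon[\kappa]^{<\omega}\to\kappa$, one thins $\kappa$ level by level: for each $n<\omega$, repeated use of $\kappa$-completeness together with normality (forming suitable diagonal intersections) produces $H_n\in U$ with $c\restrict[H_n]^n$ constant; then $H:=\bigcap_{n<\omega}H_n\in U$ — a countable intersection of sets in $U$, which remains in $U$ because $U$ is $\kappa$-complete and $\kappa>\omega$ — is homogeneous for $c$. Hence for any $A\in U$ the set $A\cap H\in U$ is an unbounded subset of $A$ homogeneous for $c$, so $A\in\R^c([\kappa]^{<\kappa})^+$; as $c$ was arbitrary regressive, $A\in\R([\kappa]^{<\kappa})^+$. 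Applying this with $A=R$ finishes the proof.

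The only genuinely delicate point is the absoluteness step in (i): what makes it go through — and what would fail if one replaced ``Ramsey'' by, say, ``measurable'' — is precisely that Ramseyness is of low complexity, a property of $P(\kappa)$ rather than of $P(P(\kappa))$, and is therefore reflected into the ultrapower $M$ even though $U\notin M$. Everything else is routine once this is in place.
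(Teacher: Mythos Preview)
Your argument is correct. Both you and the paper begin with the same observation, namely that the set $R=\{\alpha<\kappa\mid\alpha\text{ is Ramsey}\}$ lies in a normal measure $U$ on $\kappa$, because $\kappa$ is Ramsey in the ultrapower $\Ult(V,U)$ (Ramseyness being a property of $P(\kappa)$, which the ultrapower computes correctly). The divergence is in how one then concludes $R\in\R([\kappa]^{<\kappa})^+$. You use the combinatorial definition of $\R$ directly and invoke the Rowbottom-style fact that every regressive $c\colon[\kappa]^{<\omega}\to\kappa$ has a homogeneous set in $U$; hence every $U$-large set, and in particular $R$, is positive for the Ramsey ideal. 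The paper instead passes to the model version $\R_{mod}$ (via Theorem~\ref{theorem:ramseymodels}): for an arbitrary parameter $A\subseteq\kappa$ it takes a size-$\kappa$ elementary substructure $M^*\prec H((2^\kappa)^+)$ with $A,U\in M^*$, collapses it to a transitive weak $\kappa$-model $M$, and observes that the collapsed $U$ is an $M$-normal, $\kappa$-amenable, countably complete $M$-ultrafilter containing $R$. Your route is more elementary and entirely self-contained; the paper's route fits its ambient framework of small models and $M$-ultrafilters and makes the amenability and completeness properties transparent, which is what is reused elsewhere in the paper.
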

\begin{proof}
  Using Theorem \ref{theorem:ramseymodels}, $\mathcal R([\kappa]^{<\kappa})^*=\R_{mod}([\kappa]^{<\kappa})^*$. Let $U^*$ be a measurable ultrafilter on $\kappa$ and let $A\subseteq\kappa$ be arbitrary. Let $M^*\prec H((2^\kappa)^+)$ have size $\kappa$ with $A,U^*\in M^*$ and such that $\kappa+1\subseteq M^*$, let $M$ be the transitive collapse of $M^*$, and let $U$ be the image of $U^*$ under the collapsing map. Then, $U$ is $M$-normal, $\kappa$-amenable for $M$ and countably complete, and since $\kappa$ is Ramsey in the ultrapower of $V$ by $U^*$, it is also Ramsey in the ultrapower of $M$ by $U$, and hence $\{\alpha<\kappa\mid\alpha$ is Ramsey$\}\in U$. This shows that $\{\alpha<\kappa\mid\alpha$ is not Ramsey$\}\not\in\R_{mod}([\kappa]^{<\kappa})^*$.
\end{proof}

\begin{counterexample}
  Assume that $\kappa$ is Ramsey, such that \[S=\{\alpha<\kappa\mid\alpha\textrm{ is not Ramsey}\}\not\in\mathcal R([\kappa]^{<\kappa})^*.\] 
  Then,
  \[T=\{\alpha<\kappa\mid\forall\xi<\alpha\ S\cap\alpha\in\Pi^1_\xi(\alpha)^+\}\subseteq
  \{\alpha<\kappa\mid S\cap\alpha\in\Pi^1_2(\alpha)^+\}=\]
  \[=\{\alpha<\kappa\mid\{\beta<\alpha\mid\beta\textrm{ is not Ramsey}\}\in\Pi^1_2(\alpha)^+\}.\]
  But since being Ramsey is a $\Pi^1_2$-property, if $\alpha$ is a Ramsey cardinal, then every set in $\Pi^1_2(\alpha)^+$ contains a Ramsey cardinal. Hence the latter set, and thus also $T$, is contained in $S$. This shows that $T\not\in\mathcal R([\kappa]^{<\kappa})^*$.
\end{counterexample}

The following seems to be exactly the statement that is shown to hold true by the proof of \cite[Theorem 4.1]{MR4206111}.

\begin{theorem}\label{theorem:codycorrect}
  If $\kappa$ is a cardinal, $S\in\mathcal R([\kappa]^{<\kappa})^+$, and \[T=\{\alpha<\kappa\mid\forall\xi<\alpha\ S\cap\alpha\in\Pi^1_\xi(\alpha)^+\},\] then $S\setminus T\in\mathcal R([\kappa]^{<\kappa})$.
\end{theorem}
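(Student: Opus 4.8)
The plan is to mimic the structure of the proof of Theorem \ref{theorem_baumgartners_lemma_for_the_strongly_ramsey_ideal}, but now working with the model version $\R_{mod}$ of the Ramsey operator (equivalently $\R$ by Theorem \ref{theorem:ramseymodels}), and to show that $T$ contains a set in the filter $\R([\kappa]^{<\kappa})^*$ by exhibiting, for an arbitrary parameter $A\subseteq\kappa$, a suitable weak $\kappa$-model and $M$-ultrafilter witnessing $T\in U$. So suppose for a contradiction that $S\setminus T\in\R([\kappa]^{<\kappa})^+$, and fix a parameter $A_0\subseteq\kappa$ coding $S\setminus T$ together with whatever bookkeeping data we need. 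By the definition of $\R_{mod}$ and the fact that $S\setminus T$ is positive, there is a transitive weak $\kappa$-model $M\supseteq V_\kappa$ with $A_0\in M$, and an $M$-normal, $\kappa$-amenable, countably complete $M$-ultrafilter $U$ on $\kappa$ with $S\setminus T\in U$. Let $j\colon M\to N$ be the ultrapower embedding; since $U$ is countably complete, $N$ is well-founded, and $P(\kappa)^M=P(\kappa)^N$ as $U$ is $\kappa$-amenable for $M$, so $M$ and $N$ agree on $V_\kappa$, on the truth of $\Pi^1_\eta$-statements over $V_\kappa$, and on restrictions of formulas.

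Since $\kappa\in j(S\setminus T)$, the model $N$ believes that $\kappa\in j(S)$ and $\kappa\notin j(T)$, i.e.\ $N$ thinks there is some $\eta<(\kappa^+)^N$ such that $j(S)\cap\kappa = S$ is \emph{not} $\Pi^1_\eta$-indescribable in $\kappa$. Working in $N$, fix such an $\eta$ and a $\Pi^1_\eta$-sentence $\varphi$ over $V_\kappa$ with $V_\kappa\models\varphi$ (in the sense of $N$, hence of $M$, since $P(\kappa)^M=P(\kappa)^N$) and such that for all $\alpha\in S$, if $\varphi\res^\kappa_\alpha$ is defined then $V_\alpha\models\lnot\varphi\res^\kappa_\alpha$. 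Because $M$ is a $\kappa$-model and $U$ is $M$-normal, the statement ``$V_\kappa\models\varphi$ and for all $\alpha\in S$ with $\varphi\res^\kappa_\alpha$ defined, $V_\alpha\models\lnot\varphi\res^\kappa_\alpha$'' holds in $M$ as well (all the relevant objects $\varphi$, $S$, the restriction operation are available in $M=N$ below $\kappa$). Applying $j$ and using elementarity, $N$ thinks that $V_{j(\kappa)}\models j(\varphi)$ and that for all $\alpha\in j(S)$ with $j(\varphi)\res^{j(\kappa)}_\alpha$ defined, $V_\alpha\models\lnot j(\varphi)\res^{j(\kappa)}_\alpha$. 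But $\kappa\in j(S)$, and by the computation in the proof of Lemma \ref{lemma_restrictionvsj} we have $j(\varphi)\res^{j(\kappa)}_\kappa=\varphi$, which is defined, so $N$ concludes $V_\kappa\models\lnot\varphi$ — contradicting $V_\kappa\models\varphi$ in $N$.

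The one subtlety to be careful about, and what I expect to be the main obstacle, is the interplay between $(\kappa^+)^N$ and $\kappa^+$: the sentence $\varphi$ is chosen in $N$ of complexity $\eta<(\kappa^+)^N$, but the restriction operation and Lemma \ref{lemma_restrictionvsj} (more precisely the portion of its proof computing $j(\varphi)\res^{j(\kappa)}_\kappa=\varphi$) must be applied inside $N$, where $\varphi$ is coded as an element of $H((\kappa^+)^N)^N\subseteq H(\kappa^+)^N$ and $j(\kappa)$ plays the role of the ambient regular cardinal. This is fine because $N$ is a transitive model of $\ZFC^-$ in which $\kappa$ is regular (indeed inaccessible, as $\R([\kappa]^{<\kappa})^+\ni S\setminus T$ forces $\kappa$ weakly compact, hence inaccessible) and $\kappa<j(\kappa)$, so the relevant instance of Lemma \ref{lemma_restrictionvsj}, namely $j(\varphi)\res^{j(\kappa)}_\kappa=\varphi$, holds in $N$; the reader should simply note that $j(\varphi)$ is the $j$-image computed in $N$ of a formula $\varphi$ that itself is an honest element of $N$, so Remark \ref{remark_coding} and the inductive argument of Lemma \ref{lemma_restrictionvsj} apply verbatim internally to $N$. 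With that observed, the contradiction above goes through, establishing $S\setminus T\in\R([\kappa]^{<\kappa})$. Note that, as with Theorem \ref{theorem:codycorrect} being weaker than the erroneous claim of \cite{MR4206111}, the universal quantifier in the definition of $T$ ranges over $\xi<\alpha$ rather than $\xi<\alpha^+$, which is exactly what the argument delivers since $\eta$ was pulled out below $(\kappa^+)^N$ and reflected down.
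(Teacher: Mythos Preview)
Your argument is essentially correct and provides a genuinely different route from the one the paper takes. The paper does not give a direct proof at all: it observes that the result follows immediately from Theorem~\ref{theorem_baumgartner} (the higher-indescribability version of Baumgartner's subtle lemma) together with the fact that the subtle ideal is contained in the Ramsey ideal. That route is shorter and in fact yields the stronger conclusion with $\xi<\alpha^+$ in the definition of $T$. Your approach instead adapts the elementary-embedding argument of Theorem~\ref{theorem_baumgartners_lemma_for_the_strongly_ramsey_ideal} to the Ramsey setting, using a weak $\kappa$-model and a $\kappa$-amenable, $M$-normal, countably complete $M$-ultrafilter. This works here precisely because the statement involves no regressive function and hence no need for the $\bigcap_{n<\omega}B_n$ construction that breaks down for weak $\kappa$-models (the obstacle the paper flags after Question~\ref{question_Baumgartner_for_Ramsey_hierarchy}).

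There is, however, a point of confusion in your writeup that you should clean up. Since $T$ is defined with $\forall\xi<\alpha$ (not $\forall\xi<\alpha^+$), the condition $\kappa\notin j(T)$ in $N$ yields an ordinal $\eta<\kappa$, not merely $\eta<(\kappa^+)^N$. This makes your life strictly easier: for $\eta<\kappa$ the formula $\varphi$ is finitary, its second-order parameters lie in $P(V_\kappa)^N=P(V_\kappa)^M$, so $\varphi\in M$, and the identity $j(\varphi)\res^{j(\kappa)}_\kappa=\varphi$ is the elementary case of the restriction operation (no appeal to the limit-stage machinery of Lemma~\ref{lemma_restrictionvsj} is needed). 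Your final paragraph has this backwards when it says $\eta$ was ``pulled out below $(\kappa^+)^N$''; it was pulled out below $\kappa$, and that is exactly why the stated theorem (with $\xi<\alpha$) is what your argument delivers. Also, a minor slip: $M$ is a \emph{weak} $\kappa$-model, not a $\kappa$-model.
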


As for the case when $\gamma=0$ in the proof of Theorem \ref{theorem_pushing_Baumgartner} however, this result now follows directly from Theorem \ref{theorem_baumgartner} because the subtle ideal is contained in the Ramsey ideal $\R([\kappa]^{<\kappa})$. Note that Theorem \ref{theorem_baumgartner} in fact yields the stronger statement that if \[T^*=\{\alpha<\kappa\mid\forall\xi<\alpha^+\ S\cap\alpha\in\Pi^1_\xi(\alpha)^+\},\] then $S\setminus T^*\in\mathcal R([\kappa]^{<\kappa})$.

The next result that is claimed in \cite{MR4206111} is its \cite[Theorem 4.2]{MR4206111}, which suffers the same kind of problem as does its \cite[Theorem 4.1]{MR4206111} (and it is now seen to be wrong for it includes the base case when $\alpha=0$, which is \cite[Theorem 4.1]{MR4206111}). However, if its statement is modified according to the modification of \cite[Theorem 4.1]{MR4206111} that we provided in Theorem \ref{theorem:codycorrect} above, it is not clear as to whether Cody's argument can be adapted to work. Let us thus state what might be a good candidate for a corrected version of \cite[Theorem 4.2]{MR4206111} as an open question.\footnote{In fact, we ask a strong version of this question, allowing for $\xi<\alpha^+$ rather than just $\xi<\alpha$.}

\begin{question}\label{question:ramseylikeineffable2}
  Assume that $\kappa$ is a cardinal, that $\gamma<\kappa$ is an ordinal, that $S\in\mathcal R^{\gamma+1}([\kappa]^{<\kappa})^+$, and that $T=\{\alpha<\kappa\mid\forall\xi<\alpha^+\ S\cap\alpha\in\mathcal R^\gamma(\Pi^1_\xi(\alpha))^+\}$. Does it follow that $S\setminus T\in\mathcal R^{\gamma+1}([\kappa]^{<\kappa})$?\footnote{If one tries to adapt the proof of \cite[Theorem 4.2]{MR4206111} in a seemingly obvious way, making use of the notation from that proof, one defines sets $X$ and $H$ in $\mathcal R^{\alpha_0+1}([\kappa]^{<\kappa})^+$, and $C$, as in Cody's argument, but then, the inductive conclusion is that $H\setminus C$ is in $\mathcal R^{\alpha_0+1}([\kappa]^{<\kappa})$, rather than Cody's inductive conclusion that $C\in\mathcal R^{\alpha_0+1}([\kappa]^{<\kappa})^*$. Now our weaker conclusion doesn't seem to allow us to derive that $X\cap C\in\mathcal R^{\alpha_0+1}([\kappa]^{<\kappa})^+$.}
\end{question}

\cite[Theorem 4.2]{MR4206111} is then used to deduce \cite[Corollary 4.3]{MR4206111}, which we would like to pose as yet another open question, since it now seems unclear how to prove the below when $\gamma>0$ (its instance for $\gamma=0$ however follows directly from Theorem \ref{theorem:codycorrect}).

\begin{question}\label{question:pushinguptheramseyhierarchy}
  Assume that $\kappa\in\mathcal R^{\gamma+1}([\kappa]^{<\kappa})^+$. Does it follow that \[\{\alpha<\kappa\mid\forall\xi<\alpha\ \alpha\in\mathcal R^\gamma(\Pi^1_\xi(\alpha))^+\}\in\mathcal R^{\gamma+1}([\kappa]^{<\kappa})^*?\]
\end{question}

In the remainder of \cite{MR4206111}, the results from its Section 4 are only used in a few places. The first result that becomes unclear is \cite[Theorem 6.7 (2)]{MR4206111} (except for the case when $m=1$, for the proof of which the case when $\gamma=0$ in Question \ref{question:pushinguptheramseyhierarchy} suffices), which we thus state as an open question.

\begin{question}\label{question:propercontainment}
  Suppose $1<m<\omega$ and $\xi<\kappa$.
  If $\kappa\in\R^m(\Pi^1_\xi(\kappa))^+$, does it follow that the inclusion \[\R^{m-1}(\Pi^1_{\xi+2}(\kappa))\subseteq\R^m(\Pi^1_\xi(\kappa))\] is a proper inclusion?
\end{question}

The only other result from \cite{MR4206111} that becomes unclear is (the properness of the containments in) \cite[Theorem 7.9]{MR4206111}, which is essentially a version of \cite[Theorem 6.7 (2)]{MR4206111} (and thus would yield a version of Question \ref{question:propercontainment}) for infinite $m$.

In order to answer Question \ref{question:ramseylikeineffable2} in the affirmative, it seems one would need to proceed by induction on $\gamma$ to prove a statement similar to that of Theorem \ref{theorem_pushing_Baumgartner}, but with the ineffable operator $\I$ replaced with the Ramsey operator $\R$ and the $S$-list $\vec{S}$ replaced with a regressive function. This suggests the following.

\begin{question}\label{question_Baumgartner_for_Ramsey_hierarchy}
Suppose $\gamma<\kappa^+$, $S\in \R^{\gamma+1}([\kappa]^{<\kappa})^+$ and $f:[S]^{<\omega}\to \kappa$ is a regressive function. Let $A$ be the set of all ordinals $\alpha\in S$ such that
\[\exists X\subseteq S\cap\alpha\left[(\forall \xi<\alpha^+\ X\in \R^{f^\kappa_\gamma(\alpha)}(\Pi^1_\xi(\alpha))^+) \land (X\cup\{\alpha\}\text{ is hom. for }\vec{S})\right].\]
Does it follow that $S\setminus A\in\R^{\gamma+1}([\kappa]^{<\kappa})$?
\end{question}
Notice that in order to address Question \ref{question_Baumgartner_for_Ramsey_hierarchy}, one might attempt an argument similar to that of Theorem \ref{theorem_baumgartners_lemma_for_the_strongly_ramsey_ideal}, using Ramseyness embeddings instead of strong Ramseyness embeddings. However, the elementary embedding characterization of Ramseyness involves weak $\kappa$-models which are not in general closed under $\omega$-sequences, and therefore, in the context of the proof of Theorem \ref{theorem_baumgartners_lemma_for_the_strongly_ramsey_ideal}, if one only assumes that $M$ is a weak $\kappa$-model, there is no reason to expect that the sequence $\<B_n\st n<\omega\>$ is in $M$, and hence $B=\bigcap_{n<\omega}B_n$ may not be in $M$.

Feng \cite{MR1077260} showed that the Ramsey operator can be characterized using $(\omega,S)$\--sequences. Recall that for any set $S$ of ordinals, an \emph{$(\omega,S)$-sequence} is a sequence $\vec S$ of the form $\vec S=\langle S_{\alpha_1\ldots\alpha_n}\mid 1\le n<\omega,\,\alpha_1<\ldots<\alpha_n,\,\alpha_1,\ldots,\alpha_n\in S\rangle$, where each $S_{\alpha_1\ldots\alpha_n}\subseteq\alpha_1$. We say that $H\subseteq S$ is \emph{homogeneous} for $\vec S$ if for all $n>0$, and all $\alpha_1<\ldots<\alpha_n$ and $\beta_1<\ldots<\beta_n$ from $H$, if $\alpha_1\le\beta_1$, then $S_{\alpha_1\ldots\alpha_n}=S_{\beta_1\ldots\beta_n}\cap\alpha_1$. Feng proved that for any ideal $I$ on a regular cardinal $\kappa$ we have $S\in\R(I)^+$ if and only if every $(\omega,S)$-list has a homogeneous set $H\in P(S)\cap I^+$. Thus, in Question \ref{question_Baumgartner_for_Ramsey_hierarchy} one may replace the regressive function with an $(\omega,S)$-list if desired.

It seems that in order to handle the base case ($\gamma=0$) of Question \ref{question_Baumgartner_for_Ramsey_hierarchy}, one would want to address the following question about the pre-Ramsey ideal; this is in analogy to the fact that the base case of Theorem \ref{theorem_pushing_Baumgartner} follows from the corresponding result, namely Theorem \ref{theorem_generalizing_Baumgartner}, about the subtle ideal. It is straightforward to check that the pre-Ramsey ideal can be characterized in terms of $(\omega,S)$-sequences, so let us formulate the question as follows.

\begin{question}\label{question:ramseylikeineffable}
  If $S\in\mathcal R_0([\kappa]^{<\kappa})^+$, $\vec a$ is an $(\omega,S)$-sequence, and \[A=\{\alpha\!\in\!S\mid\exists X\!\subseteq\!S\cap\alpha\,\forall\xi<\alpha^+\ X\!\in\!\Pi^1_\xi(\alpha)^+\,\land\,X\!\cup\!\{\alpha\}\textrm{ is homogeneous for }\vec a\},\] does it follow that $S\setminus A\in\mathcal R_0([\kappa]^{<\kappa})$?
\end{question}

Furthermore, note that in Theorem \ref{theorem_generalizing_Baumgartner}, we are only stating a particular instance of Baumgartner's original result, for it is not only about subtlety, but about \emph{$n$-subtlety} for any particular $n<\omega$; here $n$-subtley is a property which resembles subtlety but is formulated in terms of $(n,S)$-sequences (see \cite{MR0384553}). Since pre-Ramseyness is, in a certain sense, simultaneous $n$-subtlety for all $n<\omega$, one could hope for Baumgartner's argument to somehow be adaptable to the context of our Question \ref{question:ramseylikeineffable}, and thus answer it positively. However, our attempts to do so have as yet been unsuccessful.

Let us close by posing the simplest version of Question \ref{question_Baumgartner_for_Ramsey_hierarchy} which remains open.
\begin{question}\label{question_simple}
Is the hypothesis ``$\exists\kappa\ \kappa\in\R^2([\kappa]^{<\kappa})^+$'' stronger in consistency strength than ``$\exists\kappa\ \kappa\in\R(\Pi^1_1(\kappa))^+$''?

\end{question}


\begin{thebibliography}{AHKZ77}

\bibitem[AHKZ77]{MR0460120}
F.~G. Abramson, L.~A. Harrington, E.~M. Kleinberg, and W.~S. Zwicker.
\newblock Flipping properties: a unifying thread in the theory of large
  cardinals.
\newblock {\em Ann. Math. Logic}, 12(1):25--58, 1977.

\bibitem[Bag19]{MR3894041}
Joan Bagaria.
\newblock Derived topologies on ordinals and stationary reflection.
\newblock {\em Trans. Amer. Math. Soc.}, 371(3):1981--2002, 2019.

\bibitem[Bau75]{MR0384553}
J.~E. Baumgartner.
\newblock Ineffability properties of cardinals. {I}.
\newblock pages 109--130. Colloq. Math. Soc. J\'anos Bolyai, Vol. 10, 1975.

\bibitem[Bau77]{MR0540770}
James~E. Baumgartner.
\newblock Ineffability properties of cardinals. {II}.
\newblock In {\em Logic, foundations of mathematics and computability theory
  ({P}roc. {F}ifth {I}nternat. {C}ongr. {L}ogic, {M}ethodology and {P}hilos. of
  {S}ci., {U}niv. {W}estern {O}ntario, {L}ondon, {O}nt., 1975), {P}art {I}},
  pages 87--106. Univ. Western Ontario Ser. Philos. Sci., Vol. 9. Reidel,
  Dordrecht, 1977.

\bibitem[BW]{brickhill-welch}
Hazel Brickhill and Philip Welch.
\newblock Generalisations of stationarity, closed and unboundedness and of Jensen's $\Box$.
\newblock (\emph{preprint}).

\bibitem[Cod]{CodyHigherIndescribability}
Brent Cody.
\newblock Higher indescribability and derived topologies.
\newblock (\emph{Under review}).

\bibitem[Cod20]{MR4206111}
Brent Cody.
\newblock A refinement of the {R}amsey hierarchy via indescribability.
\newblock {\em J. Symb. Log.}, 85(2):773--808, 2020.

\bibitem[Fen90]{MR1077260}
Qi~Feng.
\newblock A hierarchy of {R}amsey cardinals.
\newblock {\em Ann. Pure Appl. Logic}, 49(3):257--277, 1990.

\bibitem[For10]{MR2768692}
Matthew Foreman.
\newblock Ideals and generic elementary embeddings.
\newblock In {\em Handbook of set theory. {V}ols. 1, 2, 3}, pages 885--1147.
  Springer, Dordrecht, 2010.

\bibitem[Git07]{MR2710923}
Victoria Gitman.
\newblock {\em Applications of the proper forcing axiom to models of {P}eano
  arithmetic}.
\newblock ProQuest LLC, Ann Arbor, MI, 2007.
\newblock Thesis (Ph.D.)--City University of New York.

\bibitem[Git11]{MR2830415}
Victoria Gitman.
\newblock Ramsey-like cardinals.
\newblock {\em J. Symbolic Logic}, 76(2):519--540, 2011.

\bibitem[HL21]{MR4156888}
Peter Holy and Philipp L\"{u}cke.
\newblock Small models, large cardinals, and induced ideals.
\newblock {\em Ann. Pure Appl. Logic}, 172(2):102889, 50, 2021.

\bibitem[Hol]{HolyLCOandEE}
Peter Holy.
\newblock Ramsey-like operators.
\newblock Accepted at {\em Fund. Math.}, 2022.

\bibitem[Joh87]{MR918427}
C.~A. Johnson.
\newblock More on distributive ideals.
\newblock {\em Fund. Math.}, 128(2):113--130, 1987.

\bibitem[SW11]{MR2817562}
I.~Sharpe and P.~D. Welch.
\newblock Greatly {E}rd\H{o}s cardinals with some generalizations to the
  {C}hang and {R}amsey properties.
\newblock {\em Ann. Pure Appl. Logic}, 162(11):863--902, 2011.

\end{thebibliography}

\end{document}